\renewcommand{\dateseparator}{-}
\renewcommand{\today}{\the\year \dateseparator \twodigit\month
\dateseparator \twodigit\day}
\numberwithin{equation}{section}
\newtheorem{thm}{Theorem}[section]
\newtheorem*{thm*}{Theorem}
\newtheorem*{thmA}{Theorem A}
\newtheorem*{thmB}{Theorem B}
\newtheorem*{thmC}{Theorem C}
\newtheorem{thm-defn}[thm]{Theorem/Definition}
\newtheorem{lem}[thm]{Lemma}
\newtheorem{prop}[thm]{Proposition}
\newtheorem{cor}[thm]{Corollary}
\theoremstyle{definition}
\newtheorem{defn}[thm]{Definition}
\newtheorem{defn-lem}[thm]{Definition/Lemma}
\newtheorem{eg}[thm]{Example}
\newtheorem{fact}[thm]{Fact}
\theoremstyle{remark}
\newtheorem{rem}[thm]{Remark}
\DeclareMathOperator{\ob}{ob}
\DeclareMathOperator{\Isom}{Isom}
\DeclareMathOperator{\Proj}{Proj}
\DeclareMathOperator{\Fil}{Fil}
\newcommand{\Rep}{\mathrm{Rep}}
\newcommand{\RepB}{{\overline{\mathrm{Rep}}}}
\newcommand{\CRep}{\mathcal{R}\mathrm{ep}}
\newcommand{\CRepB}{\overline{\mathcal{R}\mathrm{ep}}}
\newcommand{\CPsR}{\mathcal{P}\mathrm{sR}}
\newcommand{\PsR}{\mathrm{PsR}}
\newcommand{\tupi}{{[\underline{\pi}]}}
\newcommand{\tuep}{{[\underline{\varep}]}}
\newcommand{\ur}{\mathrm{ur}}
\newcommand{\cris}{\mathrm{cris}}
\newcommand{\st}{\mathrm{st}}
\newcommand{\pst}{\mathrm{pst}}
\newcommand{\Ac}{{A_{\text{cris}}}}
\newcommand{\Bpc}{B^+_{\text{cris}}}
\newcommand{\Bc}{B_{\mathrm{cris}}}
\newcommand{\Bpst}{B^+_{\text{st}}}
\newcommand{\Bst}{B_{\mathrm{st}}}
\newcommand{\BpdR}{B^+_{\mathrm{dR}}}
\newcommand{\BdR}{B_{\mathrm{dR}}}
\newcommand{\Acac}{A_{\text{cris}, A^\circ}}
\newcommand{\Bpcb}{B^+_{\mathrm{cris}, B}}
\newcommand{\Bpca}{B^+_{\mathrm{cris}, A}}
\newcommand{\Bpstb}{B^+_{\mathrm{st}, B}}
\newcommand{\Bpsta}{B^+_{\mathrm{st}, A}}
\newcommand{\Bsta}{B_{\st,A}}
\newcommand{\bv}{{\bold v}}
\newcommand{\Gr}{\mathrm{Gr}}
\newcommand{\gr}{\mathrm{gr}}
\DeclareMathOperator{\Fr}{Fr}
\newcommand{\Int}{\mathrm{Int}}
\newcommand{\Adm}{\mathcal{A}\mathrm{dm}}
\newcommand{\FS}{\mathcal{F}\mathcal{S}}
\newcommand{\Db}{{\bar D}}
\newcommand{\lb}{{[\![}}
\newcommand{\rb}{{]\!]}}
\newcommand{\lp}{{(\!(}}
\newcommand{\rp}{{)\!)}}
\newcommand{\ab}{\mathrm{ab}}
\newcommand{\CH}{\mathrm{CH}}
\DeclareMathOperator{\Tr}{\mathrm{Tr}}
\newcommand{\Ad}{\mathrm{Ad}}
\DeclareMathOperator{\uM}{\underline{\frM}}
\DeclareMathOperator{\Hom}{\mathrm{Hom}}
\DeclareMathOperator{\End}{\mathrm{End}}
\DeclareMathOperator{\sgn}{sgn}
\DeclareMathOperator{\Mod}{Mod}
\DeclareMathOperator{\Gal}{{Gal}}
\DeclareMathOperator{\Aut}{Aut}
\DeclareMathOperator{\Ext}{Ext}
\DeclareMathOperator{\ord}{ord}
\DeclareMathOperator{\Res}{Res}
\DeclareMathOperator{\Lie}{Lie}
\DeclareMathOperator{\rk}{rk}
\DeclareMathOperator{\Sym}{Sym}
\newcommand{\GL}{\mathrm{GL}}
\newcommand{\PGL}{\mathrm{PGL}}
\DeclareMathOperator{\Spec}{Spec}
\DeclareMathOperator{\MaxSpec}{MaxSpec}
\DeclareMathOperator{\Spf}{Spf}
\DeclareMathOperator{\ch}{\mathrm{char}}
\DeclareMathOperator{\ad}{\mathrm{ad}}
\newcommand{\cA}{{\mathcal A}}
\newcommand{\cB}{{\mathcal B}}
\newcommand{\cD}{{\mathcal D}}
\newcommand{\cE}{{\mathcal E}}
\newcommand{\cF}{{\mathcal F}}
\newcommand{\cG}{{\mathcal G}}
\newcommand{\cK}{{\mathcal K}}
\newcommand{\cL}{{\mathcal L}}
\newcommand{\cM}{{\mathcal M}}
\newcommand{\cO}{{\mathcal O}}
\newcommand{\cP}{{\mathcal P}}
\newcommand{\cR}{{\mathcal R}}
\newcommand{\cX}{{\mathcal X}}
\newcommand{\frd}{{\mathfrak d}}
\newcommand{\frmm}{{\mathfrak m}}
\newcommand{\fro}{{\mathfrak o}}
\newcommand{\frp}{{\mathfrak p}}
\newcommand{\frq}{{\mathfrak q}}
\newcommand{\bA}{{\mathbb A}}
\newcommand{\bC}{{\mathbb C}}
\newcommand{\bF}{{\mathbb F}}
\newcommand{\bG}{{\mathbb G}}
\newcommand{\bP}{{\mathbb P}}
\newcommand{\bQ}{{\mathbb Q}}
\newcommand{\bZ}{{\mathbb Z}}
\newcommand{\frA}{{\mathfrak A}}
\newcommand{\frM}{{\mathfrak M}}
\newcommand{\frN}{{\mathfrak N}}
\newcommand{\frR}{{\mathfrak R}}
\newcommand{\frS}{{\mathfrak S}}
\newcommand{\frX}{{\mathfrak X}}
\newcommand{\varep}{\varepsilon}
\newcommand{\ra}{\rightarrow}
\newcommand{\lra}{\longrightarrow}
\newcommand{\lrisom}{\buildrel\sim\over\lra}
\newcommand{\risom}{\buildrel\sim\over\ra}
\newcommand{\rinj}{\hookrightarrow}
\newcommand{\rsurj}{\twoheadrightarrow}
\newcommand{\floor}[1]{\ensuremath{\lfloor #1 \rfloor}}
\newcommand{\Zp}{{\bZ_p}}
\newcommand{\alg}{\mathrm{alg}}
\newcommand{\Fp}{{\bF_p}}
\newcommand{\phz}{\varphi}
\newcommand{\mDb}{{\frmm_\Db}}
\newcommand{\AcR}{{A_{\cris, \cR}}}
\newcommand{\fMod}{{\frM\fro\frd}}
\newcommand{\br}{{\bar\rho}}
\title[Algebraic Families of Galois Representations]{Algebraic Families of Galois Representations and Potentially Semi-Stable Pseudodeformation Rings} 
\author{Carl Wang-Erickson}
\subjclass[2010]{Primary 11F80; Secondary 11S20, 14D15, 14L24}
\address{Mathematics Department, Brandeis University \\
415 South Street, MS 050 \\
Waltham, MA 02453, USA}
\email{c.wang-erickson@imperial.ac.uk}
\thanks{This work was partially supported by an NSF graduate research fellowship.}
\begin{document}

\begin{abstract}
We construct and study the moduli of continuous representations of a profinite group with integral $p$-adic coefficients. We present this moduli space over the moduli space of continuous pseudorepresentations and show  that this morphism is algebraizable. When this profinite group is the absolute Galois group of a $p$-adic local field, we show that these moduli spaces admit Zariski-closed loci cutting out Galois representations that are potentially semi-stable with bounded Hodge-Tate weights and a given Hodge and Galois type. As a consequence, we show that these loci descend to the universal deformation ring of the corresponding pseudorepresentation. 
\end{abstract}

\maketitle

\tableofcontents

%\newpage

\section{Introduction}

\subsection{Overview} Mazur \cite{mazur1989} initiated the systematic study of the moduli of representations of a Galois group $G$ in terms of complete local deformation rings. For  a fixed residual representation $\bar \rho$ with coefficients in the finite residue field $\bF,$ which admits 
a universal deformation ring $R_{\bar \rho},$ the resulting moduli space $\Spf R_{\bar \rho}$ is ``purely formal'' in the sense that the underlying algebraic scheme $\Spec \bF$ is $0$-dimensional. These deformation rings have been studied extensively in recent years, playing a significant role in automorphy lifting theorems. 

In contrast, the moduli of Galois representations that are not purely formal, i.e.\ positive-dimensional algebraic families of residual representations, have been somewhat neglected. They do appear implicitly in the work of Skinner-Wiles \cite{SW1999} and Bella\"iche-Chenevier \cite{BC2009}. The space $\Ext^1_G(\bar\rho_2, \bar\rho_1)$ of extensions of two distinct irreducible residual representations $\bar\rho_1, \bar\rho_2$ of $G$ 
\begin{equation}
\label{eq:ext_fam}
%\[
\begin{pmatrix}
\bar\rho_1 & * \\
0 & \bar\rho_2
\end{pmatrix}
%\]
\end{equation} 
is the most basic example of such a residual family. The goal of this paper is to set up a general theory of families of Galois representations and to show that conditions from $p$-adic Hodge theory may be sensibly imposed on them. The ad hoc use of these ideas in \cite{SW1999} and \cite{BC2009} suggests that these spaces should have applications to modularity lifting theorems, and the study of Selmer groups.

To state our first main result, recall that a {\em pseudorepresentation} of $G$ is the data of a polynomial for each element of $G$, satisfying coherence and continuity conditions one expects from characteristic polynomials of a representation. Chenevier \cite{chen2014} has shown that a residual pseudorepresentation $\Db$ of $G$ admits only formal deformations, and that these are parameterized by a universal deformation ring $(R_\Db, \mDb)$.  Let $\CRep_\Db$ denote the groupoid which attaches to any quotient $B$ of $\bZ_p\lb t_1, \dotsc, t_n\rb \langle z_1, \dotsc, z_m\rangle$, the category of locally free $B$-modules $V_B$ equipped with a continuous linear action $\rho_B: G \ra \Aut_B(V_B)$ having residual pseudorepresentation $\Db.$

Write $\hat\psi(\rho_B)$ for the pseudorepresentation induced by a representation $(V_B, \rho_B)$.
Denote by $\rho^{ss}_\Db$ the unique semi-simple representation such that $\hat\psi(\rho^{ss}_\Db) = \Db.$ 
We say that $\Db$ is multiplicity-free if $\rho^{ss}_\Db$ has no multiplicity among its simple factors. 

\begin{thmA}[Theorem \ref{thm:PsR_background}] 
If $G$ satisfies Mazur's finiteness condition $\Phi_p,$ then $R_\Db$ is Noetherian and 
\[
\hat\psi: \CRep_\Db \lra \Spf R_\Db
\]
is a formally finite type $\Spf R_\Db$-formal algebraic stack. Moreover,
\begin{enumerate}
\item $\CRep_\Db$ arises as the $\mDb$-adic completion of a finite type $\Spec R_\Db$-algebraic stack 
\[
\psi: \Rep_\Db \lra \Spec R_\Db.
\]
\item The defect $\nu: \Spec \psi_*(\cO_{\Rep_\Db}) \ra \Spec R_\Db$ between the GIT quotient and the pseudodeformation space is a finite universal homeomorphism which is an isomorphism in characteristic zero. In particular, $\psi$ is universally closed. 
\item If $\Db$ is multiplicity-free, then $R_\Db$ is precisely the GIT quotient ring.
\end{enumerate}
\end{thmA}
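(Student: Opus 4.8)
The plan is to realize $\psi\colon \Rep_\Db \to \Spec R_\Db$ as the moduli stack of Cayley--Hamilton representations of the universal Cayley--Hamilton algebra $E := E(R_\Db, D^u_\Db)$ attached to the universal pseudodeformation $D^u_\Db$, and then to extract all three assertions from the $\GL_d$-invariant theory of this stack, where $d = \dim \Db$. The facts I will borrow from Chenevier \cite{chen2014} are: under $\Phi_p$ the ring $R_\Db$ is Noetherian and $E$ is a finite $R_\Db$-module; over an algebraically closed field every pseudorepresentation equals $\hat\psi$ of a unique semisimple representation; and in characteristic zero the pseudodeformation ring coincides with the ring generated by the traces of the universal representation. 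Granting these, let $X := \underline{\Hom}^{\mathrm{CH}}_{R_\Db}(E, M_d)$ be the affine $R_\Db$-scheme representing, on $R_\Db$-algebras $B$, the functor of $R_\Db$-algebra homomorphisms $E \to M_d(B)$ carrying the determinant law $D^u_\Db$ to $\det_{M_d}$; since $E$ is module-finite over the Noetherian ring $R_\Db$, the scheme $X$ is of finite type over $R_\Db$. Set $\Rep_\Db := [X/\GL_d]$ with $\GL_d$ acting by conjugation and $\psi$ the structure morphism; it is algebraic and of finite type. For (1) I would check that the $\mDb$-adic completion of $\Rep_\Db$ represents $\CRep_\Db$: over a test ring $B$ (a quotient of some $\bZ_p\lb\underline t\rb\langle\underline z\rangle$), a continuous $(V_B, \rho_B)$ with residual pseudorepresentation $\Db$ has $\hat\psi(\rho_B)$ a deformation of $\Db$, hence a classifying morphism $R_\Db \to B$, and---no completion being required since $E$ is already module-finite over $R_\Db$---the action $\rho_B$ factors through a Cayley--Hamilton homomorphism $E \otimes_{R_\Db} B \to \End_B(V_B)$; this equivalence of groupoids is natural in $B$, identifies $\CRep_\Db$ with the formal completion of $\Rep_\Db$, and exhibits $\hat\psi$ as the base change of $\psi$ along $\Spf R_\Db \to \Spec R_\Db$, so $\hat\psi$ is a formally finite type formal algebraic stack.

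For (2), since $X$ is affine over the Noetherian base $\Spec R_\Db$ and $\GL_d$ is geometrically reductive (Seshadri), $\cO(X)^{\GL_d}$ is a finitely generated $R_\Db$-algebra, $\psi_*\cO_{\Rep_\Db} = \cO(X)^{\GL_d}$, and $\Spec \psi_*\cO_{\Rep_\Db}$ is the affine GIT quotient $X /\!\!/ \GL_d$, the morphism $\nu$ being induced by the conjugation-invariant inclusion $R_\Db \hookrightarrow \cO(X)$. To see $\nu$ is finite I would realize $X$ as a $\GL_d$-stable closed subscheme of $M_d^{\oplus m}$, for $m$ a number of algebra generators of $E$, and invoke Donkin's theorem that $\cO(M_d^{\oplus m})^{\GL_d}$ is generated over $\bZ$ by the characteristic-polynomial coefficients of monomials in the universal matrices: by geometric reductivity $\cO(X)^{\GL_d}$ is integral over the image of those invariants, and the characteristic coefficients of the tautological image of an element $e \in E$ are, by the defining Cayley--Hamilton property of $E$, pulled back from $\Lambda^\bullet_{D^u_\Db}(e) \in R_\Db$; hence $\cO(X)^{\GL_d}$ is a finite module over the image of $R_\Db$. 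That $\nu$ is surjective and radicial follows from Chenevier's correspondence: a geometric point $\Spec k \to \Spec R_\Db$ yields a pseudorepresentation over $k$, which is $\hat\psi$ of a semisimple $\rho\colon G \to \GL_d(k)$, i.e. a $k$-point of $X$ lying over it, and two semisimple representations over $k$ with equal pseudorepresentation are isomorphic, so every geometric fibre of $\nu$ is a single point with the residue field of the base. A finite, surjective, radicial morphism is a universal homeomorphism; and since $\psi = \nu \circ q$ with $q\colon \Rep_\Db \to X /\!\!/ \GL_d$ the good moduli space morphism, which is universally closed (Alper), $\psi$ is universally closed. Finally, $\nu$ is an isomorphism after inverting $p$: over $\bQ$ the group $\GL_d$ is linearly reductive, so $\cO(M_d^{\oplus m})^{\GL_d}\otimes\bQ \to \cO(X)^{\GL_d}\otimes\bQ$ is surjective and the classical (Procesi) description of the matrix invariants shows $\nu^{\#}\otimes\bQ$ is surjective, while Chenevier's characteristic-zero comparison of the pseudodeformation ring with the trace ring \cite{chen2014} gives $\ker(\nu^{\#}\otimes\bQ)=0$.

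For (3), assume $\Db$ is multiplicity-free, so $\rho^{ss}_\Db = \bar\rho_1 \oplus \cdots \oplus \bar\rho_n$ with the $\bar\rho_i$ pairwise non-isomorphic of dimensions $d_i$. Then the diagonal idempotents lift, and by Bella\"iche--Chenevier \cite{BC2009} the algebra $E$ is a generalized matrix algebra over $R_\Db$: $E = \bigoplus_{i,j} \cA_{ij}$ with $e_i E e_i \cong M_{d_i}(R_\Db)$ and finitely generated $R_\Db$-modules $\cA_{ij}$. I would exploit this in two ways. First, the traces of elements of the diagonal blocks already reproduce all of $R_\Db$, and a computation with the generalized matrix algebra structure shows that the remaining $\GL_d$-invariants on $X$ are $R_\Db$-linear combinations of such traces, so $\nu^{\#}$ is surjective (now over $\bZ_p$, not just over $\bQ$). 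Second, after a faithfully flat base change $R_\Db \to R'$ trivializing the off-diagonal modules $\cA_{ij}$, the algebra $E \otimes_{R_\Db} R'$ admits a Cayley--Hamilton homomorphism into $M_d(R')$ with determinant $D^u_\Db$, i.e. a section of $X_{R'} \to \Spec R'$; such a section gives a retraction of $R' \to \cO(X)\otimes_{R_\Db} R'$, and faithful flatness of $R_\Db \to R'$ then shows $R_\Db \to \cO(X)$ is injective, hence so is $\nu^{\#}$. Combining, $\nu^{\#}$ is an isomorphism, i.e. $R_\Db$ is the GIT quotient ring.

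The step I expect to be the main obstacle is the passage, in (3), from the universal-homeomorphism statement of (2) to a genuine isomorphism of schemes: one must show that $R_\Db$ carries no nilpotents invisible to honest families of representations, and since the multiplicity-free hypothesis enters only through the generalized matrix algebra structure, the delicate point is to organize the off-diagonal modules $\cA_{ij}$ and their multiplication so as to build the required family of representations over a faithfully flat cover of $\Spec R_\Db$ even when $R_\Db$ is non-reduced. A secondary difficulty, in (1), is reconciling the topology built into ``continuous representation'' with the purely algebraic moduli problem defining $\Rep_\Db$; this again comes down to the module-finiteness of $E$, which is the one place where the hypothesis $\Phi_p$ is truly used.
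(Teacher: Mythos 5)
Your overall architecture coincides with the paper's: pass to the universal Cayley--Hamilton algebra $E(G)_\Db$, use its module-finiteness over the Noetherian ring $R_\Db$ to algebraize (this is Proposition \ref{prop:E(G)} and Theorem \ref{thm:R_E_rep_equiv}; your closing remark that the topological issues reduce to module-finiteness is exactly how the paper disposes of them), and then run $\GL_d$-invariant theory on the framed scheme. In part (2) you diverge usefully: the paper proves finiteness of $\nu$ by verifying the valuative criterion for universal closedness (Theorem \ref{thm:psi_main}), using Corollary \ref{cor:ss_degree} to split the generic semisimple representation over a finite extension and Proposition \ref{prop:CH_finite} to span a lattice, whereas you embed $X$ into $M_d^{\oplus m}$ and invoke Donkin's characteristic-free generation of matrix invariants by characteristic-polynomial coefficients, which land in $R_\Db$ by the Cayley--Hamilton compatibility. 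That route works, provided you handle the base change $\bZ\to R_\Db$ of the invariant ring via adequacy (invariants do not commute with base change for merely geometrically reductive groups, but integrality survives), and it buys a proof that never leaves affine invariant theory; the paper's route instead yields the valuative statement directly and reuses the splitting result elsewhere.

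The genuine gap is in (3). First, your surjectivity step asserts that the remaining $\GL_d$-invariants on $X$ are $R_\Db$-linear combinations of traces of diagonal-block elements; in small or mixed characteristic traces do not generate invariants (this is the very reason Donkin's theorem is stated with characteristic-polynomial coefficients), and no computation is indicated. The paper avoids this by (i) upgrading the GMA's canonical pseudo\emph{character} to a canonical pseudo\emph{representation} $D_\cE$ valid in all characteristics and proving $D_\cE=D^u_\Db$ (Proposition \ref{prop:GMA_PsR}, Theorem \ref{thm:CH_MF_GMA}) --- a step your sketch omits but needs, since Chenevier's \cite[Thm.\ 2.22(ii)]{chen2014} only matches traces --- and (ii) replacing the $\GL_d$-problem by the adapted-representation scheme $\Spec T/J$ modulo the torus $Z(\cE)$ (Proposition \ref{prop:adapted_GMA}), where linear reductivity of tori and an explicit cycle-monomial computation give $(T/J)^{Z(\cE)}=R_\Db$ on the nose (Corollary \ref{cor:GMA_good}), hence a good moduli space. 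Second, your injectivity step would base change along a faithfully flat $R_\Db\to R'$ ``trivializing'' the $\cA_{i,j}$ and then embed $E\otimes_{R_\Db}R'$ into $M_d(R')$; but the $\cA_{i,j}$ are merely finite $R_\Db$-modules, not locally free, so no such trivialization exists in general, and a GMA need not embed into $M_d$ even fppf-locally (cf.\ the fractional-ideal criterion of \cite[\S1.3]{BC2009}). The correct (and easier) input is that the universal adapted representation already provides a $D^u_\Db$-compatible representation over the $R_\Db$-algebra $T/J$, and $\cA_{i,j}\ra T/J$ (in particular $R_\Db\ra T/J$) is a split injection by \cite[Prop.\ 1.3.13]{BC2009}, which forces $R_\Db\ra \cO(X)^{\GL_d}$ to be injective without any descent argument.
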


The theorem should be compared with the result that $\hat\psi$ is an isomorphism when $\rho^{ss}_\Db$ is absolutely irreducible. The fact that $\hat\psi$ is algebraizable may be thought of as an interpolation of the algebraicity of each fiber of $\hat\psi$ over a pseudorepresentation $D$. This fiber consists of the representations with semi-simplification isomorphic to $\rho^{ss}_D$, and is naturally algebraic as in \eqref{eq:ext_fam} above. The proof of Theorem A uses the existence of a ``universal Cayley-Hamilton algebra'' whose representations naturally factor the continuous representations of $\CRep_\Db$. The theorem then follows from results on the moduli of representations of finitely generated algebras. 

Having constructed and algebraized these families, we prove that it is possible to impose conditions from $p$-adic Hodge theory on them, namely that they are potentially semi-stable with a given Hodge type $\bv$ and Galois type $\tau$
in the sense of \cite{pssdr}. Let $K$ be a finite extension of $\mathbb Q_p,$ with Galois group $G = G_K.$ 
We will refer to $\CRep^\square_\Db$, which is the framed version of $\CRep_\Db$ (see \S\ref{sec:fam_gal}).

\begin{thmB}[{\S\ref{subsec:universal_C}}] Let $\tau$ and $\bold v$ be fixed Galois and Hodge types. There exists a closed formal substack $\CRep^{\tau, \bv}_\Db \rinj \CRep_\Db$ such that for any finite $\bQ_p$-algebra $B$ and point $\zeta: \Spec B \ra \CRep_\Db$, $\zeta$ factors through $\CRep^{\tau, \bold v}_\Db$ if and only if the corresponding representation $V_B$ of $G_K$ is potentially semi-stable of Galois and Hodge type $(\tau, \bold v).$ Moreover, 
\begin{enumerate}
\item $\CRep^{\square, \tau, \bv}_\Db[1/p]$ is reduced, locally complete intersection, equi-dimensional, and generically formally smooth over $\bQ_p$. If we replace ``semi-stable'' with ``crystalline,'' it is everywhere formally smooth over $\bQ_p$.

\item If $\Db$ is multiplicity-free, then $\CRep^{\tau, \bold v}_\Db$ is algebraizable, i.e.\ $\CRep^{\tau, \bold v}_\Db$ is the completion of a closed substack $\Rep^{\tau, \bold v}_\Db$ of $\Rep_\Db.$ The geometric properties of (1) also apply to $\Rep^{\tau, \bv}_\Db[1/p]$, except equi-dimensionality, which applies to its framed version $\Rep^{\square, \tau,\bv}_\Db[1/p]$.
\end{enumerate}
\end{thmB}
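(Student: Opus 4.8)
The plan is to build $\CRep^{\square,\tau,\bv}_\Db$ by carrying out Kisin's period-ring resolution of the potentially semi-stable locus in families over the framed moduli space, and then to descend the framing and read off the geometric and algebraization statements. Write $\CRep^\square_\Db=\Spf A$, where $A$ is a complete Noetherian $\Zp$-algebra ($\mDb$-adically complete, with $A/\mDb$ of finite type over $\bF$) carrying the universal framed continuous representation $\rho^{\univ}\colon G_K\to\GL_d(A)$; it is the $\mDb$-adic completion of an affine finite type $\Spec R_\Db$-scheme $\Rep^\square_\Db=\Spec B$, and equivalently $\rho^{\univ}$ comes from the universal Cayley--Hamilton representation of Theorem A. Restricting $\rho^{\univ}$ along the Kummer tower and passing to Kisin $\frS$-modules, I would form the relative version $\LD$ of Kisin's moduli of lattices of height $\le h$ ($h$ determined by $\bv$) equipped with the monodromy-and-filtration data cutting out the type $(\tau,\bv)$. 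The two inputs I need are: (i) $\LD$ is representable by a projective $\CRep^\square_\Db$-scheme, being a closed subscheme of a suitable Grassmannian bundle exactly as in \cite{pssdr}; and (ii) after inverting $p$ the structure morphism $\Theta\colon\LD[1/p]\to\CRep^\square_\Db[1/p]$ is a closed immersion onto the locus where $\rho^{\univ}$ is potentially semi-stable of type $(\tau,\bv)$. I would then \emph{define} $\CRep^{\square,\tau,\bv}_\Db$ to be the scheme-theoretic image of $\Theta$, i.e.\ the closed formal subscheme $\Spf A^{\square,\tau,\bv}$ with $A^{\square,\tau,\bv}$ the maximal $p$-torsion-free quotient of $A$ through which $\Theta$ factors, so that $\CRep^{\square,\tau,\bv}_\Db[1/p]=\LD[1/p]$ as closed subspaces of $\CRep^\square_\Db[1/p]$.

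The point criterion and the descent of the framing then drop out. For a closed point $x$ of $\CRep^\square_\Db$ with residual representation $\bar\rho_x$ over $\bF'$ (so $\hat\psi(\bar\rho_x)=\Db\otimes\bF'$), the completed local ring $\widehat{\mathcal O}_{\CRep^\square_\Db,x}$ is the unrestricted framed deformation ring $R^\square_{\bar\rho_x}$, and since the construction of $\LD$ commutes with base change it recovers Kisin's quotient: $\widehat{\mathcal O}_{\CRep^{\square,\tau,\bv}_\Db,x}=R^{\square,\tau,\bv}_{\bar\rho_x}$. For a finite $\bQ_p$-algebra $B$ and $\zeta\colon\Spec B\to\CRep^\square_\Db$, invertibility of $p$ in $B$ together with $\CRep^{\square,\tau,\bv}_\Db[1/p]=\LD[1/p]$ reduces ``$\zeta$ factors through $\CRep^{\square,\tau,\bv}_\Db$'' to ``$\zeta$ lifts to $\LD[1/p]$'', which by $p$-adic Hodge theory holds exactly when $V_B$ is potentially semi-stable of type $(\tau,\bv)$; this is Kisin's characterisation, now in families. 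Because this property is intrinsic to the representation and insensitive to the frame, $\CRep^{\square,\tau,\bv}_\Db$ is stable under the change-of-frame action used in \S\ref{sec:fam_gal} to present $\CRep_\Db$ from $\CRep^\square_\Db$, so by descent it is the pullback of a unique closed formal substack $\CRep^{\tau,\bv}_\Db\rinj\CRep_\Db$ and the point criterion descends. For (1): reducedness, the locally complete intersection property, and formal smoothness over $\bQ_p$ at a point are local and invariant under completion (for excellent rings, using that over the perfect field $\bQ_p$ smoothness equals regularity), so they may be checked on the completed local rings $R^{\square,\tau,\bv}_{\bar\rho_x}[1/p]$ where they hold by \cite{pssdr}; equi-dimensionality then follows from Kisin's dimension formula for $R^{\square,\tau,\bv}_{\bar\rho_x}$ depending only on $d$, $[K:\bQ_p]$, $\bv$ (not on $x$) together with the Cohen--Macaulay property implied by lci; and in the crystalline case the same local analysis gives formal smoothness over $\bQ_p$ everywhere.

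For (2), assume $\Db$ multiplicity-free, so by Theorem A(1),(3) the stack $\CRep_\Db$ is the $\mDb$-adic completion of the finite type $\Spec R_\Db$-stack $\Rep_\Db$ with $R_\Db$ equal to the GIT quotient ring. Since $\LD$ is projective over $\CRep^\square_\Db=\Spf\hat B$, the $\mDb$-adic completion of the finite type scheme $\Rep^\square_\Db=\Spec B$, Grothendieck's existence theorem algebraises $\LD$ to a projective $\Rep^\square_\Db$-scheme $\LD^{\alg}$ (apply the coherent-sheaf form to the ideal of $\LD$ inside a relative projective space over $\Spf\hat B$). Taking the scheme-theoretic image of $\LD^{\alg}\to\Rep^\square_\Db$ and its $p$-torsion-free quotient yields a closed subscheme $\Rep^{\square,\tau,\bv}_\Db\rinj\Rep^\square_\Db$ whose $\mDb$-adic completion is $\CRep^{\square,\tau,\bv}_\Db$, since scheme-theoretic image commutes with the flat base change $B\to\hat B$; and the multiplicity-free hypothesis, through the identification $R_\Db=$ GIT quotient, is what lets one check this closed subscheme descends along the change-of-frame action to a closed substack $\Rep^{\tau,\bv}_\Db\rinj\Rep_\Db$ with completion $\CRep^{\tau,\bv}_\Db$. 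Its completed local rings at closed points are again Kisin's rings $R^{\square,\tau,\bv}_{\bar\rho}$ (allowing $\bar\rho$ to live over finite $\bQ_p$-algebras as well), so the properties of (1) transfer to $\Rep^{\square,\tau,\bv}_\Db[1/p]$ and $\Rep^{\tau,\bv}_\Db[1/p]$, with equi-dimensionality surviving only for the framed version because passing to the stack quotient introduces fibres of varying dimension at points with larger automorphism groups.

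The step I expect to be the main obstacle is relativizing Kisin's construction of $\LD$ and of the closed immersion $\Theta[1/p]$ over the non-local base $\CRep^\square_\Db$, where the residual representation genuinely varies: one must develop the theory of Kisin $\frS$-modules, bounded height, and weak admissibility in families over a complete Noetherian $\Zp$-algebra that is not complete local (so that, for instance, $A/\mDb^n$ is a finite type $\bF$-algebra rather than a finite ring), and verify that the resulting $\CRep^{\square,\tau,\bv}_\Db$ has the expected completed local rings, so that \cite{pssdr} can be imported point by point. Once that is in place, the descent of the framing, the point criterion, the transfer of local properties, and the algebraization in the multiplicity-free case are all comparatively formal.
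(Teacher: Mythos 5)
Your construction of the formal substack and the point criterion follows essentially the paper's route: relativize Kisin's Kisin-module and period-map machinery over the universal framed formal scheme, characterize the cut-out locus by its points valued in finite $\bQ_p$-algebras (the Jacobson-type Lemma \ref{lem:jacobson}), observe that the resulting ideal is $\widehat{\GL}_d$-stable because the condition is intrinsic to the representation, and descend the frame; and the properties in (1) are likewise read off from completed local rings at closed points of the generic fiber. Two caveats there: the local complete intersection and reducedness statements for equi-characteristic-zero potentially semi-stable deformation rings are due to Bellovin \cite{bellovin2014}, not \cite{pssdr}, and identifying those completed local rings with the classical deformation rings over a non-local base is exactly the non-trivial relativization (the paper's analogue of Kisin's Prop.\ 3.3.1, i.e.\ Lemma \ref{lem:pssdr_3.3.1}) -- but you flagged that as the main obstacle, so this is a matter of attribution rather than a gap.

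The genuine gap is in your part (2). Grothendieck existence algebraizes a formal scheme projective over $\Spf \hat B$ to a scheme projective over $\Spec \hat B$, not over $\Spec B$: the theorem needs the base to be complete along the ideal, and $B$ is not. Passing from $\Spec \hat B$ back to the finite-type model $\Spec B = \Rep^{\square}_\Db$ is precisely the hard point, and flatness does not help: $\mDb B$ is not contained in the Jacobson radical of $B$ (there are maximal ideals of $B$ at which $p$ is invertible), so $B \ra \hat B$ is not faithfully flat, and a closed subscheme of $\Spec \hat B$ has no reason to arise from one of $\Spec B$; there is no formal GAGA for the affine, non-proper morphism $\Rep^{\square}_\Db \ra \Spec R_\Db$. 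A symptom that something is off is that your argument never genuinely uses the multiplicity-free hypothesis: descent along the change-of-frame action only needs $\widehat{\GL}_d$-stability of the ideal, which holds unconditionally (this is the content of Theorem \ref{thm:C_univ}), so your chain of steps, if correct, would give the algebraization with no hypothesis at all, whereas the paper cannot do this without either multiplicity-freeness or the conjecture (FGAMS). The actual mechanism in the paper is different: multiplicity-freeness makes $\psi: \Rep_\Db \ra \Spec R_\Db$ a good moduli space (Theorems \ref{thm:CH_MF_GMA} and \ref{thm:PsR_background}(4)), formal GAGA holds for good moduli spaces by \cite{GZB2015} (Theorem \ref{thm:gaga}), and then the closed formal substack of $\CRep_\Db$ algebraizes to a closed substack of $\Rep_\Db$ by Lemma \ref{lem:AG_background}(1c) (Corollary \ref{cor:C_univ_alg}). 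In other words, the algebraization happens at the level of the unframed stack over $\Spec R_\Db$, exploiting the properness-like behavior of $\psi$; your framed-level Grothendieck-existence step should be replaced by this, and that is exactly where the multiplicity-free hypothesis enters.
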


One may also produce the $R_\Db$-algebraic closed substack $\Rep^{\tau, \bold v}_\Db$ without any condition on $\Db$ assuming an algebraization conjecture for $\psi$ (see \S\ref{subsec:FGAMS}). 

We emphasize that the methods to cut out these loci of representations are due to Kisin \cite[\S\S1-2]{pssdr} in the case of complete local coefficient rings, and that we adapt his arguments to hold over more general coefficient schemes. The geometric properties of the loci follow from results on the ring-theoretic properties of equi-characteristic zero potentially semi-stable deformation rings, principally  \cite{bellovin2014}.

The fact that $\psi$ is algebraic of finite type and universally closed can be used to produce a potentially semi-stable \emph{pseudodeformation} ring. A pseudorepresentation $D$ valued in a $p$-adic field $E$ will be said to satisfy a condition applying to representations when the associated semi-simple representation $\rho^{ss}_D$ satisfies this condition (see Definition \ref{defn:pseudo_property}).
\begin{thmC}[{\S\ref{subsec:psspsr}}]
Assuming that the algebraization $\Rep^{\tau, \bold v}_\Db$ of $\CRep^{\tau, \bold v}_\Db$ exists, the scheme-theoretic image of $\Rep^{\tau, \bold v}_\Db$ under $\psi$ corresponds to a quotient of $R_{\Db}^{\tau, \bold v}$ of $R_\Db$ which satisfies the following property: for any finite extension $E/\bQ_p$ and any point $z: \Spec E \ra \Spec R_\Db$, the corresponding pseudorepresentation $D_z: G_K \ra E$ is potentially semi-stable of Galois and Hodge type $(\tau,\bold v)$ if and only if $z$ factors through $R^{\tau, \bold v}_\Db$. Moreover, 
\begin{enumerate}
\item $R^{\tau, \bold v} := R_\Db^{\tau, \bold v}[1/p]$ is reduced for any $(\tau,\bv)$ and does not depend on the choice of $R_\Db^{\tau, \bold v}$.
\item When we replace ``semi-stable'' by ``crystalline,'' $R^{\tau, \bold v}$ is pseudo-rational (see Definition \ref{defn:pseudo-rational}); in particular, it is normal and Cohen-Macaulay. 
\end{enumerate}
\end{thmC}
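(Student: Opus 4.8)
The plan is to \emph{define} $R^{\tau,\bv}_\Db$ to be the scheme-theoretic image of $\Rep^{\tau,\bv}_\Db$ under $\psi$; this is a closed subscheme of $\Spec R_\Db$ because $\psi$ is universally closed (Theorem~A) and $\Rep^{\tau,\bv}_\Db \rinj \Rep_\Db$ is a closed immersion. The bridge between the moduli of representations $\Rep^{\tau,\bv}_\Db$ and the pseudodeformation space $\Spec R_\Db$ is the elementary observation that, for a representation $V$ of $G_K$ over a finite extension $E/\bQ_p$, $V$ is potentially semi-stable of Galois and Hodge type $(\tau,\bv)$ if and only if $V^{ss}$ is. Indeed, potentially semi-stable representations are closed under subquotients (reducing to the semi-stable case, where it follows from left-exactness of $D_{\st}$ together with a dimension count), and $V$ and $V^{ss}$ have the same Jordan--H\"older constituents, hence the same multiset of Hodge--Tate weights and --- since the inertial action on $D_{\pst}$ is semisimple and additive in short exact sequences of potentially semi-stable representations --- the same Galois type; this is also insensitive to finite extension of $E$. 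Because $D_z$ records only $\rho^{ss}_{D_z}$, this is exactly what matches the pst condition on $\Rep^{\tau,\bv}_\Db$ with the pst condition on pseudorepresentations of Definition~\ref{defn:pseudo_property}.

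Granting this, the characterization of $E$-points is proved in two directions. If $z\colon \Spec E \ra \Spec R_\Db$ factors through $R^{\tau,\bv}_\Db$, then since $\psi$ restricted to the closed substack $\Rep^{\tau,\bv}_\Db$ is still universally closed, its set-theoretic image is closed and hence equals $|\Spec R^{\tau,\bv}_\Db|$; so after enlarging $E$ to a finite extension $E'$ there is a point $\Spec E' \ra \Rep^{\tau,\bv}_\Db$ over $z$, classifying a representation $V_{E'}$ that is pst of type $(\tau,\bv)$ by the defining property of $\Rep^{\tau,\bv}_\Db$ (Theorem~B), whence $\rho^{ss}_{D_z}\otimes_E E' \cong V_{E'}^{ss}$, and thus $\rho^{ss}_{D_z}$, is pst of type $(\tau,\bv)$. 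Conversely, if $D_z$ is pst of type $(\tau,\bv)$, then over a suitable finite $E'/E$ the semisimple representation $\rho^{ss}_{D_z}\otimes_E E'$ is defined, is pst of type $(\tau,\bv)$, and has associated pseudorepresentation $D_z\otimes_E E'$; it therefore gives a point $\Spec E' \ra \Rep_\Db$ over $z\otimes_E E'$, which by the defining property of $\Rep^{\tau,\bv}_\Db$ factors through $\Rep^{\tau,\bv}_\Db$, so that $z\otimes_E E'$, and hence $z$ (using $E\rinj E'$), factors through $R^{\tau,\bv}_\Db$.

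For part (1): by Theorem~B the stack $\Rep^{\tau,\bv}_\Db[1/p]$ is reduced, and since formation of the scheme-theoretic image commutes with the flat base change $-[1/p]$ and the scheme-theoretic image of a reduced quasi-compact stack is reduced (a subring of a reduced ring is reduced), $R^{\tau,\bv}=R^{\tau,\bv}_\Db[1/p]$ is reduced. It does not depend on the choices made (the algebraization $\Rep_\Db$ and its closed substack $\Rep^{\tau,\bv}_\Db$): the $\mDb$-adic completion of $R^{\tau,\bv}_\Db$ is the scheme-theoretic image of the canonical closed formal substack $\CRep^{\tau,\bv}_\Db \rinj \CRep_\Db$ under $\hat\psi$, and $R^{\tau,\bv}_\Db$, being a quotient of the complete local ring $R_\Db$, is recovered from this completion; equivalently, $R^{\tau,\bv}$ is the unique reduced quotient of $R_\Db[1/p]$ whose closed points are those described in the characterization, using that $\Spec R_\Db[1/p]$ is Jacobson.

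For part (2), in the crystalline case: by Theorem~A(2) the defect $\nu$ is an isomorphism after inverting $p$, so $R_\Db[1/p]$ is the ring of $\GL_d$-invariants on the affine framed moduli scheme $\Rep^{\square}_\Db[1/p]$; since $\GL_d$ is reductive and we work in characteristic zero, the closed immersion $\Rep^{\square,\cris,\bv}_\Db[1/p]\rinj\Rep^{\square}_\Db[1/p]$ induces a surjection on invariant rings, so that $R^{\cris,\bv}=\Gamma\bigl(\Rep^{\square,\cris,\bv}_\Db[1/p],\cO\bigr)^{\GL_d}$. By Theorem~B this framed moduli scheme is formally smooth over $\bQ_p$, hence regular, so $R^{\cris,\bv}$ is the invariant ring of a regular ring under a linearly reductive group in characteristic zero, and a Boutot-type theorem --- in the form used in \cite{bellovin2014}, valid for the excellent (not necessarily finite-type) rings occurring here --- shows $R^{\cris,\bv}$ is pseudo-rational, in particular normal and Cohen--Macaulay (Definition~\ref{defn:pseudo-rational}). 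The main obstacle is the first step: arguing that the scheme-theoretic image is governed by precisely the pst-pseudorepresentation condition, which uses universal closedness of $\psi$ to identify its set-theoretic and scheme-theoretic images, together with the semisimplification bookkeeping above; and in part (2), ensuring that after inverting $p$ the morphism $\psi$ really becomes the good moduli space map, so that $R^{\cris,\bv}$ is literally a ring of invariants of a smooth affine scheme.
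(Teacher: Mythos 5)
Your construction and overall route are essentially the paper's: define $R^{\tau,\bv}_\Db$ as the scheme-theoretic image, use universal closedness of $\psi$ and the defining property of the closed substack $\Rep^{\tau,\bv}_\Db$ to characterize $E$-points, deduce reducedness of $R^{\tau,\bv}$ from reducedness of the representation space and uniqueness from the Jacobson property, and obtain pseudo-rationality by identifying $R^{\cris,\bv}$ with the $\GL_d$-invariants of the regular coordinate ring of $\Rep^{\square,\cris,\bv}_\Db[1/p]$ and invoking a Hochster--Roberts/Boutot-type theorem valid beyond finite type. Two remarks. First, your ``elementary observation'' is stated as a biconditional, and the ``if'' direction is false: a representation $V$ need not be potentially semi-stable when $V^{ss}$ is (e.g.\ the non-Hodge--Tate extension of the trivial character by itself, cut out by $\log\chi_{\mathrm{cyc}}$, has crystalline semisimplification). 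Your argument only ever uses the true direction (pst of type $(\tau,\bv)$ passes to subquotients, with Hodge and inertial types additive), so this is not a gap, but the paper avoids needing even that input: by Theorem \ref{thm:PsR_background}(2) the fiber of $\psi$ over $z$ has a unique closed point, namely $\rho^{ss}_{D_z}$, and any nonempty closed substack of the fiber must contain it, so one concludes directly that $\rho^{ss}_{D_z}$ itself is a point of $\Rep^{\tau,\bv}_\Db$ rather than semisimplifying an auxiliary point $V_{E'}$. Second, the pseudo-rationality input in the paper is Schoutens' theorem \cite[Thm.\ A]{schoutens2008} (a cyclically pure subring of a regular Noetherian equicharacteristic-zero ring is pseudo-rational), applied to the cyclically pure inclusion $R^{\cris,\bv}\rinj S$; the reference to \cite{bellovin2014} is the wrong source for this step (it supplies the ring-theoretic properties behind Theorem B), though the mathematical content you invoke is the right one.
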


We remark that the ring-theoretic properties of the potentially semi-stable pseudodeformation rings in Theorem C are deduced from the geometric properties of the families of potentially semi-stable representations in Theorem B using invariant theory: Theorem A tells us that $R^{\tau, \bold v}$ is a GIT quotient ring. The conventional techniques used to study ring-theoretic properties of Galois deformation rings in terms of Galois cohomology have not been directly applicable to study pseudodeformation rings 
$R^{\tau, \bold v}_{\bar D}$ or $R^{\tau, \bold v}$. The author intends to report on this in future work. 

Having shown that conditions from $p$-adic Hodge theory cut out a Zariski-closed condition on algebraic families of local Galois representations, we end the paper with a discussion of the corresponding constructions for families of global Galois representations, and pseudorepresentations. We remark that the correct notion of ``a global pseudorepresentation that is locally potentially semi-stable'' is more restrictive than ``a global pseudorepresentation such that its restriction to each decomposition group over $p$ is potentially semi-stable.'' This is well-illustrated through the explicit example of a 2-dimensional global ordinary pseudodeformation ring, which we discuss in \S\ref{subsec:ordinary_psspsr}. These ordinary pseudodeformation rings are compared to Hecke algebras in \cite{WWE2015, WWE2015a}. 

As a final point, we emphasize that Theorem A is based on a study of the moduli of representations of a finitely generated associative algebra over a Noetherian ring in \S2. Theorem A is deduced from this study by ``removing the topology'' from the representation theory of profinite groups. The conclusions of Theorem A may be viewed as generalizations, allowing for the profinite topology and non-zero characteristic, of parts of the investigations of Le Bruyn \cite{lebruyn2008, lebruyn2012} (building on \cite{procesi1987}) in non-commutative algebraic geometry. 

\subsection{Summary Outline}  

Section 2 discusses the geometry of the moduli spaces of $d$-dimensional representations $\Rep^d_R$ and pseudorepresentations $\PsR^d_R$ of an associative algebra $R$, especially with reference to the natural map $\psi: \Rep^d_R \ra \PsR^d_R$ associating a representation to its induced pseudorepresentation. The main idea pursued is that the adjoint action of $\GL_d$ on the scheme of framed representations $\Rep^{\square,d}_R$, whose associated quotient stack is $\Rep^d_R$, has GIT quotient nearly equal to $\PsR^d_R$. In order to establish this, we draw geometric and algebraic consequences of Chenevier's work on pseudorepresentations \cite{chen2014}. First, we establish that the GIT quotient and $\PsR^d_R$ naturally have identical geometric points because each set of geometric points naturally corresponds to isomorphism classes of semi-simple representations. We then introduce the notion of universal Cayley-Hamilton quotient, which factors the representations of $R$. Using the theory of polynomial identity rings to derive finiteness properties of the Cayley-Hamilton quotient, we show that the discrepancy between the GIT quotient and $\PsR^d_R$ is finite. 

To conclude Section 2, we augment the theory of generalized matrix algebras of \cite[\S1.3]{BC2009} so that it functions well in arbitrarily small characteristic, attaching a canonical pseudorepresentation to a generalized matrix algebra. We also discover that when $R$ is a generalized matrix algebra, the associated space of pseudorepresentations is precisely the GIT quotient. 

In Section 3 we study the map $\psi$ in the setting of continuous representation theory of a profinite group $G$, so that we take coefficients in formal schemes over $\bZ_p$. The key result is Proposition \ref{prop:E(G)}, namely that the universal Cayley-Hamilton quotient $E(G)_\Db$, which factors the representations of $G$ with residual pseudorepresentation $\Db$, is finite as a module over $R_\Db$ and that its adic topology as an $R_\Db$-module is equivalent to the topology induced by $G$. Consequently, the moduli space of representations of $E(G)_\Db$ over $\Db$ is a finite-type $R_\Db$-algebraic model $\Rep_\Db$ for the formal moduli space $\CRep_\Db$ of representations of $G$ with residual pseudorepresentation $\Db$. Adding the results of Section 2, we get Theorem A. We then discuss how Theorem A implies that formal GAGA holds for $\psi$ in certain cases. 

In Section 4 we begin our study of potentially semi-stable representations of $G = G_K$. We adapt the methods of Kisin \cite[\S1]{pssdr} to cut out a locus of representations with $E$-height $\leq h$ within the universal families $\CRep^\square_\Db$. The point of the generalization is that coefficients must now be allowed to be quotients of $R_\Db\langle z_1, \dotsc, z_a\rangle$, while the coefficients rings were taken to be local in \emph{loc.\ cit.}  Along the way, we expand the allowable coefficients in the theory of Fontaine \cite[\S1.2]{fontaine1990}, drawing an equivalence between continuous representations of $G_{K_\infty}$ with arbitrary discrete coefficients, and \'{e}tale $\phz$-modules. The work of Emerton and Gee \cite{EG2015} will interpolate these families, as there exist families of \'{e}tale $\phz$-modules larger than those that admit a Galois representation. We then construct a projective subscheme of an affine Grassmannian parameterizing lattices of $E$-height $\leq h$ (i.e.\ Kisin modules) in the \'{e}tale $\phz$-module, and produce a characteristic zero period map relating a family of $G_{K_\infty}$-representations to an family of Kisin modules. 

Section 5 continues with the next part of Kisin's method \cite[\S2]{pssdr}, descending, in families, the comparison of a Kisin module to a $G_{K_\infty}$-representation down to a comparison of a $(\phz,N)$-module to a $G_K$-representation. This comparison is valid over a certain locus, and Section 6 begins with the conclusion that this locus consists of exactly those $G_K$-representations that are semi-stable with Hodge-Tate weights in $[0,h]$. Then we cut out connected components corresponding to a given $p$-adic Hodge type or potential semi-stability with a certain Galois type. Theorem B follows from applying these constructions to a universal family of representations and algebraizing these closed subschemes using formal GAGA for $\psi$. Finally, geometric properties of these spaces in characteristic 0 are then deduced from existing results on their local rings at closed points. 

In Section 7, we apply Theorems A and B to cut out potentially semi-stable pseudodeformation rings as the scheme-theoretic image of the potentially semi-stable locus in $\Rep_\Db$ under $\psi$, proving Theorem C. Works of Alper \cite{alper2013, alper2014} and Schoutens \cite{schoutens2008} allow us to deduce the ring-theoretic properties of potentially crystalline pseudodeformation rings using invariant theory. We discuss representations of the Galois group $G_{F,S}$ of a number field and cut out loci of representations and pseudorepresentations which are potentially semi-stable at decomposition groups over $p$. There are subtleties in this definition, which we illustrate through an example of ordinary pseudorepresentations. 

\subsection{Acknowledgements} 

We wish to thank Mark Kisin and recognize his influence in two capacities, firstly as the originator of the $p$-adic Hodge theoretic methods and ideas in this paper, and secondly for his suggestion to examine the geometric relationship between moduli spaces of Galois representations and moduli spaces of pseudorepresentations. This was begun in the author's Ph.D.\ thesis under his supervision. The influence of Ga\"etan Chenevier will also be clear to the reader. It is also a pleasure to thank Brian Conrad, Barry Mazur, Ga\"etan Chenevier, Jo\"el Bella\"iche, Preston Wake, Rebecca Bellovin, David Zureick-Brown, and Brandon Levin for helpful discussions related to this work. We also thank the anonymous referees for their thorough readings and helpful comments. Part of this work was completed with support from the National Science Foundation in the form of a graduate research fellowship. They have our thanks. Finally, we are grateful for the support and hospitality of the mathematics departments at Harvard University and Brandeis University. 

\section{Moduli of Representations of a Finitely Generated Group or Algebra}
\label{sec:Rep}

Let $A$ be a commutative Noetherian ring, let $R$ be an associative but not necessarily commutative $A$-algebra, and let $d \geq 1$ be an integer. We will often assume that $R$ is finitely generated over $A$ (cf.\ \cite[\S1.6.2]{MR2001}), e.g.\ the main result Theorem \ref{thm:psi_main}. For example, we may have $R = A[G]$ for some finitely generated group $G$. We will study the moduli of $d$-dimensional representations of $R$ relative to the space of $d$-dimensional pseudorepresentations of $R$, ultimately showing in Theorem \ref{thm:psi_main} that they almost form an adequate moduli space when $R$ is finitely generated. Later, in \S\ref{sec:fam_gal}, we will apply this study to continuous representations of a profinite group. 

\subsection{Moduli Spaces of Representations and Pseudorepresentations}

With $A$, $R$, and $d$ as above and $S = \Spec A$, here are the moduli groupoids we will consider. 
\begin{defn}
\label{defn:rep}
\begin{enumerate}
\item Define the functor on $S$-schemes $\Rep^{\square,d}_R$ by setting
\[
X \mapsto \{\cO_X\text{-algebra homomorphisms } R \otimes_{\cO_S} \cO_X \lra M_d(X).
\]
\item Define the $S$-groupoid $\Rep_R^d$ by setting
\[
\begin{aligned}
\ob \Rep^d_R(X) &= \{V / X \text{ rank } d \text{ vector bundle,} \\
& \cO_X\text{-algebra homomorphism } R \otimes_{\cO_S} \cO_X \lra \End_{\cO_X}(V) \},
\end{aligned}
\]
with the natural $\cO_X$-linear, $R$-equivariant isomorphisms of such objects.
\item Define the $S$-groupoid $\RepB_R^d$ by setting
\[
\begin{aligned}
\ob \RepB^d_R(X) &= \{\cE \text{ a rank } d^2\ \cO_X\text{-Azumaya algebra,} \\
& \cO_X\text{-algebra homomorphism } R \otimes_{\cO_S} \cO_X \lra \cE \},
\end{aligned}
\]
with the natural $\cO_X$-linear, $R$-equivariant isomorphisms of such objects.
\end{enumerate}
\end{defn}

One can check that $\Rep^{\square, d}_R$ is representable by an affine scheme which is finite type over $S$ if $R$ is finitely generated over $A$. It has been studied extensively, especially when $A$ is an algebraically closed field of characteristic zero (see e.g.\ \cite{LM1985, procesi1987, lebruyn2008, lebruyn2012}). Also, $\Rep^d_R$ is equivalent to the algebraic quotient stack $[\Rep^{\square,d}_R/\GL_d]$ by the adjoint action of $\GL_d$ just as we also have $\RepB^d_R \cong [\Rep^{\square,d}_R/\PGL_d]$. While $\RepB^d_R$ has the advantage that it often has representable loci, we will focus on $\Rep^d_R$ because it carries a universal vector bundle. 

We will be interested in the geometry of $\Rep^d_R$ relative to the moduli space of $d$-dimensional pseudorepresentations of $R$. We will use the notion of pseudorepresentation due to Chenevier \cite{chen2014}, following previous notions due to Wiles \cite{wiles1988} and Taylor \cite{taylor1991}. He uses the notion of a multiplicative polynomial law due to Roby \cite{roby1, roby2}.

\begin{defn}[{\cite[\S1.5]{chen2014}}]
\label{defn:psrep_initial}
A $d$-dimensional \emph{pseudorepresentation} $D$ of $R$ over $A$ is a homogenous multiplicative polynomial law $D : R \ra A$, i.e.\ an association of each commutative $A$-algebra $B$ to a function
\[
D_B: R \otimes_A B \lra B
\]
satisfying the following conditions:
\begin{enumerate}
\item $D_B$ is multiplicative and unit-preserving (but not necessarily additive),
\item $D_B$ is homogenous of degree $d$, i.e.
\[\forall \ b \in B, \forall\ x \in R \otimes_A B, \quad D_B(bx) = b^d D_B(x), \text{ and}\]
\item $D$ is functorial on $A$-algebras, i.e.\ for any commutative $A$-algebras $B \ra B'$, the diagram
\[\xymatrix{
R \otimes_A B \ar[r]^(.6){D_B} \ar[d] & B \ar[d] \\
R \otimes_A B' \ar[r]^(.6){D_{B'}} & B'
}
\]
commutes.
\end{enumerate}
We define $\PsR^d_R(B)$ to be the set of $d$-dimensional pseudorepresentations $D: R \otimes_A B \ra B$. It is evident that $\PsR^d_R$ is a functor on $A$-algebras.
\end{defn}

A pseudorepresentation may be thought of as an ensemble of characteristic polynomials, one for each element of $R$, satisfying compatibility properties as if they came from a representation of $R$. For $r \in R$, its characteristic polynomial $\chi(r,t) \in A[t]$ is given by $D_{A[t]}(t-r)$ and is written
\begin{equation}
\label{eq:chi_poly}
\chi(r,t) = t^d - \Lambda_{d-1}^D(r)t^{d-1} + \dotsm + (-1)^d \Lambda_d^D(r) = t^d + \sum_{i=1}^d (-1)^i\Lambda_i^D(r) t^{d-i}.
\end{equation}
Indeed, the $\chi(r,t)$ for $r \in R$ characterize the pseudorepresentation \cite[Lem.\ 1.12(ii)]{chen2014}. 

Any $B$-valued representation $(V,\rho) \in \Rep^d_R(B)$ of $R$ induces a pseudorepresentation, denoted $\psi(V)$, given by composition of $\rho: R \otimes_A B \ra \End_B(V)$ with the determinant map $\det: \End_B(V) \ra B$. This is easily checked to be functorial in $A$-algebras and therefore defines a morphism 
\[
\psi: \Rep^d_R \lra \PsR^d_R.
\]
There also exist analogous maps to $\PsR^d_R$ from $\Rep^{\square, d}_R$ and $\RepB^d_R$. The usual notion of characteristic polynomial of a representation coincides with the characteristic polynomial of the representation's induced pseudorepresentation. 

Base changes of $\psi$ have a natural interpretation as follows. 
\begin{defn}
\label{defn:D_compatible}
With $R$ and $A$ as above and any $B$-valued $d$-dimensional pseudorepresentation $D: R \otimes_A B \ra B$ of $R$, we say that a $d$-dimensional representation $(\rho,V)$ of $R$ with coefficients in a $B$-algebra $C$ is \emph{compatible with $D$} if $\det \circ \rho = D \otimes_B C$. We denote the resulting $\Spec B$-groupoid by $\Rep_{R,D}$ and note that $\Rep_{R,D} = \Rep^d_R \times_{\PsR^d_R} \Spec B$, where $\Spec B \ra \PsR^d_R$ corresponds to the pseudorepresentation $D$.
\end{defn}

Chenevier, following the work of Roby \cite{roby1, roby2}, proved that the functor of $d$-dimensional pseudorepresentations of $R$
\[
\PsR^d_R : B \mapsto \{d\text{-dimensional pseudorepresentations } D: R \otimes_A B \lra B\}
\]
is representable by an affine scheme $\PsR^d_R = \Spec \Gamma^d_A(R)^\ab$ (see \cite[Prop.\ 1.6]{chen2014}), so that there exists a universal $d$-dimensional pseudorepresentation of $R$ 
\[
D^u: R \otimes_A \Gamma^d_A(R)^\ab \lra \Gamma^d_A(R)^\ab.
\]
When $R$ is finitely generated over $A$, $\Gamma^d_A(R)^\ab$ is also finitely generated over $A$ \cite[Prop.\ 2.38]{chen2014}. 

The notion of a kernel of a pseudorepresentation provides a first step toward our goal of understanding $\psi$. 
\begin{defn}
\label{defn:kernel}
The \emph{kernel} $\ker(D)$ of a pseudorepresentation $D: R \ra A$ is a two-sided ideal of elements $r \in R$ such that for all $A$-algebras $B$ and all $r' \in R \otimes_A B$, the characteristic polynomial $\chi(rr', t) \in B[t]$ is equal to $t^d$. Call $D$ \emph{faithful} if $\ker(D) = 0$.
\end{defn}
See \cite[\S1.17]{chen2014} for further properties of the kernel, among them being the fact that the quotient algebra $R/\ker(D)$ is the minimal quotient through which $D$ factors. Moreover, in the case that $A$ is an algebraically closed field, the surjection $R \ra R/\ker(D)$ realizes the representation $\rho^{ss}_D$ of the following  
\begin{thm}[{\cite[Thm.\ A]{chen2014}}]
\label{thm:chenevier_A}
Let $\bar k$ be an algebraically closed field and let $D: R \ra \bar k$ be a $d$-dimensional pseudorepresentation. Then there exists a unique (up to isomorphism) $d$-dimensional semi-simple representation $\rho^{ss}_D: R \ra M_d(\bar k)$ such that $\psi(\rho^{ss}_D) = D$.
\end{thm}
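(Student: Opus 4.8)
The plan is to reduce the statement to the classical theory of semisimple representations and the structure of pseudorepresentations over a field. Concretely, the target is Chenevier's Theorem A: given an algebraically closed field $\bar k$ and a $d$-dimensional pseudorepresentation $D : R \to \bar k$, produce a unique (up to isomorphism) $d$-dimensional semisimple representation $\rho^{ss}_D$ with $\psi(\rho^{ss}_D) = D$. Since this is precisely \cite[Thm.\ A]{chen2014}, the natural proof is to recall the key points of Chenevier's argument in a form adapted to the present setup, rather than to reprove it from scratch.

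First I would pass to the minimal quotient $\bar R := R/\ker(D)$ through which $D$ factors, using the properties of $\ker(D)$ cited from \cite[\S1.17]{chen2014}. Over an algebraically closed field the kernel of $D$ coincides with the kernel of the associated ``Cayley--Hamilton'' or trace structure, and one shows that $\bar R$ is a finite-dimensional $\bar k$-algebra: indeed $D$ has degree $d$, so every element of $\bar R$ satisfies a monic degree-$d$ polynomial (its characteristic polynomial), and combined with the fact that $D$ is faithful on $\bar R$ one deduces, via the theory of polynomial identity rings (or directly via Chenevier's arguments), that $\bar R$ is Artinian of $\bar k$-dimension at most $d^2$. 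Then I would invoke the Artin--Wedderburn structure of the semisimplification $\bar R^{ss} = \bar R/\rad(\bar R) \cong \prod_i M_{d_i}(\bar k)$, using that $\bar k$ is algebraically closed so the division algebras appearing are all $\bar k$ itself.

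Next, the pseudorepresentation $D$, being faithful on $\bar R$ and hence on $\bar R^{ss}$ as well (one checks $\rad(\bar R)$ lies in $\ker(D)$ after passing to the semisimplification — this uses nilpotence together with homogeneity of degree $d$ of the polynomial law), factors through $\bar R^{ss} \cong \prod_i M_{d_i}(\bar k)$. Each factor $M_{d_i}(\bar k)$ has a unique irreducible representation $\sigma_i$ of dimension $d_i$ with $\det$-pseudorepresentation equal to the determinant polynomial law, and by multiplicativity of $D$ the pseudorepresentation must be a product $D = \prod_i (\det_{d_i})^{m_i}$ for uniquely determined multiplicities $m_i \geq 0$ with $\sum_i m_i d_i = d$ (forced by homogeneity of degree $d$; here one compares characteristic polynomials of central idempotents). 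Setting $\rho^{ss}_D := \bigoplus_i \sigma_i^{\oplus m_i}$, pulled back to $R$ via $R \twoheadrightarrow \bar R \twoheadrightarrow \bar R^{ss}$, gives a semisimple representation with $\psi(\rho^{ss}_D) = D$.

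For uniqueness: if $\rho$ is any semisimple $d$-dimensional representation of $R$ with $\psi(\rho) = D$, then $\ker(D) \subseteq \ker(\rho)$ (since the characteristic polynomials of $\rho$ agree with those of $D$, and $\rho$ semisimple is determined by its characteristic polynomials over $\bar k$ by Brauer--Nesbitt), so $\rho$ factors through $\bar R$, and semisimplicity forces it through $\bar R^{ss}$; comparing the induced determinant polynomial laws on each matrix factor pins down the multiplicities $m_i$, giving $\rho \cong \rho^{ss}_D$. The main obstacle is the finite-dimensionality of $\bar R$ over $\bar k$ — this is where the degree-$d$ polynomial identity and faithfulness of $D$ must be combined carefully (Chenevier deduces this from the Cayley--Hamilton property and results on the kernel), and it is exactly the point where the polynomial-identity-ring input enters. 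Once $\bar R$ is known to be a finite-dimensional algebra over an algebraically closed field, the rest is Artin--Wedderburn bookkeeping with multiplicities constrained by the homogeneity degree.
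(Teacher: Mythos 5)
This statement is quoted from Chenevier (\cite[Thm.\ A]{chen2014}); the paper supplies no proof of its own, so the only comparison is with Chenevier's argument, which your outline essentially reproduces: pass to $R/\ker(D)$, use the structure theorem (quoted in the paper as Theorem \ref{thm:chen_2.16}, which over an algebraically closed field yields a finite product of matrix algebras with $D = \prod_i \det_i^{m_i}$ and $d = \sum_i m_i d_i$), take $\rho^{ss}_D = \bigoplus_i \sigma_i^{\oplus m_i}$, and get uniqueness from Brauer--Nesbitt with characteristic polynomials (valid in all characteristics). Your sketch is correct in outline, with the only caveat that the finite-dimensionality of $R/\ker(D)$ and the classification of determinants on matrix algebras as powers of $\det$ are exactly the nontrivial content of Theorem \ref{thm:chen_2.16} rather than ``Artin--Wedderburn bookkeeping,'' and uniqueness follows directly from Brauer--Nesbitt without the intermediate claim $\ker(D)\subseteq\ker(\rho)$.
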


From this theorem, we know that there is a unique semi-simple representation in each geometric fiber of $\psi$. Moreover, it is precisely the orbits (under the adjoint action of $\GL_d$) of geometric points corresponding to semi-simple representations in $\Rep^{\square,d}_R(\bar k)$ which are closed orbits (cf.\ \cite[\S II.4.5, Prop.]{kraft1982}). It is equivalent to say that each fiber $\psi^{-1}(D)$ of a geometric point $D \in \PsR^d_R(\bar k)$ has a unique closed point corresponding to the representation $\rho^{ss}_D$. We summarize these facts:
\begin{cor}
\label{cor:ss_to_psr}
If $R$ is finitely generated over $A$, the morphism $\psi: \Rep^d_R \ra \PsR^d_R$ induces an isomorphism of sets from the closed geometric points of $\Rep^d_R$ to the geometric points of $\PsR^d_R$. The inverse map sends $D \in \PsR^d_R(\bar k)$ to the semi-simple representation $\rho^{ss}_D$, where the geometric points of the fiber $\psi^{-1}(D)$ correspond to representations with Jordan-H\"older factors identical to $\rho^{ss}_D$.
\end{cor}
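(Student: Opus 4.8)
The plan is to deduce Corollary \ref{cor:ss_to_psr} essentially formally from Theorem \ref{thm:chenevier_A} together with standard geometric invariant theory over an algebraically closed field, using the identification $\Rep^d_R \cong [\Rep^{\square,d}_R/\GL_d]$. First I would fix an algebraically closed field $\bar k$ and recall that, since $R$ is finitely generated over $A$, the scheme $\Rep^{\square,d}_R$ is of finite type over $S$, so its base change to $\bar k$ is a finite-type $\bar k$-scheme on which $\GL_d$ acts via the adjoint action. The $\bar k$-points of $\Rep^{\square,d}_R$ are precisely the $d$-dimensional representations $\rho: R \otimes_A \bar k \to M_d(\bar k)$, and two such points lie in the same $\GL_d$-orbit if and only if the representations are isomorphic. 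The map $\psi$ on $\bar k$-points sends $\rho$ to $\det\circ\rho \in \PsR^d_R(\bar k)$, which factors through the orbit; so $\psi$ descends to a map from $\GL_d(\bar k)$-orbits on $\Rep^{\square,d}_R(\bar k)$, i.e.\ from $\bar k$-points of the stack $\Rep^d_R$, to $\PsR^d_R(\bar k)$.

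Next I would invoke the classical fact from invariant theory (the reference to \cite{kraft1982} is already cited in the excerpt) that for a reductive group acting on an affine finite-type scheme over an algebraically closed field, every fiber of the GIT quotient contains a unique closed orbit, and an orbit is closed if and only if the corresponding representation is semisimple — here the standard argument is that the closure of any orbit contains the orbit of the associated graded (semisimplification) representation, and semisimple representations have closed orbits because their stabilizers are reductive, or more directly by the Hilbert--Mumford / Artin criterion identifying closed orbits with those of semisimple modules. Thus the closed points of $\Rep^d_R$ over $\bar k$ are exactly the isomorphism classes of semisimple $d$-dimensional representations. Theorem \ref{thm:chenevier_A} then gives, for each $D \in \PsR^d_R(\bar k)$, a unique such semisimple $\rho^{ss}_D$ with $\psi(\rho^{ss}_D) = D$; this furnishes a two-sided inverse to the map on closed points, proving it is a bijection of sets. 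The statement about Jordan--Hölder factors follows since every representation in the fiber $\psi^{-1}(D)$ has the same characteristic polynomials as $\rho^{ss}_D$, hence by Brauer--Nesbitt (over a field, characteristic polynomials of all elements determine the semisimplification) the same Jordan--Hölder factors as $\rho^{ss}_D$; conversely any semisimple representation with those factors has induced pseudorepresentation $D$.

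I would organize the write-up as: (i) reduce to $\bar k$-points and recall the orbit description; (ii) cite invariant theory for the unique closed orbit in each fiber of the affine GIT quotient, and the semisimple $\Leftrightarrow$ closed-orbit correspondence; (iii) combine with Theorem \ref{thm:chenevier_A} to get the bijection and its inverse; (iv) note the Jordan--Hölder statement via characteristic polynomials. The main obstacle — really the only non-formal point — is step (ii): one must be a little careful that ``closed point of the stack $\Rep^d_R$'' means ``closed $\GL_d$-orbit in $\Rep^{\square,d}_R$'' and that the identification of closed orbits with semisimple representations holds in \emph{arbitrary} characteristic, not just characteristic zero. This is true because $\GL_d$ is reductive in all characteristics and the relevant statement (orbit closure contains the semisimplification; semisimple modules have closed orbits) is characteristic-independent — but it is worth flagging, and it is exactly the place where one should cite \cite{kraft1982} or an analogous reference rather than appeal to intuition; all other steps are immediate from Theorem \ref{thm:chenevier_A} and the definition of $\psi$.
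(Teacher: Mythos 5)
Your proposal is correct and follows essentially the same route as the paper, which likewise deduces the corollary from Theorem \ref{thm:chenevier_A} together with the fact, cited from \cite[\S II.4.5, Prop.]{kraft1982}, that the closed $\GL_d$-orbits in $\Rep^{\square,d}_R(\bar k)$ are exactly the orbits of semi-simple representations, the Jordan--H\"older statement being the uniqueness part of Theorem \ref{thm:chenevier_A} (equivalently Brauer--Nesbitt with characteristic polynomials). One small caution: your parenthetical justification that semisimple representations have closed orbits ``because their stabilizers are reductive'' is not a valid argument on its own (a reductive stabilizer does not force a closed orbit, e.g.\ ${\bG}_m$ acting on ${\bA}^1$ by scaling), but this does not affect the proof, since the correct characteristic-independent statement is precisely what you cite from \cite{kraft1982}.
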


In \S\ref{subsec:IT}, we will refine Corollary \ref{cor:ss_to_psr} using geometric invariant theory. This will rely in part upon understanding what base extensions make a pseudorepresentation become realizable as the determinant of a representation. That is, we are seeking a version of Theorem \ref{thm:chenevier_A} where $k \neq \bar k$. 

Toward this goal, we first recall the following theorem of Chenevier. To state it, we need the following definitions. For any algebraic field extension $K/k$, we write $K^s$ for the maximal separable extension of $k$ in $K$. Let $(f_i,q_i)$ be the exponent of a field extension $k_i/k$, i.e.\ $f_i = [k_i^s: k]$ and $q_i$ is the least power of $p = \ch k$ such that $k_i^{q_i} \subset k_i^s$. Given any simple $k$-algebra $S_i$ with center $k_i$, there is a canonical $n_i f_i q_i$-dimensional pseudorepresentation, denoted $\det_{S_i}: S_i \ra k$ and defined as follows: $\det_{S_i}$ is the composition of the standard reduced norm $S_i \ra k_i$ followed by the $q_i$-Frobenius map $F_{q_i}: k_i \ra k^s_i$ followed by the standard field-theoretic norm $k^s_i \ra k$. 
\begin{thm}[{\cite[Thm.\ 2.16]{chen2014}}]
\label{thm:chen_2.16}
Let $k$ be a field and let $R$ be a $k$-algebra with a $d$-dimensional pseudorepresentation $D: R \ra k$. Then $R/\ker(D)$ is a semi-simple $k$-algebra of the form 
\[
R/\ker(D) \lrisom \prod_{i=1}^s S_i
\]
where $S_i$ is a simple $k$-algebra with center $k_i$ and dimension $n_i^2$ over $k_i$. There are unique integers $m_i$ such that
\[
D = \prod_{i=1}^s {\det}_{S_i}^{m_i} \qquad \text{and} \qquad d = \sum_{i=1}^s m_i n_i q_i f_i.  
\]

$R/\ker(D)$ is finite-dimensional over $k$ if any of the following conditions are satisfied.
\begin{enumerate}
\item $k$ is a perfect field, 
\item $d < \ch(k)$ or $\ch(k) = 0$, 
\item $\ch(k) > 0$ and $[k : k^p] < \infty$. 
\end{enumerate}
\end{thm}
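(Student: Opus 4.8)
The plan is to reduce to the faithful case, extract the algebra structure after base change to an algebraic closure using Theorem \ref{thm:chenevier_A}, and then descend to $k$, tracking how inseparability forces Frobenius twists.

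\emph{Reductions.} First replace $R$ by $R/\ker(D)$, which carries $D$ by the minimality property recalled after Definition \ref{defn:kernel}; so we may assume $D$ is faithful. Since the Cayley--Hamilton ideal of $D$ (generated by the evaluated coefficients of characteristic polynomials) is contained in $\ker(D)=0$ \cite{chen2014}, the ring $R$ itself is Cayley--Hamilton of degree $d$: each $r\in R$ satisfies its monic degree-$d$ characteristic polynomial $\chi(r,t)\in k[t]$. Thus $R$ satisfies a polynomial identity and is algebraic of bounded degree $d$ over $k$. By Amitsur's theorem $\rad(R)$ is nil, and a nil ideal of a Cayley--Hamilton algebra lies in $\ker(D)$ \cite{chen2014}, so $\rad(R)=0$: $R$ is semiprimitive.

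\emph{Structure of $R$.} Next I would check that forming $\ker$ is injective under field extension: a witness $\Lambda^D_i(rr')\neq0$ over a $k$-algebra $B$ stays nonzero over $B\otimes_k L$ by faithful flatness, so $R\to (R\otimes_k L)/\ker(D\otimes_k L)$ is injective for all $L/k$. Taking $L=\bar k$ and applying Theorem \ref{thm:chenevier_A} gives a $k$-algebra embedding
\[
R \;\hookrightarrow\; (R\otimes_k\bar k)/\ker(D\otimes_k\bar k)\;\cong\;\mathrm{im}\big(\rho^{ss}_{D\otimes\bar k}\big)\;\cong\;\prod_{\alpha=1}^{t}M_{n_\alpha}(\bar k),
\]
a finite-dimensional semisimple $\bar k$-algebra (finite since $\rho^{ss}$ is $d$-dimensional). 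This bounds the number of orthogonal central idempotents of $R$ by $t$. Combining this with the PI structure — Kaplansky's theorem (primitive PI rings are simple, finite over their center), Shirshov's height theorem (finitely generated PI algebras algebraic of bounded degree are finite-dimensional, so $R$ is locally finite over $k$), and the resulting von Neumann regularity of the reduced, zero-dimensional ring $Z(R)$ — forces $Z(R)=\prod_{i=1}^{s}k_i$ a finite product of fields and hence $R\cong\prod_{i=1}^{s}S_i$ with $S_i$ simple, center $k_i$, $\dim_{k_i}S_i=n_i^2<\infty$.

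\emph{The factorization of $D$.} Multiplicativity of $D$ over the central idempotents $e_i$ gives $D=D_1\boxtimes\cdots\boxtimes D_s$ with $D_i\colon S_i\to k$ a faithful determinant of dimension $d_i:=\deg_t D(te_i+(1-e_i))\geq1$ and $\sum d_i=d$. It remains to classify faithful determinants on a simple $k$-algebra $S$ with center $k'$, $\dim_{k'}S=n^2$. Using that every determinant out of a central simple $k'$-algebra is a power of the reduced norm post-composed with a determinant on the center $k'$, one reduces to classifying faithful determinants $E\colon k'\to k$; base-changing to $\bar k$ and invoking Theorem \ref{thm:chenevier_A} once more, $k'\otimes_k\bar k$ splits into $f=[(k')^s:k]$ local $\bar k$-algebras whose nilpotents have exponent equal to the inseparable exponent $q$ of $k'/(k')^s$, and tracking the Frobenius introduced by these nilpotents identifies $E$ with a power of $N_{(k')^s/k}\circ F_q$. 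Hence $D_i=\det_{S_i}^{m_i}$ with $d_i=m_i n_i q_i f_i$, so $d=\sum_i m_i n_i q_i f_i$; the finiteness of the $d_i$ forces each $f_i,q_i<\infty$ so these quantities are defined, and $m_i$ is unique since distinct Frobenius exponents give distinct laws.

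\emph{Finite-dimensionality.} Since $\dim_k S_i=n_i^2\,f_i\,[k_i:k_i^s]$ with $f_i\leq d$, it suffices to bound $[k_i:k_i^s]$. As $k_i^s/k$ is finite separable, $[k_i^s:(k_i^s)^p]=[k:k^p]$, and $k_i/k_i^s$ is purely inseparable of exponent $q_i$; the elementary fact that a purely inseparable extension of finite exponent over $F$ is finite exactly when $[F:F^p]<\infty$ then settles case (3). Under (2), every $q_i$ (a power of $\ch k$ dividing a summand of $d<\ch k$) equals $1$, so each $k_i/k$ is separable of degree $\leq d$; and (1) is the case with no inseparability. In each case $R/\ker(D)$ is finite-dimensional over $k$. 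The main obstacle is the \emph{Structure of $R$} step: because $\ker$ does not commute with base change, semisimplicity and finiteness of the list of simple factors cannot be read off the algebraic closure directly and must be extracted by playing the Cayley--Hamilton/PI constraints off against the $\bar k$-picture; the dimension bookkeeping in the Frobenius-twist classification is the other delicate point.
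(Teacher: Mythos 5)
This statement is not proved in the paper at all: it is quoted verbatim from Chenevier \cite[Thm.\ 2.16]{chen2014}, and the paper uses it as a black box (only its corollaries, e.g.\ Corollary \ref{cor:ss_degree} and Corollary \ref{cor:cond_4}, are argued here). So there is no in-paper proof to compare against; what you have written is essentially an outline of Chenevier's original argument rather than an alternative to anything in this paper. Your overall route — pass to the faithful Cayley--Hamilton quotient, use the bounded-degree/PI structure plus the $\bar k$-picture from Theorem \ref{thm:chenevier_A} to get semisimplicity with finitely many simple blocks finite over their centers, then classify faithful determinants on a simple algebra via the reduced norm and a Frobenius-twisted norm on the center, and finally do the dimension bookkeeping for (1)--(3) — is the right shape and matches Chenevier's strategy.

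Two caveats. First, the real mathematical content of the theorem sits exactly in the two steps you assert without proof: that a $k$-valued determinant on a central simple $k_i$-algebra factors through the reduced norm, and that a determinant on a (possibly inseparable) field extension $k_i/k$ is a power of $N_{k_i^s/k}\circ F_{q_i}$. These are Chenevier's structure lemmas for determinants of simple algebras, and "tracking the Frobenius introduced by these nilpotents" is where the genuine work (and the uniqueness of $m_i$, which is cleanest via the degree count $d_i=m_in_iq_if_i$) lives; as written your sketch cites the conclusion rather than proving it. Second, the semiprimitivity step is stated a bit loosely: you do not need Amitsur, and "a nil ideal lies in $\ker(D)$" deserves an argument. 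In a Cayley--Hamilton algebra over a field every nilpotent element has characteristic polynomial $t^d$ (check it after $\otimes_k\bar k$ via $\rho^{ss}$), hence index $\leq d$; so $\rad(R)$, being nil of bounded index, is nilpotent by the Nagata--Higman/Samoilov input already recorded in this paper (Proposition \ref{prop:N(d)}, cf.\ Corollary \ref{cor:kernel_CH_nilpotent}), and then $\rad(R)\otimes_k B$ consists of nilpotents for every $B$, which is what you need to conclude $\rad(R)\subseteq\ker(D)=0$ — without this, "nil ideal $\Rightarrow$ nil after base change" runs into Koethe-type issues. Similarly, the passage from "$Z(R)=\prod k_i$" to "$R=\prod S_i$ simple and finite over $k_i$" silently uses that nonzero ideals of a semiprime PI ring meet the center (Rowen) together with Kaplansky; fine, but it should be flagged as borrowed. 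With those attributions made explicit, your outline is a correct reconstruction of the cited proof.
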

Notice that $R/\ker(D)$ is finite dimensional if and only if $[k_i : k] < \infty$ for all $i$. 

\begin{rem}
Below in Corollary \ref{cor:cond_4}, we give another condition under which $R/\ker(D)$ is finite-dimensional, namely, that $R$ is finitely generated. 
\end{rem} 
 
We consider our refinement to be a corollary of Chenevier's result, and we use the same notation. 
\begin{cor}
\label{cor:ss_degree}
Assume that $R/\ker(D)$ is finite-dimensional over $k$. There exists a field extension $k'/k$ of degree bounded by $\prod_{i=1}^s [k_i : k]n_i/q_i$ such that $D \otimes_k k'$ is realizable, i.e.\ there exists $\rho: R/\ker(D) \ra M_{d\times d}(k')$ such that $\psi(\rho) = D \otimes_k k'$. The extension $k'/k$ may be taken to be a separable extension if and only if $k_i/k$ are simple field extensions for all $i$, and in this case there exists a choice of separable $k'/k$ such that $[k' : k] \leq \sum_{i=1}^s f_i n_i \leq d$. In addition,
\begin{enumerate}
\item When either of conditions (1) or (2) of Theorem \ref{thm:chen_2.16} is true, $k'/k$ may be taken to be a separable extension. 
\item Assume that $k'/k$ may be taken to be a separable extension.  If the Brauer groups of the fields $k_i$ have no elements of order $\leq d$ other than the identity, then $k'/k$ may be taken to be the trivial extension. 
\end{enumerate}
\end{cor}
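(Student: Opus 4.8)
The plan is to deduce this from Theorem \ref{thm:chen_2.16} by analyzing each simple factor separately and then combining the splitting fields. By that theorem we have $R/\ker(D) \cong \prod_{i=1}^s S_i$ and $D = \prod_i \det_{S_i}^{m_i}$, so it suffices to produce, for each $i$, a finite extension $k_i'/k$ of degree at most $[k_i:k] n_i / q_i$ over which $\det_{S_i}$ becomes split, and then to take $k'$ to be a compositum (or rather an extension dominating all the $k_i'$, whose degree is bounded by the product $\prod_i [k_i':k]$); a pseudorepresentation that is a product of split pseudorepresentations is split, since one can take the corresponding block-diagonal (with multiplicities $m_i$) representation. Recall $\det_{S_i}$ is the composite $S_i \to k_i \xrightarrow{F_{q_i}} k_i^s \to k$ of reduced norm, $q_i$-power Frobenius, and field norm.

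For a single simple factor $S_i$ with center $k_i$ and $\dim_{k_i} S_i = n_i^2$, first I would extend scalars by $k_i$ itself: over $k_i$, the factor $S_i \otimes_k k_i$ contains $S_i$ as a $k_i$-algebra, and $S_i$ is a central simple $k_i$-algebra, so it splits over a further extension of $k_i$ of degree dividing $n_i$ (a splitting field of the division algebra underlying $S_i$, e.g. a maximal subfield). This gives the reduced-norm part. The Frobenius twist $F_{q_i}$ is absorbed by passing to a purely inseparable extension: if $\ell \supseteq k_i^s$ is chosen so that $\ell$ contains $q_i$-th roots of the relevant elements — concretely $\ell = (k_i')^{1/q_i}$-type extension making $F_{q_i}$ into an isomorphism onto its image — the composite with the field norm $k_i^s \to k$ becomes an actual determinant after base change. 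Bookkeeping the degrees: the separable part of $k_i/k$ contributes $f_i = [k_i^s:k]$, the inseparable part contributes $q_i$, splitting the CSA contributes (a divisor of) $n_i$, and dividing through by the $q_i$ that is "used up" by the Frobenius gives the stated bound $[k_i:k]n_i/q_i = f_i q_i n_i / q_i \cdot (\text{purely insep part} / q_i)$ — I would need to chase this carefully to land exactly on the claimed bound, keeping track of which extension is needed to make the Frobenius surjective versus which is needed to split the division algebra.

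For the separability statement: if every $k_i/k$ is a simple (hence separable, given the simple-extension hypothesis forces $q_i = 1$ only when there is no inseparability — more precisely, one must handle when the purely inseparable part is trivial) field extension, then $q_i = 1$, $k_i = k_i^s$, and one only needs to split each central simple $k$-algebra-after-base-change-to-$k_i$; one can choose a \emph{separable} maximal subfield of the division algebra (available since $k_i$ is infinite or by standard CSA theory over any field the division algebra has a separable splitting field, see e.g. the primitive element/Koethe theorem), giving a separable $k_i'/k_i$ of degree dividing $n_i$, and a separable $k'/k$ of degree at most $\sum_i [k_i':k] \le \sum_i f_i n_i \le \sum_i m_i n_i q_i f_i = d$ by choosing $k'$ to contain all the $k_i'$ via a primitive element and counting degrees additively rather than multiplicatively (one can do this because the $k_i'$ are separable, so their compositum has degree bounded by the sum — actually by the product, but a more careful choice realizes the sum: take successive primitive elements). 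Conversely, if some $k_i/k$ is inseparable, then $\det_{S_i}$ genuinely involves a Frobenius and no separable extension can split it, since the image of $D_{k'}$ on scalars would have to absorb $q_i$-th powers. Finally, under condition (1) or (2) of Theorem \ref{thm:chen_2.16}, $k$ is perfect or $d < \ch k$; in the perfect case all $q_i = 1$ automatically, and when $d < \ch k$ one has $q_i = 1$ as well since $q_i \le d$ would otherwise force $\ch k \le d$, so the separable case applies.

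The main obstacle I expect is the precise degree bookkeeping in the inseparable case: disentangling the contribution of the purely inseparable part of $k_i/k$, the Frobenius $F_{q_i}$, and the splitting field of the division algebra $S_i$, so that the bound comes out as exactly $\prod_i [k_i:k] n_i / q_i$ rather than something slightly larger — in particular making sure the factor of $q_i$ in the denominator is legitimately cancelled by recognizing that the extension needed to trivialize $F_{q_i}$ overlaps with, or is subsumed by, the inseparable part of $k_i$, rather than being an independent contribution. Checking the converse direction of the separability criterion (that an inseparable $k_i$ obstructs separable splitting) is straightforward by a degree/Frobenius argument but needs to be stated carefully.
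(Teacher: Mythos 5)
Your overall strategy is the paper's: decompose $R/\ker(D) \cong \prod_i S_i$ via Theorem \ref{thm:chen_2.16}, realize each $\det_{S_i} = N_{k_i^s/k}\circ F_{q_i}\circ \mathrm{Nrd}$ after a controlled scalar extension, and combine over a compositum; your treatment of the reduced norm (a separable splitting field $k_i'/k_i$ of degree $\le n_i$) and of the norm $k_i^s\ra k$ (the regular representation, no extension needed) is fine. The genuine gap is exactly the point you flag as unresolved: the Frobenius factor $F_{q_i}\colon k_i \ra k_i^s$ and the factor $q_i$ in the denominator of the bound. Your proposal to adjoin $q_i$-th roots goes in the wrong direction. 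Any field $\ell \supseteq k_i$ already realizes $F_{q_i}$ (the scalar action of $k_i$ on $\ell^{q_i}$ has determinant $x\mapsto x^{q_i}$), but such an $\ell$ costs the full degree $[k_i:k_i^s]$ over $k_i^s$ and only yields the weaker bound $\prod_i [k_i:k]\,n_i$; adjoining roots costs even more. The paper instead goes \emph{down}: choose an intermediate field $k_i^s\subseteq L_i\subseteq k_i$ with $[k_i:L_i]=q_i$ (possible since $k_i/k_i^s$ is purely inseparable); then $k_i\otimes_{k_i^s}L_i$ is local with residue field $k_i$, and the induced $L_i$-linear action of $k_i$ on $k_i$ is a $q_i$-dimensional representation whose characteristic polynomials are $X^{q_i}-\alpha^{q_i}$, i.e.\ it realizes $F_{q_i}\otimes_{k_i^s}L_i$ at the cost of only $[L_i:k_i^s]=[k_i:k_i^s]/q_i$. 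That saving is the entire quantitative content of the bound $\prod_i [k_i:k]\,n_i/q_i$, so as written the proposal does not establish it.

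The separability criterion is also argued from a false dichotomy. The statement is about $k_i/k$ being \emph{simple}, not separable, and simplicity does not force $q_i=1$: for $k=\bF_p(t)$ and $k_i=k(t^{1/p})$ one has $q_i=p$, yet the $p$-dimensional law $\alpha\mapsto\alpha^{p}$ is split over $k$ itself by the regular representation (whose determinant is the norm $\alpha\mapsto\alpha^p$). This contradicts both your claims that "simple hence $q_i=1$" and that an inseparable $k_i$ obstructs separable splitting. The correct obstruction argument (the paper's) is: for any finite \emph{separable} $K_i/k_i^s$, the ring $k_i\otimes_{k_i^s}K_i$ is a field, so the minimal $K_i$-dimension of a representation of it is still $[k_i:k_i^s]$; hence a separable scalar extension helps exactly when none is needed, i.e.\ when $q_i=[k_i:k_i^s]$, equivalently when $k_i/k_i^s$ (equivalently $k_i/k$) is simple. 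Finally, the additive degree count for the compositum ("successive primitive elements") is not valid — composita of separable extensions are only bounded by the product of the degrees — so the $\sum_i f_i n_i$ bound requires a separate accounting. Your reduction of conditions (1) and (2) of Theorem \ref{thm:chen_2.16} to the separable case ($q_i=1$ since $q_i$ is a $p$-power bounded by $d$) is fine once the correct criterion is in place.
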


\begin{proof}
The question of realizing $\det_{S_i}$ as the induced pseudorepresentation of a representation after some scalar extension may be addressed separately for each of the three factors composing $\det_{S_i}$. First we address the reduced norm $\theta_i: S_i \ra k_i$. It is well-known that there exists a minimal finite separable extension $k'_i/k_i$ such that $S_i \otimes_{k_i} k'_i \risom M_{n_i}(k'_i)$ and $[k'_i : k_i] \leq n_i$ (see e.g.\ \cite[Prop.\ 4.5.5]{GS2006}). This representation of $S_i$ over $k'_i$ realizes $\theta_i \otimes_{k_i} k'_i$. Later in the proof, it will be useful to draw this isomorphism as $\alpha_i : S_i \otimes_{k_i^s} {k'_i}^s \risom M_{n_i}(k'_i)$.

Next we address the field-theoretic norm $N_i: k_i^s \ra k$, which is a $f_i$-dimensional $k$-linear pseudorepresentation. By definition, $N_i$ is the pseudorepresentation induced by the determinant of the regular representation $\rho_i : k_i^s \ra M_{f_i}(k)$ of $k_i^s$ over $k$. 

We claim that the $q_i$-Frobenius map $F_{q_i}: k_i \ra k_i^s$, which is a $q_i$-dimensional pseudorepresentation, is realized as the determinant of some representation after a finite scalar extension if and only if $k_i/k_i^s$ is finite. Secondly, when $k_i/k_i^s$ is finite we claim that this scalar extension may be taken to be separable if and only if it may be taken to be trivial if and only if $k_i/k_i^s$ is a simple extension. 

To prove the first claim, observe that the minimal dimensional $k_i^s$-linear representation of $k_i$ is the the regular representation $k_i \ra M_{[k_i : k_i^s]}(k_i^s)$. Assuming $[k_i : k_i^s]$ is finite, let $L_i$ be an intermediate field such that $[k_i: L_i] = q_i$. Such an $L_i$ exists since any purely inseparable extension may be realized as a sequence of extensions of degree $p$ achieved by adjoining $p$th roots. The ring $k_i \otimes_{k_i^s} L_i$ is a local ring with residue field $k_i$. The action of $k_i \otimes_{k_i^s} L_i$ on $k_i$ realized by projection of the regular action to the residue field, which we will call $\beta_i$ below, is then a $q_i$-dimensional $L_i$-linear representation. One can check that its induced $q_i$-dimensional pseudorepresentation is $F_{q_i} \otimes_{k_i^s} L_i$. Indeed, the characteristic polynomial of the $L_i$-linear action of $\alpha \in k_i$ on $k_i$ is $X^{q_i} - \alpha^{q_i}$. 

Having proved the first claim, the second claim follows from the following observations. If $K_i/k_i^s$ is any finite separable extension, then $k_i \otimes_{k_i^s} K_i$ is a field, and so the minimal dimension of a $K_i$-linear representation of $k_i \otimes_{k_i^s} K_i$ remains $[k_i : k_i^s]$. Also, we observe that $q_i = [k_i : k_i^s]$ if and only if $k_i/k_i^s$ is simple if and only if $k_i/k$ is simple. 

Taking the fields $L_i$ and $k'_i$ as above, let $L'_i$ be the composite field $L_i {k'_i}^s \cong {k'_i}^s \otimes_{k_i^s} L_i$, which has degree over $k$ satisfying the bound $[L'_i : k] \leq [k_i : k]n_i/q_i$. We claim that there is a $L'_i$-linear representation of $S_i$ realizing $\det_{S_i} \otimes_k L'_i$. First we note that we can draw an isomorphism
\[
k_i \otimes_k L'_i \simeq k_i \otimes_{k_i^s} k_i^s \otimes_k L'_i \simeq k_i \otimes_{k_i^s} L'_i \otimes_k k_i^s
\simeq k_i \otimes_{k_i^s} {k'_i}^s \otimes_{k_i^s} L_i \otimes_k k_i^s. 
\]
The rightmost ordering of tensor factors makes it clear that we can apply the composition of appropriate scalar extensions of $\alpha_i$ and $\beta_i$, followed by $\rho_i$ on the $k_i^s$ factor to obtain a $L'_i$-linear $n_iq_if_i$-dimensional representation 
\[
\gamma_i : S_i \otimes_k L'_i \lra M_{n_i}(k'_i) \otimes_{k_i^s} L_i \otimes_k k_i^s \lra M_{n_iq_i}(L'_i) \otimes_k k_i^s \lra M_{n_iq_if_i}(L'_i)
\]
realizing the pseudorepresentation $\det_{S_i} \otimes_k L'_i$. 

The composite field $k'$ of all of these field extensions $L'_i$ of $k$ then satisfies the properties sought after in the statement, including the degree bound. In particular, the pseudorepresentation $D$ from the statement of the corollary is realized, after a base change to $D \otimes_k k'$, by the $d$-dimensional $k'$-linear representation 
\[
\prod_{i=1}^s \left(\gamma_i^{\times m_i} \otimes_{L'_i} k'\right) : R/\ker(D) \otimes_k k' \lrisom \prod_{i=1}^s \left(M_{n_iq_if_i}({L'_i}) \otimes_{L'_i} k'\right)^{\times m_i}. 
\]
Moreover, (1) follows from the observation that when $k_i/k$ is simple, $L'_i$ may be taken to be separable with $[L'_i: k]$ bounded by $n_i f_i$. This results on the desired restrictions on $k'$ in the case that $k_i/k$ is simple for all $i$. 

Under the assumptions of (2), the arguments above allow us to take $L'_i/k_i$ to be the trivial extension, so that $k'/k$ is also the trivial extension. In particular, the assumption about the Brauer group guarantees that $k'_i = k_i$. 
\end{proof}

\subsection{Cayley-Hamilton Algebras are Polynomial Identity Rings}
\label{subsec:CHPI}

The notion of a Cayley-Hamilton pseudorepresentation will be critical in what follows. 
\begin{defn}
A pseudorepresentation $D: R \ra A$ is called \emph{Cayley-Hamilton} when for any $A$-algebra $B$ and every element $r \in R \otimes_A B$, $r$ satisfies its characteristic polynomial $\chi(r,t) \in B[t]$, i.e.\ $\chi(r,r) = 0$.
In this case, one also calls $(R,D)$ a \emph{Cayley-Hamilton algebra}.
\end{defn}
This terminology comes from the Cayley-Hamilton theorem, which says that the determinant map $\det: M_{d\times d}(A) \ra A$ is a Cayley-Hamilton pseudorepresentation. 

Chenevier shows 
\begin{prop}[{\cite[\S1.17]{chen2014}}]
\label{prop:CH}
Given a pseudorepresentation $D: R \ra A$, there exists a two-sided ideal $\CH(D) \subset R$ which is the obstruction to $D$ being Cayley-Hamilton. Because $\ker(D) \supseteq CH(D)$,  $D$ factors through $R/\CH(D)$ giving a Cayley-Hamilton pseudopreresentation $D: R/\CH(D) \ra A$ (which we also denote by $D$) and making $(R/\CH(D), D)$ into a Cayley-Hamilton algebra. Also, the Cayley-Hamilton ideal is stable under base change; that is, for any $A$-algebra $B$, there is a natural isomorphism
\[
R/\CH(D) \otimes_A B \lrisom (R \otimes_A B)/\CH(D \otimes_A B).
\]
\end{prop}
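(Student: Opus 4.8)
The plan is to realize $\CH(D)$ as the two-sided ideal attached to a canonical polynomial law, to read off the base-change statement formally, and to isolate the inclusion $\CH(D)\subseteq\ker(D)$ as the one point that requires real work.

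First I would construct $\CH(D)$. Each coefficient $\Lambda_i^D\colon R\to A$ of the characteristic polynomial is a homogeneous polynomial law of degree $i$, so the assignment $\Phi\colon r\mapsto\chi(r,r)=\sum_{i=0}^{d}(-1)^i\Lambda_i^D(r)\,r^{d-i}$ (with $\Lambda_0^D=1$) is a homogeneous polynomial law $R\to R$ of degree $d$; indeed $\chi(br,br)=b^d\chi(r,r)$. By Roby's theory \cite{roby1, roby2} it factors uniquely as $\Phi=\widetilde\Phi\circ\gamma_d$ for an $A$-linear map $\widetilde\Phi\colon\Gamma^d_A(R)\to R$, and I set $\CH(D)$ to be the two-sided ideal of $R$ generated by $\operatorname{im}\widetilde\Phi$. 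Since $\Gamma^d_A(R)$ is spanned by divided-power monomials, and the value of $\widetilde\Phi$ on such a monomial is, by the universal polarization formula, a $\bZ$-linear combination of values $\chi(s,s)$ with $s\in R$, the ideal $\CH(D)$ also equals the two-sided ideal generated by $\{\chi(r,r):r\in R\}$. Because $\Gamma^d$ commutes with base change and the $\Lambda_i^D$ do as well (functoriality of $D$), for every $A$-algebra $C$ the law $s\mapsto\chi_C(s,s)$ on $R\otimes_A C$ is $\widetilde\Phi\otimes\operatorname{id}_C$ precomposed with $\gamma_d$, so each $\chi_C(s,s)$ lies in the image of $\CH(D)\otimes_A C$ in $R\otimes_A C$. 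Two facts follow at once: $D$ is Cayley--Hamilton if and only if $\CH(D)=0$ (so $\CH(D)$ is ``the obstruction''), and $\CH(D\otimes_A C)=\operatorname{im}(\CH(D)\otimes_A C\ra R\otimes_A C)$ — the left side is generated by the $\chi_C(s,s)$, the right by the $\chi_A(r,r)\otimes 1=\chi_C(r\otimes1,r\otimes1)$ — which is exactly the asserted base-change isomorphism $R/\CH(D)\otimes_A C\lrisom(R\otimes_A C)/\CH(D\otimes_A C)$.

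Granting $\CH(D)\subseteq\ker(D)$, the fact that $D$ factors through $R/\ker(D)$ shows it factors through $R/\CH(D)$; call the induced pseudorepresentation $\bar D$. For an $A$-algebra $C$ and $\bar s\in(R/\CH(D))\otimes_A C=(R\otimes_A C)/\CH(D\otimes_A C)$, choose a lift $s\in R\otimes_A C$; functoriality of characteristic polynomials along $R\ra R/\CH(D)$ gives $\chi_{\bar D}(\bar s,t)=\chi_D(s,t)$, so $\chi_{\bar D}(\bar s,\bar s)$ is the image of $\chi_D(s,s)\in\CH(D\otimes_A C)$, hence is $0$. Thus $(R/\CH(D),\bar D)$ is Cayley--Hamilton.

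The substantive point is $\CH(D)\subseteq\ker(D)$, and this is where I expect the main obstacle. Since $\ker(D)$ is a two-sided ideal and, directly from the definition of the kernel, $x\in\ker(D)$ forces $x\otimes1\in\ker(D\otimes_A C)$ for all $C$, and since by the polarization description it suffices to treat generators, the claim reduces to $\chi_A(r,r)\in\ker(D)$ for each $r\in R$. Unwinding the definition of the kernel and using the universal property of $\Gamma^d_A(R)^{\ab}$, this reduces further to a single universal assertion: for $R_0=\bZ\langle X,Y\rangle$ with universal $d$-dimensional pseudorepresentation $D_0$ over $A_0=\Gamma^d_{\bZ}(R_0)^{\ab}$, one has $\Lambda_j^{D_0}(\chi_{D_0}(X,X)\,Y)=0$ in $A_0$ for $1\le j\le d$. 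The clean way to see this is to apply the generic representation $X\mapsto(m_{ij})$, $Y\mapsto(n_{ij})$, giving $A_0\ra\bZ[m_{ij},n_{ij}]$: there $\chi_{D_0}(X,X)$ becomes $\chi_M(M)$ for the generic matrix $M$, which vanishes by the classical Cayley--Hamilton theorem, so $\chi_{D_0}(X,X)\,Y$ and all of its characteristic coefficients map to $0$. The gap is that this only shows $\Lambda_j^{D_0}(\chi_{D_0}(X,X)Y)$ is killed by $A_0\ra\bZ[m_{ij},n_{ij}]$, so one needs this map to be injective — equivalently, that the universal pseudorepresentation scheme of a free algebra carries no nilpotents obstructing comparison with generic matrices. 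Establishing that (or circumventing it by working modulo each prime of $A_0$ and invoking Theorem \ref{thm:chenevier_A}, which produces $\rho^{ss}_D$ with $\rho^{ss}_D(\chi(r,r))=0$ and $\ker(\rho^{ss}_D)=\ker(D)$ over fields) is precisely the delicate part, requiring the finer analysis of divided powers and pseudorepresentations carried out in \cite{chen2014}.
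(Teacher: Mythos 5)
The paper offers no argument of its own here: the statement is quoted wholesale from \cite[\S1.17]{chen2014}, so the real question is whether your reconstruction actually establishes it. Your construction of $\CH(D)$ — linearize the homogeneous degree-$d$ law $r\mapsto\chi(r,r)$ through $\Gamma^d_A(R)$ and take the two-sided ideal generated by the image, i.e.\ by the polarized coefficients $c_\alpha(r_1,\dotsc,r_m)$ — is exactly Chenevier's definition (compare the generators $\chi^{[\alpha]}(r_1,\dotsc,r_d)$ used in the proof of Proposition \ref{prop:E(G)}), and both the base-change isomorphism and the Cayley--Hamilton property of the quotient do follow from the identification $\Gamma^d_C(R\otimes_A C)\cong\Gamma^d_A(R)\otimes_A C$ together with uniqueness of the linearization, which you half-state. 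But you route these steps through the claim that $\CH(D)$ is already generated by the plain values $\{\chi(r,r):r\in R\}$, justified by ``the universal polarization formula.'' That principle is false integrally in degree $\geq 3$: the divided-power monomials are not $\bZ$-linear combinations of the elements $\gamma_d(s)$ (e.g.\ $\gamma_2(x)\gamma_1(y)$ is not an integral combination of values $\gamma_3(ax+by)$, as one sees by reducing the lattice of vectors $(a^3,a^2b,ab^2,b^3)$ mod $2$), so the mixed coefficients $c_\alpha$ need not lie in the ideal of values, and this is precisely the small-characteristic subtlety the coefficient definition is designed to avoid. The same unproved claim is what lets you reduce $\CH(D)\subseteq\ker(D)$ to the single-element statement $\chi(r,r)\in\ker(D)$; the honest reduction is to the coefficients $c_\alpha$, which your generic-matrix argument could in principle also handle, but the error should be repaired rather than relied on.

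The deeper problem is the one you flag yourself: the inclusion $\CH(D)\subseteq\ker(D)$ — the only substantive assertion in the proposition, since everything else is formal once it is known — is not proved. Your generic-matrix specialization only shows the relevant elements of $A_0=\Gamma^d_{\bZ}(\bZ\langle X,Y\rangle)^{\ab}$ die in $\bZ[m_{ij},n_{ij}]$, and the missing injectivity of $A_0\ra\bZ[m_{ij},n_{ij}]$ is a genuine theorem (due to Vaccarino, also Donkin, and it is an input into \cite{chen2014}), not something one may wave at; likewise the proposed workaround ``modulo each prime of $A_0$'' only places the element in the nilradical, so it again needs reducedness of $A_0$, which is essentially the same nontrivial fact, and the identity $\ker(\rho^{ss}_D)=\ker(D)$ you invoke is only available over algebraically closed fields in this paper. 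So as it stands the proposal reproduces the formal shell of the proposition but leaves its core exactly where the paper's citation to \cite[\S1.17]{chen2014} does the work; to make it self-contained you would either have to import Vaccarino's theorem explicitly or reproduce Chenevier's direct argument that the polarized coefficients of $\chi(r,r)$ lie in $\ker(D)$.
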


It can be useful to take the perspective that a Cayley-Hamilton algebra is a generalization of a matrix algebra, and to consider Cayley-Hamilton algebra-valued representations. For instance, Procesi proved that in equi-characteristic 0, any Cayley-Hamilton algebra admits an embedding into a matrix algebra \cite{procesi1987}. When we take the Cayley-Hamilton algebra produced out of the universal $d$-dimensional pseudorepresentation $D^u: R \otimes_A \Gamma_A^d(R)^\ab \ra \Gamma_A^d(R)^\ab$, which we denote by
\[
E(R)^d := (R \otimes_A \Gamma_A^d(R)^\ab)/\CH(D^u),
\]
we can get a ``universal Cayley-Hamilton algebra'' $(E(R)^d, D^u)$ and ``universal Cayley-Hamilton representation'' $\rho^u: R \otimes_A \Gamma_A^d(R)^\ab \ra E(R)^d$ (see \cite[\S1.22]{chen2014}). The consequence of this universality that we are concerned with is the following
\begin{prop}[{\cite[Prop.\ 1.23]{chen2014}}]
\label{prop:R_E_equivalent}
For any commutative $A$-algebra $B$ and $(V_B, \rho_B) \in \Rep^d_R(B)$, there exists a unique factorization of $\rho_B$ as
\[
R\otimes_A B \lra E(R)^d \otimes_{\Gamma_A^d(R)^\ab} B \ra \End_B(V_B)
\]
where the map $\Gamma_A^d(R)^\ab \ra B$ is induced by the pseudorepresentation $\det \circ \rho_B: R \otimes_A B \ra B$. In partulcar, for $\rho_B = \rho^u$, there is a canonical isomorphism
\[
\Rep^d_R \lrisom \Rep_{E(R)^d, D^u}
\]
to the moduli of representations of $E(R)^d$ compatible with $D^u$ (see Definition \ref{defn:D_compatible}).
\end{prop}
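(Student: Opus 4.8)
The plan is to bootstrap from two inputs already in hand: the base-change compatibility of the Cayley--Hamilton ideal (Proposition~\ref{prop:CH}) and the Cayley--Hamilton theorem for $\End_B(V_B)$. Given $(V_B,\rho_B)\in\Rep^d_R(B)$, I first record that $\det\circ\rho_B\colon R\otimes_A B\to B$ is a $d$-dimensional pseudorepresentation of $R$ valued in $B$, hence corresponds to a unique $A$-algebra map $c\colon\Gamma^d_A(R)^\ab\to B$, and by the universal property of $D^u$ the base change $D^u\otimes_{\Gamma^d_A(R)^\ab}B$ along $c$ equals $\det\circ\rho_B$. Applying the base-change isomorphism of Proposition~\ref{prop:CH} to $D^u$ along $c$ then gives a canonical identification
\[
E(R)^d\otimes_{\Gamma^d_A(R)^\ab}B\;\cong\;(R\otimes_A B)\big/\CH(\det\circ\rho_B),
\]
under which $\rho^u\otimes_{\Gamma^d_A(R)^\ab}B\colon R\otimes_A B\to E(R)^d\otimes_{\Gamma^d_A(R)^\ab}B$ is the canonical quotient map, in particular surjective.

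The crux is the inclusion $\CH(\det\circ\rho_B)\subseteq\ker(\rho_B)$. Here one uses that $\CH(-)$ is, by its construction in \cite[\S1.17]{chen2014}, generated as a two-sided ideal by the elements expressing the failure of the Cayley--Hamilton identity: for every $B$-algebra $C$ and $r\in R\otimes_A C$, the characteristic polynomial of $r$ for $\det\circ\rho_B$ coincides with that of $\rho_B(r)$ for $\det$ on $\End_C(V_C)$, so $\rho_B$ carries the corresponding generator to $\chi_{\det}(\rho_B r,\rho_B r)\in\End_C(V_C)$, which vanishes because $\det\colon\End_C(V_C)\to C$ is Cayley--Hamilton (Zariski-localize to reduce $\End_C(V_C)$ to $M_d$ over a ring where $V_C$ is free, then quote the classical theorem). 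Thus $\rho_B$ kills every generator of $\CH(\det\circ\rho_B)$. I expect the delicate point to be precisely this bookkeeping---that the ``Cayley--Hamilton obstructions'' genuinely generate $\CH(-)$ and do so compatibly with base change, despite a priori living in $R\otimes_A C$ for varying $C$ rather than in $R$ itself; but this is exactly what has been packaged into Proposition~\ref{prop:CH} and \cite[\S1.17, \S1.22]{chen2014}.

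Granting the inclusion, $\rho_B$ factors as $\rho_B=g\circ(\rho^u\otimes_{\Gamma^d_A(R)^\ab}B)$ for an algebra map $g\colon E(R)^d\otimes_{\Gamma^d_A(R)^\ab}B\to\End_B(V_B)$, necessarily unique by surjectivity of $\rho^u\otimes_{\Gamma^d_A(R)^\ab}B$; this is the claimed factorization. For the final assertion I would unwind Definition~\ref{defn:D_compatible}: restriction along $\rho^u\colon R\otimes_A\Gamma^d_A(R)^\ab\to E(R)^d$ sends a representation of $E(R)^d$ compatible with $D^u$ over a $\Gamma^d_A(R)^\ab$-algebra $C$ to a representation of $R$ over $C$ inducing the pseudorepresentation $D^u\otimes_{\Gamma^d_A(R)^\ab}C$, i.e.\ a $C$-point of $\Rep^d_R$ lying over the tautological point of $\PsR^d_R$; conversely, the factorization just produced sends such a point of $\Rep^d_R$ back to a representation of $E(R)^d$ compatible with $D^u$, the compatibility being checked via surjectivity of $\rho^u\otimes_{\Gamma^d_A(R)^\ab}C$ (so that a pseudorepresentation of $E(R)^d\otimes_{\Gamma^d_A(R)^\ab}C$ is determined by its pullback along it). The uniqueness clause makes these two constructions mutually inverse, giving the isomorphism $\Rep^d_R\lrisom\Rep_{E(R)^d,D^u}$ of groupoids over $\PsR^d_R$.
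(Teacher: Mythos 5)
Your argument is correct, and it is essentially the standard one: the paper itself gives no proof but quotes Chenevier's Prop.\ 1.23, whose argument is exactly what you reconstruct — the determinant on $\End_B(V_B)$ is Cayley--Hamilton, so $\rho_B$ kills the generators of $\CH(\det\circ\rho_B)$, and combining this with the universal property of $\Gamma^d_A(R)^{\ab}$ and the base-change compatibility of the Cayley--Hamilton ideal (Proposition~\ref{prop:CH}) yields the unique factorization and hence the identification $\Rep^d_R\cong\Rep_{E(R)^d,D^u}$. The one ingredient you flag as delicate — that $\CH(-)$ is generated by the polarized Cayley--Hamilton obstructions compatibly with base change — is indeed exactly what is supplied by \cite[\S1.17]{chen2014}, so no gap remains.
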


Now we will discuss \emph{polynomial identity rings}, written \emph{PI-rings}; we refer to the book \cite{procesi1973} for the precise definition of a polynomial identity ring. It will suffice to say that an associative ring $R$ is called a polynomial identity ring when there exists some non-commutative polynomial in $n$ variables that every $n$-tuple in $R^{\times n}$ satisfies. For example, every commutative ring $R$ is a polynomial identity ring because any $x,y \in R$ satisfy the equation $xy-yx = 0$. 

\begin{prop}
\label{prop:CH_finite}
If $(R,D)$ is a Cayley-Hamilton $A$-algebra, it is a PI-$A$-algebra with polynomial identity dependent only on the dimension of $D$. If, in addition, $R$ is finitely generated over the Noetherian ring $A$, $R$ is finite as an $A$-module.
\end{prop}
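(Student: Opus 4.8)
The plan is to draw from the Cayley--Hamilton identity two separate structural facts and then combine them via the Shirshov height theorem. The first is \emph{integrality over $A$}: rewriting $\chi(r,r)=0$ as $r^d=\sum_{i=1}^{d}(-1)^{i+1}\Lambda_i^D(r)\,r^{d-i}$ and noting that each $\Lambda_i^D(r)$ lies in $A$, we see at once that every element of $R$ satisfies a monic polynomial of degree $d$ over $A$. This requires no work. The second fact --- that $(R,D)$ is a PI-$A$-algebra with an identity whose degree depends only on $d=\dim D$ --- carries the real content of the first assertion.

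To produce the polynomial identity I would polarize the Cayley--Hamilton relation. Take the generic element $r=b_1X_1+\dots+b_dX_d$ over $A[b_1,\dots,b_d]$ with $X_i\in R$; since each $\Lambda_i^D$ is a homogeneous polynomial law of degree $i$, the element $\chi(r,r)$ is homogeneous of degree $d$ in the $b_j$, so the vanishing of its coefficient of $b_1\cdots b_d$ is a relation, valid for all $X_1,\dots,X_d\in R$, whose part of highest degree in the $X_i$ is the \emph{monic} multilinear expression $\sum_{\sigma\in S_d}X_{\sigma(1)}\cdots X_{\sigma(d)}$. The step I expect to be the main obstacle is converting this into an \emph{honest} polynomial identity (coefficients in the ground ring) depending only on $d$: the lower-order terms of the relation involve the polarizations of the $\Lambda_i^D$, which are not noncommutative polynomials in the $X_i$, so they must be removed. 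Over a $\bQ$-algebra this is classical --- Procesi's embedding theorem \cite{procesi1987} realizes a $d$-dimensional Cayley--Hamilton algebra inside some $M_d(C)$, which satisfies the standard identity $s_{2d}$ --- and in general one can clean up the polarized relation using the elementary pseudorepresentation identity $\Lambda_1^D(xy)=\Lambda_1^D(yx)$ together with Cayley--Hamilton applied to commutators, arriving at $s_{2d}$ or a bounded power of it (the power absorbing a nilpotent discrepancy that can appear in small characteristic); I would isolate this as a lemma. Alternatively, one may invoke the characteristic-free invariant theory of $M_d$, or a form of the height theorem adapted to algebras carrying the extra operations $\Lambda_i^D$, either of which makes this step unnecessary.

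Granting the PI property, module-finiteness is formal. Write $R=A\langle a_1,\dots,a_m\rangle$. By the Shirshov height theorem applied to the degree-$n$ identity above, $R$ is spanned as an $A$-module by the monomials $w_1^{k_1}\cdots w_s^{k_s}$ with $s\le h(m,n)$, $k_j\ge 1$, and each $w_j$ a word in $a_1,\dots,a_m$ of length bounded in terms of $m$ and $n$; in particular the $w_j$ range over a fixed finite subset of $R$. By the integrality fact each $w_j$ satisfies a monic degree-$d$ relation over $A$, so $w_j^{k}\in A\cdot 1+A\,w_j+\dots+A\,w_j^{d-1}$ for all $k$; substituting, $R$ is spanned over $A$ by the finitely many monomials $w_1^{j_1}\cdots w_s^{j_s}$ with $s\le h$ and $0\le j_i\le d-1$. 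Hence $R$ is a finite $A$-module. (The Noetherian hypothesis on $A$ does not seem to be needed for this argument, but is in force throughout the section.)
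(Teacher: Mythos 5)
Your proof of the second assertion is fine, and in fact more self-contained than the paper's: the paper simply cites \cite[Ch.\ VI, Thm.\ 2.7]{procesi1973} (finitely generated integral PI-algebras over a commutative ring are module-finite), whereas you reprove that citation via Shirshov's height theorem plus the degree-$d$ integrality coming from $\chi(r,r)=0$; your observation that Noetherianness of $A$ is not needed there is also correct. The genuine gap is in the first assertion. You derive integrality, but the claim that $(R,D)$ satisfies an honest polynomial identity (with coefficients in the ground ring, depending only on $d$) is exactly the step you leave open: the polarized Cayley--Hamilton relation has central coefficients $\Lambda_i^D(\cdots)$ that \emph{depend on the arguments}, so it is not a PI, and your proposed cleanup --- use $\Lambda_1^D(xy)=\Lambda_1^D(yx)$ and Cayley--Hamilton on commutators to reach $s_{2d}$ or a bounded power of it, ``absorbing a nilpotent discrepancy'' --- is asserted, not proved. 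It is also doubtful as stated: trace-based manipulations of this kind (Razmyslov--Procesi style) need $d!$ invertible or characteristic $0$, and in small characteristic $\Lambda_1^D$ can be degenerate (e.g.\ $\Lambda_1^D(1)=d=0$ when $p\mid d$); moreover Cayley--Hamilton algebras need not embed in matrix algebras outside characteristic $0$, so Procesi's embedding theorem, your only solid reference, does not cover the mixed/positive characteristic coefficients ($\bZ_p$, $\bF_p$) that are the whole point of this paper. That small characteristic is genuinely delicate here is visible elsewhere in the section, where nilpotence bounds require Samoilov's theorem.

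The paper avoids your polarization route entirely: it uses only the integrality you established and quotes \cite[Prop.\ 3.22]{procesi1973}, which says that any $A$-algebra integral of degree $\leq d$ over $A$ satisfies one explicit identity with integer coefficients depending only on $d$. If you want to keep your argument self-contained, the standard repair is in the same spirit: for every $x\in R$ the elements $1,x,\dots,x^d$ satisfy a \emph{monic} $A$-linear dependence, so the Capelli polynomial
\[
c_{d+1}(x_1,\dots,x_{d+1};y_1,\dots,y_d)=\sum_{\sigma\in S_{d+1}}\sgn(\sigma)\,x_{\sigma(1)}y_1x_{\sigma(2)}y_2\cdots y_d x_{\sigma(d+1)},
\]
evaluated at $x_i=x^{\,i-1}$, vanishes identically on $R$ (alternating multilinearity plus the monic relation lets you replace $x^d$ by lower powers and produce repeated slots). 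This is an honest identity with $\pm1$ coefficients, of degree depending only on $d$, valid over any $A$, and its unit leading coefficients are exactly what your subsequent Shirshov argument needs. With that substitution your proof closes; as written, the first assertion is not established.
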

\begin{proof}
By \cite[Prop.\ 3.22]{procesi1973}, given any $d \in \bZ_{\geq 1}$, there is an explicit polynomial identity with coefficients in $\bZ$ such that any associative $A$-algebra $R$ that is integral over $A$ with degree bounded by $d$ is a PI-$A$-algebra with this particular polynomial identity.  Consequently, any Cayley-Hamilton $A$-algebra $(R,D)$ is a PI-$A$-algebra because any element of $R$ is integral over $A$ of degree bounded by $d = \dim(D)$; indeed, $\chi(r,r) = 0$ for all $r \in R$. 
By \cite[Ch.\ VI, Thm.\ 2.7]{procesi1973}, any integral, finitely generated non-commutative PI-algebra over a commutative Noetherian ring is module-finite. 
\end{proof}

Consequently, such $R$ is Noetherian, finite as a module over its center, and Jacobson when $A$ is Jacobson \cite[\S1.1.3, \S9.1.3]{MR2001}. Remarkably, this proposition along with Proposition \ref{prop:R_E_equivalent} implies that the study of $d$-dimensional representation theory of a finitely generated non-commutative $A$-algebra $R$ amounts to the study of representations of a certain module-finite algebra over a Noetherian ring. In particular, we have this strengthening of Chenevier's Theorem \ref{thm:chen_2.16}. 
\begin{cor}
\label{cor:cond_4}
With the assumptions of Theorem \ref{thm:chen_2.16}, $\dim_k R/\ker(D)$ is finite when $R$ is finitely generated as a $k$-algebra. 
\end{cor}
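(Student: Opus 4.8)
The plan is to factor $D$ through its Cayley-Hamilton quotient and then invoke the polynomial-identity finiteness statement already established. First I would apply Proposition \ref{prop:CH}: the Cayley-Hamilton ideal $\CH(D) \subseteq R$ satisfies $\CH(D) \subseteq \ker(D)$, and $D$ factors through $R/\CH(D)$, making $(R/\CH(D), D)$ a Cayley-Hamilton $k$-algebra. Since $R$ is finitely generated as a $k$-algebra, so is its quotient $R/\CH(D)$, and $k$, being a field, is Noetherian.

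Next I would apply Proposition \ref{prop:CH_finite} with $A = k$ to the Cayley-Hamilton $k$-algebra $(R/\CH(D), D)$: it follows that $R/\CH(D)$ is finite as a $k$-module, i.e.\ $\dim_k R/\CH(D) < \infty$, with a bound depending only on $d = \dim D$. Finally, because $\CH(D) \subseteq \ker(D)$, the algebra $R/\ker(D)$ is a further quotient of $R/\CH(D)$, so $\dim_k R/\ker(D) \leq \dim_k R/\CH(D) < \infty$, which is the claim.

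There is no real obstacle here: the substance has been isolated into Propositions \ref{prop:CH} and \ref{prop:CH_finite}, and the corollary is a formal consequence. The only points that require a moment's care are confirming that finite generation passes to the quotient $R/\CH(D)$ (which is immediate) so that Proposition \ref{prop:CH_finite} applies over the Noetherian base $k$, and recording that this gives a new sufficient condition for finite-dimensionality of $R/\ker(D)$ not covered by conditions (1)--(3) of Theorem \ref{thm:chen_2.16}, since $k$ may be imperfect with $[k:k^p] = \infty$ and $d \geq \ch(k)$, yet $R$ finitely generated still forces the conclusion.
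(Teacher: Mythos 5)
Your argument is correct and is exactly the route the paper intends: factor $D$ through the Cayley-Hamilton quotient $R/\CH(D)$ via Proposition \ref{prop:CH}, apply Proposition \ref{prop:CH_finite} over the field $k$ to get module-finiteness, and pass to the further quotient $R/\ker(D)$. One small caveat: Proposition \ref{prop:CH_finite} gives finiteness but no bound on $\dim_k R/\CH(D)$ depending only on $d$ (any bound would also involve the number of generators of $R$), so you should drop that parenthetical claim; it is not needed for the corollary.
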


Here are some results from PI-theory that will be useful even in the infinitely generated cases we will study later, namely group algebras of profinite groups. 
\begin{prop}
\label{prop:N(d)}
Let $A = k$ be field and let $R$ be an associative (non-unital) $k$-algebra satisfying the polynomial identity $x^d$, i.e.\ every element of $R$ is nilpotent of degree at most $d \in \bZ_{\geq 1}$. Then there exists some $N = N(d) \in \bZ_{\geq 1}$ depending only upon $d$ such that $R$ is nilpotent of degree $N$, i.e.\ $R^N = 0$.
\end{prop}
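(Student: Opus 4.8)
The statement is the Nagata--Higman theorem; the plan is to reprove it by the standard linearisation-plus-induction argument, which furthermore produces the explicit value $N(d) = 2^{d} - 1$, visibly uniform in $R$ because the argument never refers to a generating set (equivalently one may first reduce to bounding the nilpotence degree of the relatively free algebra $k\langle X_1, X_2, \dotsc\rangle^{+}/T$, with $T$ the two-sided ideal generated by all $d$-th powers, since every $N$-fold product in $R$ is a specialisation of $X_1 \cdots X_N$). One caveat: the argument divides by $(d-1)!$, so it needs $\ch k = 0$ or $\ch k > d$, and some hypothesis of this kind is genuinely necessary --- in characteristic $p \le d$ the nilpotence degree of a nil algebra of bounded index $d$ need not be bounded (e.g.\ $k\langle x_1, \dotsc, x_n\rangle^{+}$ modulo all squares, in characteristic $2$).

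I would argue by induction on $d$. The base case $d = 1$ forces $R = 0$, so $N(1) = 1$ works. For the inductive step, first \emph{linearise}: substituting $x = t_1 X_1 + \dotsb + t_d X_d$ with commuting indeterminates $t_i$ into $x^{d} = 0$ and extracting the coefficient of $t_1 t_2 \cdots t_d$ gives the identity
\[
\sum_{\sigma \in S_d} X_{\sigma(1)} X_{\sigma(2)} \cdots X_{\sigma(d)} = 0 \qquad (X_1, \dotsc, X_d \in R),
\]
$S_d$ being the symmetric group on $d$ letters. Now \emph{specialise} $X_1 = b$ and $X_2 = \dotsb = X_d = a$: each word $a^{i} b\, a^{\,d-1-i}$ arises $(d-1)!$ times, so dividing by $(d-1)!$ yields the relation
\[
\sum_{i=0}^{d-1} a^{i}\, b\, a^{\,d-1-i} = 0 \qquad (a, b \in R),
\]
which I will call $(\ast)$. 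Let $V \subseteq R$ be the two-sided ideal generated by all $(d-1)$-st powers $a^{d-1}$, $a \in R$; then $R/V$ satisfies $x^{d-1} = 0$, so the inductive hypothesis gives $R^{\,N(d-1)} \subseteq V$.

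The heart of the matter --- and the step I expect to be the main obstacle to write out carefully --- is to show $V \cdot R \cdot V = 0$, equivalently $a^{d-1} c\, b^{d-1} = 0$ for all $a, b, c \in R$. The idea is to feed $a^{d-1} c$ into $(\ast)$ in the slot that gets flanked by powers of $b$: this exhibits $a^{d-1} c\, b^{d-1}$ as $-\sum_{i \geq 1} b^{i} a^{d-1} c\, b^{\,d-1-i}$, and a descending induction on the exponents --- reapplying $(\ast)$ to push the surviving powers of $a$ and $b$ toward the central factor $c$, and using $a^{d} = b^{d} = 0$ to kill the terms where they pile up --- forces everything to vanish; this is also exactly where invertibility of $(d-1)!$ is used. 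Granting $V R V = 0$, and using $R^{\,N(d-1)} \subseteq V$, one concludes
\[
R^{\,2N(d-1) + 1} = R^{\,N(d-1)} \cdot R \cdot R^{\,N(d-1)} \subseteq V\, R\, V = 0,
\]
so $N(d) \le 2 N(d-1) + 1$, and the recursion together with $N(1) = 1$ gives $N(d) = 2^{d} - 1$.
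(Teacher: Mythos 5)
Your linearisation-and-induction argument is the standard proof of the Nagata--Higman theorem, and as a sketch it is essentially correct: the step $V\cdot R\cdot V=0$ (equivalently $a^{d-1}c\,b^{d-1}=0$) is Higman's lemma, and the recursion $N(d)\le 2N(d-1)+1$, $N(1)=1$ gives $2^d-1$. This is precisely the ingredient the paper invokes, without reproving it, when the characteristic is $0$ or exceeds $d$. Where you part ways with the paper is everything else: the proposition is stated over an arbitrary field with $N$ depending only on $d$, and the paper's proof handles characteristic $p\le d$ by quoting the main theorem of \cite{samoilov2009} as supplying a bound $N(p,d)$ for every nil algebra of bounded index $d$ over a field of characteristic $p$, and then taking $N(d)$ to be the maximum of $2^d-1$ and the finitely many $N(p,d)$ with $p\le d$. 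Measured against the statement as given, your proposal is therefore incomplete: the positive-characteristic case is exactly what the second citation is for, and you assert its negation rather than prove it.

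That said, your caveat is not a slip but a genuine discrepancy with the paper's proof. Your example is the classical one: over a field $k$ of characteristic $p$ the non-unital commutative algebra $k[x_1,x_2,\dotsc]^{+}/(x_i^p)$ satisfies the identity $x^p=0$ (the $p$-th power map is additive and kills every monomial), yet $x_1x_2\dotsm x_N\neq 0$ for every $N$; your characteristic-$2$ example is the case $p=2$, since the T-ideal of $x^2$ forces $xy=yx$ in characteristic $2$. This rules out any bound depending only on $(p,d)$ once $p\le d$, and it also rules out rescuing such a bound by restricting to finitely generated algebras, since a bound uniform in the number of generators passes to directed unions (and the nilpotency degree of $k[x_1,\dotsc,x_n]^{+}/(x_i^p)$ grows with $n$). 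So the positive-characteristic statement, in the form quoted in the paper's proof, cannot be literally correct: whatever \cite{samoilov2009} establishes must involve hypotheses beyond ``nil of bounded index,'' and note that in this paper the proposition is applied verbatim to $\ker(D)$ inside a degree-$d$ Cayley--Hamilton algebra (Corollary \ref{cor:kernel_CH_nilpotent}), so the proof of that corollary inherits the same concern. The honest verdict is that you have proved the proposition only in the characteristic range where it is classically true; the remaining gap is real, but closing it requires pinning down the precise positive-characteristic input (or adding hypotheses to the statement), not refining your Nagata--Higman argument.
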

\begin{proof}
When $\ch(k) = 0$ or $\ch(k) > d$, the Nagata-Higman theorem states that $R^N = 0$ where $N = 2^d-1$.

On the other hand, the main theorem of \cite{samoilov2009} states that if $\ch(k) = p > 0$, then there exists an integer $N = N(p,d)$ depending only on $p$ and $d$ such that $R^N = 0$. Combining these two results, we may set $N(d) = \max\{2^d-1\} \cup\{N(p,d)\}_{p \leq d}$.
\end{proof}

The work of Samoilov \cite{samoilov2009} is the key input needed to loosen conditions guaranteeing that a deformation ring of pseudorepresentations of a profinite group is Noetherian (see Prop.\ \ref{prop:R_Db_noeth}). It will be used in the form of the following 
\begin{cor}
\label{cor:kernel_CH_nilpotent}
Given a positive integer $d$, there exists a positive integer $N(d)$ with the following property. Let $k$ be a field and let $(R,D)$ be a Cayley-Hamilton $k$-algebra of degree $d$ (which may not be finitely generated over $k$). Then the kernel $\ker(D) \subset R$ is nilpotent of order $N(d)$.
\end{cor}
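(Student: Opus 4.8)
The plan is to reduce the statement to Proposition \ref{prop:N(d)} by showing that $\ker(D)$, regarded as a non-unital associative $k$-algebra, satisfies the polynomial identity $x^d$. First I would recall from Definition \ref{defn:kernel} that $\ker(D)$ is a two-sided ideal of $R$, hence is itself a (generally non-unital) associative $k$-algebra closed under addition, multiplication, and the $k$-action; this is exactly the setting to which Proposition \ref{prop:N(d)} applies, once we verify its hypothesis.

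Next, for an arbitrary $r \in \ker(D)$, I would apply the defining property of the kernel with $B = k$ and $r' = 1 \in R$, giving $\chi(r,t) = D_{k[t]}(t - r) = t^d$; that is, all the coefficients $\Lambda_i^D(r)$ vanish. Since $(R,D)$ is a Cayley-Hamilton algebra, $r$ satisfies its own characteristic polynomial, so $0 = \chi(r,r) = r^d$. Therefore every element of $\ker(D)$ is nilpotent of degree at most $d$, which is precisely the hypothesis of Proposition \ref{prop:N(d)}.

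Finally, applying Proposition \ref{prop:N(d)} to the non-unital $k$-algebra $\ker(D)$ with this fixed $d$ produces an integer $N(d)$, depending only on $d$, such that $\ker(D)^{N(d)} = 0$; note that the formation of $N(d)$ in that proposition makes no reference to $R$, to $k$, or to any finiteness hypothesis, so the bound is uniform as claimed, and in particular valid when $R$ is not finitely generated. The only substantive ingredient beyond this bookkeeping is Proposition \ref{prop:N(d)} itself, whose positive-characteristic case rests on Samoilov's theorem \cite{samoilov2009}; with that result available, the corollary is essentially immediate, so I do not expect a genuine obstacle here beyond correctly invoking the Cayley-Hamilton property to get $r^d = 0$ from membership in the kernel.
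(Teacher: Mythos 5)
Your proof is correct and follows essentially the same route as the paper: membership in $\ker(D)$ forces $\chi(r,t)=t^d$, the Cayley-Hamilton property gives $r^d=\chi(r,r)=0$, and Proposition \ref{prop:N(d)} (Nagata--Higman plus Samoilov) applied to the non-unital algebra $\ker(D)$ yields the uniform bound $N(d)$. Your extra remarks (taking $r'=1$, $B=k$, and noting the ideal is a non-unital $k$-algebra) just make explicit what the paper's proof leaves implicit.
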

\begin{proof}
The definition of $\ker(D)$ implies that every element $r \in \ker(D)$ has characteristic polynomial $\chi(r,t) = t^d$, and because $(R,D)$ is Cayley-Hamilton we have that $\chi(r,r) = r^d = 0$. Then Proposition \ref{prop:N(d)} implies the result. 
\end{proof}

\subsection{Invariant Theory}
\label{subsec:IT}

For this paragraph, we will assume that $R$ is finitely generated over the Noetherian commutative ring $A$ so that $\Rep^d_R$ and $\PsR^d_R$ are finite type over $S = \Spec A$. The fact that $\psi: \Rep^d_R \ra \PsR^d_R$ is a bijection on closed geometric points suggests a comparison between $\PsR^d_R$ and the geometric invariant theoretic (GIT) quotient. 
\begin{defn}
\label{defn:git_quotient}
The \emph{GIT quotient} of the action of an affine algebraic $S$-group scheme $G$ on an affine $S$-scheme $X = \Spec B$, written $X /\!/ G$, is given by $X /\!/ G := \Spec B^G$.
\end{defn}
The work of Alper \cite{alper2013, alper2014} provides a useful perspective on geometric invariant theory that is appropriate for our use. We will refer to \emph{loc.\ cit.}~for the definitions of adequate and good moduli spaces, since for our purposes, the following examples of adequate and good moduli spaces suffice. 
\begin{eg}
\label{eg:adequate_good}
\begin{enumerate} 
\item Let $G$ be a reductive $S$-group scheme acting on an affine $S$-scheme $X$. Then the natural morphism from the quotient stack to the GIT quotient $[X / G]  \ra X /\!/ G$ is an example of an \emph{adequate moduli space}.

\item Now $G$ be a linearly reductive $S$-group scheme acting on $X$; see e.g.\ \cite[\S12]{alper2013} for a definition. Then $[X / G]  \ra X /\!/ G$ is an example of a \emph{good moduli space}. We will only require the fact that a torus is linearly reductive over any $S$. If $S = \Spec k$ and $\ch k =0$, reductive is equivalent to linearly reductive; if $\ch k > 0$, linearly reductive means that the connected component of the identity in $G$ is a torus, and the group of components has order prime to $\ch k$. 
\end{enumerate}
\end{eg}
We will be interested in the particular case of the adequate moduli space
\[
\phi: \Rep^d_R \cong [\Rep^{\square,d}_R / \GL_d] \lra \Rep^{\square,d}_R /\!/ \GL_d.
\]

Here are the main results of Alper's work. 
\begin{thm}[\cite{alper2014, alper2013}]
\label{thm:alper}
Let $\phi: \cX \ra Y$ be an adequate moduli space.
\begin{enumerate}
\item $\phi$ is surjective, universally closed, and universally submersive.
\item Two geometric points $x_1, x_2 \in \cX(\bar k)$ are identified in $Y$ if and only if their closures $\overline{\{x_1\}}$ and $\overline{\{x_2\}}$ intersect in $\cX \times_\bZ \bar k$.
\item If $\cX$ is finite type over a Noetherian scheme $S$, then $Y$ is finite type over $S$ and for every coherent $\cO_\cX$-module $\cF$, $\phi_* \cF$ is coherent. 
\item $\phi$ is universal for maps from $\cX$ to algebraic spaces which are either locally separated or Zariski-locally have affine diagonal.
\item Adequate moduli spaces are stable under flat base change and descend in the fpqc topology of the target. 
\item A good moduli space is an adequate moduli space. 
\item Good moduli spaces are stable under arbitrary base change.
\end{enumerate}
\end{thm}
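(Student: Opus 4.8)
Since Theorem \ref{thm:alper} is a compilation of results from Alper's foundational papers \cite{alper2013, alper2014}, the ``proof'' is really a matter of locating each assertion in \emph{loc.\ cit.}; I sketch the structure and indicate the single point with genuine content. Recall that an adequate moduli space $\phi: \cX \ra Y$ is by definition a quasi-compact morphism to an algebraic space for which $\cO_Y \ra \phi_* \cO_\cX$ is an \emph{adequate homomorphism} (we take the definition from \cite{alper2014}), and that the local model is $[\Spec B / G] \ra \Spec B^G$ with $G$ geometrically reductive. Here adequacy encodes the Mumford--Nagata--Haboush principle that $G$-invariants are \emph{almost} exact: given a $G$-equivariant surjection of modules, an invariant section of the target lifts to an invariant section of the source after raising to a suitable $p$-power, with no power needed in characteristic zero. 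Granting the local structure theory, most of the seven items are formal manipulations.

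Items (1), (4), and (5) I would treat as formal. For (1): adequacy forces $\phi$ to be surjective and to remain closed after arbitrary base change, since radical ideals on $\cX$ are detected on $Y$ because invariant functions ``almost'' separate closed substacks; universal submersiveness is then automatic from surjective plus universally closed. For (4): an adequate moduli space is a categorical quotient for the indicated class of targets, which follows from the universal property of $\Spec$ of the invariant ring in the affine local model, glued via the descent in (5). For (5): being an adequate homomorphism is stable under flat base change and fpqc-local on the target, hence so is the property of being an adequate moduli space, which is exactly how one passes from the affine local models to the general statement.

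Item (2) is classical geometric invariant theory transported to this setting: on the local model $[\Spec B/G] \ra \Spec B^G$, two closed $\bar k$-points of $\Spec B$ have the same image if and only if they cannot be separated by a $G$-invariant function, if and only if their $G$-orbit closures meet; Alper checks that this descends to the stacky picture. Items (6) and (7) compare the adequate and good cases: when $G$ is linearly reductive the functor of invariants is exact, not merely almost exact, so $\cO_Y \ra \phi_* \cO_\cX$ is a split injection --- a fortiori adequate, giving (6) --- and exactness of $\phi_*$ (equivalently, cohomological affineness of $\phi$) is manifestly preserved under arbitrary base change, since a linearly reductive group stays linearly reductive and has vanishing higher cohomology, giving (7).

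The one step with real content, and the main obstacle, is the finiteness in (3): if $\cX$ is finite type over the Noetherian base $S$, then $Y$ is finite type over $S$ and $\phi_* \cF$ is coherent for coherent $\cF$. On the local model this amounts to showing that, for a geometrically reductive group scheme $G$ over a Noetherian ring acting on a finitely generated algebra $B$, the ring $B^G$ is finitely generated and $M^G$ is a finite $B^G$-module whenever $M$ is a finite $B$-module. In equicharacteristic zero this is Nagata's theorem; in positive and mixed characteristic it rests on Haboush's theorem on geometric reductivity together with its relative (Seshadri/Thomason-type) refinements, and this is precisely the technical heart of \cite{alper2014}. Once this finiteness and the local structure theory are in hand, the remaining items of Theorem \ref{thm:alper} follow, so in the body of the paper one simply invokes \cite{alper2013, alper2014}.
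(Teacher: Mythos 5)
Your proposal is correct and matches the paper's treatment: the paper offers no proof of Theorem \ref{thm:alper} beyond citing \cite{alper2013, alper2014}, and your attributions (including identifying the finite generation of invariants via geometric reductivity as the one point of real content) locate the assertions where they are actually proved. The only caveat is that your paraphrased definitions are loose --- e.g.\ Alper requires $\phi$ adequately affine with $\cO_Y \risom \phi_*\cO_\cX$, and for good moduli spaces this map is an isomorphism rather than merely a split injection --- but this does not affect the substance.
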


Part (4) of Theorem \ref{thm:alper} immediately implies that $\psi: \Rep^d_R \ra \PsR^d_R$ factors uniquely through the adequate moduli space $\phi: \Rep^d_R \ra \Rep^{\square,d}_R /\!/ \GL_d$, inducing a canonical map $\nu$:
\[
\xymatrix{
\Rep^d_R \ar[d]^\psi \ar[dr]^\phi & \\
\PsR^d_R  & \Rep^{\square,d}_R /\!/ \GL_d \ar[l]^(.6)\nu
}
\]
Combining Corollary \ref{cor:ss_to_psr} with part (2) of Theorem \ref{thm:alper}, we find that $\nu$ induces an isomorphism on geometric points. It is the same to say that $\nu$ is surjective and radicial \cite[3.5.5]{ega1}. 

What we will show is that $\PsR^d_R$ differs from the GIT quotient by at most an \emph{adequate homeomorphism}, i.e.\ an integral universal homeomorphism that is an isomorphism in characteristic zero. In the affine Noetherian case, this means that the kernel and cokernel of a ring map consists of finite modules of $p$-torsion nilpotents. It is possible to eliminate this difference in certain cases (see Theorem \ref{thm:CH_MF_GMA}).
\begin{thm}
\label{thm:psi_main}
If $R$ is finitely generated over $A$, the difference $\nu$ between $\psi: \Rep^d_R \ra \PsR^d_R$ and an adequate moduli space is an adequate homeomorphism.
\end{thm}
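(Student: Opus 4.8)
The plan is to analyze the canonical map $\nu: \Rep^{\square,d}_R /\!/ \GL_d \to \PsR^d_R$ directly on coordinate rings and show that both its kernel and cokernel are "small" in the adequate-homeomorphism sense, i.e.\ consist of $p$-torsion nilpotents forming finite modules. We already know from the discussion preceding the statement that $\nu$ is surjective and radicial on geometric points (combining Corollary \ref{cor:ss_to_psr} with Theorem \ref{thm:alper}(2)), hence a universal homeomorphism once we establish integrality. So the real content is: (a) $\nu$ is integral (equivalently, $\Gamma^d_A(R)^\ab \to (\cO_{\Rep^{\square,d}_R})^{\GL_d}$ is module-finite), and (b) the map becomes an isomorphism after inverting $p$, with kernel and cokernel controlled as finite $p$-torsion-nilpotent modules.

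First I would reduce to the universal Cayley-Hamilton algebra. By Proposition \ref{prop:R_E_equivalent}, $\Rep^d_R \cong \Rep_{E(R)^d, D^u}$, and correspondingly $\Rep^{\square,d}_R$ is a closed subscheme of the framed representation scheme of the module-finite $\Gamma^d_A(R)^\ab$-algebra $E(R)^d$ (which is module-finite by Proposition \ref{prop:CH_finite}, since $(E(R)^d, D^u)$ is Cayley-Hamilton and $R$ finitely generated). Thus, replacing $R$ by $E(R)^d$ and $A$ by $\Gamma^d_A(R)^\ab$, one may assume $R$ is itself a Cayley-Hamilton algebra, module-finite over the base, and the base equals $\PsR^d_R$. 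In this situation the framed representation scheme $\Rep^{\square,d}_R$ is of finite type over the Noetherian ring $A = \Gamma^d_A(R)^\ab$; by Theorem \ref{thm:alper}(3) the GIT quotient is also finite type over $A$, hence module-finite over $A$ because $\nu$ is radicial (a finite-type radicial, hence quasi-finite, affine morphism that is also universally closed is finite). This gives integrality of $\nu$, and it remains to identify kernel and cokernel of $A = \Gamma^d_A(R)^\ab \to (\cO_{\Rep^{\square,d}_R})^{\GL_d}$.

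The key computational step is the characteristic-zero comparison. Over $A[1/p]$, the group $\GL_d$ is linearly reductive, so by Theorem \ref{thm:alper}(6,7) the GIT quotient is a good moduli space and invariant-theoretic arguments (Procesi's theorem that in characteristic zero a Cayley-Hamilton algebra embeds in a matrix algebra, together with the classical fact that the ring of $\GL_d$-invariants of tuples of matrices is generated by traces of words) show that the invariant functions are exactly the coefficients $\Lambda^{D^u}_i$ of characteristic polynomials, i.e.\ pulled back from $\PsR^d_R$; thus $\nu \otimes \bQ$ is an isomorphism. For the torsion: the kernel of $\Gamma^d_A(R)^\ab \to (\cO_{\Rep^{\square,d}_R})^{\GL_d}$ consists of pseudorepresentation-functions vanishing on all actual representations; since $R$ is Cayley-Hamilton of degree $d$ and $\psi$ is surjective onto $\PsR^d_R$, such an element is nilpotent, and being killed by inverting $p$ it is $p$-power torsion; finiteness follows since everything is finite over Noetherian $A$. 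The cokernel is handled by the general fact (essentially \cite[\S5]{alper2014}) that for a reductive group over $\bZ$ acting on a finite-type scheme, the inclusion of the "obvious" invariants into all invariants is an adequate homeomorphism — one uses that reductive implies geometrically reductive, so invariants are a "finite-type hull up to $p$-power Frobenius twists."

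The main obstacle I expect is the positive-characteristic bookkeeping in step (b): in characteristic $p$, $\GL_d$ is only geometrically (adequately) reductive, so the ring of invariants need not be generated by traces on the nose — one must argue that every invariant, after raising to a $p$-power, becomes a polynomial in the characteristic-polynomial coefficients, and that the "defect" module is finite and $p$-torsion. This is precisely where Alper's notion of adequate moduli space and his structure results in \cite{alper2014} do the heavy lifting, replacing the naive "invariants = traces" statement; one invokes that $\phi: \Rep^d_R \to \Rep^{\square,d}_R/\!/\GL_d$ is an adequate moduli space and that $\psi$ factors through it (Theorem \ref{thm:alper}(4)), then checks the induced $\nu$ is an adequate homeomorphism by testing on geometric points (already done) plus the generic isomorphism plus coherence of the relevant sheaves from Theorem \ref{thm:alper}(3).
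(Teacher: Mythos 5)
Your proposal gets the shape of the statement right (kernel and cokernel of $\nu^*$ should be finite modules of $p$-torsion nilpotents), and the characteristic-zero isomorphism and the kernel analysis are essentially fine (the paper simply cites Chenevier's \cite[Prop.\ 2.3]{chen2013} for the former). But the heart of the theorem --- the \emph{finiteness} of $\nu$ --- is not actually proved in your argument. In your integrality step you write that the GIT quotient is module-finite over $\Gamma^d_A(R)^\ab$ ``because $\nu$ is radicial (a finite-type radicial, affine morphism that is also universally closed is finite).'' Radicial, affine and finite type do \emph{not} imply finite (an open immersion of affines already fails), so you are implicitly using universal closedness of $\nu$; but universal closedness of $\nu$ is exactly the nontrivial content here and is nowhere established --- it does not follow from Theorem \ref{thm:alper}, which only gives universal closedness of $\phi:\Rep^d_R\to\Rep^{\square,d}_R/\!/\GL_d$, not of the comparison map $\nu$ to $\PsR^d_R$. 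Your fallback for the cokernel, ``essentially \cite[\S 5]{alper2014}: the inclusion of the obvious invariants into all invariants is an adequate homeomorphism,'' is not a theorem in that generality; to make such an argument work you would need a genuine input identifying invariants with characteristic-polynomial functions up to $p$-powers --- e.g.\ Donkin/Vaccarino-type generation of $\GL_d$-invariants of tuples of matrices over $\bZ$ by coefficients of characteristic polynomials of words, combined with adequate surjectivity of restriction of invariants to the $\GL_d$-stable closed subscheme $\Rep^{\square,d}_R\subset (M_d)^m\times\Spec A$, and then integrality plus finite type (Theorem \ref{thm:alper}(3)) to get finiteness. None of this is carried out, so as written the argument is circular at its key point.

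For comparison, the paper's proof goes a different way: having $\nu$ surjective and radicial, it reduces (via \cite[Cor.\ 18.12.11]{ega4-4}) to showing $\nu$ is finite, and proves this by checking the valuative criterion for universal closedness. Given a complete DVR $B$ with fraction field $K$ and a pseudorepresentation $D_B$, the semi-simple representation over $\bar K$ lying over the $K$-point factors through $(R\otimes_A K)/\ker(D)$, which is finite-dimensional by Corollary \ref{cor:cond_4}; Corollary \ref{cor:ss_degree} then realizes it over a finite extension $K'$, and one produces an $R$-stable $B'$-lattice by spanning a lattice with the action of the Cayley--Hamilton quotient $(R\otimes_A B)/\CH(D\otimes_A B)$, which is module-finite over $B$ by Propositions \ref{prop:R_E_equivalent} and \ref{prop:CH_finite}. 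That lattice construction is the step your proposal is missing a substitute for; if you want to pursue your invariant-theoretic route, you must supply the Donkin-type generation statement and the adequacy argument for restriction to closed subschemes explicitly rather than assuming universal closedness.
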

We emphasize that the isomorphism in characteristic zero is due to Chenevier, using ideas of Procesi \cite{procesi1987}.
\begin{proof}
The proof that $\psi$ is precisely an adequate moduli space in equi-characteristic zero is due to Chenevier \cite[Prop.\ 2.3]{chen2013}. We know that $\nu$ is surjective and radicial by the comments above, so in light of \cite[Cor.\ 18.12.11]{ega4-4}, it remains to show that $\nu$ is finite. It will suffice to prove that $\nu$ is universally closed, since it is clearly affine, hence separated, and proper affine morphisms are finite. We will prove this by verifying the valuative criterion for universally closed morphisms given in \cite[Thm.\ 7.10]{lmb} (see also \cite[Remark 7.3.9(i)]{ega2}). 

Let $B$ represent a complete discrete valuation ring with an algebraically closed residue field and fraction field $K$.  Given a diagram of $A$-schemes
\[\xymatrix{
\Spec K \ar[r]^(.4)\alpha \ar[d] & \Rep^{\square, d}_R /\!/ \GL_d \ar[d]^\nu \\
\Spec B \ar[r]^{D_B} & \PsR^d_R
}\]
we will show that there exists a finite field extension $K'/K$ and, letting $B'$ denote the integral closure of $B$ in $K'$, a morphism $f: \Spec B' \ra \Rep^d_R$ such that $\phi \circ f: \Spec B' \ra \Rep^{\square, d}_R /\!/ \GL_d$ verifies the valuative criterion. 

Let $D_B$ denote the pseudorepresentation of $R$ over $B$ associated to the $B$-point of $\PsR^d_R$ in the diagram, and let $D$ denote $D_B \otimes_B K$.  Theorem \ref{thm:alper} implies that there exists a semi-simple $\bar K$-representation of $R$ inducing a point of $\Rep^d_R$ lying over $\alpha$. The kernel of the action of $R \otimes_A K$ on this representation factors through $(R \otimes_A K)/\ker(D)$, which is finite-dimensional over $K$ by Corollary \ref{cor:cond_4}. Then Corollary \ref{cor:ss_degree} tells us that this representation is, in fact,  realizable as a representation $\rho: R \otimes_{A} K' \ra M_d(K')$ in $\Rep^{d}_R(K')$ where $K'/K$ is some finite extension of fields, and whose induced pseudorepresentation $\det \circ \rho$ is identical to $D \otimes_K K'$. 

Let $B'$ be the integral closure of $B$ in $K'$, which is a DVR \cite[Prop.\ II.3]{serre1979}. We claim that $\rho$ is isomorphic to $\rho_{B'} \otimes_{B'} K'$, where $\rho_{B'} : R \otimes_{A} B' \ra \End_{B'}(L')$ and $L'$ is a rank $d$ projective $B'$-module, which will complete the proof. Choose a $d$-dimensional $K'$-vector space $V'$ realizing $\rho$, and let $L$ be a $B'$-lattice $L \subset V'$. Now let $L'$ be the $B'$-linear span of the translates of $L$ by $R \otimes_{A} B$. This is a finite $B'$-module because the action of $R \otimes_A B$ factors through its Cayley-Hamilton quotient $(R \otimes_A B)/\CH(D \otimes_A B)$ by Proposition \ref{prop:R_E_equivalent}, and this quotient is $B$-module-finite by Proposition \ref{prop:CH_finite}. Therefore $L'$ is a $B'$-lattice because it is finite and torsion-free, and the induced $\rho_{B'} : R \otimes_A B' \ra \End_{B'}(L')$ yields $\rho$ after applying $\otimes_{B'} K'$. 
\end{proof}

\subsection{Generalized Matrix Algebras}

The concept of a generalized matrix algebra (GMA) with respect to a pseudocharacter  has been carefully studied in \cite[\S1]{BC2009}. It will be helpful in the sequel to develop the notion of GMA relative to a pseudorepresentation in order to eliminate complications with pseudocharacters arising in small characteristic. In particular, this will allow us to adapt the theory of GMAs to characteristic smaller than the dimension. However, no change to the definition of the GMA is necessary: we will show that a GMA admits a canonical pseudorepresentation. This was also shown independently by Ann-Kristin Juschka, following the suggestion of \cite[Remark 2.3.3.6]{WEthesis}.

A \emph{pseudocharacter} is the data of a trace coefficient function $\Lambda_1$ satisfying identities expected of a trace function coming from a representation (see \cite{taylor1991}, \cite[\S1.2]{BC2009} for the definition), while a $d$-dimensional pseudorepresentation $D$ keeps track of all characteristic polynomial coefficients $\{\Lambda_i^D\}_{i=1}^d$ as in \eqref{eq:chi_poly}. In particular, a $d$-dimensional pseudorepresentation $D : R\ra A$ induces a $d$-dimensional pseudocharacter $\Lambda_1^D : R \ra A$. This is a bijective correspondence if $(2d)!$ is invertible in $A$ \cite[Prop.\ 1.29]{chen2014}. 

We follow \cite[\S1.3]{BC2009} closely in what follows, showing that a GMA admits a canonical pseudorepresentation compatible with its canonical pseudocharacter. 
\begin{defn}[{\cite[\S1.3]{BC2009}}]
Let $A$ be a commutative ring and let $R$ be an $A$-algebra. Call $R$ a \emph{generalized matrix algebra} or \emph{GMA} of type $(d_1, \dotsc, d_r)$ if there exists data $\cE = (\{e_i\}, \{\phi_i\})$ as follows.
\begin{enumerate}
\item A set of $r$ orthogonal idempotents $e_1, \dotsc, e_r$ with sum $1$, and
\item A set of isomorphisms of $A$-algebras $\phi_i : e_i R e_i \risom M_{d_i}(A)$,
\end{enumerate}
such that the trace map $\Tr = \Tr_\cE : R \ra A$ defined by
\[
\Tr(x) := \sum_{i=1}^r \Tr \phi_i(x)
\]
is a central function, i.e.\ $\Tr(xy) = \Tr(yx)$ for all $x,y \in R$. We call $\cE$ the \emph{data of idempotents} of $R$ and write $(R, \cE)$ for a GMA.
\end{defn}

We note that all of the arguments of \cite[\S1.3.1-\S1.3.6]{BC2009} have no dependence on the characteristic of $A$ or the invertibility of $d!$ in $A$, except the proof that the trace map $\Tr$ associated to $\cE$ is a pseudocharacter. Therefore, we have access to these results of \cite[\S1]{BC2009} on the structure of a GMA, which we record here in order to introduce notation. 
\begin{itemize}[leftmargin=2em]
\item Write $\delta^{j,k} \in M_{d}(A)$ for the matrix with entries $0$ except in the $(j,k)$th entry, where the value is $1$. 
\item For each $i$, $1 \leq i \leq r$, a primitive decomposition of idempotents $e_i = E_i^{1} + \dotsm + E_i^{d_i}$ where $E_i^j := \phi_i^{-1}(\delta^{j,j})$.
\item We also write $E^l$ for the $l$th primitive idempotent of $R$ given by the order down the diagonal of the idempotents  $E_1^1, E_1^2, \dotsc, E_1^{d_1}, E_2^1, \dotsc$, i.e.\ $E^l = E^j_i$ for 
$l = j + \sum_{i' = 1}^{i-1} d_{i'}$.
\item Set $\cA_{i,j} := E_i^1 R E_j^1$, and write $\varphi_{i,j,k}$ for the map $\cA_{i,j} \otimes_A \cA_{j,k} \ra \cA_{i,k}$ induced by multiplication, where $1 \leq i,j,k \leq r$. These satisfy the properties (UNIT), (COM), and (ASSO) of \cite[Lem.\ 1.3.5]{BC2009}.
\item The argument of \cite[\S1.3.2]{BC2009} applies to show that there is a canonical isomorphism
\begin{equation}
\label{eq:GMA_form}
R \lrisom \begin{pmatrix}
M_{d_1}(\cA_{1,1}) & M_{d_1 \times d_2}(\cA_{1,2}) & \dotsm & M_{d_1 \times d_r}(\cA_{1,r}) \\
M_{d_2 \times d_1}(\cA_{2,1}) & M_{d_2}(\cA_{2,2}) & \dotsm & M_{d_2 \times d_r}(\cA_{2,r}) \\
\vdots & \vdots & \vdots & \vdots \\
M_{d_r \times d_1}(\cA_{r,1}) & M_{d_2}(\cA_{r,2}) & \dotsm & M_{d_r}(\cA_{r,r}) \\
\end{pmatrix},
\end{equation}
\item Likewise, write $\cA^{i,j} := E^i R E^j$, and write $\varphi^{i,j,k}$ for the map $\cA^{i,j} \otimes_A \cA^{j,k} \ra \cA^{i,k}$ induced by multiplication, where $1 \leq i,j,k \leq d$, which will also satisfy (UNIT), (COM), and (ASSO).
\item A canonical isomorphism $\phi_{i,i}: \cA_{i,i} \risom A$ induced by $\phi_i$ and a canonical isomorphism $\phi^l : \cA^{l,l} \risom A$ induced by $\phi_{i_l}$.
\end{itemize}

We recall the definition of an adapted representation of a GMA. 
\begin{defn}[{\cite[Definition 1.3.6]{BC2009}}]
\label{defn:adapted}
Let $B$ be a commutative $A$-algebra and let $(R,\cE)$ be a generalized matrix $A$-algebra.  A representation $\rho_B: R \ra M_d(B)$ is said to be \emph{adapted to $\cE$} if its restriction to the $A$-subalgebra $\bigoplus_{i=1}^r e_i R e_i$ is the composite of the representation $\bigoplus_{i=1}^r \phi_i$ by the natural ``diagonal'' map
\[
M_{d_1}(A) \oplus \dotsm \oplus M_{d_r}(A) \ra M_d(B).
\]
We define $\Rep^\square_\Ad(R, \cE)$ to be the functor associating to an $A$-algebra $B$ the set of adapted representations of $(R,\cE)$ over $B$.
\end{defn}

By \cite[Prop.\ 1.3.9]{BC2009}, $\Rep^\square_\Ad(R,\cE)$ is represented by the affine scheme corresponding to the quotient of the $A$-algebra 
\begin{equation}
\label{eq:adapted_rep_alg}
T := \Sym^*_A \left( \bigoplus_{1 \leq i \neq j \leq r} \cA_{i,j}\right)
\end{equation}
by the ideal $J$ generated by $b \otimes c  - \varphi(b \otimes c)$ for all $\varphi = \varphi_{i,j,k}$, $b \in \cA_{i,j}$, $c \in \cA_{j,k}$ and $1 \leq i,j,k \leq r$. These are precisely the relations required to ensure that the morphism of $A$-modules $R \ra M_d(B)$ induced by the $A$-algebra homomorphism $T \ra B$ along with  \eqref{eq:GMA_form} is actually a morphism of $A$-algebras. Then, the universal adapted representation $f: R \ra M_d(T/J)$ is induced by the natural maps $\cA_{i,j} \ra B$; each $\cA_{i,j} \ra B$ is an $A$-split injection \cite[Prop.\ 1.3.13]{BC2009}. 

With the above notions in place, we are equipped to introduce a canonical pseudorepresentation associated to a GMA.

\begin{prop}
\label{prop:GMA_PsR}
Given a GMA $A$-algebra $(R, \cE)$ of dimension $d$, there exists a natural $d$-dimensional Cayley-Hamilton pseudorepresentation ${D_\cE}: R \ra A$ given by, for any commutative $A$-algebra $B$, the formula 
\[
D_\cE(x) := \sum_{\sigma \in S_d} \sgn(\sigma) \prod_{\text{cycles } \gamma \text{ of } \sigma} {\phi^{k}}\left(\prod_{l =k}^{\sigma^{-1}(k)} E^l x E^{\sigma(l)}\right)
\]
for any $x \in R \otimes_A B$. Here, the product is first over the cycles $\gamma$ of $\sigma$ and then over the elements $l$ of the cycle taken in the order that they appear in the cycle, where $k$ is a choice of initial element of $\gamma$. We also have $\Tr = \Lambda_1^{D_\cE}$. 
\end{prop}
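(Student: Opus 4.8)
The plan is to verify that the formula for $D_\cE$ defines a $d$-dimensional multiplicative polynomial law, that it is Cayley-Hamilton, and that $\Lambda_1^{D_\cE} = \Tr$, by reducing each claim to the case of an honest matrix algebra via the universal adapted representation $f: R \ra M_d(T/J)$. The central observation is that the given expansion is precisely the Leray/permanent-expansion of the determinant written in ``block-coordinate'' form: if one plugs in a matrix $x = (x_{l,m}) \in M_d(B)$, writes $E^l$ for the diagonal idempotent $\delta^{l,l}$, and identifies $\cA^{l,m}$ with $B$ via $\phi^l$, then $\phi^k\big(\prod_{l=k}^{\sigma^{-1}(k)} E^l x E^{\sigma(l)}\big)$ is exactly the product of entries $\prod_{l \in \gamma} x_{l,\sigma(l)}$ around the cycle $\gamma$, so $D_\cE(x) = \sum_\sigma \sgn(\sigma)\prod_l x_{l,\sigma(l)} = \det(x)$. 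Thus on $M_d$, $D_\cE$ literally is the determinant, which is the archetypal $d$-dimensional Cayley-Hamilton pseudorepresentation by Roby's theory (and the Cayley-Hamilton theorem).

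The key steps, in order, are as follows. First I would check that the displayed formula is well-defined: it is independent of the choice of initial element $k$ in each cycle $\gamma$, because the entries of a cyclic product $\prod_{l} E^l x E^{\sigma(l)}$ get cyclically permuted under a change of base-point, and $\phi^k$ is a ring isomorphism onto the commutative ring $A$ (resp.\ $B$ after base change), so the resulting scalar is a conjugation-invariant, i.e.\ cyclic-invariant, product. Second, I would note that each $E^l x E^{\sigma(l)} \in \cA^{l,\sigma(l)} \otimes_A B$ and that the maps $\varphi^{i,j,k}$ let one compose these to land in $\cA^{k,k}\otimes_A B \risom B$, so the formula makes sense for arbitrary $B$, and it is manifestly functorial in $B$ and homogeneous of degree $d$ (each summand is a product of $d$ factors, each linear in $x$). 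Third, and this is where the matrix reduction is used: to prove multiplicativity $D_\cE(xy) = D_\cE(x)D_\cE(y)$ and the Cayley-Hamilton identity, I would invoke \cite[Prop.\ 1.3.13]{BC2009}, which says each $\cA_{i,j} \ra T/J$ is an $A$-split injection, so that $R \rinj R \otimes_A T/J \xrightarrow{f} M_d(T/J)$ is injective and compatible with all the structure maps; since $f$ is an adapted representation, one checks directly (using \eqref{eq:GMA_form} and the block-coordinate description above) that $D_\cE(x)$ computed in $R$ maps to $\det(f(x))$ computed in $M_d(T/J)$. Because $\det$ on $M_d$ is multiplicative, homogeneous of degree $d$, and Cayley-Hamilton, and because these identities are expressed by polynomial equations in the entries that remain valid after the injection $R \rinj M_d(T/J)$ (and after any further base change $B \ra B'$, since the whole construction commutes with base change by the split-injectivity and functoriality of the $\cA_{i,j}$), the same identities hold for $D_\cE$. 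Finally, $\Lambda_1^{D_\cE} = \Tr$ follows by comparing the degree-$(d-1)$ coefficient of $D_\cE(t - x) = \det(t\cdot\id - f(x))$: its linear-in-$x$ term is $\sum_l \phi^l(E^l x E^l) = \sum_i \Tr\phi_i(x) = \Tr(x)$, exactly the trace of $f(x)$.

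The main obstacle I anticipate is the bookkeeping in Step 3 — verifying carefully that the ``intrinsic'' expression $\sum_\sigma \sgn(\sigma)\prod_\gamma \phi^k(\cdots)$ in $R$ is carried to the genuine determinant $\det(f(x))$ in $M_d(T/J)$, i.e.\ that the $\varphi^{i,j,k}$-composition of the blocks $E^l x E^{\sigma(l)}$ matches the cycle-products of matrix entries under \eqref{eq:GMA_form}. This is essentially the content of \cite[Lem.\ 1.3.5, \S1.3.2]{BC2009} unwound one level further, and the (COM)/(ASSO)/(UNIT) axioms are exactly what make the cyclic products and their base-point-independence work; once that identification is pinned down, everything else is formal transport of the determinant's properties across an injective, base-change-compatible algebra homomorphism. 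A minor secondary point to handle is that the target $A$ of $D_\cE$ need not contain $d!$, so one cannot recover $D_\cE$ from $\Tr$; but this is a non-issue because $D_\cE$ is \emph{defined} directly by the formula rather than reconstructed from its trace, which is precisely the point of working with GMAs relative to a pseudorepresentation rather than a pseudocharacter.
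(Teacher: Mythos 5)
Your proposal is correct and follows essentially the same route as the paper's proof: identify $D_\cE$ with $\det\circ f$ for the universal adapted representation $f: R \rinj M_d(T/J)$, use the $A$-split injectivity of the $\cA_{i,j}\ra T/J$ to keep injectivity after base change and transport multiplicativity (and the Cayley-Hamilton identity) from the determinant, check $\Lambda_1^{D_\cE}=\Tr$ from the characteristic polynomial, and settle independence of the base-points $k$ via the (COM) axiom. The only cosmetic difference is that the paper carries out the base-point-independence computation explicitly with (COM), where your Step 1 phrasing ("cyclic invariance") should be pinned down to exactly that identity, as you note at the end.
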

\begin{proof}
It is clear that we have a homogenous degree $d$ polynomial law ${D_\cE}: R \ra A$, and it will be a pseudorepresentation if it is multiplicative. This follows from the fact that, by inspection of the definition of ${D_\cE}$, the injection $f: R \rinj M_d(T/J)$ satisfies ${D_\cE} = \det \circ f$. These maps remain injective after any base extension $\otimes_A B$ because the injections $\cA_{i,j} \ra T/J$ are split. Therefore, the determinant is a multiplicative homogenous degree $d$ polynomial law, i.e.\ a pseudorepresentation. One may check that $\Tr_\cE = \Lambda_1^{D_\cE}$ by computing the characteristic polynomial $\chi_{D_\cE}(r, t)$. 

We now verify that $D_\cE$ does not depend upon the choice of initial element $k$ in each cycle $\gamma$ composing $\sigma$. This follows from the property (COM) of the multiplication maps $\varphi$ deduced from the centrality of $\Tr_\cE$ in \cite[Lem.\ 1.3.5]{BC2009}, which reads as follows:
\[
\text{(COM)} \quad \text{For all } i,j \text{ and all } x \in \cA_{i,j}, y \in \cA_{j,i}, \text{we have } \varphi_{i,j,i}(x \otimes y) = \varphi_{j,i,j}(y \otimes x).
\]
Therefore, for any $\sigma \in S_d$, cycle $\gamma$ of $\sigma$, and $k \in \gamma$ at which we will begin the multiplication, we have that 
\begin{align*}
\prod_{l = k}^{\sigma^{-1}(k)} &E^l x E^{\sigma(l)} = \left(E^k x E^{\sigma(k)} \right) \cdot \prod_{l = \sigma(k)}^{\sigma^{-1}(k)} E^l x E^{\sigma(l)} = \\
& \prod_{l = \sigma(k)}^{\sigma^{-1}(k)} E^l x E^{\sigma(l)}\cdot \left( E^k x E^{\sigma(k)}\right) = \prod_{l = \sigma(k)}^{k} E^l x E^{\sigma(l)}
\end{align*}
where we apply (COM) in the central equality. 
\end{proof}

The proof shows that the determinant of the universal adapted representation $R \ra M_d(T/J)$ is compatible with  the pseudorepresentation ${D_\cE}: R \ra A$ induced by the GMA structure $(R,\cE)$.  Consequently, we have a monomorphism $\Rep^\square_\Ad(R, \cE) \rinj  \Rep^\square_{R,{D_\cE}}$ induced by forgetting the adaptation; it may be easily checked to be a closed immersion. 

Considering the adjoint action of $\GL_d$ on framed representations, the stabilizer subgroup of an adaptation is the center $Z(\cE)$ of the diagonally embedded subgroup $\GL(\cE) := \GL_{d_1} \times \GL_{d_2} \times \dotsm \times \GL_{d_r} \rinj \GL_d$. Therefore $Z(\cE)$ acts on $\Rep^{\square}_\Ad(R,\cE)$ compatibly with the action of $\GL_d$ on $\Rep^\square_{R, D_\cE}$ via the immersion above. This means that the morphism \eqref{eq:adapted_to_reg} exists, and, furthermore, we show the following.
\begin{prop}
\label{prop:adapted_GMA}
Given a GMA $(R,\cE)$ over $A$, the natural morphism 
\begin{equation}
\label{eq:adapted_to_reg}
[\Rep^\square_\Ad(R,\cE) / Z(\cE)] \lra \Rep_{R,{D_\cE}} 
\end{equation}
of $\Spec A$-algebraic stacks is an isomorphism.
\end{prop}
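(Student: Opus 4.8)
The plan is to exhibit the morphism \eqref{eq:adapted_to_reg} as an isomorphism by producing a two-sided inverse, working fppf-locally on the target and invoking faithfully flat descent. The essential point is that, given any representation $(V_B, \rho_B) \in \Rep_{R, D_\cE}(B)$, we want to show that, after an fppf base change $B \to B'$, the representation becomes adapted to $\cE$ in a way that is canonical up to the action of $Z(\cE)$. First I would unwind the moduli problem: by Definition \ref{defn:D_compatible} an object of $\Rep_{R,D_\cE}(B)$ is a rank $d$ vector bundle $V_B$ with an $\cO_B$-algebra map $\rho_B: R \otimes_A B \to \End_{\cO_B}(V_B)$ whose induced pseudorepresentation is $D_\cE \otimes_A B$. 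Applying $\rho_B$ to the orthogonal idempotents $e_1, \dots, e_r$ (equivalently to the primitive idempotents $E^1, \dots, E^d$) gives a decomposition $V_B = \bigoplus_{l=1}^d V_B^l$ into the images of the $\rho_B(E^l)$. The key numerical input is that each $V_B^l$ is a line bundle: this follows because $\Lambda_1^{D_\cE}(E^l) = \Tr(E^l) = 1$ by Proposition \ref{prop:GMA_PsR}, and more precisely the characteristic polynomial of $\rho_B(E^l)$ forces its image to be locally free of rank $1$ (an idempotent whose pseudo-trace is $1$ in a $d$-dimensional Cayley–Hamilton context cuts out a line bundle, fiberwise over $\Spec B$, hence a line bundle). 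Then, Zariski-locally (indeed after passing to a cover trivializing all the $V_B^l$ simultaneously), one may choose a basis of $V_B$ adapted to this decomposition, i.e. one basis vector in each $V_B^l$; with respect to such a basis the restriction of $\rho_B$ to $\bigoplus_i e_i R e_i \cong \bigoplus_i M_{d_i}(A)$ becomes the standard diagonal embedding, because the $\phi_i$ were already normalized that way in the definition of the GMA and in Proposition \ref{prop:GMA_PsR}. That is exactly the adaptation condition of Definition \ref{defn:adapted}.

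So the inverse construction is: to $(V_B, \rho_B)$ associate, fppf-locally, the adapted framing just described, which is a $B$-point of $\Rep^\square_\Ad(R,\cE)$; the ambiguity in the choice of adapted basis (once the decomposition $V_B = \bigoplus V_B^l$ is fixed, which it is, canonically) is precisely rescaling each of the $d$ basis vectors, i.e. the action of the diagonal torus inside $\GL(\cE)$ — but one has to be slightly careful: the stabilizer of an adaptation inside $\GL_d$ is $Z(\cE)$, the center of $\GL(\cE) = \GL_{d_1} \times \cdots \times \GL_{d_r}$, so the adapted framings lying over a fixed $(V_B, \rho_B)$ (with its canonical idempotent decomposition) form a $Z(\cE)$-torsor. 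Hence the assignment $(V_B,\rho_B) \mapsto \{\text{adapted framings}\}$ defines a morphism $\Rep_{R,D_\cE} \to [\Rep^\square_\Ad(R,\cE)/Z(\cE)]$, at least after fppf descent, and by construction it is inverse to \eqref{eq:adapted_to_reg}: composing one way recovers $(V_B,\rho_B)$ from any adapted framing by forgetting the framing, and composing the other way recovers the $Z(\cE)$-orbit of framings. One should check $Z(\cE)$ actually acts freely on $\Rep^\square_\Ad(R,\cE)$ (so that the quotient stack is what we expect) — this is \cite[Prop.\ 1.3.9, 1.3.13]{BC2009} together with the fact that an adapted representation determines the idempotent images rigidly.

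The main obstacle I anticipate is the argument that each $\rho_B(E^l)$ has image a line bundle, uniformly in $B$ and compatibly with base change — in other words, that $D_\cE$ being Cayley–Hamilton of dimension $d$ with $\Tr(E^l)=1$ and $\sum_l E^l = 1$ genuinely forces the idempotent images to be a sum of $d$ line bundles with no room to spare. Over a field this is immediate from Theorem \ref{thm:chenevier_A} / Theorem \ref{thm:chen_2.16} applied to $E^l R E^l \cong A$, but one wants it integrally and in families; the clean way is to note the characteristic polynomial of $\rho_B(E^l)$ is $t^{d-1}(t-1)$ (computed from $D_\cE$ using that $\Lambda_i^{D_\cE}(E^l)$ vanishes for $i \geq 2$, since $E^l$ is conjugate-free to $\delta^{l,l}$ inside the GMA), so $\rho_B(E^l)$ is an idempotent endomorphism whose rank-$1$ image is locally free by the standard fact that the image of an idempotent on a vector bundle is a direct summand, hence a vector bundle, and its rank is the trace, namely $1$. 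Once this is nailed down the rest is bookkeeping with \eqref{eq:GMA_form} and faithfully flat descent of objects and morphisms of algebraic stacks, so the proof reduces to: (i) the line-bundle claim, (ii) identifying the stabilizer as $Z(\cE)$ — both already essentially contained in \cite[\S1.3]{BC2009} and Proposition \ref{prop:GMA_PsR} — and (iii) a formal descent argument assembling the inverse.
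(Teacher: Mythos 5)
Your overall strategy---construct a quasi-inverse by decomposing $V_B$ along idempotents and exhibiting the adapted framings over a fixed object as a $Z(\cE)$-torsor---is the same as the paper's, but the key step as you have written it does not go through, and fixing it is exactly where the content lies. You decompose $V_B = \bigoplus_l \rho_B(E^l)V_B$ into $d$ line bundles (this part is fine: the image of an idempotent endomorphism of a vector bundle is a direct summand, and the characteristic polynomial $t^{d-1}(t-1)$ computed from $D_\cE$ pins the fiberwise rank at $1$), and then claim that a basis obtained by choosing one vector in each line bundle is automatically adapted ``because the $\phi_i$ were already normalized that way.'' That is false: adaptation in the sense of Definition \ref{defn:adapted} demands that every matrix unit $\phi_i^{-1}(\delta^{j,k})$ act as the literal elementary matrix, whereas an arbitrary choice of the $d$ vectors only makes the action block-compatible, with $\rho_B(\phi_i^{-1}(\delta^{j,k}))$ sending the $k$-th chosen vector to an unspecified multiple of the $j$-th. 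Concretely, framings compatible with the primitive-idempotent decomposition form a torsor under the full diagonal torus $\bG_m^d$, while the $\cE$-adapted ones form a torsor under the much smaller $Z(\cE)\cong\bG_m^r$; your write-up slides between the two, and in particular never establishes that an $\cE$-adapted framing exists locally at all.

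The repair is the blockwise (Morita-type) observation the paper uses: set $V_i := \rho_B(e_i)V_B$, of rank $d_i$; the action of $e_iRe_i \cong M_{d_i}(A)$ makes $V_i$ a module over $M_{d_i}(\cO_X)$, hence $V_i \cong \cL_i \otimes_{\cO_X} \cO_X^{\oplus d_i}$ for a single line bundle $\cL_i$ per block (equivalently: choose a local generator of $E_i^1V_B$ and propagate it through the block by the $\phi_i^{-1}(\delta^{j,1})$). Trivializing each $\cL_i$ then produces a genuinely adapted basis, and the remaining ambiguity is one scalar per block, i.e.\ exactly $Z(\cE)$. The paper even avoids your fppf-descent bookkeeping: it takes $\cG := \times_{i=1}^r \Isom_{\cO_X}(\cL_i,\cO_X)$, which is canonically a $Z(\cE)$-torsor equipped with a $Z(\cE)$-equivariant map to $\Rep^\square_\Ad(R,\cE)$, and such data is literally a map to the quotient stack, quasi-inverse to \eqref{eq:adapted_to_reg}. (Freeness of the $Z(\cE)$-action, which you pause over, is not needed for the quotient stack or for this argument.)
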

\begin{proof}
Let $X$ be a $\Spec A$-scheme.  Choose $(\rho,V_X) \in \Rep_{R,{D_\cE}}(X)$.  The idempotents $e_i \in R$ break $V_X$ into a direct sum of projective sub-$\cO_X$-modules $V_i := \rho(e_i) V_X$ of rank $d_i$,
\[
V_X \cong \bigoplus_{i=1}^r V_i.
\]
Each $V_i$ receives an $A$-linear action of $e_i R e_i \subset R$. Using the GMA data $\phi_i: e_i R e_i \risom M_{d_i}(A)$ and the fact that the pseudorepresentation induced by $V_X$ lies over $D_\cE$, we see that the action of $e_i R e_i$ on $V_i$ is faithful and induces an isomorphism $\End_{\cO_X}(V_i) \simeq M_{d_i}(\cO_X)$.  Consequently, $V_i$ is isomorphic as an $\cO_X$-module to a twist of a free rank $d_i$ vector bundle $\cO_X^{\oplus d_i}$ by some line bundle $\cL_i$.

Let $\cG_i := \Isom_{\cO_X}(\cL_i, \cO_X)$ be the $\bG_m$-torsor over $X$ corresponding to $\cL_i$.  Then $\cG := \times_{i=1}^r \cG_i$ is naturally a $Z(\cE)$-torsor.  Indeed, the base change of $V_X$ to $\cG$ from $X$ is a free rank $d$ $\cO_\cG$-vector bundle with a canonical basis adapted to $(R,\cE)$.  This defines a map $\cG \ra \Rep^\square_\Ad(R,\cE)$, equivariant for the action of $Z(\cE)$.  We have therefore established a morphism
\[
\Rep_{R,{D_\cE}} \lra [\Rep^\square_\Ad(R,\cE) / Z(\cE)].
\]
We observe that this provides a quasi-inverse to \eqref{eq:adapted_to_reg}. 
\end{proof}

In the case of a generalized matrix algebra, we can improve on Theorem \ref{thm:psi_main}. 
\begin{cor}
\label{cor:GMA_good}
Let $(R, \cE)$ be a generalized matrix $A$-algebra with canonical pseudorepresentation ${D_\cE}: R \ra A$. Then $\Rep_{R,{D_\cE}} \ra \Spec A$ is a good moduli space. 
\end{cor}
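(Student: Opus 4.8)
The plan is to reduce the claim to an already-established fact by combining Proposition~\ref{prop:adapted_GMA} with the good-moduli-space criteria recalled in Example~\ref{eg:adequate_good}(2) and Theorem~\ref{thm:alper}(7). By Proposition~\ref{prop:adapted_GMA}, the stack $\Rep_{R,{D_\cE}}$ is the quotient stack $[\Rep^\square_\Ad(R,\cE)/Z(\cE)]$, where $Z(\cE)$ is the center of the diagonally embedded $\GL(\cE) = \GL_{d_1}\times\dotsm\times\GL_{d_r} \rinj \GL_d$. The center $Z(\cE)$ is a torus (it is $\bG_m^{\,r}$, embedded via scaling the blocks), hence linearly reductive over any base $S = \Spec A$ by the remark in Example~\ref{eg:adequate_good}(2). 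Therefore $[\Rep^\square_\Ad(R,\cE)/Z(\cE)] \ra \Rep^\square_\Ad(R,\cE)/\!/Z(\cE)$ is a good moduli space, and by Theorem~\ref{thm:alper}(7) this persists after arbitrary base change.

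First I would make the claim precise: I want to identify the target $\Spec A$ with the GIT quotient $\Rep^\square_\Ad(R,\cE)/\!/Z(\cE)$, i.e.\ show that the invariant subring of the coordinate ring $T/J$ of $\Rep^\square_\Ad(R,\cE)$ (see \eqref{eq:adapted_rep_alg}) under the $Z(\cE) \cong \bG_m^{\,r}$-action is exactly $A$. The $\bG_m^{\,r}$-action on $T = \Sym^*_A(\bigoplus_{i\neq j}\cA_{i,j})$ scales the summand $\cA_{i,j}$ by the character $t_i t_j^{-1}$ (coming from conjugation by $\mathrm{diag}(t_1 I_{d_1},\dotsc,t_r I_{d_r})$, which sends $E_i^1 R E_j^1$-entries to $t_i t_j^{-1}$ times themselves). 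A monomial in the $\cA_{i,j}$'s is invariant precisely when, for each index $i$, the number of factors drawn from blocks $\cA_{i,\bullet}$ equals the number drawn from blocks $\cA_{\bullet,i}$. Passing to the quotient $T/J$, any such invariant monomial can be rewritten, using the multiplication relations $b\otimes c = \varphi(b\otimes c)$, into a sum of products of ``cyclic'' terms $\cA_{i_1,i_2}\cA_{i_2,i_3}\dotsm\cA_{i_k,i_1}$, each of which lands in $\cA_{i_1,i_1}\cong A$ via $\phi_{i_1,i_1}$. This shows $(T/J)^{Z(\cE)} = A$; the reverse inclusion $A \subset (T/J)^{Z(\cE)}$ is trivial since $A$ sits in degree $0$.

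The main obstacle is exactly that last combinatorial/algebraic step: showing that the weight-zero part of $T/J$ collapses onto $A$. One must be careful that the relations in $J$ are strong enough to reduce \emph{every} balanced monomial — not just connected cycles but arbitrary disjoint unions of cycles and possibly ``unbalanced-looking'' but globally balanced products — down to the image of $A$. The right way to organize this is to note that a weight-zero monomial in the generators $\cA_{i,j}$ corresponds (by the balancing condition on each vertex $i$ of the complete graph on $\{1,\dots,r\}$) to an Eulerian multiset of edges, which decomposes into closed walks; each closed walk, after applying (ASSO) and (COM) from \cite[Lem.~1.3.5]{BC2009} to contract it, yields an element of some $\cA_{i,i} \cong A$; and the product of these lies in $A$. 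An alternative, perhaps cleaner, route avoiding the explicit computation: invoke Proposition~\ref{prop:GMA_PsR}, which gives the canonical pseudorepresentation $D_\cE: R \ra A$ with associated classifying map $\Spec A \ra \PsR^d_R$ through which $\psi$ restricted to $\Rep_{R,{D_\cE}}$ factors; then apply the universal property in Theorem~\ref{thm:alper}(4) to get a canonical morphism $\Rep^\square_\Ad(R,\cE)/\!/Z(\cE) \ra \Spec A$, and check it is an isomorphism by verifying it is so on the fibral coordinate rings, i.e.\ that the adapted-representation algebra has invariant ring $A$ after base change to any residue field — which again reduces to the same statement but now only needs to be checked over a field, where the structure theory of $T/J$ in \cite[\S1.3]{BC2009} (the splitness of $\cA_{i,j}\ra T/J$ and the explicit form \eqref{eq:GMA_form}) makes the contraction argument transparent. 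Either way, once $(T/J)^{Z(\cE)} = A$ is in hand, the corollary follows from Example~\ref{eg:adequate_good}(2) and Theorem~\ref{thm:alper}(7).
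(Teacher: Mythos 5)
Your proposal is correct and follows essentially the same route as the paper: identify $\Rep_{R,{D_\cE}}$ with $[\Rep^\square_\Ad(R,\cE)/Z(\cE)]$ via Proposition~\ref{prop:adapted_GMA}, use linear reductivity of the torus $Z(\cE)$ (Example~\ref{eg:adequate_good}(2)), and prove $(T/J)^{Z(\cE)} = A$ by decomposing weight-zero monomials into cyclic products that contract into $A$ via the GMA multiplication maps. The paper organizes that last step by first describing $T^{Z(\cE)}$ as generated by cyclic tensors and using $(T/J)^{Z(\cE)} = T^{Z(\cE)}/J^{Z(\cE)}$ together with the generators of $J$ and (ASSO), which is the same combinatorial content as your Eulerian/closed-walk contraction argument.
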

\begin{proof}
We will argue that the invariant ring $(T/J)^{Z(\cE)}$ of the $Z(\cE)$-action on the coordinate ring $T/J$ of $\Rep^\square_\Ad(R, \cE)$, given in \eqref{eq:adapted_rep_alg}, is equal to $A$. In light of Example \ref{eg:adequate_good}, $\Rep^\square_\Ad(R, \cE) \ra \Spec A$ is a good moduli space because tori are linearly reductive over any base, and because $A = (T/J)^{Z(\cE)}$. Then the statement of the Corollary follows from Proposition \ref{prop:adapted_GMA}. 

It is clear that $A$ is contained in $(T/J)^{Z(\cE)}$, and we will show any $Z(\cE)$-invariant in $T/J$ is in $A$. Indeed, $Z(\cE)$ acts on $\cA_{i,j}$ as the torus in $\GL_r$ acts by roots on the $(i,j)$-coordinate of $M_r$, so that the invariant subring of $T$ is generated by tensors of the form
\[
\prod_{1 \leq i \leq \ell} a_{i, \sigma(i)}
\]
where $\sigma : \{0, \dotsc, \ell\} \ra \{1, \dotsc, \ell\}$ is a cycle (i.e.\ $\sigma(0) = \sigma(\ell)$) of length $\ell$. The ideal $J$ is stable under $Z(\cE)$, and $(T/J)^{Z(\cE)} = T^{Z(\cE)}/J^{Z(\cE)}$ because $Z(\cE)$ is linearly reductive \cite[Rem.\ 4.11]{alper2013}. By considering the generators of $J$ and the property (ASSO), we conclude that all of the invariant tensors are equivalent to elements of $A$. 
\end{proof}

The following conditions will be useful to show that certain Cayley-Hamilton algebras are GMAs.
\begin{defn}
\label{defn:split_MF}
Let $(A, \frmm_A)$ be a local ring with the usual data $D: R \ra A$ and residue field $\bF := A/\frmm_A$. 
\begin{enumerate}
\item We denote by $\Db$ the \emph{residual pseudorepresentation} $D \otimes_A \bF : R \otimes_A \bF \ra \bF$, and call $\Db$ \emph{split} and $D$ \emph{residually split} if $(R \otimes_A \bF)/\ker(\Db)$ is a product of matrix algebras over $\bF$.
%%%  
\item We call $\Db$ \emph{multiplicity-free} and call $D$ \emph{residually multiplicity-free} when $\Db$ is split and the semi-simple representation $\rho^{ss}_\Db: R \otimes_A \bF \ra M_d(\bF)$ has distinct Jordan-H\"older factors. 
\end{enumerate}
\end{defn}
Recall from Theorem \ref{thm:chen_2.16} that $(R\otimes_A \bF)/\ker(\Db)$ is a semi-simple $\bF$-algebra, so it is split after a base change by a finite extension of $\bF$. 

Chenevier has shown that in a certain case, a Cayley-Hamilton algebra $(R,D)$ may be endowed with the structure of a GMA $(R,\cE)$ such that the pseudocharacter induced by $\cE$ is equal to the trace $\Lambda_1^D$ of $D$ \cite[Thm.\ 2.22(ii)]{chen2014}. We will now remark that his proof also shows that the pseudorepresentation $D_\cE$ is equal to $D$, generalizing \cite[Cor.\ 1.3.16]{BC2009} to any characteristic. 
\begin{thm}
\label{thm:CH_MF_GMA}
Let $(R,D)$ be a finitely generated Cayley-Hamilton $A$-algebra where $A$ is a Noetherian Henselian local ring. Assume that $D$ is residually multiplicity-free. Then $(R,D)$ admits a structure $\cE$ of a generalized matrix $A$-algebra such that the pseudorepresentation induced by $(R,\cE)$ is equal to $D$. Moreover, there is an isomorphism
\[
[\Rep^\square_\Ad(R,\cE) / Z(\cE)] \lrisom \Rep_{R,{D}}
\]
Consequently, $\psi: \Rep_{R,D} \ra \Spec A$ is a good moduli space. 
\end{thm}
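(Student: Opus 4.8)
The strategy is to invoke Chenevier's result \cite[Thm.\ 2.22(ii)]{chen2014}, which under exactly these hypotheses (finitely generated Cayley-Hamilton $A$-algebra, $A$ Noetherian Henselian local, $D$ residually split and multiplicity-free) produces a family of orthogonal idempotents $e_1, \dots, e_r \in R$ lifting the central idempotents of $(R\otimes_A\bF)/\ker(\Db) \cong \prod M_{d_i}(\bF)$, together with isomorphisms $\phi_i : e_i R e_i \risom M_{d_i}(A)$, and shows that the resulting trace $\Tr_\cE$ coincides with $\Lambda_1^D$. So the first step is simply to quote that construction to obtain the GMA structure $\cE = (\{e_i\}, \{\phi_i\})$. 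The key point is that Chenevier's proof only uses the idempotent-lifting (Henselian) hypothesis and the multiplicity-free condition to pin down $\Tr_\cE$; I want to observe that the \emph{same} construction pins down all the higher coefficients $\Lambda_j^{D_\cE}$, hence $D_\cE = D$ on the nose.

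For the identification $D_\cE = D$: by Proposition \ref{prop:GMA_PsR} the GMA $(R,\cE)$ carries its canonical Cayley-Hamilton pseudorepresentation $D_\cE : R \ra A$, and we already know $\Lambda_1^{D_\cE} = \Tr_\cE = \Lambda_1^D$. To upgrade trace-equality to equality of pseudorepresentations, I would argue as follows. Both $D$ and $D_\cE$ are $d$-dimensional pseudorepresentations of $R$ over $A$ with $\CH(D) = \CH(D_\cE) = 0$ (both are Cayley-Hamilton), and they agree modulo $\frmm_A$: indeed both reduce to $D_\cE \otimes_A \bF$, since the residual pseudorepresentation determines, and is determined by, the semisimple representation $\rho^{ss}_{\Db}$ realized through $(R\otimes_A\bF)/\ker(\Db) \cong \prod M_{d_i}(\bF)$, which is precisely the block-diagonal target of the $\phi_i \otimes \bF$. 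Now by \cite[Prop.\ 1.29]{chen2014} a $d$-dimensional pseudorepresentation over a ring in which $(2d)!$ is invertible is equivalent to its associated pseudocharacter, so over $A[1/(2d)!]$ the equality $\Lambda_1^{D_\cE} = \Lambda_1^D$ already forces $D_\cE = D$; for the general (small-characteristic) case one reduces to a universal situation — both pseudorepresentations are pulled back from the universal adapted representation $R \otimes_A (T/J) \ra M_d(T/J)$ via the canonical map induced by $D$ (using Proposition \ref{prop:R_E_equivalent} to factor any representation, and the splitness of the injections $\cA_{i,j}\ra T/J$ from \cite[Prop.\ 1.3.13]{BC2009}) — so that $D = \det\circ f = D_\cE$ follows formally from the proof of Proposition \ref{prop:GMA_PsR}. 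This is the step I expect to require the most care: reconciling Chenevier's \emph{pseudocharacter}-level statement with the \emph{pseudorepresentation}-level statement in characteristic $\le d$, where the two are not tautologically the same.

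Once $D_\cE = D$ is established, the isomorphism $[\Rep^\square_\Ad(R,\cE)/Z(\beta)] \risom \Rep_{R,D}$ is immediate from Proposition \ref{prop:adapted_GMA} applied to $(R,\cE)$, since $\Rep_{R,D_\cE} = \Rep_{R,D}$. Finally, the good moduli space claim: Corollary \ref{cor:GMA_good} says $\Rep_{R,D_\cE} \ra \Spec A$ is a good moduli space for any GMA, so composing with $D_\cE = D$ gives that $\Rep_{R,D} \ra \Spec A$ is a good moduli space. (Strictly, one should note the map being called "$\psi$" here is the structure morphism $\Rep_{R,D} \to \Spec A$, which under $\Spec A \risom \PsR$-fiber is the restriction of $\psi\colon \Rep^d_R \to \PsR^d_R$ over the $A$-point $D$.) The only real content beyond bookkeeping is thus the $D_\cE = D$ identification; everything else is assembling Proposition \ref{prop:GMA_PsR}, Proposition \ref{prop:adapted_GMA}, and Corollary \ref{cor:GMA_good} with Chenevier's GMA-existence theorem.
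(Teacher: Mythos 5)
Your assembly of the final steps is right: once $D_\cE = D$ is known, Proposition \ref{prop:adapted_GMA} gives the isomorphism $[\Rep^\square_\Ad(R,\cE)/Z(\beta)] \risom \Rep_{R,D}$ and Corollary \ref{cor:GMA_good} gives the good moduli space statement, and quoting Chenevier's Thm.\ 2.22(ii) for the existence of $\cE$ with $\Tr_\cE = \Lambda_1^D$ is also fine. The problem is exactly the step you flagged: your argument that $D_\cE = D$ in small characteristic is circular. You assert that ``both pseudorepresentations are pulled back from the universal adapted representation $R \otimes_A (T/J) \to M_d(T/J)$ via the canonical map induced by $D$,'' but there is no such canonical link between $D$ and the adapted representation: $D$ is only a pseudorepresentation, not known to be realized by any representation over $A$ or over $T/J$, and the only pseudorepresentation the universal adapted representation $f$ induces is $\det\circ f = D_\cE \otimes_A T/J$ (Proposition \ref{prop:GMA_PsR}). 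Proposition \ref{prop:R_E_equivalent} only factors representations that are \emph{already known} to be compatible with $D$; saying $f$ is such a representation is precisely the assertion $D_\cE = D$ that you are trying to prove. The $(2d)!$-invertible reduction is correct but does not cover the case the theorem exists for (characteristic $\le d$), and it cannot be bootstrapped to general Henselian local $A$, where $A[1/(2d)!]$ may even vanish.

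The paper closes this gap by a different device: it first reduces to the universal Cayley-Hamilton algebra $(R^u, D^u)$ over the pseudodeformation ring $R_\Db$ (the good moduli space conclusion then descends by base change, Theorem \ref{thm:alper}(7)). In that setting $D_\cE$ is itself a pseudodeformation of $\Db$ with coefficients in $R_\Db$, so the universal property of $R_\Db$ produces a classifying endomorphism $f: R_\Db \to R_\Db$ with $D_\cE = f_* D^u$, and the equality $D_\cE = D^u$ becomes the statement $f = \mathrm{id}$. That is checked from the $R_\Db$-linearity of $D_\cE$, concretely by restricting both pseudorepresentations to $e_1 R^u e_1 \simeq M_{d_1}(R_\Db)$, where both are the determinant, and evaluating $\Lambda_1$ on the rank-one matrices $x E_1^1$: this gives $x = \Lambda_1^{D^u}(xE_1^1)$ versus $f(x) = \Lambda_1^{D_\cE}(xE_1^1)$, forcing $f(x) = x$. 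If you want to repair your write-up, you should either reproduce this reduction to the universal case (the universal property is what converts trace-level information into equality of full pseudorepresentations without inverting $(2d)!$), or find a genuinely different argument pinning down all higher coefficients $\Lambda_j$; the step cannot be obtained ``formally'' from Proposition \ref{prop:GMA_PsR} as written.
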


\begin{proof}
It will suffice to prove the theorem after replacing the Cayley-Hamilton $A$-algebra $(R,D)$ with the universal Cayley-Hamilton with residual pseudorepresentation $\Db$. This is the Cayley-Hamilton $R_\Db$-algebra $(R^u,D^u)$ where $R^u = (R \otimes_A R_\Db)/\CH(D^u)$ and $D^u$ is the universal pseudorepresentation deforming $\Db$, $D^u : R^u \ra R_\Db$. Indeed, the good moduli space property claimed in the theorem will follow because it is stable under base change (Thm.\ \ref{thm:alper}(7)). 

Using the assumptions of the statement, Chenevier \cite[Thm.\ 2.22(ii)]{chen2014} shows that there exists a data of idempotents $\cE$ of $R^u$ inducing a generalized matrix algebra $(R^u,\cE)$ over $R_\Db$. Both $D^u \otimes_{R_\Db} \bF$ and $D_\cE \otimes_{R_\Db} \bF$ are equal to $\Db$, as they each arise from the product of matrix algebras $R^u \ra (R^u \otimes_{R_\Db} \bF)/\ker(D^u \otimes_{R_\Db} \bF) \cong \prod_{i=1}^r M_{d_i}(\bF)$. Consequently, $D_\cE$ is a deformation of $\Db$ with coefficients in $R_\Db$, and the universal property of $R_\Db$ induces a map $f: R_\Db \ra R_\Db$ induced by $D_\cE$. The desired equality $D^u = D_\cE$ will follow from this map being the identity. This follows immediately from the fact that $D_\cE$ is $R_\Db$-linear by construction. One concrete way to observe this is to use the GMA structure to restrict $D_\cE$ and $D^u$ to the  matrix subalgebra $e_1 R^u e_1 \simeq M_{d_1}(R_\Db)$, resulting in a degree $d_1$ pseudorepresentation $M_{d_1}(R_\Db) \ra R_\Db$. Both restrictions equal the determinant pseudorepresentations (see e.g.\ the proof of \cite[Thm.\ 2.22]{chen2014}). In particular, for $x \in R_\Db$, we can compute the traces of matrices with one non-zero entry: $\Lambda_1^{D^u}(xE^1_1) = x$ and $\Lambda^{D_\cE}_1(xE^1_1) = f(x)$, so $f$ is the identity. 

The rest of the theorem now follows from \ref{prop:adapted_GMA} and \ref{cor:GMA_good}. 
\end{proof}

\section{Algebraic Families of Galois Representations}
\label{sec:fam_gal}

\subsection{Formal Moduli of Representations and Pseudorepresentations of a Profinite Group}

Let $G$ be a profinite group; we will often impose the $\Phi_p$-finiteness condition of Mazur \cite{mazur1989} on $G$. We wish to understand the moduli space of continuous representations of $G$ and how it relates to the moduli space of continuous pseudorepresentations. We will study these representations over integral $p$-adic coefficient rings for some prime $p$ which we fix. We will not insist that these rings are local because of the existence of positive-dimensional algebraic families of residual representations such as the one-dimensional family 
\begin{equation}
\label{eq:ext_fam-2}
\begin{pmatrix}
\bar\rho_1 & \tilde e_1 + x \tilde e_2 \\
0 & \bar\rho_2
\end{pmatrix} \quad \text{ with coefficients in } \bF[x],
\end{equation} 
where $\tilde e_1, \tilde e_2$ are representatives of linearly-independent extension classes $e_1, e_2 \in \Ext^1_G(\bar\rho_2, \bar\rho_1)$. Namely, we will let our category of coefficient rings be admissible continuous $\bZ_p$-algebras $\Adm_{\bZ_p}$, which is anti-equivalent to the category of affine Noetherian $\Spf \bZ_p$-formal schemes \cite[\S10.1]{ega1}. We will use the category of $\Spf \bZ_p$-formal schemes $\FS_{\bZ_p}$ as coefficient spaces.

As in the previous section, we will use $A$ as a base coefficient ring, and will let $A \in \Adm_{\bZ_p}$ be a local ring, so that $A$ is profinite and, in particular, has a finite residue field. Let $R$ be a profinite continuous (non-commutative) $A$-algebra. When we wish to consider the case of group representations, we may set $A = \bZ_p$ and $R = \bZ_p\lb G \rb$. 
\begin{defn}
\label{defn:Rep_groupoids_profinite}
Let $A$ and $R$ be as specified above, and let $d$ be a positive integer.
\begin{enumerate}
\item Define the functor $\CRep^{\square,d}_R$ on $\FS_A$ by
\[
\frX \mapsto \{\text{continuous } \cO_\frX\text{-algebra homomorphisms } R \otimes_{A} \cO_\frX \lra M_d(\frX)\}.
\]
\item Define the groupoid  $\CRep_R^d$, fibered over $\FS_A$, by
\[
\begin{aligned}
\ob \CRep^d_R(\frX) &= \{V / \frX \text{ a rank } d \text{ vector bundle, with a} \\
& \text{continuous } \cO_\frX\text{-algebra homomorphism } R \otimes_{A} \cO_\frX \lra \End_{\cO_\frX}(V) \}
\end{aligned}
\]
and morphisms being isomorphisms of this data.
\item Define the groupoid $\CRepB_R^d$, fibered over $\FS_A$, by 
\[
\begin{aligned}
\ob \CRepB^d_R(\frX) &= \{\cE \text{ a rank } d^2\ \cO_\frX\text{-Azumaya algebra, with a} \\
& \text{continuous } \cO_\frX\text{-algebra homomorphism } R \otimes_{A} \cO_\frX \lra \cE \}
\end{aligned}
\]
and morphisms being isomorphisms of this data.
\end{enumerate}
\end{defn}
It is not difficult to show that $\CRep^{\square,d}_R$ is representable by an affine Noetherian $\Spf A$-formal scheme when $R$ satisfies a finiteness condition equivalent to the $\Phi_p$ finiteness condition on profinite groups. But we will first show that all of these groupoids are algebraizable, from which their representability by formal schemes or formal algebraic stacks follows. 

The moduli functor of continuous pseudorepresentations of a profinite algebra has been defined and studied by Chenevier \cite{chen2014}. Firstly, he shows in \cite[Prop.\ 3.3]{chen2014} that given any finite field-valued pseudorepresentation $\Db : R \otimes_A \bF\ra \bF$, the natural deformation functor $\CPsR_\Db$ to complete local $A$-algebras with residue field $\bF$ is representable by a complete local $A$-algebra $(R_\Db, \mDb)$, i.e.\ $\CPsR_\Db \cong \Spf R_\Db$. We call the objects of $\CPsR_\Db$ ``pseudodeformations.'' When certain finiteness conditions are satisfied, $R_\Db$ is Noetherian. Here we loosen Chenevier's criteria for $R_\Db$ to be Noetherian \cite[Prop.\ 3.7]{chen2014}. 
\begin{prop}
\label{prop:R_Db_noeth}
Assume that the continuous cohomology group $H^1_c(G, \ad \rho^{ss}_\Db)$ is finite dimensional over the coefficient field of $\rho^{ss}_\Db$. Then $R_\Db$ is Noetherian. 
\end{prop}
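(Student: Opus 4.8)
The plan is to reduce the Noetherianity of $R_\Db$ to a tangent-space finiteness statement via a Schlessinger-style argument, and then identify the relevant tangent space with a quotient of $H^1_c(G,\ad\rho^{ss}_\Db)$ (together with a central piece), so that the hypothesis applies directly. Since $\CPsR_\Db \cong \Spf R_\Db$ with $R_\Db$ a complete local $A$-algebra with residue field $\bF$, it suffices to show that the mod-$\frmm_{\Db}^2$ tangent space of $R_\Db$, i.e.\ the relative tangent space $\Hom_\bF(\frmm_\Db/(\frmm_\Db^2, \frmm_A), \bF)$, is finite-dimensional over $\bF$; for then $\frmm_\Db$ is a finitely generated ideal and, $R_\Db$ being complete local, it is Noetherian. (One should be slightly careful since $A$ need not be a field, but $A$ is itself Noetherian and $R_\Db$ is an $A$-algebra topologically of finite type once the tangent space is finite, so completeness gives Noetherianity.)

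The key step is therefore to compute $\CPsR_\Db(\bF[\varepsilon])$ and bound it. This is exactly the computation Chenevier carries out in \cite[Prop.\ 3.7]{chen2014}; the only difference here is that we want the cleanest possible cohomological hypothesis. First I would invoke Corollary \ref{cor:kernel_CH_nilpotent}: the Cayley–Hamilton quotient $E(G)_\Db := (\bZ_p\lb G\rb \otimes_A R_\Db)/\CH(D^u)$ has, modulo $\frmm_\Db$, kernel of the residual pseudorepresentation nilpotent of bounded order $N(d)$, so $E(G)_\Db \otimes \bF$ is a finite-dimensional $\bF$-algebra — this is what makes the deformation problem of $\Db$ behave like that of a finite-dimensional algebra even though $G$ is only profinite and $p$ may be small. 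Using the residually split structure (after a harmless finite extension of $\bF$, which does not affect Noetherianity), $E(G)_\Db \otimes \bF$ surjects onto $\prod M_{d_i}(\bF)$ with nilpotent kernel, and a first-order pseudodeformation of $\Db$ is controlled by first-order deformations of this finite $\bF$-algebra together with the ``extension data'' between the blocks. Concretely, a tangent vector to $\CPsR_\Db$ corresponds to a pseudorepresentation $D_\varepsilon: \bZ_p\lb G\rb \otimes \bF[\varepsilon] \to \bF[\varepsilon]$ reducing to $\Db$, and by the generalized-matrix-algebra analysis (the bridge being Proposition \ref{prop:GMA_PsR} and Theorem \ref{thm:CH_MF_GMA} in the multiplicity-free case, and Chenevier's more general structural results otherwise) this is a subquotient of $\bigoplus_{i,j} H^1_c(G, \Hom_\bF(\rho^{ss}_i, \rho^{ss}_j))$ for the simple factors $\rho^{ss}_i$ of $\rho^{ss}_\Db$, plus a contribution from $H^1_c$ with coefficients in the relevant centers. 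All of these are sub- or quotient-spaces of $H^1_c(G, \ad\rho^{ss}_\Db) = H^1_c(G, \End_\bF(\rho^{ss}_\Db))$, which is finite-dimensional by hypothesis; hence so is the tangent space.

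The main obstacle is making the identification of the tangent space with a piece of $H^1_c(G,\ad\rho^{ss}_\Db)$ precise in a way that is uniform in the characteristic — in small characteristic $\Db$ carries strictly more information than its associated pseudocharacter $\Lambda_1^{D^u}$, so one cannot simply cite \cite{BC2009} and must instead feed in the refined GMA/pseudorepresentation dictionary developed in \S\ref{subsec:IT} and \S2.4 above (Propositions \ref{prop:GMA_PsR}, \ref{prop:adapted_GMA}), and use Corollary \ref{cor:kernel_CH_nilpotent} to guarantee the algebra stays finite. Once the tangent space is pinned down as a subquotient of a sum of $H^1_c$'s whose total is governed by $H^1_c(G,\ad\rho^{ss}_\Db)$, finiteness is immediate and Schlessinger/complete-local formalism finishes the argument. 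I expect the bookkeeping — especially the central contributions $H^1_c(G, k_i)$ attached to the centers $k_i$ of the simple factors, which must also be shown finite — to be the fiddly part, but it follows because each $k_i$ embeds into $\ad\rho^{ss}_\Db$ as scalars on a block, so $H^1_c(G,k_i)$ is a subspace of $H^1_c(G,\ad\rho^{ss}_\Db)$ as well.
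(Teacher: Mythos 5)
Your overall skeleton is the right one, and you have correctly isolated the genuinely new ingredient: Corollary \ref{cor:kernel_CH_nilpotent} (Samoilov's theorem in place of Nagata--Higman) is exactly the modification the paper makes to Chenevier's argument in order to drop the condition that $d!$ be invertible. But the step you lean on to get finiteness is wrong as stated. The tangent space of $\CPsR_\Db$ is \emph{not} a subquotient of $H^1_c(G,\ad\rho^{ss}_\Db)$ (even allowing central pieces): already for $d=2$ with $\rho^{ss}_\Db=\bar\rho_1\oplus\bar\rho_2$ multiplicity-free, the cotangent space $\frmm_\Db/\frmm_\Db^2$ contains, besides the ``diagonal'' directions, a piece which is dual to a subspace of $\Ext^1_G(\bar\rho_1,\bar\rho_2)\otimes\Ext^1_G(\bar\rho_2,\bar\rho_1)$ (the reducibility directions coming from the image of $\cA_{1,2}\otimes\cA_{2,1}\to R_\Db$ in the GMA picture). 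Its dimension is governed by a \emph{product} $ab$ of the two $H^1$-dimensions, which in general exceeds $a+b$, so no subquotient identification with $\ad$-cohomology can hold. Finiteness of the tangent space does still follow from finiteness of $H^1_c(G,\ad\rho^{ss}_\Db)$ (products of finite-dimensional spaces are finite-dimensional, and in general one gets sums of products over cycles of blocks), but you never actually prove the structural statement that produces such a bound; that bookkeeping is precisely the content you defer, and the results you cite as the bridge (Proposition \ref{prop:GMA_PsR}, Theorem \ref{thm:CH_MF_GMA}) do not compute pseudodeformation tangent spaces --- and invoking Theorem \ref{thm:CH_MF_GMA} over $R_\Db$ at this stage risks circularity, since its hypotheses are stated for a Noetherian base.

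The place where the hypothesis actually enters (both in Chenevier and in this paper, compare the proof of Proposition \ref{prop:E(G)}(3)) is one level down: one works with the residual Cayley--Hamilton algebra $\bar E$ over $\bF_\Db$ and shows $\dim_\bF\bar E<\infty$ by combining (i) finite-dimensionality of $\bar E/\ker(\Db)$ (Theorem \ref{thm:chen_2.16}), (ii) nilpotency $\ker(\Db)^{N(d)}=0$ (Corollary \ref{cor:kernel_CH_nilpotent} --- your correct key input), and (iii) the embedding of $\ker(\Db)/\ker(\Db)^2$ into $H^1_c(G,\ad\rho^{ss}_\Db)$-type groups (Chenevier's Prop.~3.35 argument); note it is $\ker(\Db)/\ker(\Db)^2$, not the tangent space of $R_\Db$, that sits inside $\ad$-cohomology. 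One then concludes Noetherianity not by a cohomological identification of the tangent space but by Chenevier's finite-generation argument: choosing finitely many elements of $G$ whose images span $\bar E$, the Amitsur/Cayley--Hamilton identities show $R_\Db$ is topologically generated by the characteristic-polynomial coefficients of words of bounded length in these elements, hence is a quotient of a restricted power series ring over a Noetherian base. This is the content of the citations [2.7, 2.26, 2.35] of Chenevier in the paper's proof; your proposal is missing this mechanism (or a corrected quantitative tangent-space bound replacing it), so as written it has a genuine gap even though the strategy and the novel ingredient are correctly identified.
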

\begin{proof}
Let $\bF$ represent the coefficient field of $\rho^{ss}_\Db$. We apply Chenevier's strategy to prove \cite[Prop.\ 3.7]{chen2014} via arguing from \cite[2.7, 2.26, 2.35]{chen2014}. The one change we make is that in \cite[Lem.\ 2.7(iii)]{chen2014}, we produce $N(d)$ such that $\ker(D)^{N(d)} = 0$ using Corollary \ref{cor:kernel_CH_nilpotent}. This removes the condition that $d!$ is invertible in $\bF$. 
\end{proof}

\begin{rem}
Proposition \ref{prop:R_Db_noeth} answers a question of Chenevier \cite[Remark 2.29]{chen2014}. It fulfills his suggestion that 
some result in the spirit of Shirshov's height theorem would allow for the proof of the Noetherianness of $R_\Db$ in terms of the finiteness of cohomology at $\rho^{ss}_\Db$ alone, instead of the stronger condition that $G$ satisfies the $\Phi_p$ finiteness condition. This result was provided by Samoilov \cite{samoilov2009} (see Cor.\ \ref{cor:kernel_CH_nilpotent}). On the other hand, the finiteness of cohomology for \emph{every} $\rho^{ss}_\Db$ implies that $G$ satisfies condition $\Phi_p$ \cite[Example 3.6]{chen2014}. 
\end{rem}

The deformation theory of various $\Db$ will suffice to describe the entire moduli functor of $d$-dimensional pseudorepresentations on formal schemes $\frX \in \FS_{\bZ_p}$, 
%%%
\[
\CPsR^d_R: \frX \mapsto \{\text{continuous pseudorepresentations } D: R \otimes_{A} \cO_\frX \lra \cO_\frX\}
\]
It will be helpful to establish notation about residual pseudorepresentations and their fields of definition. 
%%%
There is a natural equivalence relation on continuous $d$-dimensional finite field-valued pseudorepresentations of $R$, namely that $\Db \sim \Db'$ for pseudorepresentations $\Db: R \otimes_A \bF \ra \bF$ and $\Db': R \otimes_A \bF' \ra \bF'$ if $\Db \otimes_\bF \bar \bF_p \cong \Db' \otimes_{\bF'} \bar\bF_p$. 
\begin{defn}
\label{defn:residual_rep}
We will let $\Db: R \otimes_A \bF_\Db \ra \bF_\Db$ represent a \emph{residual pseudorepresentation}, which is the unique representative of each equivalence class with smallest field of definition $\bF_\Db$. By Corollary \ref{cor:ss_degree}(2), there exists a semi-simple representation $(V^{ss}_\Db, \rho^{ss}_\Db)$ over $\bF_\Db$ inducing $\Db$. 
\end{defn}

Notice that the irreducible factors of $\rho^{ss}_\Db$ need not be absolutely irreducible, i.e.\ $R/\ker(\Db)$ may not be a product of matrix algebras over $\bF$ (cf.\ Definition \ref{defn:split_MF}). Rather, it may be a product of matrix algebras over finite extensions of $\bF$. 

Chenevier has shown that the entire moduli of pseudorepresentations is simply a disjoint union of deformation functors of residual representations. 
\begin{thm}[{\cite[Cor.\ 3.14]{chen2014}}]
\label{thm:chen_3.14}
Assuming that $G$ satisfies the $\Phi_p$ finiteness condition and $R = \bZ_p \lb G \rb$, the moduli functor $\CPsR_R^d$ on $\FS_{\bZ_p}$ is representable by the disjoint union 
\[
\CPsR^d_R \cong \coprod_\Db \CPsR_\Db \cong \coprod_\Db \Spf R_\Db
\]
of local Noetherian formal schemes over the set of $d$-dimensional residual pseudorepresentations. 
\end{thm}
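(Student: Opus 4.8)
The plan is to reduce the statement to a local decomposition property. Since a disjoint union of formal schemes is again a formal scheme, it suffices to produce, for every admissible Noetherian $\bZ_p$-algebra $B$ with ideal of definition $I$, a canonical and functorial decomposition of each continuous $d$-dimensional pseudorepresentation $D\colon R\otimes_{\bZ_p}\cO_{\Spf B}\to\cO_{\Spf B}$ as an orthogonal sum $D=\sum_{\Db'}D_{\Db'}$ indexed by the residual pseudorepresentations of $R$, where $D_{\Db'}$ is a pseudorepresentation over $Be_{\Db'}$ (for suitable orthogonal idempotents $e_{\Db'}$, almost all zero, summing to $1$) whose reduction at every closed point of $\Spf(Be_{\Db'})$ is $\Db'$. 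Given this, the universal property of the pseudodeformation ring $(R_{\Db'},\frmm_{\Db'})$ of \cite[Prop.\ 3.3]{chen2014} — together with the Noetherianness of Proposition~\ref{prop:R_Db_noeth}, which guarantees $\Spf R_{\Db'}$ lies in $\FS_{\bZ_p}$ — identifies the datum of $D_{\Db'}$ with a morphism $\Spf(Be_{\Db'})\to\Spf R_{\Db'}$, and these assemble to the asserted isomorphism of functors, the indexing being automatically disjoint.

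To build the decomposition, first I would attach a residual pseudorepresentation to each closed point. Every maximal ideal of $B$ contains $I$, so $B/I$ is a quotient of a polynomial algebra over $\bF_p$; it is therefore Jacobson with finite residue fields at its closed points. Reducing $D$ modulo a maximal ideal $\frmm$ produces a pseudorepresentation over the finite field $\kappa(\frmm)$, whose $\bar\bF_p$-isomorphism class — pinned down by the reductions of the characteristic polynomial coefficients $\Lambda_i^D(r)$ ($1\le i\le d$, $r\in R$) via \cite[Lem.\ 1.12]{chen2014}, and realized, by Corollary~\ref{cor:ss_degree} and Definition~\ref{defn:residual_rep}, by a residual semisimple representation — is the residual pseudorepresentation $\Db'_\frmm$. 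The content of the theorem is that $\frmm\mapsto\Db'_\frmm$ is \emph{locally constant} on $\Spf B$. That each fibre is \emph{closed} is immediate: for fixed $\Db'$ with coefficients $c_{i,r}\in\bF_{\Db'}$, the set where $D$ restricts, after base change along $\bF_p\to\bF_{\Db'}$, to the pullback of $\Db'$ is the closed subscheme of $\Spec\!\bigl((B/I)\otimes_{\bF_p}\bF_{\Db'}\bigr)$ cut out by the elements $\Lambda_i^D(r)-c_{i,r}$.

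The harder half is that these fibres are also \emph{open}; equivalently, that on each (of the finitely many) connected components of $\Spf B$ only a single $\Db'$ occurs. Here I would pass to finite quotients: continuity and Mazur's condition $\Phi_p$ force $D$ modulo $I$ to factor through the group algebra $\bF_p[\Gamma]$ of a suitable finite quotient $G\twoheadrightarrow\Gamma$, and $\Gamma^d_{\bF_p}(\bF_p[\Gamma])^{\ab}$ is a finite-dimensional $\bF_p$-algebra, so $\CPsR^d_{\bF_p[\Gamma]}$ is a finite Artinian $\bF_p$-scheme whose connected components are precisely its residual pseudorepresentations; the morphism $\Spec(B/I)\to\CPsR^d_{\bF_p[\Gamma]}$ induced by $D$ then sends a connected $\Spec(B/I)$ into a single such component. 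The finiteness inputs of \S\ref{subsec:CHPI} — Proposition~\ref{prop:CH_finite} and Corollary~\ref{cor:kernel_CH_nilpotent}, applied to the Cayley--Hamilton quotient of $D$ — are exactly what make this finite-quotient approximation legitimate in all characteristics (in particular for $p\le d$), replacing invertibility of $d!$. Once the decomposition is in hand, on each piece $\Spf(Be_{\Db'})$ the pseudorepresentation $D_{\Db'}$ has residual pseudorepresentation $\Db'$ at every closed point; completing at any such point exhibits it as a pseudodeformation of $\Db'$, hence as a unique continuous $\bZ_p$-algebra homomorphism $R_{\Db'}\to Be_{\Db'}$, inverse to pulling back the universal pseudodeformation, and compatibility with idempotent decompositions finishes the argument.

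I expect the local-constancy step to be the main obstacle: one must forbid any ``jump'' of the residual pseudorepresentation inside a continuous family, and carrying this out uniformly in the characteristic — so that the argument still applies when $p\le d$ — is precisely where the polynomial-identity finiteness of Cayley--Hamilton algebras and the $\Phi_p$-condition are simultaneously indispensable. By comparison, the identification of each piece with $\Spf R_{\Db'}$ is a formal consequence of the universal property of the pseudodeformation ring and the representability result \cite[Prop.\ 3.3]{chen2014}.
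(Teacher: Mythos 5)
The paper itself gives no argument for this statement---it is quoted directly from \cite[Cor.\ 3.14]{chen2014}---so your proposal has to be measured against Chenevier's proof. Your architecture (decompose any admissible Noetherian $B$ by orthogonal idempotents according to a locally constant residual pseudorepresentation, then classify each constant piece by $R_{\Db'}$) is the right shape, and the closedness half of the local-constancy claim is essentially fine. The genuine gap is the step you yourself identify as the crux: the assertion that continuity and $\Phi_p$ force $D$ modulo an ideal of definition to factor through the group algebra of a finite quotient $\Gamma$ of $G$. Continuity only says that each coefficient function $g \mapsto \Lambda_i^D(g)$ is locally constant on $G$, hence factors through a finite quotient \emph{as a function}; a pseudorepresentation is much more data than its coefficient functions, and no formal argument makes it descend to $(B/I)[\Gamma]$. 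The two results you invoke to legitimize the step do not apply: Proposition \ref{prop:CH_finite} requires the Cayley--Hamilton algebra to be finitely generated over a Noetherian base, whereas $(B/I)[G]/\CH(D)$ is not finitely generated over $B/I$ when $G$ is infinite, and Corollary \ref{cor:kernel_CH_nilpotent} requires field coefficients, whereas $B/I$ is an arbitrary discrete Noetherian ring (not even an $\bF_p$-algebra in general). Note that your claim already implies that the $\Lambda_i^D(g)$ generate a finite subring of $B/I$, i.e.\ that continuous pseudorepresentations admit no genuinely algebraic variation at fixed finite level---which is the substantive content of the theorem. This is exactly the finiteness Chenevier must and does establish (roughly: the $\Lambda_i^D(g)$ are integral over $\bZ_p$, since $\chi(h,t)=(t-1)^d$ for $h$ in an open subgroup and one can exploit the Cayley--Hamilton identity, and $D$ is defined over the closed subring generated by its characteristic-polynomial coefficients on $G$); nothing in your outline substitutes for an argument of this kind, so assuming it is very close to assuming the theorem.

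A second, related gap is the classification of each constant-residual piece. The universal property of $(R_{\Db'},\frmm_{\Db'})$ from \cite[Prop.\ 3.3]{chen2014} pro-represents the deformation functor on \emph{local} profinite coefficient rings, while $Be_{\Db'}$ is a general admissible Noetherian algebra. Completing at a closed point only produces maps $R_{\Db'} \to \widehat{(Be_{\Db'})}_{\frmm}$, one for each maximal ideal, and these do not assemble into the required continuous map $R_{\Db'} \to Be_{\Db'}$ without further work; saying the construction is ``inverse to pulling back the universal pseudodeformation'' presupposes exactly the representability on non-local coefficients that is being proved. The standard repair is again the missing finiteness: once one knows that $D$ is defined over the profinite closed subring $B_0 \subseteq Be_{\Db'}$ topologically generated by the $\Lambda_i^D(g)$ (a finite product of local profinite rings with finite residue fields, since $B$ is Noetherian), the universal property applies factor by factor, the classifying morphism is the composite $R_{\Db'} \to B_0 \subseteq Be_{\Db'}$, and it is unique because $R_{\Db'}$ is topologically generated by the coefficients of its universal pseudodeformation. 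So both ends of your argument---local constancy and the classifying map---reduce to the same lemma, which your proposal asserts rather than proves; the remaining bookkeeping (idempotents, functoriality, Noetherianness of the pieces via Proposition \ref{prop:R_Db_noeth}) is indeed routine.
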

That is, the moduli of continuous pseudorepresentations of a profinite group is ``purely formal,'' i.e.\ semi-local, unlike the moduli of its representations. This means that all non-trivial positive-dimensional algebraic families of residual representations consist of varying extension classes with a fixed semi-simplification, with \eqref{eq:ext_fam-2} as the archetypal example. 

Because continuous representations induce continuous pseudorepresentations, there exists a natural morphism from each of the moduli spaces of Definition \ref{defn:Rep_groupoids_profinite} to $\CPsR^d_R$, for example
\[
\hat\psi: \CRep^d_R \lra \CPsR^d_R.
\]
Using the decomposition of Theorem \ref{thm:chen_3.14}, we may study this morphism over one component of the base at the time. Fixing a residual pseudorepresentation $\Db: G \ra \bF_\Db$, we set $\CRep_\Db := \CRep^d_R \otimes_{\CPsR^d_R} \CPsR_\Db$ to be the groupoid of representations with residual pseudorepresentation $\Db$, and write $\hat\psi$ for the base change
\[
\hat\psi: \CRep_\Db \lra \CPsR_\Db = \Spf R_\Db
\]
We may analogously define $\CRep^\square_\Db$ and $\overline{\CRep}_\Db$. 

It is well-known that $\hat\psi$ is an isomorphism when $\Db$ is absolutely irreducible, i.e.\ when $\rho^{ss}_\Db$ is absolutely irreducible (see \cite{nyssen1996, rouquier1996} in the case of pseudocharacters, and \cite[Thm.\ 2.22(i)]{chen2014} for pseudorepresentations). In this case, $\CRep_\Db$ is purely formal. 

\subsection{Algebraization of Moduli of Representations over Moduli of Pseudorepresentations}
\label{subsec:algebraization}
Our goal is to draw a conclusion about $\hat\psi$ similar to Theorem \ref{thm:psi_main}. As before, our principal tool will be the universal Cayley-Hamilton algebra of \S\ref{subsec:CHPI}. As nothing about its construction was particular to the finitely generated case, the universal continuous pseudodeformation of $\Db$
\[
D^u_\Db : \bZ_p\lb G\rb \otimes_{\bZ_p} R_\Db \lra R_\Db 
\]
induces the universal Cayley-Hamilton algebra
\begin{equation}
\label{eq:univ_CH_alg}
E(G)_\Db := (\bZ_p\lb G \rb \otimes_{\bZ_p} R_\Db) / \CH(D^u_\Db). 
\end{equation}
Because $D^u_\Db$ is continuous, it factors through the profinite completion $R_\Db\lb G \rb$ of $\bZ_p\lb G\rb \otimes_{\bZ_p} R_\Db$. Denote the factorization by $\tilde D^u_\Db : R_\Db\lb G \rb \ra R_\Db$ and denote the resulting Cayley-Hamilton algebra by
\[
\tilde E(G)_\Db := R_\Db\lb G \rb / \CH(\tilde D^u_\Db).
\]
As a consequence of the following important properties of the universal Cayley-Hamilton algebra, the two definitions are identical. 
\begin{prop}
\label{prop:E(G)}
Assume that the continuous cohomology group $H^1_c(G, \ad \rho^{ss}_\Db)$ is finite-dimensional over $\bF_\Db$. 
\begin{enumerate}
\item The natural map $E(G)_\Db \ra \tilde E(G)_\Db$ is a topological isomorphism.
\item The quotient map $R_\Db\lb G \rb \rsurj E(G)_\Db$ is continuous.
\item $E(G)_\Db$ is module-finite as an $R_\Db$-algebra, and therefore Noetherian. 
\item On $E(G)_\Db$, the profinite topology, the $\mDb$-adic topology, and the quotient topology from the surjection $R_\Db \lb G \rb \rsurj E(G)_\Db$ are equivalent. 
\item When $\Db$ is multiplicity-free, $E(G)_\Db$ is a generalized matrix algebra with canonical pseudorepresentation equivalent to $D^u_\Db$. In particular, when $\Db$ is absolutely irreducible, $E(G)_\Db \simeq M_d(R_\Db)$.
\end{enumerate}
\end{prop}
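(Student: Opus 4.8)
The proof rests on one finiteness statement about the residual algebra; granting that, parts (1)--(4) are formal bookkeeping and part (5) is a direct application of Theorem~\ref{thm:CH_MF_GMA}. I would proceed in the order: the key finiteness step, then (3), then (1), then (2) and (4), and finally (5).

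\emph{The key step: $\bar E := E(G)_\Db \otimes_{R_\Db} \bF_\Db$ is finite-dimensional over $\bF_\Db$.} The hypothesis guarantees $R_\Db$ is Noetherian (Proposition~\ref{prop:R_Db_noeth}), so each $R_\Db/\mDb^n$ is a finite ring and $\bZ_p\lb G\rb \otimes_{\bZ_p} R_\Db/\mDb^n$ is already profinite; in particular, base-change stability of the Cayley--Hamilton ideal (Proposition~\ref{prop:CH}) identifies $\bar E$ with $\bF_\Db\lb G\rb/\CH(\Db)$. This is a Cayley--Hamilton $\bF_\Db$-algebra of dimension $d$ with $\Db$ split. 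Its kernel $\ker(\Db)$ is nilpotent of order at most $N(d)$ by Corollary~\ref{cor:kernel_CH_nilpotent}, while $\bar E/\ker(\Db) \cong \prod_i M_{n_i}(\bF_\Db)$ is finite-dimensional by Theorem~\ref{thm:chen_2.16}, the residue field $\bF_\Db$ being perfect. It then remains to bound the successive quotients $\ker(\Db)^j/\ker(\Db)^{j+1}$ for $j \le N(d)$: each is a module over the semisimple ring $\bar E/\ker(\Db)$ whose isotypic component attached to a pair of simple constituents $(\rho_i,\rho_{i'})$ of $\rho^{ss}_\Db$ is a sum of copies of $\Hom(\rho_i,\rho_{i'})$, the number of copies being controlled by $\Ext^1_G(\rho_{i'},\rho_i)$, which is a subquotient of $H^1_c(G,\ad\rho^{ss}_\Db)$ and hence finite-dimensional by hypothesis. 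This is exactly Chenevier's argument for the Noetherianness of $R_\Db$ (\cite[\S\S2.7,\,2.26,\,2.35]{chen2014}), adapted precisely as in the proof of Proposition~\ref{prop:R_Db_noeth} so that Corollary~\ref{cor:kernel_CH_nilpotent} supplies the nilpotence bound in arbitrary residue characteristic; I would simply invoke it. I expect this to be the main obstacle: it is where the cohomological hypothesis is genuinely consumed, and where the small-characteristic input of \cite{samoilov2009}, packaged in Corollary~\ref{cor:kernel_CH_nilpotent}, takes the place of the customary hypothesis that $(2d)!$ be invertible.

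\emph{Parts (3) and (1).} Reducing modulo powers of $\mDb$ and a dévissage from the key step show that $E(G)_\Db/\mDb^n E(G)_\Db$ is finite for every $n$. Hence $\tilde E(G)_\Db = R_\Db\lb G\rb/\CH(\tilde D^u_\Db)$, which is the $\mDb$-adic completion of $E(G)_\Db$, is a profinite $R_\Db$-module with $\tilde E(G)_\Db/\mDb \tilde E(G)_\Db \cong \bar E$ finite-dimensional; lifting an $\bF_\Db$-basis of $\bar E$ and applying the topological Nakayama lemma shows these lifts generate $\tilde E(G)_\Db$ as an $R_\Db$-module — the submodule they span is finitely generated over the complete Noetherian local ring $R_\Db$, hence $\mDb$-adically closed, hence everything. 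This yields part~(3), and Noetherianness follows since $\tilde E(G)_\Db$ is then module-finite over a Noetherian ring. For part~(1), the natural map $\pi\colon E(G)_\Db \rsurj \tilde E(G)_\Db$ is surjective because its image is the image of $\bZ_p\lb G\rb\otimes_{\bZ_p} R_\Db$, which contains $R_\Db$ and the images of $G$ and therefore contains the finite module-generating set of $\tilde E(G)_\Db$ constructed above; and once one knows $E(G)_\Db$ is itself module-finite over $R_\Db$ — which follows from Proposition~\ref{prop:CH_finite}, since $E(G)_\Db$ is Cayley--Hamilton and is generated as an $R_\Db$-algebra by finitely many elements, namely images of finitely many $g\in G$ whose reductions generate the finite-dimensional $\bar E$ — the algebra $E(G)_\Db$ is $\mDb$-adically complete and separated, so it coincides with its completion $\tilde E(G)_\Db$. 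Both sides being profinite, the continuous bijection $\pi$ is a homeomorphism. The one point that I expect to require genuine care here (beyond the key step) is checking that finitely many images of elements of $G$ already generate $E(G)_\Db$ as an \emph{algebra} rather than merely topologically; this is where the finiteness of $\bar E$ enters again.

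\emph{Parts (2), (4), (5).} Part~(2) is then immediate: under the identification of part~(1), $E(G)_\Db = R_\Db\lb G\rb/\CH(\tilde D^u_\Db)$ with its quotient topology, and the quotient map of a profinite ring is continuous. For part~(4), module-finiteness makes the $\mDb$-adic topology on $E(G)_\Db$ have finite quotients, hence profinite, and the quotient map $R_\Db\lb G\rb \rsurj E(G)_\Db$ is continuous for it because $G$ maps to $E(G)_\Db/\mDb^n E(G)_\Db$ through a finite quotient; the three topologies then coincide by the standard uniqueness of the $\mDb$-adic topology on a finitely generated module over the complete Noetherian local ring $R_\Db$. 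Finally, part~(5) is Theorem~\ref{thm:CH_MF_GMA} applied to the Cayley--Hamilton $R_\Db$-algebra $(E(G)_\Db, D^u_\Db)$: $R_\Db$ is Noetherian (Proposition~\ref{prop:R_Db_noeth}) and Henselian local, $E(G)_\Db$ is finitely generated over $R_\Db$ by part~(3), and $D^u_\Db$ is residually split (as $\Db$ is a split residual pseudorepresentation by our standing convention) and residually multiplicity-free by hypothesis; the theorem then produces a GMA structure $\cE$ with $D_\cE = D^u_\Db$, and in the absolutely irreducible case $r=1$, $d_1=d$, so the GMA is $M_{d_1}(\cA_{1,1}) \cong M_d(R_\Db)$.
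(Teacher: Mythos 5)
Your key finiteness step and your part (5) do match the paper's proof: the same ingredients appear there, namely Theorem \ref{thm:chen_2.16} for the semisimple quotient, Corollary \ref{cor:kernel_CH_nilpotent} for nilpotence of $\ker(\Db)$, the hypothesis on $H^1_c(G,\ad\rho^{ss}_\Db)$ consumed exactly as in Proposition \ref{prop:R_Db_noeth}, and Theorem \ref{thm:CH_MF_GMA} for the GMA statement. (One imprecision: for $j\geq 2$ the graded piece $\ker(\Db)^j/\ker(\Db)^{j+1}$ is not ``controlled by $\Ext^1$'' directly; it is a quotient of $(\ker(\Db)/\ker(\Db)^2)^{\otimes j}$, and only the first graded piece embeds into $H^1_c(G,\ad\rho^{ss}_\Db)$.)

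The genuine gap is in the topological bookkeeping for (1)--(4), which you describe as formal but which is where the paper does real work. You never establish the one nontrivial topological fact, which the paper proves first: the Cayley--Hamilton ideal $\CH(\tilde D^u_\Db)\subset R_\Db\lb G\rb$ is \emph{closed} (the paper exhibits it as the continuous image of a compact set and applies the closed map lemma); equivalently, the quotient topology on $\tilde E(G)_\Db$ is profinite, hence Hausdorff, hence $\mDb$-adically separated. Without this, several of your steps fail or beg the question. First, the assertion that the submodule spanned by your lifts is ``finitely generated over $R_\Db$, hence $\mDb$-adically closed, hence everything'' is not a valid principle when the ambient module is not yet known to be finitely generated: for $M=\bZ_p\oplus\bQ_p$ over $\bZ_p$, every $M/p^nM$ is finite, lifts of a basis of $M/pM$ span the finitely generated submodule $\bZ_p\oplus 0$, yet that submodule is not closed and $M$ is not finitely generated. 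What rescues the argument in the present situation is precisely Hausdorffness of the quotient topology (so that the compact submodule, a continuous image of $R_\Db^m$, is closed), or equivalently $\mDb$-adic separatedness of $\tilde E(G)_\Db$ followed by the standard ``separated $+$ finite special fiber over a complete Noetherian local base $\Rightarrow$ module-finite'' lemma, which is the paper's route to (3) --- and both reduce to the closedness of $\CH(\tilde D^u_\Db)$. Second, your parenthetical claim that $\tilde E(G)_\Db$ is the $\mDb$-adic completion of $E(G)_\Db$ is unproven and is essentially equivalent to (1) plus completeness. Third, the point you flag yourself --- that finitely many images of elements of $G$ generate $E(G)_\Db$ as an $R_\Db$-algebra, so that Proposition \ref{prop:CH_finite} applies --- is indeed unresolved, and the paper does not argue this way; module-finiteness comes from the separatedness argument, not from abstract finite generation. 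Finally, deriving (2) from (1) as ``a quotient map of profinite rings is continuous'' empties (2) of content (in the paper, (2) \emph{is} the closedness statement, proved first and then used for (3), (4), (1)), and in (4) your claim that $G$ maps to $E(G)_\Db/\mDb^n E(G)_\Db$ through a finite quotient, i.e.\ that the relevant kernel is open, is again the point at issue; after (2) and (3) it follows because $\mDb^n E(G)_\Db$ is compact of finite index, hence open in the profinite topology.
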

\begin{proof}
The two-sided ideal $\CH(\tilde D^u_\Db) \subset R_\Db \lb G \rb$ is a two-sided ideal generated by the image of $\chi^{[\alpha]}(r_1, \dotsc,r_d)$ where $\alpha$ varies over the finite set $I_d^d$ and $r_i$ vary over $R$, where we are using the notation and notions of \cite[\S1.10]{chen2014}. For the moment, let $R$ denote $R_\Db \lb G \rb$ and let $R_l, R_r$ be copies of $R$ distinguished for notational purposes. Let $(r_*^\alpha)$ denote an element of $R_*^{I_d^d}$ where $* = r,l$, and let $(r_i)$ denote an element of $R^d$.  Now define a continuous function 
\[
\begin{aligned}
R_l^{I_d^d} \times R^d \times R_r^{I_d^d} &\lra R \\
((r_l^\alpha) , (r_i), (r_r^\alpha)) &\mapsto \sum_{\alpha \in I_d^d}  r_l^{\alpha}\cdot \chi_D^{[\alpha]}((r_i)) \cdot r_r^\alpha.
\end{aligned}
\]
The image of this map is precisely the two-sided ideal generated by the image of the $\chi^{[\alpha]}$, i.e.\ $\CH(\tilde D^u_\Db)$, and it is closed by the closed map lemma, proving (2). Consequently, the quotient map $R_\Db \lb G \rb \rsurj \tilde E(G)_\Db$ induces a quotient topology equivalent to the profinite topology. 

We now work with $\bar E := \tilde E(G)_\Db \otimes_{R_\Db} \bF_\Db$. We wish to show that $\bar E$ is finite-dimensional over $\bF_\Db$. Firstly, Theorem \ref{thm:chen_2.16}(1) gives us that $\bar E/\ker(\Db)$ is finite-dimensional as a $\bF_\Db$-vector space. Because $\bar E$ is a Cayley-Hamilton algebra over a field, we may also apply Proposition \ref{prop:N(d)} to conclude that $\ker(\Db) \subset \bar E$ is nilpotent. Because of the natural surjection $\left(\ker(\Db)/\ker(\Db)^2\right)^{\otimes i} \rsurj \ker(\Db)^i/\ker(\Db)^{i+1}$ induced by multiplication, it will suffice to show that $\ker(\Db)/\ker(\Db)^2$ is finite-dimensional. We now invoke the finiteness of $H^1_c(G, \ad \rho^{ss}_\Db)$, which, by the argument of \cite[Prop.\ 3.35]{chen2014}, contains $\ker(\Db)/\ker(\Db)^2$ as a sub-vector space. 

The finiteness of the $\bF_\Db$-dimension of $\bar E$ along with the fact that $E(G)_\Db$ is clearly $\mDb$-adically separated implies that $E(G)_\Db$ is finite as a $R_\Db$-module. This completes (3), from which (4) follows. 

Observe that the image of the composite map $\bZ_p\lb G \rb \otimes_{\bZ_p} R_\Db \rsurj E(G)_\Db \ra \tilde E(G)_\Db$ is dense and is an $R_\Db$-submodule. Since $\tilde E(G)_\Db$ is a finite $R_\Db$-module, we have an isomorphism $E(G)_\Db \risom \tilde E(G)_\Db$, establishing part (1).

The algebra structure results of part (5) come from \cite[Thm.\ 2.22]{chen2014}, and the equivalence of the pseudorepresentations was proved in Theorem \ref{thm:CH_MF_GMA}.
\end{proof}

Using these results on $E(G)_\Db$, we get a result for continuous representations analogous to Proposition \ref{prop:R_E_equivalent}. 
\begin{thm}
\label{thm:R_E_rep_equiv}
Assume that $\dim_{\bF_\Db} H^1_c(G, \ad \rho^{ss}_\Db)$ is finite. There is a functorial equivalence of categories between continuous representations of $G$ with coefficients in $\Adm_{\bZ_p}$ with induced residual pseudorepresentation $\Db$ and continuous representations $(V_B, \rho_B)$ of $E(G)_\Db$ with coefficients in $\Adm_{\bZ_p}$ compatible with the universal pseudorepresentation $D^u_\Db : E(G)_\Db \ra R_\Db$. In particular, there is an isomorphism 
\[
\CRep_\Db \lrisom \CRep_{E(G)_\Db, D^u_\Db}
\]
of $\Spf \bZ_p$-groupoids. 
\end{thm}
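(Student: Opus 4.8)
The plan is to bootstrap the algebraic statement of Proposition~\ref{prop:R_E_equivalent} through the $\frmm_\Db$-adic topology, using the finiteness results of Proposition~\ref{prop:E(G)} to control continuity. First I would fix a continuous representation $(V_B,\rho_B)$ of $G$ with coefficients in $B \in \Adm_{\bZ_p}$ and induced split residual pseudorepresentation $\Db$. Writing $D_B := \det \circ \rho_B : \bZ_p\lb G\rb \otimes_{\bZ_p} B \ra B$ for the induced (continuous) pseudorepresentation, the fact that $D_B$ reduces to $\Db$ means that $D_B$ is classified by a map $R_\Db \ra B$ in $\Adm_{\bZ_p}$, i.e.\ $D_B = D^u_\Db \otimes_{R_\Db} B$. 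Now apply the purely algebraic Proposition~\ref{prop:R_E_equivalent} (with $A = \bZ_p$, $R = \bZ_p\lb G\rb$, and the discrete quotients of $B$, or directly with $R = R_\Db\lb G\rb$): the representation $\rho_B$ factors uniquely as
\[
\bZ_p\lb G\rb \otimes_{\bZ_p} B \lra E(G)_\Db \otimes_{R_\Db} B \lra \End_B(V_B),
\]
where the first arrow is the universal Cayley-Hamilton representation base-changed along $R_\Db \ra B$ and the second is a $B$-algebra homomorphism compatible with $D^u_\Db$. This produces the functor $\CRep_\Db \to \CRep_{E(G)_\Db, D^u_\Db}$ on objects; functoriality and fully faithfulness are immediate since the underlying $B$-modules and $B$-linear isomorphisms are unchanged, and the factorization in Proposition~\ref{prop:R_E_equivalent} is unique.

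The content is in the two continuity checks needed to make this an equivalence of groupoids of \emph{continuous} representations. Going forward (from $G$ to $E(G)_\Db$): I must verify that the resulting homomorphism $E(G)_\Db \otimes_{R_\Db} B \to \End_B(V_B)$ is continuous when $B$ carries its adic topology and $V_B$ the induced one. By Proposition~\ref{prop:E(G)}(4) the profinite topology on $E(G)_\Db$ agrees with the $\frmm_\Db$-adic topology, so $E(G)_\Db \otimes_{R_\Db} B$ is $\frmm_B$-adically a finite $B$-module, and the map factors a continuous map out of $\bZ_p\lb G\rb \otimes_{\bZ_p} B$ through the \emph{open} surjection $\bZ_p\lb G\rb \otimes_{\bZ_p} B \rsurj E(G)_\Db \otimes_{R_\Db} B$ (openness is Proposition~\ref{prop:E(G)}(2)--(4)); hence it descends to a continuous map. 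Going backward (from $E(G)_\Db$ to $G$): given a continuous $B$-algebra map $E(G)_\Db \otimes_{R_\Db} B \to \End_B(V_B)$ compatible with $D^u_\Db$, precomposing with the continuous surjection $\bZ_p\lb G\rb \otimes_{\bZ_p} B \rsurj E(G)_\Db \otimes_{R_\Db} B$ and then with $G \hookrightarrow \bZ_p\lb G\rb$ yields a continuous action $\rho_B : G \to \Aut_B(V_B)$; its induced pseudorepresentation is $D^u_\Db \otimes_{R_\Db} B$ by compatibility, which reduces to $\Db$, so $(V_B,\rho_B)$ lies in $\CRep_\Db(B)$. These two constructions are mutually inverse on the nose because on both sides the data is the same $B$-module with a module action, just expressed through different (but topologically equivalent, by Proposition~\ref{prop:E(G)}) presenting algebras.

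The main obstacle is the backward continuity step, i.e.\ ensuring that a representation of $E(G)_\Db$ that is merely continuous for the $R_\Db$-module (equivalently $\frmm_\Db$-adic) topology genuinely comes from a \emph{continuous} representation of the profinite group $G$; this is exactly where the equivalence of the profinite and adic topologies on $E(G)_\Db$ — Proposition~\ref{prop:E(G)}(4), which in turn rests on the module-finiteness (3) and ultimately on Samoilov's nilpotence bound via Corollary~\ref{cor:kernel_CH_nilpotent} — is indispensable. Once that topological identification is in hand, the argument is formal: everything reduces to the algebraic equivalence $\Rep^d_R \cong \Rep_{E(R)^d, D^u}$ of Proposition~\ref{prop:R_E_equivalent} applied level-by-level over the discrete Artinian quotients of the coefficient ring and then passed to the limit, together with the observation that a compatible system of representations over $B/\frmm_B^n$ with bounded (in fact finite, by module-finiteness) presenting algebra glues to a continuous representation over $B$. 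The last sentence of the theorem, the isomorphism $\CRep_\Db \risom \CRep_{E(G)_\Db, D^u_\Db}$ of $\Spf\bZ_p$-groupoids, is then just the restatement of this equivalence as an isomorphism of fibered categories over $\FS_{\bZ_p}$, compatible with base change because the factorization of Proposition~\ref{prop:R_E_equivalent} is.
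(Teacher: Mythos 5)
Your proposal is correct and follows essentially the same route as the paper: use the induced pseudorepresentation to make $B$ an admissible $R_\Db$-algebra, factor $\rho_B$ uniquely through the Cayley-Hamilton quotient $E(G)_\Db$ (Propositions \ref{prop:CH}/\ref{prop:R_E_equivalent}), invoke Proposition \ref{prop:E(G)}(2),(4) for the continuity of this factorization, and define the inverse functor by precomposition with $\bZ_p\lb G\rb \ra E(G)_\Db$. The only cosmetic difference is that you phrase the forward continuity via openness of the quotient map after base change to $B$, whereas the paper works with $\bZ_p\lb G\rb \otimes_{\bZ_p} R_\Db \ra \End_B(V_B)$ directly; both rest on the same topological identification from Proposition \ref{prop:E(G)}.
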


\begin{proof}
Choose $B \in \Adm_{\bZ_p}$ and $(V_B, \rho_B) \in \CRep_\Db(B)$. The continuous homomorphism $\rho_B: \bZ_p\lb G\rb \otimes_{\bZ_p} B \ra \End_B(V_B)$ induces a continuous homomorphism $R_\Db \ra B$ corresponding to the pseudorepresentation $\hat\psi(V_B)$, giving $B$ the structure of an admissible $R_\Db$-algebra. Therefore there exists a canonical continuous map $\bZ_p\lb G \rb \otimes_{\bZ_p} R_\Db \ra \End_B(V_B)$ which induces $\rho_B$ by applying $\otimes_{R_\Db} B$ to the source. This factors uniquely and continuously through the Cayley-Hamilton quotient $E(G)_\Db$ by Propositions \ref{prop:CH} and \ref{prop:E(G)}(2,4), giving an object of $\CRep_{E(G)_\Db, D^u_\Db}$. Functorality may be checked using the stability of the Cayley-Hamilton quotient under base change described in Proposition \ref{prop:CH}. 

There exists an inverse functor, associating to an admissible $R_\Db$-algebra $B$ with $(V_B, \eta_B) \in \CRep_{E(G)_\Db, D^u_\Db}$ the $\bZ_p$-algebra $B$ and the composite map $\bZ_p \lb G \rb \ra E(G)_\Db \buildrel{\eta_B}\over \lra \End_B(V_B)$. 
\end{proof}

The following important result shows that $\hat\psi$ is algebraizable. This allows us to deduce further properties of $\hat\psi$ from the study of \S\ref{sec:Rep}. We write $\PsR_\Db$ for $\Spec R_\Db$. 

Following \cite[\S10.13]{ega1}, we will call a homomorphism $A \ra B$ in $\Adm_{\bZ_p}$ ``formally finitely generated'' if this map is compatible with a presentation of $B$ as a quotient of a restricted power series ring $A \langle x_1, \dotsc, x_n\rangle$, and use the term ``formally finite type'' to describe the corresponding morphisms of formal schemes. 

\begin{thm}
\label{thm:PsR_background}
Assume that $\dim_{\bF_\Db} H^1_c(G, \ad \rho^{ss}_\Db)$ is finite. The groupoids $\CRep_\Db$ and $\CRepB_\Db$ (resp.\ the functor $\CRep^\square_\Db$) over $\Spf \bZ_p$ are representable by formally finite type formal algebraic stacks (resp.\ formally finite type formal scheme) over $\Spf R_\Db$ which are algebraizable of finite type over $\Spec R_\Db$ with algebraizations $\Rep_\Db := \Rep_{E(G)_\Db, D^u_\Db}$ and $\RepB_\Db := \RepB_{E(G)_\Db, D^u_\Db}$ (resp.\ $\Rep^\square_\Db := \Rep^\square_{E(G)_\Db, D^u_\Db}$).

The natural morphism $\psi: \Rep_\Db \ra \PsR_\Db$ has $\frmm_\Db$-adic completion $\hat\psi$ and satisfies the following properties. 
\begin{enumerate}
\item $\psi$ is universally closed, 
\item $\psi$ has connected geometric fibers with a unique closed point corresponding to the unique semi-simple representation inducing the pseudorepresentation corresponding to the base of the fiber, 
\item $\psi$ consists of an adequate moduli space $\Rep_\Db \ra \psi_*(\cO_{\Rep_\Db})$ followed by an adequate homeomorphism $\nu: \psi_*(\cO_{\Rep_\Db}) \ra \Spec R_\Db$, and  
\item if $\Db$ is multiplicity free, then $\psi$ is precisely an adequate moduli space; moreover, it is a good moduli space.
\end{enumerate}
\end{thm}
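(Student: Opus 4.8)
The plan is to reduce the whole statement to Section~\ref{sec:Rep} by working with the $R_\Db$-algebra $E(G)_\Db$. Under the hypothesis $\dim_{\bF_\Db}H^1_c(G,\ad\rho^{ss}_\Db)<\infty$, Proposition~\ref{prop:R_Db_noeth} gives that $R_\Db$ is Noetherian and Proposition~\ref{prop:E(G)} gives that $E(G)_\Db$ is module-finite (hence finitely generated) over $R_\Db$, with its natural topology equal to the $\mDb$-adic one. Hence, by Section~\ref{sec:Rep}, the framed functor and groupoids $\Rep^\square_{E(G)_\Db,D^u_\Db}$, $\Rep_{E(G)_\Db,D^u_\Db}$, $\RepB_{E(G)_\Db,D^u_\Db}$ attached to the compatible pseudorepresentation $D^u_\Db$ (Definition~\ref{defn:D_compatible}) are an affine scheme and algebraic stacks, all of finite type over $\Spec R_\Db$; call them $\Rep^\square_\Db$, $\Rep_\Db$, $\RepB_\Db$. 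For the algebraization I would check that their $\mDb$-adic completions are $\CRep^\square_\Db$, $\CRep_\Db$, $\CRepB_\Db$: this follows from Theorem~\ref{thm:R_E_rep_equiv} together with the fact that, the topology of $E(G)_\Db$ being $\mDb$-adic, a continuous representation of $E(G)_\Db$ over an object of $\Adm_{\bZ_p}$ is the same datum as a compatible system of representations of the Artinian quotients $E(G)_\Db/\mDb^n$, i.e.\ an object of the $\mDb$-adic completion of the algebraic moduli. Formal finiteness of type over $\Spf R_\Db$, and representability of $\CRep^\square_\Db$ by an affine formal scheme and of $\CRep_\Db,\CRepB_\Db$ by formal algebraic stacks, then follow by $\mDb$-adic completion from the corresponding statements over $\Spec R_\Db$.

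Turning to $\psi$, note that $\Rep^\square_\Db=\Rep^{\square,d}_{E(G)_\Db}\times_{\PsR^d_{E(G)_\Db}}\Spec R_\Db$ is affine and of finite type over the Noetherian ring $R_\Db$, so by Example~\ref{eg:adequate_good}(1) and Theorem~\ref{thm:alper} the map $\phi:\Rep_\Db=[\Rep^\square_\Db/\GL_d]\to\Rep^\square_\Db/\!/\GL_d$ is an adequate moduli space, through which $\psi$ factors uniquely (Theorem~\ref{thm:alper}(4)) via an affine morphism $\nu:\Rep^\square_\Db/\!/\GL_d\to\Spec R_\Db$; as $\nu$ is affine, $\Spec\psi_*(\cO_{\Rep_\Db})=\Rep^\square_\Db/\!/\GL_d$, which is the first half of (3). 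To see that $\nu$ is an adequate homeomorphism, apply Theorem~\ref{thm:psi_main} to the finitely generated $R_\Db$-algebra $E(G)_\Db$: this makes $\nu^d:\Rep^{\square,d}_{E(G)_\Db}/\!/\GL_d\to\PsR^d_{E(G)_\Db}$ an adequate homeomorphism. Now base change along the closed section $\Spec R_\Db\to\PsR^d_{E(G)_\Db}$ classifying $D^u_\Db$: the base change of $\nu^d$ remains an adequate homeomorphism (integrality and the universal-homeomorphism property are preserved, and $\nu^d$ is an isomorphism after $\otimes_{\bZ}\bQ$, so stays one), and the natural comparison map $\Rep^\square_\Db/\!/\GL_d\to(\Rep^{\square,d}_{E(G)_\Db}/\!/\GL_d)\times_{\PsR^d_{E(G)_\Db}}\Spec R_\Db$ — measuring the failure of GIT quotients to commute with this base change — is itself an adequate homeomorphism by Alper's work; composing proves (3). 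Given (3), $\nu$ is integral and of finite type, hence finite, hence universally closed, and $\phi$ is universally closed by Theorem~\ref{thm:alper}(1), so $\psi=\nu\circ\phi$ is universally closed, which is (1). For (2), the geometric fibre of $\psi$ over $z\in\Spec R_\Db(\bar k)$ agrees with the geometric fibre of $\psi^d$ over the induced pseudorepresentation $D_z=D^u_\Db\otimes_{R_\Db,z}\bar k$, which by Corollary~\ref{cor:ss_to_psr} consists of the $\bar k$-representations of $E(G)_\Db$ with Jordan--H\"older factors those of $\rho^{ss}_{D_z}$; the unique closed point is $\rho^{ss}_{D_z}$ by Theorem~\ref{thm:chenevier_A}, and since each such representation admits a Jordan--H\"older filtration exhibiting $\rho^{ss}_{D_z}$ as a degeneration, $\rho^{ss}_{D_z}$ lies in the closure of every point of the fibre, so the fibre is connected.

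For (4), when $\Db$ is multiplicity-free, Proposition~\ref{prop:E(G)}(5) says $(E(G)_\Db,D^u_\Db)$ is a generalized matrix algebra over $R_\Db$ with canonical pseudorepresentation $D^u_\Db$. Since $R_\Db$ is a complete (hence Henselian) Noetherian local ring and $(E(G)_\Db,D^u_\Db)$ is a finitely generated Cayley--Hamilton algebra that is residually split and multiplicity-free, Theorem~\ref{thm:CH_MF_GMA} applies directly and shows $\psi:\Rep_\Db=\Rep_{E(G)_\Db,D^u_\Db}\to\Spec R_\Db$ is a good moduli space. A good moduli space is an adequate moduli space (Theorem~\ref{thm:alper}(6)); comparing with the factorization $\psi=\nu\circ\phi$ of (3) forces $\nu$ to be an isomorphism, so $\psi$ is precisely an adequate moduli space, and moreover a good one.

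I expect the main obstacle to be the base-change comparison appearing in (3): transporting the adequate-homeomorphism conclusion of Theorem~\ref{thm:psi_main} from $\PsR^d_{E(G)_\Db}$ down to $\Spec R_\Db$ along the non-flat section classifying $D^u_\Db$ means controlling the discrepancy between $\Rep^\square_\Db/\!/\GL_d$ and the base change of $\Rep^{\square,d}_{E(G)_\Db}/\!/\GL_d$; I would handle this using the fact that for a reductive group scheme acting on an affine scheme over a Noetherian base the natural map from the invariants of a base change to the base change of the invariants has kernel and cokernel consisting of finite modules of $p$-torsion nilpotents, i.e.\ is an adequate homeomorphism. A secondary point requiring care is the algebraization in the first paragraph, namely identifying the continuous-representation moduli of the profinite-but-module-finite algebra $E(G)_\Db$ with the $\mDb$-adic completion of the algebraic moduli of Section~\ref{sec:Rep}.
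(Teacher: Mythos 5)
Your proposal is correct and follows the same skeleton as the paper's proof: reduce to the module-finite $R_\Db$-algebra $E(G)_\Db$ via Propositions \ref{prop:R_Db_noeth}, \ref{prop:E(G)} and Theorem \ref{thm:R_E_rep_equiv}, quote the results of \S\ref{sec:Rep} for $\psi$, apply Theorem \ref{thm:CH_MF_GMA} for part (4), and obtain the algebraization/completion statement from the automatic continuity of homomorphisms out of $E(G)_\Db$, which carries the $\mDb$-adic topology. The one place you genuinely diverge is the step you flag as the main obstacle in (3). The paper sidesteps the base-change issue entirely: it observes that the condition that the induced pseudorepresentation lie over $D^u_\Db$ cuts out a connected component (namely $\Spec R_\Db$) of $\PsR^d_{E(G)_\Db}$, so the base change $\Spec R_\Db \ra \PsR^d_{E(G)_\Db}$ is a clopen, hence flat, immersion; Theorem \ref{thm:psi_main} and the flat base-change stability already quoted in Theorem \ref{thm:alper}(5) then restrict on the nose, with no discrepancy between $\Rep^{\square}_\Db /\!/ \GL_d$ and the restriction of $\Rep^{\square,d}_{E(G)_\Db} /\!/ \GL_d$. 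You instead treat the section classifying $D^u_\Db$ as an arbitrary (possibly non-flat) closed immersion and invoke Alper's stronger result that adequate moduli spaces are stable under arbitrary base change up to adequate homeomorphism, then compose adequate homeomorphisms. That argument is valid --- the needed statement is in Alper's adequate moduli spaces paper, though it is not among the properties recorded in Theorem \ref{thm:alper} here --- and it has the merit of not requiring the connected-component observation; the paper's route is shorter and stays within its quoted toolkit. Your explicit arguments for (1), (2) (degeneration to the semisimplification giving connectedness of fibers) and (4) match what the paper leaves implicit in its appeal to \S\ref{sec:Rep} and Theorem \ref{thm:CH_MF_GMA}.
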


The condition on $\nu$ in (3) means that there exist finitely many $p$-power torsion nilpotents $x_i \in \psi_*(\cO_{\Rep_\Db})$, $y_j \in R_\Db$ such that $R_\Db$ is generated over $\psi_*(\cO_{\Rep_\Db})$ by $\{y_j\}$ and the kernel of $\nu^*: \psi_*(\cO_{\Rep_\Db}) \ra R_\Db$ is generated by $\{x_i\}$. 

\begin{proof}
Under the assumption that $\dim_{\bF_\Db} H^1_c(G, \ad \rho^{ss}_\Db)$ is finite, Propositions \ref{prop:R_Db_noeth} and \ref{prop:E(G)} tell us that $R_\Db$ is a Noetherian ring and that the $R_\Db$-algebra $E(G)_\Db$ is finite as a $R_\Db$-module and has a GMA structure compatible with $D^u_\Db$ when $\Db$ is multiplicity-free. Theorem \ref{thm:R_E_rep_equiv} allows us to study representations of $E(G)_\Db$ in place of those of $G$. Then the existence and properties of each of the representation functors $\Rep^\square_\Db$, $\Rep_\Db$, $\RepB_\Db$ and $\psi$ are the content of \S\ref{sec:Rep}. More precisely, we have these statements for $\Rep^d_{E(G)_\Db}$ over $\PsR^d_{E(G)_\Db}$, and the condition that the induced pseudorepresentation lies over $D^u_\Db$ simply cuts out a connected component (namely $\Spec R_\Db$) of the base $\PsR^d_{E(G)_\Db}$. In particular, we apply Theorem \ref{thm:CH_MF_GMA} to obtain part (4). 

In order to see that the $\mDb$-adic completion of $(\Rep_\Db, \psi)$ is $(\CRep_\Db, \hat\psi)$ (and similarly for $\RepB_\Db$, $\Rep^\square_\Db$), we observe that all (non-topological) homomorphisms from $R_\Db$ to admissible $\bZ_p$-algebras $B$ or from $E(G)_\Db$ to $\End_B(V_B)$ are automatically continuous, as $E(G)_\Db$ is a finite $R_\Db$-module with the $\mDb$-adic topology.
\end{proof}

\begin{rem}
The condition ``constant residual pseudorepresentation $\Db$'' is no real restriction to the scope of Theorem \ref{thm:PsR_background} in view of the bijective correspondence between $d$-dimensional residual pseudorepresentations $\Db$ and connected components of $\CRep^d_G$. Putting together the connected components, we can say that there is an algebraization $\Rep^d_G$ of $\CRep^d_G$ of finite type over $\PsR^d_G = \coprod_{\Db} \Spec R_\Db$.
\end{rem}

This algebraization result implies that the topology on an integral $p$-adic family $V_A$ of representations of $G$ with coefficients in $A \in \Adm_{\bZ_p}$ and residual pseudorepresentation $\Db$ can always be strengthened to the $\mDb$-adic topology, and that there are subrings of $A$ that are finitely generated over $R_\Db$ over which a model for $V_A$ exists. 
\begin{cor}
\label{cor:G_coefs}
Let $A \in \Adm_{\bZ_p}$ and let $(\rho_A, V_A) \in \CRep_\Db(A)$. Then with assumptions as in Theorem \ref{thm:PsR_background}, 
\begin{enumerate}
\item the $G$-action on $V_A$ remains continuous for the possibly stronger $\mDb A$-adic topology on $A$, where $A$ has the structure of a continuous $R_\Db$-algebra induced by the pseudorepresentation $\det \circ \rho_A$,
\item there exists a minimal formally finitely generated sub-$R_\Db$-algebra $A' \subseteq A$ with a model $V_{A'} \in \CRep_\Db(A')$ for $V_A$, i.e.\ $V_A \simeq V_{A'} \otimes_{A'} A$, and
\item there exists a minimal finitely generated $\mDb$-adically separated sub-$R_\Db$-algebra $A'_\alg \subseteq A'$ with a model $V_{A'_\alg} \in \Rep_\Db(A'_\alg)$ such that $A'$ is the $\mDb$-adic completion of $A'_\alg$ and $V_{A'} \simeq V_{A'_\alg} \otimes_{A'_\alg} A'$.
\end{enumerate}
\end{cor}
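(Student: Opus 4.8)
The plan is to derive all three parts from Theorem~\ref{thm:R_E_rep_equiv} and Theorem~\ref{thm:PsR_background}, viewing $A$ as a continuous $R_\Db$-algebra via the pseudorepresentation $\det\circ\rho_A$. For part (1), I would first use Theorem~\ref{thm:R_E_rep_equiv} to factor $\rho_A$ through $E(G)_\Db\otimes_{R_\Db}A$. By Proposition~\ref{prop:E(G)}(3) this is a finite $A$-module, so it carries the $\mDb A$-adic topology, and the (surjective) structure map from $\bZ_p\lb G\rb\otimes_{\bZ_p}A$ --- equivalently from $E(G)_\Db$ --- is continuous for it by Proposition~\ref{prop:E(G)}(2)--(4). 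Composing with the $A$-linear, hence $\mDb A$-adically continuous, map to $\End_A(V_A)$ gives continuity of $G\to\GL_d(A)$ for the $\mDb A$-adic topology. This topology is finer than the one $A$ carries as an object of $\Adm_{\bZ_p}$ since $\mDb^nA\to 0$, and it is separated because $\bigcap_n\mDb^nA=0$.

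For part (2), by (1) the pair $(\rho_A,V_A)$ is classified by a morphism $\Spf A\to\CRep_\Db$ over $\Spf R_\Db$, with $A$ given the $\mDb A$-adic topology. By Theorem~\ref{thm:PsR_background}, $\CRep^\square_\Db=\Spf\frC$ for a formally finitely generated --- hence Noetherian and $\mDb$-adically complete --- $R_\Db$-algebra $\frC$, and $\CRep^\square_\Db\to\CRep_\Db$ is a $\GL_d$-torsor. I would pull this torsor back along $\Spf A\to\CRep_\Db$ to obtain an affine formal scheme $\Spf A_\frT$, a $\GL_d$-torsor over $\Spf A$, equipped with a continuous $\GL_d$-equivariant $R_\Db$-algebra homomorphism $\frC\to A_\frT$; let $A'_\frT$ be its image. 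Since $A_\frT$ is separated, the kernel is closed, so $A'_\frT$, a quotient of the complete Noetherian ring $\frC$ by a (closed, hence arbitrary) ideal, is again formally finitely generated over $R_\Db$, and it is $\GL_d$-stable. Setting $A':=(A'_\frT)^{\GL_d}\subseteq(A_\frT)^{\GL_d}=A$ and checking that $\Spf A'_\frT\to\Spf A'$ is still a $\GL_d$-torsor, the torsor $\Spf A'_\frT$ with its equivariant map to $\CRep^\square_\Db$ defines $V_{A'}\in\CRep_\Db(A')$ pulling back to $V_A$; minimality of $A'$ is clear, as it is the invariant ring of the image. (If $V_A$ is free one can skip the torsor and take $A'$ to be the image of the continuous map $\frC\to A$ classifying a framing of $V_A$.)

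For part (3), I would repeat this using the algebraization $\Rep^\square_\Db=\Spec\frC_\alg$ of $\CRep^\square_\Db$ from Theorem~\ref{thm:PsR_background}, where $\frC_\alg$ is of finite type over $R_\Db$ and $\frC=\widehat{\frC_\alg}$ is its $\mDb$-adic completion, with $\GL_d$ acting compatibly. Taking $A'_{\frT,\alg}$ to be the image of $\frC_\alg\hookrightarrow\frC\to A'_\frT$ and $A'_\alg:=(A'_{\frT,\alg})^{\GL_d}\subseteq A'$, this is a finitely generated $R_\Db$-subalgebra of $A'$, $\mDb$-adically separated because $A'$ is; spreading the torsor and its map out over $\Spec\frC_\alg=\Rep^\square_\Db$ produces $V_{A'_\alg}\in\Rep_\Db(A'_\alg)$ with $V_{A'_\alg}\otimes_{A'_\alg}A'\simeq V_{A'}$. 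It then remains to see that $A'$ is the $\mDb$-adic completion of $A'_\alg$: here one uses that $\frC_\alg$ is dense in $\frC$, so $A'_\alg$ is dense in the complete ring $A'$, and that the Artin--Rees lemma over the Noetherian rings $\frC_\alg$, $\frC$ identifies the subspace topology on $A'_\alg$ with its $\mDb A'_\alg$-adic topology.

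The main obstacle is this last bookkeeping: descending the stacky point $\Spf A\to\CRep_\Db$ to a genuine \emph{minimal} sub-$R_\Db$-algebra $A'\subseteq A$ --- which is what forces one through the torsor $\CRep^\square_\Db\to\CRep_\Db$ and the verification that invariant rings again form a torsor --- and controlling the $\mDb$-adic topology under passage to subrings and completions, where the essential inputs are the module-finiteness of $E(G)_\Db$ over $R_\Db$ (Proposition~\ref{prop:E(G)}), the Noetherianness of $\frC$ and $\frC_\alg$, and Artin--Rees. Everything else is formal once Theorems~\ref{thm:R_E_rep_equiv} and \ref{thm:PsR_background} are in hand; an alternative to the torsor manipulation, legitimate because the objects here are finitely presented over $A$, would be a direct Noetherian approximation argument.
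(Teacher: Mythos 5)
Part (1) of your proposal is fine and is exactly the paper's intended argument (the corollary is deduced there ``directly from Proposition \ref{prop:E(G)}'', via the factorization through the module-finite $R_\Db$-algebra $E(G)_\Db$). The problem is the construction you use for parts (2) and (3). The step ``$A':=(A'_\frT)^{\GL_d}$ \dots checking that $\Spf A'_\frT\to\Spf A'$ is still a $\GL_d$-torsor'' fails: $A'_\frT$ is a quotient of the coordinate ring of $\CRep^\square_\Db$, and $\GL_d$ acts on framed representations by conjugation, so the scalars $\bG_m\subset\GL_d$ act trivially on $A'_\frT$; the action is not free and the map to the invariant ring is never a $\GL_d$-torsor, so the descent producing $V_{A'}$ does not exist. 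Worse, the invariant ring is the wrong object altogether: conjugation-invariant functions cannot separate representations whose orbit closures meet, i.e.\ representations with a common semisimplification, so $(A'_\frT)^{\GL_d}$ is essentially the image of the GIT quotient (hence, up to the adequate homeomorphism of Theorem \ref{thm:PsR_background}(3), of $R_\Db$) in $A$. Concretely, for the family \eqref{eq:ext_fam-2} over $A=\bF[x]$ the induced pseudorepresentation is constant, so your $A'$ cannot contain $x$; but no model of that family exists over a subring omitting $x$, since distinct values of $x$ give non-isomorphic extensions. So the $A'$ you build in general admits no model $V_{A'}$, and the minimality claim (``clear, as it is the invariant ring of the image'') is also unsupported.

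The fix is essentially what you relegate to your parenthetical remark, and it is the paper's route: do not pass to invariants at all. Working with a framing (Zariski-locally on $\Spf A$, or after the identification $\CRep_\Db\simeq\CRep_{E(G)_\Db,D^u_\Db}$ of Theorem \ref{thm:R_E_rep_equiv}), the representation is a continuous $R_\Db$-algebra map $E(G)_\Db\to\End_A(V_A)$ with $E(G)_\Db$ a finite $R_\Db$-module; the sub-$R_\Db$-algebra of $A$ generated by the resulting matrix coefficients (equivalently, the image of the formally finitely generated coordinate ring of $\CRep^\square_\Db$ under the classifying map $\frC\to A$) is formally finitely generated, carries a model, and its finite-type, $\mDb$-adically separated algebraic counterpart comes from the algebraization $\Rep^\square_\Db$ of Theorem \ref{thm:PsR_background}, giving (2) and (3). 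Your Artin--Rees/completion bookkeeping at the end is fine once applied to this image rather than to an invariant ring.
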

The corollary follows directly from the fact that $\hat\psi$ is formally finite type and algebraizable by the finite type morphism $\psi$, or, alternatively, directly from Proposition \ref{prop:E(G)}. In particular, this means that the matrix coefficients of a family of representations of $G$ with residual pseudorepresentation $\Db$ generate a finite type algebra over $R_\Db$. 

The algebraization theorem also suggests that there exists a notion of continuous $D^u_\Db$-compatible representation of $G$ valued in an arbitrary $R_\Db$-algebra, i.e.\ not just those in $\Adm_{\bZ_p}$. When such an algebra $A$ is $\mDb$-adically separated, the $\mDb$-adic topology may be used, and the usual notions of continuity may be applied. On the other hand, there are common cases of concern where $A$ is not $\mDb$-adically separated. For example, one often wants to consider continuous Galois representations with coefficients in $\bQ_p$. However, $\bQ_p$ will never be $\mDb$-adically separated because $p \in \mDb$ is a unit in $\bQ_p$. 
 
\begin{defn}
Let $A$ be a $R_\Db$-algebra. A continuous representation of $G$ valued in $\GL_d(A)$ with residual pseudorepresentation $\Db$ is a group homomorphism $\rho: G \ra \GL_d(A)$ arising from an $R_\Db$-algebra morphism $E(G)_\Db \ra M_d(A)$ compatible with the structure morphism $R_\Db \ra A$ by concatenating it with the natural map $G \ra E(G)_\Db^\times$. 
\end{defn}
The moduli functor $\Rep^{\square}_\Db$ parameterizes these continuous representations. As the image of $G$ under $\rho$ lies in a module-finite $R_\Db$-subalgebra of $M_d(A)$, the $\mDb$-adic topology on this subalgebra is separated and $\rho$ is continuous with respect to this topology. 

\begin{rem}
Isomorphism classes of continuous representations of $G$ valued in a $p$-adic field $E$ with residual pseudorepresentation $\Db$ are in bijective correspondence with $E$-points of $\Rep_\Db$. Moreover, such representations uniquely characterize $\Rep_\Db$ in the following sense. Let $\Rep_\Db[1/p]$ represent the generic fiber of $\Rep_\Db$ over $\Spec \bZ_p$. The $p$-adic field-valued representations of $G$ control the reduced substack $\Rep_\Db[1/p]^{\mathrm{red}}$. Likewise, representations of $G$ with coefficients in finite $\bQ_p$-algebras $B$ control $\Rep_\Db[1/p]$. These claims are proved in Lemma \ref{lem:jacobson}(3). 

Also, notice that when $\Db$ is not absolutely irreducible, $\Rep_\Db$ is generally non-separated. If $\cO_E$ is the ring of integers of a $p$-adic field $E$, then there may be multiple $\cO_E$ points inducing a single $E$-point, reflecting the existence of multiple isomorphism classes of $G$-stable $\cO_E$-lattices in an $E$-valued representation. 
\end{rem}

\begin{eg}
Let $\bar\rho: G \ra \GL_d(\bF)$ be a residual representation with induced residual pseudorepresentation $\Db := \det \circ \bar\rho$. Write $R_{\bar\rho}$ for the versal deformation ring of $\bar\rho$ with versal representation $V_{\bar\rho}$. There exists a canonical map $R_\Db \ra R_{\bar\rho}$, and $V_{\bar\rho}$ is continuous with respect to the $\mDb$-adic topology on $R_{\bar\rho}$, which is often a strictly stronger topology than its native topology. Also, there exists a canonical, finite-type, $\mDb$-adically separated $R_\Db$-subalgebra $R_{\bar\rho, \alg}$ of $R_{\bar\rho}$ with a canonical $\mDb$-adically continuous representation $V_{\bar\rho, \alg}$ such that $V_{\bar\rho} \simeq V_{\bar\rho, \alg} \otimes_{R_{\bar\rho, \alg}} R_{\bar\rho}$. 
\end{eg}

\begin{rem}
The influence that Galois cohomology exerts on the structure of $R_{\bar\rho}$ is well-understood. Analogously, appropriate Galois cohomology groups control the structure of $\Rep_\Db$, which will be explained in forthcoming work. See the following example for a basic case. 
\end{rem}

Let us give an explicit example of a fiber of $\psi$, illustrating how $\psi$ satisfies the properties of Theorem \ref{thm:PsR_background}.
\begin{eg}
\label{eg:fiber}
Let $G = G_{\bQ_p}$ where $p > 3$ and let $\Db = \psi(\bar\chi \oplus 1)$ over $\bF_p$, where $\bar\chi$ is the mod $p$ cyclotomic character. Then, using local Tate duality, we calculate that the special fiber $\psi^{-1}(\Db)$ in $\Rep_\Db$ consists of
\begin{itemize}
\item extensions of $1$ by $\bar\chi$, parameterized by $\bP^1_{\bF_p} \cong \bP(\Ext^1_G(1,\bar\chi))$,
\item the semi-simple representation $\bar\chi\oplus 1$, and
\item extensions of $\bar\chi$ by $1$, parameterized by the trivial projective space $\mathrm{pt} = \bP^0_{\bF_p} \cong \bP(\Ext^1_G(\bar\chi,1))$.
\end{itemize}
Notice that the only closed point in $\psi^{-1}(\Db)$ is $\bar\chi\oplus 1$, because each of the projective spaces are open in the fiber, in analogy to the open immersion $\bP^{n-1} \rinj [\bA^n / \bG_m]$. 
\end{eg}

\subsection{Consequences of Formal GAGA for $\psi$}
\label{subsec:FGAMS}
In order to descend closed loci under $\psi$, it will be helpful to know formal GAGA for $\psi$. By ``formal GAGA for a morphism $f$'' we mean that the completion functor on coherent sheaves on the domain of $f$ is an equivalence of categories. The classical case is a proper morphism \cite[Cor.\ 5.1.3]{ega3-1}. We also know formal GAGA for $\psi$ when $\psi$ a good moduli space by \cite{GZB2015}. Even when $\psi$ is not a good moduli space, it satisfies formal GAGA under the following hypothesis. 
\[
\text{(FGAMS)} \quad \begin{array}{ll} \text{Formal GAGA holds for adequate moduli spaces} \\ \qquad \qquad \text{realized as quotient stacks by } \GL_d.\end{array}
\]
Consequently, we have
\begin{thm}
\label{thm:gaga}
 If $\Db$ is multiplicity free, or if the assumption (FGAMS) holds, then formal GAGA holds for the morphism $\psi: \Rep_\Db \ra R_\Db$. 
\end{thm}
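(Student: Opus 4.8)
The plan is to reduce the statement to the two cases where formal GAGA is already known. Recall that by Theorem \ref{thm:PsR_background}, $\psi: \Rep_\Db \to \Spec R_\Db$ is an adequate moduli space composed with an adequate homeomorphism $\nu$, and $R_\Db$ is Noetherian. If $\Db$ is multiplicity-free, part (4) of that theorem tells us $\psi$ is in fact a \emph{good} moduli space, and formal GAGA for good moduli spaces of finite type over a Noetherian base is exactly the content of \cite{GZB2015}; so this case is immediate. Thus the substance is the case where (FGAMS) holds but $\Db$ need not be multiplicity-free.

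In that case, the first step is to factor $\psi$ as $\Rep_\Db \xrightarrow{\phi} Y \xrightarrow{\nu} \Spec R_\Db$, where $Y := \Spec \psi_*(\cO_{\Rep_\Db}) = \Rep^{\square,d}_{E(G)_\Db}/\!\!/\GL_d$ is the GIT quotient, $\phi$ is the adequate moduli space realized as a quotient stack by $\GL_d$ (via Proposition \ref{prop:R_E_equivalent}/Theorem \ref{thm:R_E_rep_equiv}, $\Rep_\Db \cong [\Rep^{\square,d}_{E(G)_\Db, D^u_\Db}/\GL_d]$, with $E(G)_\Db$ module-finite over the Noetherian ring $R_\Db$ by Proposition \ref{prop:E(G)}), and $\nu$ is the adequate homeomorphism of Theorem \ref{thm:PsR_background}(3). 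Formal GAGA for a composite follows once we have it for each factor: the completion functor on coherent sheaves is compatible with composition, so if $\phi$ and $\nu$ both satisfy formal GAGA then so does $\psi = \nu \circ \phi$. Formal GAGA for $\phi$ is precisely the hypothesis (FGAMS), since $\phi$ is an adequate moduli space realized as a quotient by $\GL_d$ and everything is finite type over $\Spec R_\Db$.

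It remains to treat $\nu: Y \to \Spec R_\Db$. Here the key point is that $\nu$ is a finite morphism — indeed it is an adequate homeomorphism, hence integral, affine, and of finite type over the Noetherian ring $R_\Db$, so finite — and finite morphisms are proper, so formal GAGA for $\nu$ is the classical theorem \cite[Cor.\ 5.1.3]{ega3-1}. (Alternatively, for an affine morphism of Noetherian schemes $\Spec B \to \Spec A$ with $B$ module-finite over $A$, the equivalence of coherent sheaves under $\frmm_\Db$-adic completion is elementary, since completion is exact on finite modules over a Noetherian ring and $\hat B = B \otimes_A \hat A$.) Composing, formal GAGA holds for $\psi$.

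The main obstacle is essentially notational rather than mathematical: one must be careful that "formal GAGA for $\psi$" is interpreted as completion along $\frmm_\Db R_\Db$ (equivalently along $\psi^{-1}(\frmm_\Db)$), that all the spaces in sight are of finite type over the Noetherian base $\Spec R_\Db$ so that the cited results apply, and that the composite statement genuinely follows — this last point requires knowing that $\phi_*$ of a coherent sheaf is coherent, which is Theorem \ref{thm:alper}(3), so that the intermediate sheaves on $Y$ to which one applies formal GAGA for $\nu$ are indeed coherent. Given those checks, no further work is needed beyond invoking (FGAMS) and \cite{GZB2015, ega3-1}.
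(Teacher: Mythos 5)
Your argument is correct and takes essentially the same route as the paper: the multiplicity-free case via the good moduli space property and \cite{GZB2015}, and otherwise the factorization of $\psi$ as an adequate moduli space realized as a $\GL_d$-quotient (where (FGAMS) applies) followed by the finite morphism $\nu$. The paper's proof leaves the finite-morphism step and the compatibility of the composite implicit, which you have merely spelled out; no new ideas or corrections are needed.
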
 
\begin{proof}
If $\Db$ is multiplicity free, $\psi$ is a good moduli space by Theorem \ref{thm:PsR_background}, and \cite{GZB2015} shows that formal GAGA holds for good moduli spaces. Otherwise, $\psi$ is an adequate moduli space followed by a finite morphism, and (FGAMS) implies that formal GAGA holds along $\psi$.
\end{proof}

\begin{rem}
According to the authors of \cite{GZB2015}, it is unclear whether to expect that formal GAGA holds for adequate moduli spaces. However, they can prove formal GAGA for adequate moduli spaces such as $BG$ for $G$ a reductive algebraic group.
\end{rem}

The following lemma shows how we can apply formal GAGA. The foremost use will be to algebraize loci of Galois representations that we initially produce only formally. (See, however, Remark \ref{rem:GAGA}.)

\begin{lem}
\label{lem:AG_background}
Let $(\cR, \frmm_\cR)$ be a complete Noetherian local $\bZ_p$-algebra, and let $X$ be an algebraic stack of finite type over $\Spec \cR$. Write $\hat X$ for its $\frmm_\cR$-adic completion. We assume that formal GAGA holds for $X$ over $\Spec \cR$.
\begin{enumerate}
\item There is a natural bijective correspondence between 
\begin{enumerate}
\item projective morphisms $Y \ra X$ and projective morphisms $\hat Y \ra \hat X$,
\item finite schematic morphisms $Y \ra X$ and finite schematic morphisms $\hat Y \ra \hat X$, and 
\item closed immersions $Y \rinj X$ and closed immersions $\hat Y \rinj \hat X$.
\end{enumerate}
\item If $Y$ is a finite type $\Spec \cR$-scheme that is a presentation $Y \ra X$ of $X$, then 
\begin{enumerate}
\item $\hat Y \ra \hat X$ is a fppf cover of $\hat X$ as a map of formal algebraic stacks.
\item $\hat Y \ra X$ is a fpqc cover of $X$ as a map of algebraic stacks.
\item $\hat Y[1/p] \ra X[1/p]$ is a fpqc cover (as a map of algebraic stacks).
\end{enumerate}
\end{enumerate}
\end{lem}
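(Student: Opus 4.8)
The plan is to bootstrap everything from the hypothesis, namely that the $\frmm_\cR$-adic completion functor $\cF \mapsto \hat\cF := \varprojlim_n \cF/\frmm_\cR^n\cF$ induces an equivalence $\mathrm{Coh}(X) \xrightarrow{\sim} \mathrm{Coh}(\hat X)$. The first step is to record two enhancements of this that have nothing to do with $X$ specifically and are pure commutative algebra: over the Noetherian base $\Spec\cR$ the completion functor is \emph{exact} (it is $-\otimes_{\cO_X}\cO_{\hat X}$ and $\cO_X \to \cO_{\hat X}$ is flat on coherent sheaves) and \emph{symmetric monoidal}, $\widehat{\cF\otimes_{\cO_X}\cG}\cong\hat\cF\otimes_{\cO_{\hat X}}\hat\cG$ for coherent $\cF,\cG$ (this holds for finitely generated modules over a Noetherian ring, then globalizes). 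Consequently the GAGA equivalence upgrades automatically to an equivalence between coherent $\cO_X$-algebras and coherent $\cO_{\hat X}$-algebras, and between $\bZ_{\geq 0}$-graded such algebras, and it preserves surjectivity of algebra maps and the property "generated in degree $1$ over the degree-$0$ part." I would also record the standard fact (cf.\ \cite{ega3-1} over a Noetherian affine, which globalizes and stackifies) that relative $\SSpec$ and relative $\mathrm{Proj}$ commute with $\frmm_\cR$-adic completion of the base: $\widehat{\SSpec_X\cA}\cong\SSpec_{\hat X}\hat\cA$ and $\widehat{\mathrm{Proj}_X\cA}\cong\mathrm{Proj}_{\hat X}\hat\cA$.

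\textbf{Part (1).} Given the above, part (1) is essentially formal. For (1)(b): a finite schematic morphism $Y\to X$ is the same datum as the coherent $\cO_X$-algebra $\cA = f_*\cO_Y$ with $Y = \SSpec_X\cA$, and likewise over $\hat X$; the equivalence on coherent algebras together with $\widehat{\SSpec_X\cA}=\SSpec_{\hat X}\hat\cA$ yields the bijection, realized by $Y\mapsto\hat Y$. For (1)(c): a closed immersion into $X$ is the same as a coherent ideal sheaf $\cI\subseteq\cO_X$ (equivalently a finite schematic morphism whose structure map is surjective); since the GAGA equivalence is exact it carries coherent ideal sheaves of $\cO_X$ bijectively to those of $\cO_{\hat X}$, so (1)(c) is the restriction of (1)(b). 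For (1)(a): by definition a projective morphism $Y\to X$ admits a closed immersion $Y\hookrightarrow\bP_X(\cE)$ over $X$ for some coherent $\cO_X$-module $\cE$; since $\bP_X(\cE)\to X$ is projective, formal GAGA is inherited by $\bP_X(\cE)$ over $\Spec\cR$ (formal GAGA is stable under composition with proper morphisms, cf.\ \cite{ega3-1} in the schematic setting), and $\widehat{\bP_X(\cE)}=\bP_{\hat X}(\hat\cE)$, so applying (1)(c) with $\bP_X(\cE)$ in place of $X$ turns $Y\hookrightarrow\bP_X(\cE)$ into a closed immersion $\hat Y\hookrightarrow\bP_{\hat X}(\hat\cE)$; composing with $\bP_{\hat X}(\hat\cE)\to\hat X$ gives a projective $\hat Y\to\hat X$. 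The reverse construction is symmetric, starting by algebraizing the coherent $\cO_{\hat X}$-module that defines the target bundle; independence of the auxiliary module and the fact that the two constructions are mutually inverse both follow from the uniqueness half of formal GAGA, applied on $X$ and on the projective bundles.

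\textbf{Part (2).} For (2)(a): a presentation $Y\to X$ is smooth and surjective with $Y$ a scheme. Setting $X_n := X\times_{\Spec\cR}\Spec(\cR/\frmm_\cR^{n+1})$ and $Y_n := Y\times_X X_n$, one has $\hat X = \varinjlim_n X_n$ and $\hat Y = \varinjlim_n Y_n$ as formal algebraic stacks, and each $Y_n\to X_n$ is smooth and surjective, being the base change of $Y\to X$ along the closed immersion $X_n\hookrightarrow X$. By the definition of flat/smooth/surjective morphisms of formal algebraic stacks via the induced morphisms on infinitesimal neighborhoods, $\hat Y\to\hat X$ is smooth and surjective, hence an fppf cover. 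Parts (2)(b) and (2)(c) I would then deduce formally: since $\mathrm{Coh}(X)\xrightarrow{\sim}\mathrm{Coh}(\hat X)$ and $\hat Y\to\hat X$ is an fppf cover, coherent data on $X$ — and, via part (1), closed substacks, finite schematic morphisms, and projective morphisms over $X$ — are equivalent to such data on $\hat Y$ equipped with a descent datum along $\hat Y\times_X\hat Y\rightrightarrows\hat Y$; that is, $\hat Y\to X$ is an effective descent morphism for the fpqc topology, which is the sense in which it is "an fpqc cover of $X$." (Note $\hat Y\to X$ is genuinely flat and quasi-compact — $X$ and $Y$ are quasi-compact, being of finite type over the Noetherian $\Spec\cR$ — but its set-theoretic image is only the closed substack $X\times_\cR\bF$; effectivity of descent along it is precisely what formal GAGA supplies.) Finally (2)(c) follows from (2)(b) by the flat base change $\Spec\bQ_p\to\Spec\bZ_p$, under which effective descent morphisms are stable and $\hat Y\times_X X[1/p]\cong\hat Y[1/p]$.

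\textbf{Main obstacle.} The bulk of the work is concentrated in the preliminary step: verifying that the formal-GAGA equivalence is compatible with $\otimes$ (so that it descends to an equivalence of coherent algebras and graded algebras, preserving surjectivity and generation in degree $1$) and that $\SSpec$ and $\mathrm{Proj}$ commute with completion. Once those are in hand, (1)(b), (1)(c) and (2)(a) are bookkeeping, (1)(a) needs only the additional standard input that formal GAGA is inherited by projective-over-$X$ stacks, and (2)(b), (2)(c) are purely formal. A secondary point requiring care is the choice-independence in (1)(a), which one handles by invoking the faithfulness in formal GAGA on the ambient projective bundles.
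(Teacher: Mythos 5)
Your part (1) is essentially sound, and your part (2a) matches the intended argument, but part (2b) contains a genuine error that also infects (2c). You assert that $\hat Y \ra X$ has set-theoretic image equal to the special fiber $X \times_\cR \bF$, and you therefore replace the statement to be proved ("fpqc cover of $X$ as a map of algebraic stacks") with an unproved and different claim ("effective descent morphism, which is the sense in which it is an fpqc cover"). In the lemma, $\hat Y$ in (2b)--(2c) is the honest scheme $\Spec$ of the $\frmm_\cR$-adically completed coordinate ring (this is forced by the phrase "as a map of algebraic stacks" and by the operation $\hat Y[1/p]$ in (2c), and it is how the lemma is used later, e.g.\ in Proposition \ref{prop:univ_C}). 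For that object the map $\hat Y \ra X$ is \emph{genuinely surjective} under the formal GAGA hypothesis, and that is what the paper proves: $\hat Y \ra Y$ is flat since $Y$ is Noetherian, hence $\hat Y \ra X$ is flat and quasi-compact; and given any point $z \in X$ with closure the closed substack $Z \rinj X$, part (1c) produces a corresponding (nonzero, hence non-empty) closed formal substack $\hat Z \rinj \hat X$, and a generic point of $\hat Z \times_{\hat X} \hat Y$ must map to $z$ by going-down for the flat map to $Z$. Note that surjectivity really does use formal GAGA: for $X = \bA^1_{\bZ_p}$ (where GAGA fails) the point cut out by $1-px$ is missed, since $1-px$ becomes a unit in $\bZ_p\langle x\rangle$. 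So your picture of the image is wrong in the relevant setting, your substitute statement is not the one claimed, and the descent-effectivity assertion is not justified; (2c) then follows in the paper simply because flat, surjective, quasi-compact morphisms are stable under the base change $\bZ_p \ra \bQ_p$, not via descent formalism.

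On part (1): your route for (1a), via a closed immersion into $\bP_X(\cE)$, is workable but heavier than necessary and leaves two debts you acknowledge only loosely — inheritance of formal GAGA by $\bP_X(\cE)$ over $\Spec\cR$ in the stacky setting, and independence of the auxiliary bundle. The paper's definition of projectivity (Remark \ref{rem:local_proj}) is $Y = \Proj_{\cO_X}\cB$ with $\cB$ generated in degree $1$ and $\cB_1$ coherent, so one simply completes the graded algebra degreewise, $\hat Y = \Proj_{\cO_{\hat X}}\bigoplus_i \hat\cB_i$, using that the equivalence of coherent sheaves is monoidal; this avoids both issues. Your (1b) and (1c) agree with the paper's use of the coherent-algebra/ideal-sheaf dictionary and \cite[Prop.\ 5.4.4]{ega3-1}.
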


\begin{rem}
\label{rem:local_proj}
It is important to specify the notion of ``projective morphism,'' as there are definitions which differ over non-local bases. A projective morphism over a scheme $S$ is a morphism of the form $\Proj_{\cO_S} \cB$ for some quasi-coherent sheaf $\cB = \bigoplus_{i \geq 0} \cB_i$ of graded algebras which is generated by $\cB_1$ and where $\cB_1$ is finite type. As we will work in the case of a Noetherian base, we note that this notion of projectivity is Zariski-local on the base provided that the data of an ample line bundle is included with the morphism (cf.\ \cite[\S17.3.4]{vakil2014}). 
\end{rem}

\begin{proof}
To prove (1), one observes that each of the possible types of schematic morphisms over $X$ or $\hat X$ is controlled by coherent sheaves, whence the statements follow from formal GAGA. The cases (b) and (c) are covered by \cite[Prop.\ 5.4.4]{ega3-1}. The case (a) is also controlled by coherent sheaves, as a projective morphism $Y \ra X$ is by definition $Y = \Proj_{\cO_X} \cB$, where the quasi-coherent $\cO_X$-algebra $\cB = \bigoplus_{i \geq 0} \cB_i$ is a direct sum of coherent sheaves with multiplication law composed of morphisms of coherent sheaves. The corresponding projective morphism to $\hat X$ is given by $\hat Y = \Proj_{\cO_{\hat X}} \bigoplus_{i \geq 0} \hat \cB_i$.

Part (2a) is clear.  Because $Y$ is locally Noetherian, $\hat Y \ra Y$ is flat. Since $Y \ra X$ is smooth, $\hat Y \ra X$ is then flat as well. It is also clearly quasi-compact. The surjectivity of $\hat Y \ra X$ may be deduced as follows: For any point $z \in X$, its closure in the Zariski topology is realized by a closed substack $Z \rinj X$ (cf.\ \cite[Cor.\ 5.6.1(ii)]{lmb}). Then there is a point of $\hat Y$ lying over $z$, namely, a generic point of the base change $\hat Z \times_{\hat X} \hat Y$  of the closed substack $\hat Z \rinj \hat X$ corresponding to $Z \rinj X$ by (1c), completing the proof of (2b). Finally, (2b) implies (2c).  
\end{proof}

\section{Families of \'Etale $\phz$-modules and Kisin Modules}
\label{sec:etale_kisin}

After introducing notation in \S\ref{subsec:PHT_background_1}, we will describe the main point of this section in \S\ref{subsec:families_etale}.

\subsection{Background for Families of $G_{K_\infty}$-Representations of Bounded $E$-height}
\label{subsec:PHT_background_1}

For a reference to the following fundamental definitions in $p$-adic Hodge theory, see e.g.\ \cite{BCon2009}. 

Let $k$ be a finite field of characteristic $p > 0$ and $W := W(k)$ its ring of $p$-typical Witt vectors.  $W$ is the ring of integers of the finite unramified extension $K_0 := W(k)[1/p]$ of $\bQ_p$.  Let $K/K_0$ be a totally ramified extension of degree $e$.  Fix an algebraic closure $\bar K$ of $K$, and a completion $\bC_p$ of $\bar K$ and let $G_K := \Gal(\bar K/K)$.

We recall the definitions of some $p$-adic period rings. Let $\cO_{\bar K}$ be the ring of integers of $\bar K$ and $\cO_{\bC_p}$ the ring of integers of $\bC_p$.  Let $R = \varprojlim \cO_{\bar K}/p$, where each transition map is the Frobenius endomorphism of the characteristic $p$ ring $\cO_{\bar K}/p$.  This is a complete valuation ring which is perfect of characteristic $p$ and whose residue field is $\bar k$ and is also canonically a $\bar k$-algebra.  The fraction field $\Fr R$ of $R$ is a complete non-archimedean algebraically closed characteristic $p$ field. The elements $x$ of $R$ are in natural bijection with sequences of elements $(x_{(n)})_{n \geq 0}$ of $\cO_{\bC_p}$ such that $x_{(n+1)}^p = x_{(n)}$ for all $n \geq 0$.  A canonical valuation on $R$ is given by taking the valuation $v$ on $\bC_p$ normalized so that $v(p) = 1$ and setting $v_R((x_{(n)})_{n\geq 0}) = v(x_{(0)})$.  

Consider the ring $W(R)$, and write an element of $W(R)$ as $(x_0, x_1, \dotsc, x_n, \dotsc)$.  There is a unique continuous surjective $W$-algebra map 
\[
\begin{aligned}
\theta: W(R) &\lra \cO_{\bC_p}\\
(x_0, x_1, \dotsc) &\mapsto \sum_{n=0}^\infty p^n x_{n, (n)}
\end{aligned}
\]
lifting the projection to the first factor $R \ra \cO_{\bar K}/p$ onto the $0$th truncation $W_0(R)$ of the limit of truncated Witt vectors defining  $W(R)$.  The natural Frobenius action on $R$ induces a Frobenius map $\varphi$ on $W(R)$ which sends $(x_0, x_1, \dotsc)$ to $(x_0^p, x_1^p, \dotsc)$.

We fix the notation $\frS := W\lb u \rb$, the power series ring in the variable $u$.  We equip $\frS$ with a Frobenius map denoted $\varphi$, which acts by the usual Frobenius map on $W$ and sends $u$ to $u^p$.  We think of $\frS$ as the functions bounded by $1$ on the open analytic unit disk over $K_0$, and $\frS[1/p]$ as the ring of bounded functions on the open unit disk.  Fix a uniformizer $\pi \in K$, and elements $\pi_n$\footnote{In the notation above, these would be $\pi_{(n)}$.} for $n \geq 0$ such that $\pi_0 = \pi$ and $\pi_{n+1}^p = \pi_n$.  Write $E(u) \in W[u]$ for the minimal, Eisenstein polynomial of $\pi$ over $K_0$.

Write $\underline{\pi} := (\pi_n)_{n \geq 0} \in R$, and let $\tupi \in W(R)$ be its Teichm\"uller lift $(\underline{\pi}, 0, 0, \dotsc)$.  Because $R$ is canonically a $\bar k$-algebra, we have a canonical embedding $W \rinj W(\bar k) \rinj W(R)$.  We consider $W(R)$ as a $W[u]$-algebra by sending $u$ to $\tupi$.  Since $\theta(\tupi) = \pi$, this embedding extends to an embedding of $\frS$ into $W(R)$, and we will consider $W(R)$ and rings derived from $W(R)$ as $\frS$-algebras via this map from now on. From the discussion above, this map is visibly $\varphi$-equivariant. 

We define another important element $\tuep \in W(R)$. Firstly define a sequence of $p^n$th roots of unity
\begin{equation}
\label{eq:tuep_defn}
\varep_0 = 1, \varep_1 \neq 1, \text{ and } \varep_{n+1}^p = \varep_n \quad \forall n \geq 0.
\end{equation}
This sequence defines an element $\underline{\varep}$ in $R$.  Let $\tuep \in W(R)$ be its Teichm\"ulller lift.  Notice that $\theta(\tuep - 1) = 0$.

Let $\cO_\cE$ be the $p$-adic completion of $\frS[1/u]$.  Then $\cO_\cE$ is a discrete valuation ring with residue field $k\lp u\rp$ and maximal ideal generated by $p$.  Write $\cE$ for its fraction field $\Fr \cO_\cE = \cO_\cE[1/p]$. The inclusion $\frS \rinj W(R)$ extends to an inclusion $\cO_\cE \rinj W(\Fr R)$, since $\underline{\pi} \in \Fr R$ and $W(\Fr R)$ is $p$-adically complete.  Let $\cE^\ur \subset W(\Fr R)[1/p]$ denote the maximal unramified extension of $\cE$ contained in $W(\Fr R)[1/p]$, and $\cO_{\cE^\ur}$ its ring of integers.  Since $\Fr R$ is algebraically closed, the residue field $\cO_{\cE^\ur}/p\cO_{\cE^\ur}$ is a separable closure of $k\lp u\rp$. If $\cO_{\widehat{\cE^\ur}}$ is the $p$-adic completion of $\cO_{\cE^\ur}$, or, equivalently, the closure of $\cO_{\cE^\ur}$ in $W(\Fr R)$ with respect to its $p$-adic topology, set $\frS^\ur := \cO_{\widehat{\cE^\ur}} \cap W(R) \subset W(\Fr R)$.  All of these rings are subrings of $W(\Fr R)[1/p]$, and are equipped with a Frobenius operator coming from $W(\Fr R)[1/p]$.

Let $K_\infty = \cup_{n \geq 0} K(\pi_n)$ and $G_{K_\infty} := \Gal(\bar K/K_\infty)$.  Clearly the action of $G_{K_\infty}$ on $W(R)$ fixes the subring $\frS$, since it fixes both $W$ and $\pi_n \ \forall n \geq 0$.  Therefore $G_{K_\infty}$ has an action on $\frS^\ur$ and $\cE^\ur$. 
 
Recall that for any $\Zp$-algebra $S$, $S_A$ denotes the completion of $S \otimes_\Zp A$ with respect to a defining system of ideals for the topology of $A$. It will be important to know that the following such rings are Noetherian. 

\begin{lem}
\label{lem:frS_A_noetherian}
Let $\cR$ be an admissible local $\bZ_p$-algebra and let $A$ be a formally finite type $\cR$-algebra. The commutative rings $\frS_\cR$, $\frS_A$, $\cO_{\cE, \cR}$, and $\cO_{\cE, A}$ are Noetherian. Also, there are isomorphisms $\frS_\cR \simeq \frS \hat\otimes_{\bZ_p} \cR$, $\cO_{\cE, \cR} \simeq  \cO_\cE \hat\otimes_{\bZ_p} \cR$; the latter is a topological isomorphism. Each of these rings admits a unique $\cR$ or $A$-linear extension of the Frobenius map $\phz$ on $\frS$ or $\cO_\cE$. 
\end{lem}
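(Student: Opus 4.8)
I would reduce all four Noetherianity assertions to the single structural identification $\frS_\cR \cong \cR_W\lb u\rb$, where $\cR_W := W\otimes_{\bZ_p}\cR$, together with two standard facts: a power series ring in finitely many variables over a Noetherian ring is Noetherian, and the $I$-adic completion of a Noetherian ring is Noetherian. Note first that, $\cR$ being admissible local and profinite, its topology is $\frmm_\cR$-adic and each $\cR/\frmm_\cR^n$ is a \emph{finite} ring, hence a finitely presented $\bZ_p$-module; and $\cR_W$, being module-finite over the complete local ring $\cR$, is Noetherian and $\frmm_\cR\cR_W$-adically complete.

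To identify $\frS_\cR$: applying $\frS = W\lb u\rb$ coefficient-wise to a finite presentation of $\cR/\frmm_\cR^n$ (using $\bZ_p$-flatness of $W$, so a constant-coefficient presentation matrix acts termwise on power series) gives $\frS\otimes_{\bZ_p}(\cR/\frmm_\cR^n)\cong(\cR_W/\frmm_\cR^n\cR_W)\lb u\rb$; passing to $\varprojlim_n$ yields $\frS_\cR\cong\cR_W\lb u\rb$, which is Noetherian. The same inverse-limit bookkeeping identifies $\frS_\cR$ with $\frS\hat\otimes_{\bZ_p}\cR$ (each $\cR/\frmm_\cR^n$ being $p^n$-torsion, the $p$-adic and $\frmm_\cR$-adic directions of the completed tensor product agree). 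For $\cO_{\cE,\cR}$ I would run the identical base change with $\cO_\cE=(W\lb u\rb[1/u])^{\wedge}_p$, using $\cO_\cE/p^n=(W/p^n)\lb u\rb[1/u]$, to obtain
\[
\cO_{\cE,\cR}\;\cong\;\varprojlim_n\,(\cR_W/\frmm_\cR^n\cR_W)\lb u\rb[1/u]\;\cong\;\bigl(\frS_\cR[1/u]\bigr)^{\wedge}_{\frmm_\cR},
\]
the $\frmm_\cR$-adic completion of the (Noetherian) localization $\frS_\cR[1/u]$, hence Noetherian; the same computation makes this a topological isomorphism with $\cO_\cE\hat\otimes_{\bZ_p}\cR$.

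For coefficients in $A$, write $A=\cR\langle x_1,\dots,x_n\rangle/J$ as a quotient of a restricted power series ring. Running the above over the finite rings $\cR/\frmm_\cR^m$ and their $p^m$-torsion, finite-type extensions $A/\frmm_\cR^m A$ gives $\frS_A\cong\frS_\cR\langle x_1,\dots,x_n\rangle/J\,\frS_\cR\langle x_1,\dots,x_n\rangle$ and $\cO_{\cE,A}\cong\cO_{\cE,\cR}\langle x_1,\dots,x_n\rangle/J\,\cO_{\cE,\cR}\langle x_1,\dots,x_n\rangle$, with $\langle\,\cdot\,\rangle$ the $\frmm_\cR$-adic completion of the polynomial ring. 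Such a restricted power series ring over a Noetherian ring complete for an ideal is Noetherian (it is the $\frmm_\cR$-adic completion of the Noetherian polynomial ring), and so are the displayed quotients. Finally, for the Frobenius: on $\frS$ and on $\cO_\cE$ the operator $\phz$ (Witt--Frobenius on $W$, $u\mapsto u^p$) is $\bZ_p$-linear and continuous; extending it $\cR$- (resp.\ $A$-) linearly as $\phz\otimes\id$ on $\frS\otimes_{\bZ_p}\cR$ (resp.\ $\frS\otimes_{\bZ_p}A$, and similarly for $\cO_\cE$) gives an $\frmm_\cR$-linear, hence continuous, map that passes to the completion and lands in the rings identified above; it is unique because $\frS\otimes_{\bZ_p}\cR$ has dense image in $\frS_\cR$ (the transition maps are surjective) and a continuous $\cR$-linear endomorphism is determined on a dense subring.

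The only step requiring genuine care is the identification $\frS_\cR\cong\cR_W\lb u\rb$: although $\frS\otimes_{\bZ_p}\cR$ is merely a proper, and in general non-Noetherian, subring of $\cR_W\lb u\rb$, its $\frmm_\cR$-adic completion recovers all of $\cR_W\lb u\rb$ precisely because modulo each $\frmm_\cR^n$ the coefficient module becomes finite. Once that identity and its variants (inverting $u$, and adjoining restricted power-series variables) are in place, the Noetherianity statements are immediate from the Hilbert basis and completion theorems, and the Frobenius claim is formal; everything else is routine bookkeeping.
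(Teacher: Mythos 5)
Your argument is correct, but it follows a genuinely different route from the paper. The paper's proof is citation-based: Noetherianity of $\frS\hat\otimes_{\bZ_p}\cR$ and $\cO_\cE\hat\otimes_{\bZ_p}\cR$ is quoted from EGA IV (Ch.\ 0, Lem.\ 19.7.1.2), the topological identification $\cO_{\cE,\cR}\simeq\cO_\cE\hat\otimes_{\bZ_p}\cR$ is obtained by observing that $p\in\frmm_\cR$ makes the two defining ideals coincide, and the Noetherianity of $\frS_A$ and $\cO_{\cE,A}$ is quoted from EGA I, Prop.\ 10.13.5(ii), using that $\Spf A\to\Spf\cR$ is formally finite type; the Frobenius assertion is left implicit. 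You instead prove everything by hand: the structural identifications $\frS_\cR\cong(W\otimes_{\bZ_p}\cR)\lb u\rb$ and $\cO_{\cE,\cR}\cong(\frS_\cR[1/u])^{\wedge}_{\frmm_\cR}$ (exploiting that each $\cR/\frmm_\cR^n$ is a finitely presented $\bZ_p$-module, so tensoring commutes with the coefficientwise products), restricted power series presentations over these rings for the $A$-coefficient versions, and then Hilbert basis plus Noetherianity of adic completions; uniqueness of the Frobenius extension follows from $\cR$- (resp.\ $A$-) linearity, hence continuity, and density of the uncompleted tensor product. What your approach buys is a self-contained argument with concrete descriptions of these rings, and it actually addresses the Frobenius claim, which the paper's proof does not spell out; what the paper's approach buys is brevity, at the cost of invoking general EGA machinery. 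One small caveat: your parenthetical that ``the $p$-adic and $\frmm_\cR$-adic directions agree'' is accurate for $\cO_\cE$ (whose topology is $p$-adic), but for $\frS$ the natural topology is $(p,u)$-adic and the $u$-direction is not absorbed by $\frmm_\cR$; this is precisely why the paper warns that $\frS_\cR\simeq\frS\hat\otimes_{\bZ_p}\cR$ need not be topological. Since the lemma only asserts a ring isomorphism there, and since $(W\otimes_{\bZ_p}\cR)\lb u\rb$ is already $(u,\frmm_\cR)$-adically complete, your identification still holds as stated, so this is a point of phrasing rather than a gap.
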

\begin{proof}
As $\cR$, $\frS$, and $\cO_\cE$ are Noetherian, the rings $\frS \hat\otimes_{\bZ_p} \cR$ and $\cO_\cE \hat\otimes_{\bZ_p} \cR$ are Noetherian by \cite[Ch.\ 0, Lem.\ 19.7.1.2]{ega4-1}. As the ideals $(p \otimes 1) + \cO_\cE \otimes_{\bZ_p} \frmm_\cR$ and $\frmm_\cR \cO_{\cE, \cR}$ of $\cO_\cE \otimes_{\bZ_p} \cR$ are identical, we have a topological isomorphism $\cO_\cE \hat\otimes_{\bZ_p} \cR \simeq \cO_{\cE,\cR}$. There is also an isomorphism of rings $\frS_\cR \risom \frS \hat\otimes_{\bZ_p} \cR$, but it is not necessarily a topological isomorphism.

By \cite[Prop.\ 10.13.5(ii)]{ega1}, the rings $\cO_{\cE, A}$ and $\frS_A$ are Noetherian because $\Spf A \ra \Spf \cR$ is a formally finite type morphism of formal schemes.
\end{proof}

\subsection{Algebraic Families of \'{E}tale $\varphi$-modules}
\label{subsec:families_etale}

In this section, we will work with representations $V_A$ of $G_{K_\infty}$ with coefficients in admissible $\bZ_p$-algebras $A$ with the discrete topology, which are quotients of $\bZ/p^i[x_1, \dotsc, x_j]$ for some integers $i,j$.  Unlike the previous sections, we will not study the most general moduli space of these families, but simply fix such an $A$ and study the category of $A$-linear representations. Later, these results will be applied to a family of $G_K$-representations in $\Rep_\Db$, considered as a $G_{K_\infty}$-representation. In this section, we also fix $R$ to be an arbitrary Artinian (and, therefore, finite cardinality) subring of $A$. In \S\ref{subsec:universality_uM}, we apply these results where $R$ is $\bZ_p$-subalgebra of $A$ generated by characteristic polynomial coefficients of the $G_K$ or $G_{K_\infty}$-action. 

Our goal is to compare these families of Galois representations to \'{e}tale $\phz$ modules. These \'etale $\phz$-modules are finite modules $M$ over certain of the Noetherian rings of Lemma \ref{lem:frS_A_noetherian} with a $\phz$-semilinear automorphism. We will often write this automorphism as a linear automorphism $\phz^*(M) \ra M$, where $\phz^*(M)$ denotes $\cO_\cE \otimes_{\phz, \cO_\cE} M$. Because the $\mDb$-adic topology is discrete in our current setting, we have an isomorphism $\cO_{\cE, A} \cong \cO_\cE \otimes_{\bZ_p} A$, with an $A$-linear extension $\varphi$ of the Frobenius on $\cO_\cE$. 

\begin{defn-lem}
\label{defn-lem:M_and_V}
\leavevmode
\begin{enumerate}
\item Let $\Mod_{G_{K_\infty}}(A)$ be the category of finite $A$-modules with a $A$-linear action of $G_{K_\infty}$ with open kernel.  Let $\Rep_{G_{K_\infty}}(A)$ be the full subcategory whose objects are finite, projective, and constant rank  as $A$-modules.  
\item Let $\Phi'_M(A)$ be the category of finite $\cO_{\cE,A}$-modules $M$ equipped with an $A$-linear isomorphism $\varphi^*(M) \risom M$, called ($A$-linear) \emph{\'etale $\phz$-modules}.  Let $\Phi_M(A)$ be the full subcategory whose objects are finite, projective, and constant rank  as $\cO_{\cE,A}$-modules.
\item Let $M$ be the covariant functor
\[
\begin{aligned}
M: \Mod_{G_{K_\infty}}(A) & \lra \Phi'_M(A) \\
V_{A} &\mapsto (\cO_{\cE^\ur} \otimes_{\bZ_p} V_{A})^{G_{K_\infty}}.
\end{aligned}
\]
\item Let $\Phi^{' \Gal}_M(A)$ be the essential image of $M$ on $\Mod_{G_{K_\infty}}(A)$ in $\Phi'_M(A)$, and let $\Phi^{\Gal}_M(A)$ be the essential image of $M$ on $\Rep_{G_{K_\infty}}(A)$ in $\Phi_M(A)$.
\item Let $V$ be the covariant functor
\[
\begin{aligned}
V: \Phi^{' \Gal}_M(A) &\lra \Mod_{G_{K_\infty}}(A) \\
M_A &\mapsto (\cO_{\cE^\ur} \otimes_{\cO_\cE} M_A)^{\varphi = 1}.
\end{aligned}
\]
\end{enumerate}
\end{defn-lem}
It remains to be confirmed that parts (3) and (5) above are valid, e.g.\ that $M(V_A)$ is finite as a $\cO_{\cE, A}$-module when $V_A$ is finite as an $A$-module. The proof will be given below after stating one more result. 

\begin{rem}
\label{rem:no_filtration}
The functor $V$ behaves well only after restriction to the full subcategory $\Phi^{' \Gal}_M(A)$ of $\Phi'_M(A)$. In the proof of the following proposition, we will see that the obstruction to $M$ being essentially surjective is that $M_A \in \Phi'_M(A)$ may not be a filtered direct limit of finite $\cO_{\cE,R}$-submodules $M_i$ such that the structure $\varphi^*(M_A) \risom M_A$ is the limit of such maps on $M_i$. Take for example $A = \Fp[x,x^{-1}]$ and $M_A$ free of rank one over $\cO_{\cE,A}$ with the map $\phz^*(M_A)\ra M_A$ defined by $1 \otimes z \mapsto xz$. In fact, $V(M_A) = 0$. 
\end{rem}

When $A$ is Artinian, the functors $M$ and $V$ define an equivalence between $\Rep_{G_{K_\infty}}(A)$ and $\Phi_M(A)$ that is functorial in $A$; see \cite[Lem.\ 1.2.7]{mffgsm}, which builds upon the original case with trivial ($A = \Zp$) coefficients \cite[A.1.2.6]{fontaine1990}. For more general $A$, this is not true, and the following result gives as much as one can ask for. 

\begin{prop}[{Generalizing \cite[Lem.\ 1.2.7]{mffgsm}}]
\label{prop:main_etale} \leavevmode 
\begin{enumerate}
\item The functor $M: \Mod_{G_{K_\infty}}(A) \ra \Phi'_M(A)$  is exact and fully faithful, and is an equivalence onto the full subcategory $\Phi^{' \Gal}_M(A)$ with quasi-inverse $V$.
\item For $W$ a finite $A$-module and $V_A \in \Rep_{G_{K_\infty}}(A)$, there is a natural isomorphism
\[
M(V_A \otimes_A W) \cong M(V_A) \otimes_A W.
\]
\item If $A'$ is a finitely generated $A$-algebra, then there is a commutative diagram of functors
\[
\xymatrix{
\Mod_{G_{K_\infty}}(A) \ar[r]^(.55)M \ar[d] & \Phi^{' \Gal}_M(A) \ar[d] \\
\Mod_{G_{K_\infty}}(A') \ar[r]^(.55)M & \Phi^{' \Gal}_M(A')  
}
\]
where the downward functors are induced by $- \otimes_A A'$.
\item $M$ restricts to an equivalence of categories 
\[
M: \Rep_{G_{K_\infty}} \lrisom \Phi^{\Gal}_M(A)
\]
with quasi-inverse 
\[
V: \Phi^{\Gal}_M(A) \lrisom \Rep_{G_{K_\infty}}(A). 
\]
In particular, 
\begin{enumerate}
\item $V_A$ is projective as an $A$-module of constant rank  $d$ if and only if $M(V_A)$ is a projective $\cO_{\cE, A}$-module of constant rank  $d$.
\item $V_A$ is free as an $A$-module with rank $d$ if and only if $M(V_A)$ is a free $\cO_{\cE,A}$-module of rank $d$.
\end{enumerate}
\end{enumerate}
\end{prop}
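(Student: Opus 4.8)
The plan is to follow the strategy of Fontaine \cite{fontaine1990} and \cite{mffgsm}, but to replace the finiteness arguments there (which need $A$ Artinian or Noetherian local) by a faithfully flat Galois descent argument that works over an arbitrary admissible discrete $A$. The crucial observation is that for $V_A \in \Mod_{G_{K_\infty}}(A)$ the $G_{K_\infty}$-action has open, hence finite-index, kernel. So I would choose an open normal $H \trianglelefteq G_{K_\infty}$ acting trivially on $V_A$, set $\Gamma := G_{K_\infty}/H$ and $\cO_L := (\cO_{\cE^\ur})^H$, so that $\cO_\cE \to \cO_L$ is a finite \'etale $\Gamma$-Galois extension (using the identification $G_{K_\infty} \cong \Gal(\cE^\ur/\cE)$). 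Since $p^i A = 0$ for some $i$, base change along $\bZ_p \to A$ keeps $\cO_{\cE,A} \to \cO_{L,A}$ finite \'etale $\Gamma$-Galois over the arbitrary base $A$. Writing $W_A := \cO_{L,A} \otimes_A V_A$ with its diagonal semilinear $\Gamma$-action, the first step is to identify $M(V_A) = W_A^{\Gamma}$; this uses the vanishing $H^{>0}(\Gal(\cE^\ur/L'), \cO_{\cE^\ur}/p^i) = 0$ with $H^0 = \cO_{L'}/p^i$ (additive Hilbert 90 for the maximal unramified extension $\cE^\ur/L'$), a projection-formula dévissage, and the flatness of $\cO_{\cE^\ur}/p^i$ over $\bZ/p^i$ (which holds since $\cO_{\cE^\ur}$ is $p$-torsion free).

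For part (1): faithfully flat Galois descent along $\cO_{\cE,A} \to \cO_{L,A}$ then shows $M(V_A)$ is a \emph{finite} $\cO_{\cE,A}$-module with $\cO_{L,A} \otimes_{\cO_{\cE,A}} M(V_A) \cong W_A$, and the $\varphi$-semilinear structure on $\cO_{\cE^\ur} \otimes_{\bZ_p} V_A$, commuting with $G_{K_\infty}$, descends and makes $M(V_A)$ an \'etale $\varphi$-module, so $M$ lands in $\Phi'_M(A)$. Pushing the descent isomorphism up to $\cO_{\cE^\ur}$ yields the fundamental $G_{K_\infty}$- and $\varphi$-equivariant comparison isomorphism $\cO_{\cE^\ur} \otimes_{\cO_\cE} M(V_A) \cong \cO_{\cE^\ur} \otimes_{\bZ_p} V_A$. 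From it I would deduce $V(M(V_A)) = (\cO_{\cE^\ur}/p^i \otimes_{\bZ/p^i} V_A)^{\varphi = 1} \cong V_A$, because the Artin--Schreier sequence $0 \to \bZ/p^i \to \cO_{\cE^\ur}/p^i \xrightarrow{\varphi - 1} \cO_{\cE^\ur}/p^i \to 0$ (surjectivity of $\varphi - 1$ and $(\cO_{\cE^\ur})^{\varphi = 1} = \bZ_p$ being classical) stays exact after $\otimes_{\bZ/p^i} V_A$ by that same flatness. Exactness of $M$ follows from this flatness together with $H^1(G_{K_\infty}, \cO_{\cE^\ur}/p^i \otimes_{\bZ/p^i} W') = 0$ for $W'$ finite. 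Finally $M \colon \Mod_{G_{K_\infty}}(A) \to \Phi^{' \Gal}_M(A)$ is an equivalence with quasi-inverse $V$: it is essentially surjective by the definition of the target, fully faithful because $V \circ M \cong \id$, and $M \circ V \cong \id$ on $\Phi^{' \Gal}_M(A)$ is then automatic.

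For parts (2)--(4), the technical workhorse will be the unconditional identity $(\cO_{\cE^\ur} \otimes_{\cO_\cE} N)^{G_{K_\infty}} = N$ for \emph{every} finite $\cO_{\cE,A}$-module $N$ (no $\varphi$-structure needed), proved from $(\cO_{\cE^\ur})^{G_{K_\infty}} = \cO_\cE$, the vanishing of $H^{>0}(G_{K_\infty}, \cO_{\cE^\ur}/p^i \otimes_{\cO_\cE/p^i} N)$, and the projection formula. For (2): tensor the comparison isomorphism for $V_A$ with $W$ over $A$ and take $G_{K_\infty}$-invariants, applying this identity to $N = M(V_A) \otimes_A W$. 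For (3): the $\Gamma$-Galois extension $\cO_{\cE,A} \to \cO_{L,A}$ and the descent datum exhibiting $M(V_A) = W_A^\Gamma$ both base change along $A \to A'$, so by uniqueness of descent $M(V_A \otimes_A A') \cong M(V_A) \otimes_A A'$ (noting $V_A \otimes_A A' \in \Mod_{G_{K_\infty}}(A')$). For (4): restricting (1) gives the equivalence $\Rep_{G_{K_\infty}}(A) \simeq \Phi^{\Gal}_M(A)$; the rank statements (4a), (4b) follow from the comparison isomorphism together with faithfully flat descent of ``finite locally free of rank $d$'' (respectively, matching the distinguished bases in the free case) along the faithfully flat maps $A \to \cO_{\cE^\ur}/p^i \otimes_{\bZ/p^i} A$ and $\cO_{\cE,A} \to \cO_{\cE^\ur}/p^i \otimes_{\bZ/p^i} A$.

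The main obstacle is exactly the step that departs from the classical setting: proving $M(V_A)$ is finite over $\cO_{\cE,A}$ and establishing the comparison isomorphism when $A$ is neither local nor Artinian, so that the Noetherian/length arguments of Fontaine and Kisin do not apply directly (note $M(V_A)$ sits inside the very large $\cO_{\cE,A}$-module $\cO_{\cE^\ur} \otimes_{\bZ_p} V_A$). The device I expect to resolve it is pushing the finite $G_{K_\infty}$-action onto a finite \'etale Galois extension $\cO_\cE \to \cO_L$ and then invoking Galois descent over the arbitrary base $A$. A secondary, routine point is checking that $\varphi$, the \'etale condition, and the cohomological vanishings are well behaved under these base changes; here one uses that $k$, being finite, is perfect.
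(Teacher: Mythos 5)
Your route is sound in its main thrust and genuinely different from the paper's. The paper reduces everything to Fontaine's equivalence with Artinian coefficients: it writes $V_A$ as the filtered union of its finite $R[G_{K_\infty}]$-submodules over the fixed Artinian subring $R\subset A$, checks that $M$ and $V$ commute with such injective colimits, transports the comparison isomorphism $\cO_{\cE^\ur}\otimes_{\bZ_p}V_A\cong \cO_{\cE^\ur}\otimes_{\cO_\cE}M(V_A)$ up from the finite levels, and only then uses the open kernel $H$ (finiteness of $(\cO_{\cE^\ur})^H$ over $\cO_\cE$ plus \'etale descent) to get finiteness of $M(V_A)$; parts (2)--(3) are handled by exactness plus free presentations and another colimit argument, and (4a) by specializing at maximal ideals of $A$ and citing Fontaine's rank statement over finite fields. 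You instead work directly over $A$: descend along the single finite \'etale $\Gamma$-Galois extension $\cO_{\cE,A}\to\cO_{L,A}$ attached to $H$, identify $M(V_A)=(\cO_{L,A}\otimes_A V_A)^{\Gamma}$, and recover $V$ by Artin--Schreier. This works: the unconditional identity $(\cO_{\cE^\ur}\otimes_{\cO_\cE}N)^{G_{K_\infty}}=N$ for an arbitrary $\cO_{\cE,A}$-module $N$ is correct (Galois descent at each finite level $\cO_{L'}$ plus passage to the colimit along injective transition maps), and it makes (2) and (3) cleaner than the paper's arguments — no presentation of $W$, no flatness of $A\to A'$ needed. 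The price is that the cohomological inputs (additive Hilbert 90, surjectivity of $\varphi-1$, flatness of $\cO_{\cE^\ur}/p^i$ over $\bZ/p^i$) must be verified with large coefficients, which again comes down to colimits of finite modules, so the limit bookkeeping the paper performs explicitly reappears inside your "standard" lemmas. Two small points to tighten: the vanishing $H^1(G_{K_\infty},\cO_{\cE^\ur}/p^i\otimes W')=0$ must be extended from finite $W'$ to arbitrary finite $A$-modules by such a colimit; and full faithfulness of $M$ does not follow formally from $V\circ M\cong\id$ alone — you should check $M(V(f))=f$, e.g.\ by comparing both after the faithful base change $-\otimes_{\cO_{\cE,A}}\cO_{\cE^\ur,A}$.

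There is one genuine gap, in (4b). Freeness (as opposed to projectivity of constant rank) is not a property that descends along faithfully flat maps, and "matching the distinguished bases" is not an argument: the basis you see after base change to $\cO_{\cE^\ur,A}$ is not Galois-invariant, so it does not descend, and what fpqc descent actually yields is only that $M(V_A)$ is finite projective of rank $d$ — i.e.\ (4a) again. Some extra input is required to untwist the trivialization. The paper's device is to take $H$-invariants of the comparison isomorphism, so that $V_A\otimes_{\bZ_p}(\cO_{\cE^\ur})^H$ and $M(V_A)\otimes_{\cO_\cE}(\cO_{\cE^\ur})^H$ are both free over $(\cO_{\cE^\ur,A})^H$, and then to use that $\Spec(\cO_{\cE^\ur,A})^H\to\Spec\cO_{\cE,A}$ is a finite surjective \'etale cover together with a Hilbert~90/specialness statement for $\GL_d$; in your framework the analogous missing ingredient is the triviality of the relevant class in $H^1\bigl(\Gamma,\GL_d(\cO_{L,A})\bigr)$. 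As written, your (4b) step would fail; the rest of the proposal, including (1)--(3) and (4a), is correct.
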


First we assemble useful facts about limits.  We will append $(-)^\infty$ to categories defined in Definition/Lemma \ref{defn-lem:M_and_V} to indicate that the $A$-module finiteness condition has been dropped. 

\begin{fact}
\label{fact:tensor_colimit}
In a category of modules over a ring, tensor products commute with direct limits. 
\end{fact}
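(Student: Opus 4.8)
The plan is to combine the universal property of the tensor product with the concrete presentation of $\otimes$ as a cokernel. Fix a ring $R$ (in the applications $R$ is $A$, $\cO_{\cE,A}$, or one of $\cO_{\cE^\ur}$, $\cO_\cE$), a right $R$-module $M$, and a direct system $(N_i, f_{ij})_{i \in I}$ of left $R$-modules over a directed set $I$; write $N_\infty := \varinjlim_i N_i$ (the case of a direct system in the first tensor variable being symmetric). The structure maps $\iota_i : N_i \to N_\infty$ induce maps $\id_M \otimes \iota_i : M \otimes_R N_i \to M \otimes_R N_\infty$ compatible with the $f_{ij}$, so the universal property of the colimit produces a natural map
\[
\theta : \varinjlim_i (M \otimes_R N_i) \lra M \otimes_R N_\infty,
\]
and the claim is exactly that $\theta$ is an isomorphism.

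The quickest argument is that $M \otimes_R (-)$ is a left adjoint — to $\Hom_{\bZ}(M, -)$ with its induced left $R$-module structure, or to $\Hom_R(M,-)$ in the commutative setting relevant here — and left adjoints preserve all colimits, in particular filtered ones; this gives the isomorphism at once. I would nonetheless include the elementary verification, which is self-contained. Surjectivity is immediate: $M \otimes_R N_\infty$ is generated as an abelian group by simple tensors $m \otimes x$, and every $x \in N_\infty$ equals $\iota_i(x_i)$ for some $i$ and $x_i \in N_i$, whence $m \otimes x$ lies in the image of $M \otimes_R N_i$. For injectivity, present $M \otimes_R N = F_N/K_N$ with $F_N$ the free abelian group on $M \times N$ and $K_N$ the subgroup generated by the additivity and $R$-balancing relations, taking $N = N_i$ and $N = N_\infty$. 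Represent a given class in $\varinjlim_i(M \otimes_R N_i)$ by some $\xi \in M \otimes_R N_i$, lift it to $\tilde\xi \in F_{N_i}$, and suppose $\theta$ kills the class of $\xi$; then the image of $\tilde\xi$ in $F_{N_\infty}$ lies in $K_{N_\infty}$, hence is a finite sum of relation generators. Each such generator involves only finitely many elements of $N_\infty$, and since $I$ is directed we may pick $j \geq i$ so that all of these, and all entries of $\tilde\xi$, come from $N_j$; pushing $\tilde\xi$ forward to $F_{N_j}$ then shows its image lies in $K_{N_j}$, i.e.\ $\xi$ maps to $0$ in $M \otimes_R N_j$, so $\xi$ is $0$ in the colimit.

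The only point needing a little care is the bookkeeping in the injectivity step: the relations witnessing $\theta(\xi) = 0$ a priori involve elements of $N_\infty$ rather than of a single $N_j$, so one must use directedness of $I$ to pull the finitely many relevant indices, and the finitely many relation generators, up to a common stage $j$, and then note that each relation generator for $M \otimes_R N_\infty$ so used is the image of the corresponding relation generator for $M \otimes_R N_j$. This is routine — an instance of finite limits commuting with filtered colimits in $\Sets$ — and poses no real obstacle; no generality beyond modules over an ordinary ring is needed for the uses of this Fact in the paper.
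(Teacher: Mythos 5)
Your proof is correct. Note, though, that the paper states this Fact without proof — it is invoked as standard background (the uses in the proof of Definition/Lemma 4.2.1 only need it for filtered colimits of $R$- or $\bZ_p$-modules) — so there is no argument in the paper to compare yours against. Of your two routes, the adjunction one is the cleaner and the one usually cited: $M \otimes_R (-)$ is left adjoint to $\Hom_{\bZ}(M,-)$, hence preserves \emph{all} colimits, not just filtered ones, so the directedness bookkeeping is needed only for your hands-on verification. In that elementary injectivity step the one point worth stating explicitly is that an identity in the free group on $M \times N_\infty$ expressing membership in $K_{N_\infty}$ involves finitely many elements of $N_\infty$, and equalities among their chosen preimages in $N_j$ need only hold after passing to a further stage $j' \geq j$; this is exactly the "finite data lifts to a finite stage" argument you allude to, and with that sentence added the element-wise proof is complete.
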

\begin{fact}
\label{fact:invariant_colimit}
If the maps of a filtered direct limit of finite modules in $\Mod_{G_{K_\infty}}^\infty(R)$ (resp.\ in $\Phi^{' \infty}_M(R)$) are all injective, then the functor $(-)^{G_{K_\infty}}$ (resp.\ $(-)^{\varphi = 1}$) commutes with this direct limit.
\end{fact}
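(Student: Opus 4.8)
The plan is to treat the two assertions by separate short arguments, the $\varphi=1$ case being formally the easier one. In both cases write $M := \varinjlim_i M_i$ for the filtered colimit; a routine check — for the $\varphi$-case using that tensor product commutes with colimits, Fact~\ref{fact:tensor_colimit} — shows that $M$, with the evident induced structure, again lies in the ambient category, so that it remains to prove that the canonical comparison map $\varinjlim_i(M_i^{G_{K_\infty}}) \to M^{G_{K_\infty}}$, resp.\ $\varinjlim_i(M_i^{\varphi=1}) \to M^{\varphi=1}$, is an isomorphism.

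For $(-)^{\varphi=1}$ I would simply observe that $N^{\varphi=1} = \ker(\varphi - \id\colon N \to N)$ is the kernel of a single additive endomorphism, that every transition map of the system commutes with $\varphi$ and hence with $\varphi - \id$, and that filtered colimits of $R$-modules are exact and so commute with kernels. This gives the statement at once; it uses neither the injectivity of the transition maps nor the finiteness of the $M_i$.

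For $(-)^{G_{K_\infty}}$ the injectivity hypothesis is genuinely needed, since invariants under a profinite group form an infinite inverse limit and do not commute with arbitrary filtered colimits. Here I would use injectivity to regard each $M_i$ as a $G_{K_\infty}$-stable $R$-submodule of $M = \bigcup_i M_i$, stability being automatic from $G_{K_\infty}$-equivariance of the transition maps. Then the comparison map becomes the inclusion $\bigcup_i(M_i^{G_{K_\infty}}) \hookrightarrow M^{G_{K_\infty}}$, hence is injective; for surjectivity, given $m \in M^{G_{K_\infty}}$ pick $i$ with $m \in M_i$, note that $g\cdot m \in M_i$ for all $g \in G_{K_\infty}$ since $M_i$ is $G_{K_\infty}$-stable, and that $g\cdot m = m$ holds in $M$ and therefore already in $M_i$ by injectivity of $M_i \hookrightarrow M$; thus $m \in M_i^{G_{K_\infty}}$.

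The one place I would be careful is precisely this contrast: resisting a naive exactness argument for $(-)^{G_{K_\infty}}$, which fails because it is not a finite limit, and instead routing through the concrete ``union of $G_{K_\infty}$-stable submodules'' picture in which injectivity of the transition maps does all the work. Beyond exactness of filtered colimits of modules, no nontrivial input is required.
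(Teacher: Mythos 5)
Your proof is correct. The paper states this as a Fact without argument, so there is no proof to compare against; your verification is the standard one and fills the gap cleanly: for $(-)^{\varphi=1}$, the invariants are the kernel of the additive map $\varphi-\id$, and filtered colimits of modules are exact, so they commute with this kernel (your remark that neither injectivity nor finiteness is needed here is accurate); for $(-)^{G_{K_\infty}}$, the injectivity of the transition maps lets you identify each $M_i$ with a $G_{K_\infty}$-stable submodule of $M=\bigcup_i M_i$, and then an invariant element of $M$ lying in $M_i$ is already invariant in $M_i$, which is exactly how the hypothesis is meant to be used (and how the paper uses it in the proof of Definition/Lemma \ref{defn-lem:M_and_V}, where the relevant systems have injective transition maps preserved by $\otimes_{\bZ_p}\cO_{\cE^\ur}$, resp.\ $\otimes_{\cO_\cE}\cO_{\cE^\ur}$).
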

\begin{fact}
\label{fact:invariant_limit}
Inverse limits in $\Mod_{G_{K_\infty}}(R)$ (resp.\ $\Phi'_M(R)$) commute with the invariant functor $(-)^{G_{K_\infty}}$ (resp.\ $(-)^{\varphi = 1}$), since it is a right-adjoint functor to the functor giving the trivial $G_{K_\infty}$-action to an $R$-module, and therefore commutes with limits.
\end{fact}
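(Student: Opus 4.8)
The statement is purely formal, and the plan is simply to flesh out the parenthetical reason given: each invariant functor is a right adjoint, and a right adjoint preserves every limit that exists in its source category, in particular inverse limits. To be sure the inverse limits in question are available, I would read them in the enlarged categories $\Mod_{G_{K_\infty}}^\infty(R)$ and $\Phi^{' \infty}_M(R)$ (the setting in which the Fact is actually applied); if such a limit happens to lie back in $\Mod_{G_{K_\infty}}(R)$ or $\Phi'_M(R)$, it agrees with the limit computed there.

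The only real step is to write down the two adjunctions. For the Galois case, let $\iota\colon \Mod(R) \to \Mod_{G_{K_\infty}}^\infty(R)$ send an $R$-module to itself equipped with the trivial $G_{K_\infty}$-action; since a $G_{K_\infty}$-equivariant map out of a trivially-acted module has image inside the invariants, restriction of homomorphisms gives a bijection
\[
\Hom_{\Mod_{G_{K_\infty}}^\infty(R)}(\iota(N), M) \;\lrisom\; \Hom_{\Mod(R)}\bigl(N,\, M^{G_{K_\infty}}\bigr),
\]
natural in $N$ and $M$, so $\iota \dashv (-)^{G_{K_\infty}}$. For the $\varphi$-module case, let $\iota\colon \Mod(R) \to \Phi^{' \infty}_M(R)$ send $N$ to $(\cO_{\cE,R}\otimes_R N,\, \varphi\otimes\id_N)$; this is an \'etale $\varphi$-module because $\varphi^*(-)$ commutes with $-\otimes_R N$ and carries the unit object $(\cO_{\cE,R},\varphi)$ to itself, so the \'etale structure isomorphism of the unit tensors up. A $\varphi$-equivariant $\cO_{\cE,R}$-linear map out of $\cO_{\cE,R}\otimes_R N$ is determined by its restriction to $1\otimes N$, which lands in $M^{\varphi=1}$, and conversely any $R$-linear map $N \to M^{\varphi=1}$ extends uniquely $\cO_{\cE,R}$-linearly and $\varphi$-equivariantly; hence $\Hom_{\Phi^{' \infty}_M(R)}(\iota(N),M) \lrisom \Hom_{\Mod(R)}(N, M^{\varphi=1})$, i.e.\ $\iota \dashv (-)^{\varphi=1}$. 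With these adjunctions in hand, one simply invokes that right adjoints preserve limits, applied to inverse limits.

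I do not expect any genuine obstacle: the content is the two adjunctions, both of which are immediate. The only points meriting a sentence are those noted above --- that the inverse limit should be formed where it exists, i.e.\ in the $(-)^\infty$ enlargements, and that the object $\iota(N)$ in the $\varphi$-module case is really \'etale --- and both are routine. Equivalently, one may observe that $M^{G_{K_\infty}} = \Hom_{\Mod_{G_{K_\infty}}^\infty(R)}(\iota(R), M)$ and $M^{\varphi=1} = \Hom_{\Phi^{' \infty}_M(R)}((\cO_{\cE,R},\varphi), M)$ are corepresented, and a $\Hom$ out of a fixed object preserves limits.
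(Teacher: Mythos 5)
Your proof is correct and is essentially the paper's own argument: the Fact's stated justification is precisely the adjunction (equivalently, corepresentability of the invariants by the unit object), and you have simply written out the two adjunctions explicitly. The extra care about forming the limits in the $(-)^\infty$ enlargements and checking that $\cO_{\cE,R}\otimes_R N$ with $\varphi\otimes\id$ is \'etale is sound and consistent with how the Fact is used in the paper.
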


In order to substantiate Definition/Lemma \ref{defn-lem:M_and_V} and Proposition \ref{prop:main_etale}, $A$-linear structure on the objects will be forgotten down to $R$-linear structure. Then, the objects are direct limits of finite $R$-submodules for which the statements are known, and we establish appropriate compatibility with the limits. 

\begin{proof}[Proof (Definition/Lemma \ref{defn-lem:M_and_V})]
Let $V_A \in \ob \Mod_{G_{K_\infty}}(A)$.  Because the action of $G_{K_\infty}$ has a finite index kernel, we have a canonical isomorphism as $R[G_{K_\infty}]$-modules of $V_A$ with $\varinjlim_i V_i$, where $(V_i)_{i \in I} \in \ob \Mod_{G_{K_\infty}}(R)$ are the $R$-module-finite $R[G_{K_\infty}]$-submodules of $V_A$. We note that the functor $M$ (resp.\ $V$) commutes with injective direct limits in $\Mod_{G_{K_\infty}}(R)$ (resp.\ $\Phi'_M(R)$), using Facts \ref{fact:tensor_colimit} and \ref{fact:invariant_colimit} above along with the fact that the tensor product $\otimes_{\bZ_p} \cO_{\cE^\ur}$ (resp.\ $\otimes_{\cO_\cE} \cO_{\cE^\ur}$) preserves injective maps.

Therefore there are canonical isomorphisms in $\Phi^{' \infty}_M(R)$ of colimits of objects of $\Phi'_M(R)$,
\[
M(V_A) \lrisom M(\varinjlim_i V_i) \lrisom \varinjlim_i M(V_i),
\]
and the fact that $M$ is an equivalence of categories out of $\Mod_{G_{K_\infty}}(R)$ commuting with the necessary colimits implies that there is a canonical isomorphism respecting all structures
\begin{equation}
\label{eq:compare_V_M}
\xymatrix{
V_A \otimes_{\bZ_p} \cO_{\cE^\ur} \ar[rrr]^\sim_{\cO_{\cE^\ur}, R, G_{K_\infty}, \varphi} & & & M(V_A) \otimes_{\cO_\cE} \cO_{\cE^\ur}.
}\end{equation}
The $A$-linear structure on the left hand side then provides a canonical $A$-linear structure on the right hand side, commuting with the action of $\cO_{\cE^\ur}, G_{K_\infty},$ and $\varphi$.  Therefore, $M(V_A)$, being the $G_{K_\infty}$-invariants of the right hand side, has the structure of an $A$-module; moreover, it is an $\cO_{\cE,A}$-module with a Frobenius semi-linear endomorphism.  To complete the proof that $M$ is well-defined in Definition/Lemma \ref{defn-lem:M_and_V}, we must show that $M(V_A)$ is finite as an $\cO_{\cE,A}$-module.

Let $H$ be the open kernel of the action of $G_{K_\infty}$ on $V_A$.  Since $H$ acts trivially on $V_A$, the canonical isomorphism above induces a canonical isomorphism
\begin{equation}
\label{eq:finite_compare_V_M}
V_A \otimes_{\bZ_p} (\cO_{\cE^\ur})^H \lrisom M(V_A) \otimes_{\cO_\cE} (\cO_{\cE^\ur})^H.
\end{equation}
Since $G_{K_\infty}/H$ is finite and $(\cO_{\cE^\ur})^{G_{K_\infty}} = \cO_\cE$, we know that $(\cO_{\cE^\ur})^H$ is finite as a $\cO_\cE$-module.  Therefore the left hand side is finite as a $\cO_{\cE,A}$-module, so that the right hand side is as well.  Therefore $M(V_A)$ is a finite $\cO_{\cE,A}$-module by \'etale descent.  This confirms part (3) of Definition/Lemma \ref{defn-lem:M_and_V}.

Since $V$ commutes with the same limits as $M$ does and is quasi-inverse to $M$ on $\Phi'_M(R)$, we observe that $V$ defines a $R$-linear quasi-inverse on the essential image $\Phi^{' \Gal}_M(A)$ of $M$.  Then the finiteness of $V(M_A)$ for $M_A \in \Phi^{\Gal}_M(A)$ can be proven in the same way that the finiteness of $M(V_A)$ was proven, using the isomorphism \eqref{eq:finite_compare_V_M}.  This confirms part (5) of Definition/Lemma \ref{defn-lem:M_and_V}.
\end{proof}

\begin{proof}[Proof (Prop.\ \ref{prop:main_etale})]
We have proved part (1) in the argument for Definition/Lemma \ref{defn-lem:M_and_V}  that we just gave. For part (2), observe that this statement is clearly true for free, finite rank $A$-modules $W$; then, use the exactness of $M$ on a finite free presentation for a general finite $A$-module $W$. To prove part (3), write $A'$ as an increasing union of its finite $A$-submodules $A' = \varinjlim B_i$, and then apply part (2) along with the arguments involving compatibility with limits in the proof of Definition/Lemma \ref{defn-lem:M_and_V}. 

For part (4), first observe that the exactness of $M$ along with (3) implies that $M(V_A)$ is flat over $\cO_{\cE, A}$ if and only if $V_A$ is flat over $A$.  As these modules are finite over Noetherian rings, they are projective. Having verified this equivalence, to prove (4a) it suffices to show that if the rank of $V_A$ holds constant at $d$, then so does the rank of $M(V_A)$. Since both $V_A$ and $M(V_A)$ are flat, the rank function is locally constant.  At a maximal ideal $\frmm \subset A$, we know that the ranks $\dim_{A/\frmm} V_A \otimes_A A/\frmm$ and $\rk_{\cO_{\cE,A/\frmm}} M(V_A \otimes_A A/\frmm)$ are the same since, by (2),
\[
M(V_A) \otimes_A A/\frmm \cong M(V_A \otimes_A A/\frmm)
\]
and since $A/\frmm$ is a finite field, \cite[A.1.2.4(i)]{fontaine1990} tells us that the $\cO_{\cE,A/\frmm}$-rank of $M(V_A/\frmm)$ is constant and is the same as the $A/\frmm$-dimension of $V_{A/\frmm}$.    Because $A$ is $p$-power torsion, any maximal ideal $I$ of $\cO_{\cE,A}$ contains the kernel of the factor map $\cO_{\cE,A} \ra \cO_{\cE,A/\frmm}$ for some maximal ideal $\frmm$ of $A$. In other words, there exists a natural map $\MaxSpec \cO_{\cE,A} \ra \MaxSpec A$.  Because every connected component of $\Spec A$ and $\Spec \cO_{\cE,A}$ has a closed point, we have completed the proof of (4a).

For (4b), it suffices to show that $M(V_A)$ is free when $V_A$ is free.  The isomorphism \eqref{eq:finite_compare_V_M} shows that when one of $V_A$ or $M(V_A)$ is free, then $V_A \otimes_{\bZ_p} (\cO_{\cE^\ur})^H$ and $M(V_A) \otimes_{\cO_\cE} (\cO_{\cE^\ur})^H$ are both free $(\cO_{\cE^\ur,A})^H$-modules.  Because $\Spec (\cO_{\cE^\ur,A})^H \ra \Spec \cO_{\cE,A}$ is a finite surjective \'{e}tale morphism, any vector bundle trivialized by such a cover was already free over the base, proving the assertion.  Indeed, this is Hilbert Theorem 90 applied to this \'etale extension; that is, $\GL_d$ is special in the sense of Serre \cite[Exp.\ 1, \S4]{Serre1958}, cf.\ \cite[Exp.\ XI, \S5]{sga1}. 
\end{proof}

\subsection{Functors of Lattices and Affine Grassmannians}
\label{subsec:lattice_grass}

The assumptions on $A$, $R$, and $V_A$ remain the same as in the previous section. We will study lattices in the \'etale $\phz$-module $M_A := M(V_A^*)$, where $V_A^*$ denotes the $A$-linear dual of $V_A$ with its natural $G_{K_\infty}$-action. Since $p$ is nilpotent in $A$ (choose $i$ so that $p^i = 0$ in $A$), $\cO_{\cE, A} \cong (\bZ/p^i \bZ)[[u]][1/u] \otimes_{\bZ_p} A$.  Therefore $\frS_A[1/u] \cong \cO_{\cE,A}$, and we may consider $\frS_A$-lattices within $M_A$ which are stable under the Frobenius semi-linear endomorphism $\phz$ on $M_A$.  We will show in this section that the functor associating to an $A$-algebra $B$ the $\phz$-stable $\frS_B$-sublattices of $M_A \otimes_A B$ satisfying the condition ``$E$-height $\leq h$'' is represented by a projective $A$-scheme. We will use the affine Grassmannian for this, generalizing the result of \cite[\S2.1]{mffgsm} and \cite[\S1]{pssdr}, which was done in the case that $A$ is Artinian. 

First we will briefly review the theory of the affine Grassmannian; see \cite[\S1]{richarz2015} for thorough and general treatment of affine Grassmannians. Affine Grassmannians for $\GL_d$ and related groups (see below) are functors of sublattices of projective, constant rank  modules.  The local affine Grassmannian parameterizes these vector bundles over the formal one-dimensional disk $\cD$ which are trivialized on the punctured disk.  The global affine Grassmannian parameterizes these vector bundles over the affine line $\bA^1$ which are trivialized on the punctured line. 

\begin{defn}
Let $V_A$ be a projective rank $d$ $A$-module. Write 
\[
G = \Res_{W/\bZ_p}\GL_{A \otimes_{\bZ_p} W}(V_A^* \otimes_{\bZ_p} W).
\]
Then the affine Grassmannians we will require are the following functors.
\begin{enumerate}
\item The \emph{local affine Grassmannian} $\Gr^{loc}_{G}$ for $G$ is the functor associating to a $A$-algebra $B$ the set of pairs $(P_{\cD}, \eta)$ where $P_{\cD}$ is a projective rank $d$ $W_B\lb t \rb$-module and $\eta$ is an isomorphism
\[
P_{\cD} \otimes_{W_B\lb t\rb} W_B \lp t\rp \lrisom V_A^* \otimes_A W_B\lp t\rp.
\]
\item The \emph{global affine Grassmannian} $\Gr^{glob}_{G}$ for $G$ is the functor assigning to an $A$-algebra $B$ the set of pairs $(P_{\bA^1}, \eta)$, where $P_{\bA^1}$ is a projective rank $d$ $W_B[t]$-module and $\eta$ is an isomorphism
\[
P_{\bA^1} \otimes_{W_B[t]} W_B[t][1/t] \lrisom V_A^* \otimes_A W_B[t][1/t].
\]
\end{enumerate}
\end{defn}

We observe that there is a natural functor 
\begin{equation}
\label{eq:BL_theorem}
\Gr^{glob}_{G} \lra \Gr^{loc}_{G}
\end{equation}
given by restriction from a line to the disc, i.e.\ $\otimes_{W_B[t]} W_B\lb t\rb$. In fact, this restriction is an isomorphism \cite[Prop.\ 6.2]{PZ2013}. It is not difficult to deduce that this map is an isomorphism from published results. When $W = \bZ_p$ and $V_A$ is free, this is the result of \cite{BL1995}. When $V_A$ is not free, we can verify the result on a Zariski cover of $\Spec A$ trivializing $V_A$, and deduce the result from base change from $\Spec \bZ_p$ to $\Spec A$. The result over $\bZ_p$, for arbitrary $W$, follows from \cite[Prop.\ 6.2]{PZ2013}. 

We also want to know that $\Gr_{G}$ is ind-projective over $\Spec A$ with a canonical ample line bundle. Like before, we will reduce to the case $A = \bZ_p$ by replacing $\Spec A$ with a Zariski cover trivializing $V_A$. The fact that $\Gr_{\Res_{W/\bZ_p}\GL_d}$ is ind-proper over $\Spec \bZ_p$ is given in e.g.\ \cite[Prop.\ 6.3]{PZ2013}. We thank Brandon Levin for explaining the following construction of the canonical ample line bundle. Let $H$ be the linear automorphism group of $\Lie(\Res_{W/\bZ_p}\GL_d)$. $H$ is a general linear group over $\bZ_p$, and the ``determinant'' line bundle $\cL_{\det}$ of \cite[Definition 4.3.4]{levin2014} is ample on $\Gr_H$. Even though the adjoint action homomorphism $\Res_{W/\bZ_p}\GL_d \ra H$ may not be injective, the pullback of $\cL_{\det}$ along the induced morphism $\Gr_{\Res_{W/\bZ_p}\GL_d} \ra \Gr_H$ is ample on both fibers over $\Spec \bZ_p$ by \cite[Prop.\ 4.3.6]{levin2014}. Because we already know $\Gr_{\Res_{W/\bZ_p}\GL_d}/\Spec \bZ_p$ is ind-proper, it is therefore ind-projective as well. By base change from $\bZ_p$ to $A$, we know that $\Gr_{G}$ is ind-projective over $\Spec A$ locally on the base. And because the ample line bundle was canonically constructed, it interpolates over all of $\Spec A$. Because we have this canonical ample line bundle, ind-projectivity is preserved under gluing a Zariski cover in light of Remark \ref{rem:local_proj}. 

We summarize what we have proved in the following 
\begin{thm}
\label{thm:AG-IP}
Let $S$ be a locally Noetherian scheme, and let $V$ be a projective, coherent, constant rank  $\cO_S$-module.  Then $\Gr_{\Res_{W/\bZ_p} \GL_{W_A}(V \otimes_{\bZ_p} W)}$ is an ind-projective $S$-scheme with a canonical ample line bundle. 
\end{thm}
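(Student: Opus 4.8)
The plan is to reduce Theorem \ref{thm:AG-IP} to the case $S = \Spec \bZ_p$ and $V = W^d$ (free of rank $d$), which is already in the literature, and then to propagate both the ind-projectivity and the ampleness through the remaining generalities. More precisely, the statement is Zariski-local on $S$ in the following strong sense: once we know that the relevant Grassmannian over $\Spec \bZ_p$ is ind-projective with a \emph{canonically} constructed ample line bundle, base change along $S \ra \Spec \bZ_p$ produces, over each member $U$ of a Zariski cover of $S$ trivializing $V$, an ind-projective $U$-scheme together with an ample line bundle; and because the ample line bundle was produced by a construction functorial in the input data, it glues over the overlaps. Then Remark \ref{rem:local_proj} (ind-projectivity with a specified ample line bundle is Zariski-local on the base) finishes the descent to general $S$.

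I would organize the argument in the following steps. First, recall the isomorphism \eqref{eq:BL_theorem} between the global and local affine Grassmannians, so that it suffices to treat $\Gr^{glob}_G$, which is manifestly a functor of vector bundles on $\bA^1_{W_B}$ trivialized away from the zero section; this is what makes base change transparent. Second, invoke $\Gr_{\Res_{W/\bZ_p}\GL_d}/\Spec \bZ_p$ ind-proper, citing \cite[Prop.\ 6.3]{PZ2013}. Third, construct the ample line bundle over $\bZ_p$: let $H$ be the general linear group of $\Lie(\Res_{W/\bZ_p}\GL_d)$, take the determinant line bundle $\cL_{\det}$ of \cite[Definition 4.3.4]{levin2014} on $\Gr_H$, which is ample, and pull it back along the morphism $\Gr_{\Res_{W/\bZ_p}\GL_d} \ra \Gr_H$ induced by the adjoint representation; by \cite[Prop.\ 4.3.6]{levin2014} this pullback is still ample (on both fibers over $\Spec \bZ_p$), even though the adjoint map need not be a closed immersion. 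Combined with ind-properness, this gives ind-projectivity over $\bZ_p$. Fourth, base change: for a Zariski cover $\{U_\alpha\}$ of $S$ with $V|_{U_\alpha}$ free, $\Gr_{\Res_{W/\bZ_p}\GL_{W_{U_\alpha}}(V\otimes W)} \cong \Gr_{\Res_{W/\bZ_p}\GL_d}\times_{\Spec \bZ_p} U_\alpha$, so ind-projectivity and the pulled-back ample line bundle transfer to each $U_\alpha$. Fifth, glue: since the construction of $\cL_{\det}$ and its pullback is canonical (intrinsic to the group data, not to a chosen trivialization), the line bundles on the $U_\alpha$ agree on overlaps and descend to a global ample line bundle on $\Gr$ over $S$; apply Remark \ref{rem:local_proj} to conclude.

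The main obstacle — really the only nontrivial point — is the construction and ampleness of the canonical line bundle on $\Gr_{\Res_{W/\bZ_p}\GL_d}$ over $\Spec \bZ_p$, in particular controlling the behavior over the characteristic $p$ fiber and handling the fact that the adjoint homomorphism $\Res_{W/\bZ_p}\GL_d \ra H$ is not injective. This is exactly the delicate integral input that is not formal; I would handle it by citing Levin's determinant line bundle \cite[Def.\ 4.3.4, Prop.\ 4.3.6]{levin2014}, which is designed for precisely this situation, rather than reproving it. Everything else — the global/local comparison, flat base change of ind-schemes, and Zariski gluing with a specified ample bundle — is routine, with the only thing to be careful about being that ``projective morphism'' over a non-local base must carry the datum of an ample line bundle (Remark \ref{rem:local_proj}), which is why we are careful to track the line bundle and its canonicity throughout, rather than merely asserting local ind-projectivity.
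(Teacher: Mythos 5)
Your proposal is correct and follows essentially the same route as the paper: reduce to the free case over $\Spec \bZ_p$ via a Zariski cover trivializing $V$, cite \cite[Prop.\ 6.3]{PZ2013} for ind-properness, obtain the canonical ample line bundle by pulling back Levin's determinant bundle \cite[Def.\ 4.3.4, Prop.\ 4.3.6]{levin2014} along the adjoint map to $\Gr_H$, and then base change and glue using the canonicity of the bundle together with Remark \ref{rem:local_proj}. No gaps to report.
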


The functor of sublattices that arises in our study is not identical to the global nor the local affine Grassmannian, but it is scheme-theoretically isomorphic. In what follows, we write $\hat \frS_B$ for the $u$-adic completion of $\frS_B$; they are both Noetherian (Lem.\ \ref{lem:frS_A_noetherian}).

\begin{prop}
\label{prop:grass_equiv}
If $V_A$ is an object of $\Rep_{G_{K_\infty}}(A)$, projective of constant rank $d$, and $M_A := M(V_A^*)$ is the corresponding $\cO_{\cE, A}$-module in $\Phi^{\Gal}_M(A)$, then there exist equivalences of functors on $A$-algebras
\begin{enumerate}
\item The global affine Grassmannian $\Gr^{glob}_{G}$ for $G/A$.
\item The functor $F_{V_A}$ associating to a finitely generated $A$-algebra $B$ the $\frS_B$-sublattices of $M_B := M_A \otimes_A B$
\item The local affine Grassmannian $\Gr^{loc}_{G}$ for $G/A$.
 \end{enumerate}
induced by tensoring
\begin{equation}
\label{eq:glob_L_loc}
\xymatrix{
\Gr^{glob}_{G} \ar[rr]^{\otimes_{W_B[u]} \frS_B} & & F_{V_A} \ar[rr]^{\otimes_{\frS_B} W_B\lb u\rb} & & \Gr^{loc}_{G}
}
\end{equation}
which factors the composite isomorphism \eqref{eq:BL_theorem}.
\end{prop}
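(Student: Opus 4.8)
The plan is to take advantage of the isomorphism \eqref{eq:BL_theorem}, already established, so that only the construction of the two natural transformations in \eqref{eq:glob_L_loc} and one formal observation remain. Since the composite $\Gr^{glob}_{G}\to\Gr^{loc}_{G}$ in \eqref{eq:glob_L_loc} is \eqref{eq:BL_theorem}, an isomorphism, it will suffice to (i) exhibit the two transformations as morphisms of functors on finitely generated $A$-algebras $B$, and (ii) check that the second, $F_{V_A}\to\Gr^{loc}_{G}$, is a monomorphism of functors. Granting this, $F_{V_A}\to\Gr^{loc}_{G}$ is a monomorphism which, being also an epimorphism (it is the second factor of an isomorphism), is an isomorphism; hence $\Gr^{glob}_{G}\to F_{V_A}$ is an isomorphism as well, and in particular $F_{V_A}$ is represented by the ind-projective ind-scheme $\Gr_{G}$ of Theorem \ref{thm:AG-IP}, with its canonical ample line bundle.

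For (i): the first transformation sends $(P_{\bA^1},\eta)\in\Gr^{glob}_{G}(B)$ to $P_{\bA^1}\otimes_{W_B[u]}\frS_B$, a finitely generated projective $\frS_B$-module of rank $d$; inverting $u$ and composing with $\eta$ identifies its localization with $V_A^*\otimes_A\cO_{\cE,B}$, and via the identification of the latter with $M_B$ — here one uses that $M_A=M(V_A^*)$ is projective of rank $d$ over $\cO_{\cE,A}$ by Proposition \ref{prop:main_etale}(4), see below — this displays the module as a $\frS_B$-lattice in $M_B$, i.e.\ a point of $F_{V_A}(B)$. The second transformation sends a $\frS_B$-lattice $L\subseteq M_B$ to its $u$-adic completion $L\otimes_{\frS_B}\hat\frS_B$ over $\hat\frS_B=W_B\lb u\rb$, equipped with the framing inherited from $L[1/u]=M_B\cong V_A^*\otimes_A W_B\lp u\rp$; that the composite of the two equals \eqref{eq:BL_theorem} is immediate. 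For (ii), observe that $\frS_B$ and $\hat\frS_B$ have identical quotients modulo each power of $u$, so a $u$-power-torsion module over one is the same over the other; since a $\frS_B$-lattice $L\subseteq M_B$ is recovered from the $u$-power-torsion quotients $L/N$ as $N$ runs over sublattices with $u^mN\subseteq L$, and these are unaffected by completion, $L$ may be reconstructed from $L\otimes_{\frS_B}\hat\frS_B$ (concretely, $L=(L\otimes_{\frS_B}\hat\frS_B)\cap M_B$), so the second transformation is injective on $B$-points.

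What remains is to pin down the identification of $M_B$ with $V_A^*\otimes_A\cO_{\cE,B}$ used in both transformations, and to license the descent. Here I would proceed exactly as in the proofs that \eqref{eq:BL_theorem} is an isomorphism and that $\Gr_{G}$ is ind-projective: all three functors are fppf sheaves on $A$-algebras — $F_{V_A}$ by faithfully flat descent of lattices in a fixed module — and the transformations commute with base change, so it is enough to prove the isomorphisms after a faithfully flat cover of $\Spec A$, and we may take one over which $V_A^*$ (hence $G=\Res_{W/\bZ_p}\GL_d$) and $M_A$ are free over $A$, resp.\ $\cO_{\cE,A}$. Over such a piece $F_{V_A}$ becomes the classical functor of $W_B\lb u\rb$-lattices in $\cO_{\cE,B}^{\oplus d}$, which is the affine Grassmannian by Beauville--Laszlo \cite{BL1995} and \cite[Prop.\ 6.2]{PZ2013}, the two transformations are the evident completion maps, and the equivalences hold on the nose; the local isomorphisms then glue, and since the ample line bundle of Theorem \ref{thm:AG-IP} is globally defined, ind-projectivity persists (cf.\ Remark \ref{rem:local_proj}).

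The step I expect to be the main obstacle is exactly this last one: the affine Grassmannians $\Gr^{glob}_{G},\Gr^{loc}_{G}$ are described through the standard module $V_A^*\otimes_{\bZ_p}W$, whereas $F_{V_A}$ is intrinsically the lattice functor of the \'etale $\phz$-module $M_A=M(V_A^*)$, and $M_A$ need not be isomorphic to $V_A^*\otimes_A\cO_{\cE,A}$ as an $\cO_{\cE,A}$-module globally over $\Spec A$. The reconciliation relies on \eqref{eq:finite_compare_V_M}, which identifies $M_A$ with $V_A^*\otimes_A\cO_{\cE,A}$ after the finite \'etale base change to $(\cO_{\cE^\ur})^H\otimes_{\bZ_p}A$ (with $H$ the open kernel of the $G_{K_\infty}$-action on $V_A^*$), so that the two are fppf-locally isomorphic and the Zariski-local computation propagates to a global statement; carrying out this bookkeeping carefully, while routine, is the bulk of the work.
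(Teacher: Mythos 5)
Your overall skeleton is the same as the paper's: both arrows in \eqref{eq:glob_L_loc} factor the Beauville--Laszlo isomorphism \eqref{eq:BL_theorem}, so it suffices to show the second arrow is a monomorphism of functors, after reducing to the case that $V_A$ (hence $M_A$) is free. Your monomorphism argument via the reconstruction $L = (L\otimes_{\frS_B}\hat\frS_B)\cap M_B$ is a sound variant of the paper's, which instead writes $F_{V_A}$ as a filtered colimit of bounded lattice functors $u^{n}\frM_B\subseteq \frN_B\subseteq u^{-n}\frM_B$ and observes that this colimit structure is present in all three functors and is unchanged by the two tensoring maps; either route works, and yours is no less rigorous for projective lattices since $\frS_B=\frS_B[1/u]\cap\hat\frS_B$.

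The genuine problem is in the step you yourself flag as the main obstacle. You propose to reconcile $M_A$ with $V_A^*\otimes_A\cO_{\cE,A}$ by appealing to \eqref{eq:finite_compare_V_M}, asserting that this makes the two ``fppf-locally isomorphic'' so that the local computation globalizes. But \eqref{eq:finite_compare_V_M} is an isomorphism only after the finite \'etale extension $\cO_{\cE,A}\to(\cO_{\cE^\ur,A})^H$, i.e.\ a base change in the $\cO_\cE$-direction, not along a cover of $\Spec A$; the three objects in \eqref{eq:glob_L_loc} are functors on $A$-algebras, so descent for them must be along covers of $\Spec A$, and an isomorphism of the ambient modules after extending $\cO_\cE$ neither defines your transformations globally nor lets you check that they are isomorphisms. (There is in any case no canonical global identification to descend; cf.\ Remark \ref{rem:basis_choice}.) What actually closes the gap, and is the paper's argument, is Proposition \ref{prop:main_etale}(2),(3),(4): $M(-)$ commutes with base change $A\to\tilde A$ and sends free modules to free modules, so any Zariski cover $\Spec\tilde A\to\Spec A$ trivializing $V_A$ also trivializes $M_A$; one chooses bases over $\tilde A$ (a non-canonical choice) to identify $M_{\tilde A}$ with $V_{\tilde A}^*\otimes_{\tilde A}\cO_{\cE,\tilde A}$, runs the free-case argument there, and concludes by Zariski descent. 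So the cover you invoke does exist, but its existence and the needed identification must be justified by Proposition \ref{prop:main_etale}, not by \eqref{eq:finite_compare_V_M}, whose role is confined to the Hilbert 90 argument inside the proof of Proposition \ref{prop:main_etale}(4b).
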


\begin{rem}
\label{rem:basis_choice}
We will see in the proof that the natural isomorphisms between the affine Grassmannians and $F_{V_A}$ is not canonical.  This is not a new phenomenon that arises when $A$ is no longer Artinian as it was in \cite{pssdr}; bases were implicitly chosen there as well. However, because of the nature of its construction, the canonical ample line bundle on $F_{V_A}$ does not depend on the choice of basis. 
\end{rem}

\begin{proof}
We will prove the case $W = \bZ_p$. First let us assume that $V_A$ is free of rank $d$, so that $M_A$ is as well, by Proposition \ref{prop:main_etale}(4). We observe that the two morphisms in \eqref{eq:glob_L_loc} factor \eqref{eq:BL_theorem}, and therefore it will suffice to show that the latter morphism $\otimes_{\frS_B} B\lb u\rb$ is a monomorphism of functors.

Choose a basis for $M_A$. For any finitely generated $A$-algebra $B$, let $\frM_B \in F_{V_A}(B)$ denote the $\frS_B$-lattice generated by the induced basis of $M_B = M_A \otimes_A B$. As remarked in \cite[Example 2.2.7]{levin2014}, we note that $F_{V_A}(B)$ is a direct limit over $n \geq 0$ of functors of lattices $\frN_B \subset M_B$ satisfying $u^{-n}\frM_B \supseteq \frN_B \supseteq u^n\frM_B$. 
This filtered direct limit exists for each of the global/local affine Grassmannian functors as well as $F_{V_A}$, and is compatible with and unchanged by the tensor maps of \eqref{eq:glob_L_loc}. Therefore the latter map of \eqref{eq:glob_L_loc} is an isomorphism.

In the case that $V_A$ is a projective, rank $d$ $A$-module trivialized by a Zariski cover $\Spec \tilde A \ra \Spec A$, Proposition \ref{prop:main_etale}(2,4) implies that the same cover trivializes $M_A$. Therefore the argument above applies after base change to $\Spec \tilde A$, and by descent we have the statement of the proposition.  
\end{proof}

The functor of $\frS$-lattices of $E$-height at most $h$ for $V_A$ is defined on the category of $A$-algebras as follows, where $h$ is a non-negative integer.  Recall that $M_A := M(V_A^*)$, and is equipped with an $\cO_{\cE,A}$-linear isomorphism $\phz: \varphi^*(M_A) \risom M_A$. 

\begin{defn}
For $B$ an $A$-algebra, $M_B = M_A \otimes_A B$ has a $B$-linear isomorphism $\varphi^*(M_B) \risom M_B$.  Choose a non-negative integer $h$.  A \emph{$\frS_B$-lattice in $M_B$ of $E$-height $\leq h$} is a $\frS_B$-sublattice $\frM_B$ of the $\cO_{\cE, B}$-module $M_B$ such that $\frM_B$ is stable by $\varphi$ and the cokernel of the induced injective map $\varphi^*(\frM_B) \ra \frM_B$ is killed by $E(u)^h$. 
\end{defn}

Write $L_{V_A}^{\leq h}(B)$ for the set of $\frS_B$-lattices of $E$-height at most $h$ in $M_B$.  It is naturally a functor on $A$-algebras (cf.\ \cite[p.\ 517]{pssdr}), and is a subfunctor of $F_{V_A}$. In the case that $A$ is Artinian, $L_{V_A}^{\leq h}$ is represented by a projective $A$-scheme \cite[Prop.\ 2.1.7]{mffgsm} (see also \cite[Prop.\ 1.3]{pssdr}). The same proof will apply in the non-Artinian case. 
\begin{prop}[{Generalizing \cite[Prop.\ 2.1.7]{mffgsm}}]
\label{prop:kisin_proj}
The functor $L_{V_A}^{\leq h}$ is represented by a projective $A$-scheme $\cL^{\leq h}_{V_A}$ with a canonical ample line bundle.  If $A \ra A'$ is finitely generated and $V_{A'} = V_A \otimes_A A'$, then there is a canonical isomorphism $\cL^{\leq h}_{V_A} \otimes_A A' \risom \cL^{\leq h}_{V_{A'}}$ compatible with the canonical ample line bundle.
\end{prop}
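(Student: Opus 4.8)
The plan is to realize $L^{\leq h}_{V_A}$ as a closed subfunctor of a bounded — hence projective — piece of the ind-projective $A$-scheme $F_{V_A}\cong\Gr^{glob}_{G}$ of Proposition \ref{prop:grass_equiv} and Theorem \ref{thm:AG-IP}, and then to get the canonical ample line bundle by restricting the one already produced on $F_{V_A}$. First I would reduce to the case where $V_A$, and hence (by Proposition \ref{prop:main_etale}(2,4)) also $M_A$, is free: a Zariski cover $\Spec\tilde A\to\Spec A$ trivializing $V_A$ trivializes $M_A$, and since $F_{V_A}$ together with its canonical ample line bundle is already defined over $\Spec A$ (Theorem \ref{thm:AG-IP}), it suffices to cut out $\cL^{\leq h}_{V_A}$ as a closed subscheme of $F_{V_A}$ compatibly with base change, Zariski descent then handling general $V_A$. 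After trivializing I would fix an $\cO_{\cE,A}$-basis of $M_A$, let $\frM_0\subset M_A$ be the $\frS_A$-lattice it spans, record that in this basis $\phz$ is given by a matrix $X\in\GL_d(\cO_{\cE,A})$ with $X,X^{-1}\in u^{-c}M_d(\frS_A)$ for some $c\geq0$, and use the presentation $F_{V_A}\cong\varinjlim_{n}\Gr^{(n)}_{V_A}$ from the proof of Proposition \ref{prop:grass_equiv} and \cite[Example 2.2.7]{levin2014}, where $\Gr^{(n)}_{V_A}(B)$ is the set of $\frS_B$-lattices $\frN_B$ with $u^{n}\frM_{0,B}\subseteq\frN_B\subseteq u^{-n}\frM_{0,B}$ (writing $\frM_{0,B}:=\frM_0\otimes_A B$); each $\Gr^{(n)}_{V_A}$ is a projective $A$-scheme — the $\frS_B$-stability locus inside the relative Grassmannian of the free $B$-module $u^{-n}\frM_{0,B}/u^{n}\frM_{0,B}$ — the transition maps are closed immersions, and the canonical ample line bundle of Theorem \ref{thm:AG-IP} restricts to each $\Gr^{(n)}_{V_A}$.

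Next I would prove the boundedness statement: there is an integer $N=N(h,d,\frM_0)$, independent of the test algebra $B$, with $L^{\leq h}_{V_A}(B)\subseteq\Gr^{(N)}_{V_A}(B)$ for every $A$-algebra $B$. Base-changing the relations $X,X^{-1}\in u^{-c}M_d(\frS_A)$ gives $u^{c}\frM_{0,B}\subseteq\phz(\phz^*\frM_{0,B})\subseteq u^{-c}\frM_{0,B}$ over every $B$, and for $\frM_B\in L^{\leq h}_{V_A}(B)$ one has $\phz(\phz^*\frM_B)\subseteq\frM_B$ and $E(u)^h\frM_B\subseteq\phz(\phz^*\frM_B)$. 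Running the argument of \cite[Prop.\ 1.3]{pssdr} (see also \cite[\S2.1]{mffgsm}) with $\frM_{0,B}$ as reference lattice sandwiches $\frM_B$ between $u^{N}\frM_{0,B}$ and $u^{-N}\frM_{0,B}$; the point is that the only quantities entering the bound are $h$, $\deg E(u)$, the nilpotence order of $p$, and $c$, all of which are intrinsic to $M_A$, so $N$ does not depend on $B$. This is the step I expect to be the main obstacle: not that it is deep, but extracting from the Artinian-coefficient argument a bound that is genuinely uniform over all $B$ requires some care with how $u$-powers and $E(u)$ interact modulo a nilpotent $p$.

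Then I would check that $L^{\leq h}_{V_A}$ is a closed subfunctor of $\Gr^{(N)}_{V_A}$. On $\Gr^{(N)}_{V_A}$ there is a universal lattice $\frM^{\univ}$ (concretely, $\frM^{\univ}/u^{N}\frM^{\univ}_0$ is the tautological sub-vector-bundle of $u^{-N}\frM^{\univ}_0/u^{N}\frM^{\univ}_0$), and, working over a suitable thickening $\Spec(\frS_A/u^{M})$, the module $\phz(\phz^*\frM^{\univ})$ is coherent as well. The $\phz$-stability condition $\phz(\phz^*\frM^{\univ})\subseteq\frM^{\univ}$ is closed, since the locus where a morphism of coherent sheaves fails to land in a fixed subsheaf is open; and on that locus the condition that $E(u)^h$ annihilate $\coker(\phz^*\frM^{\univ}\to\frM^{\univ})$ is the vanishing of an induced map of coherent sheaves, hence also closed. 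Therefore $L^{\leq h}_{V_A}$ is represented by a closed subscheme $\cL^{\leq h}_{V_A}\rinj\Gr^{(N)}_{V_A}$, which is thus a projective $A$-scheme carrying the ample line bundle restricted from $F_{V_A}$.

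Finally, for base change along a finitely generated $A\to A'$ with $V_{A'}=V_A\otimes_A A'$: Proposition \ref{prop:main_etale}(3) gives $M_{A'}\cong M_A\otimes_A A'$, a basis of $M_A$ induces one of $M_{A'}$ with $\frM_0$ base-changing to the reference lattice for $M_{A'}$, the constant $c$ and hence the bound $N$ are unchanged, and the two closed conditions above are preserved under $-\otimes_A A'$; combined with the fact that the canonical ample line bundle on $F_{V_{A'}}$ is the pullback of the one on $F_{V_A}$ (Theorem \ref{thm:AG-IP}), this yields the asserted canonical isomorphism $\cL^{\leq h}_{V_A}\otimes_A A'\risom\cL^{\leq h}_{V_{A'}}$, compatible with ample line bundles. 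Applying this base-change compatibility to $\tilde A\otimes_A\tilde A$ produces the descent datum that completes the reduction to the free case, and in particular shows the construction is independent of the chosen trivialization and basis.
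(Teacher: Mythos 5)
Your route is essentially the paper's: reduce Zariski-locally to $V_A$ (hence, by Proposition \ref{prop:main_etale}(4), $M_A$) free, identify $F_{V_A}$ with the affine Grassmannian via Proposition \ref{prop:grass_equiv}, cut out $L^{\leq h}_{V_A}$ as a closed subscheme of a bounded (hence projective) piece by following Kisin's argument \cite[Prop.\ 2.1.7]{mffgsm}, take the ample bundle by restriction (Theorem \ref{thm:AG-IP}, Remarks \ref{rem:basis_choice} and \ref{rem:local_proj}), and get base-change compatibility from Proposition \ref{prop:main_etale}(2)--(3); the paper simply defers the ``Zariski-closed and finite type'' verification to Kisin rather than writing it out. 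Your uniform-boundedness step, which you flag as the main worry, is fine: since $p$ is nilpotent in $A$, $E(u)$ is $u^{e}$ times a unit of $\cO_{\cE,B}$ for every $A$-algebra $B$, so the sandwiching bound depends only on $h$, $e=\deg E$, the constant $c$ for the matrix of $\phz$ on $\frM_0$, and the nilpotence exponent of $p$ in $A$ --- nothing about $B$.

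The one place your justification is not sound as stated is the closedness of the $E$-height condition. ``Vanishing of an induced map of coherent sheaves'' is not a closed condition in general: for the section $1$ of $\cO_X/\cI$, the locus where its pullback vanishes is the \emph{open} set $X\setminus V(\cI)$. For $\phz$-stability this does not matter, because the target $u^{-N}\frM_0/\frM^{\univ}$ is locally free by the very definition of the Grassmannian functor. But for the condition $E(u)^h\frM^{\univ}\subseteq \mathrm{im}\bigl(\phz^*\frM^{\univ}\to M\bigr)$ the natural target is $Q:=u^{-N'}\frM_0/\mathrm{im}(\phz^*\frM^{\univ})$, and you must check that $Q$ is finite locally free over the base (equivalently, that its formation commutes with base change) before concluding that vanishing of the composite $\frM^{\univ}\xrightarrow{E(u)^h}Q$ is a closed condition. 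This is true, and is the real content of the step: for every $A$-algebra $B$ the map $1\otimes\phz:\phz^*\frM_B\to M_B$ is injective (it becomes an isomorphism after inverting $u$ because $M_A$ is \'etale, and $\phz^*\frM_B$ is $u$-torsion free, being projective over $\frS_B$); applying this to $B/I$ for finitely generated ideals $I$ and using the exact sequence $0\to\phz^*\frM_B\to u^{-N'}\frM_{0,B}\to Q_B\to 0$ together with the $B$-flatness of $\frS_B$-projective modules gives $\Tor_1^B(Q_B,B/I)=0$, so $Q_B$ is flat; since it is killed by a fixed power of $u$ it is finite projective over $B$, and vanishing of a map from a finitely generated sheaf into a vector bundle is closed. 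With this repair your argument is precisely the one the paper invokes from \emph{loc.\ cit.}, and the rest of your proposal (ample bundle, base change, descent to non-free $V_A$) goes through as written.
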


\begin{proof}
Proposition \ref{prop:grass_equiv} gives us that $L_{V_A}^{\leq h}$ is a subfunctor of $F_{V_A}$. The desired result is Zariski-local on $\Spec A$ by Remark \ref{rem:local_proj}, allowing us to assume that $V_A$ is free. Then $M_A$ is free over $\cO_{\cE,A}$ as well by Proposition \ref{prop:main_etale}(4b). Choosing a basis for $M_A$ (which induces a choice of isomorphism of $F_{V_A}$ with the affine Grassmannian), one can follow the proof of \cite[Prop.\ 2.1.7]{mffgsm}, checking that the $\phz$-stable condition is a Zariski-closed and finite type condition in the affine Grassmannian. Then $\cL_{V_A}^{\leq h}$ is projective with a canonical ample line bundle arising as the restriction of the canonical ample line bundle on the affine Grassmannian (Theorem \ref{thm:AG-IP} and Remark \ref{rem:basis_choice}). The compatibility with base change $A \ra A'$ follows from Proposition \ref{prop:main_etale}(2-3). 
\end{proof}

Write $\Theta_A$ for the projective map $\Theta_A: \cL^{\leq h}_{V_A} \ra \Spec A$.  Write $\underline{\frM}$ for the universal sheaf of $\Theta_A^*(\frS_A)$-modules on $\cL^{\leq h}_{V_A}$ and $\underline{\hat \frM}$ for its $u$-adic completion.

Next we will compare $\underline{\frM}$ with Galois representations, showing that the global sections of the universal $\frS_A$-lattice in $M_A$, with its Frobenius semi-linear structure, can recover $V_A$ in a similar fashion to the correspondence between $V_A$ and $M_A = M(V_A)$ in Proposition \ref{prop:main_etale}, but without losing $u$-integrality.
\begin{lem}[{Generalizing \cite[Lem.\ 1.4.1]{pssdr}}]
\label{lem:S-periods}
Set $\tilde A := \Theta_{A*}(\cO_{\cL_{V_A}^{\leq h}})$.  There is a canonical $\tilde A$-linear $G_{K_\infty}$-equivariant isomorphism
\begin{equation}
\label{eq:universal_V_A}
V_{\tilde A} := V_A \otimes_A \tilde A \lrisom \Hom_{\frS_{\tilde A}, \varphi}(\Theta_{A*}(\underline{\frM}), \frS^\ur_{\tilde A}).
\end{equation}
\end{lem}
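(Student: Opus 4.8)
The plan is to deduce \eqref{eq:universal_V_A} from a ``pointwise'' version valid over arbitrary $A$-algebras, and then to globalize over $\cL := \cL^{\leq h}_{V_A}$ by pushing forward along $\Theta_A$. First I would record the étale analogue of the claim. Dualizing the comparison isomorphism \eqref{eq:compare_V_M} (applied to $V_A^*$, so that $M_A = M(V_A^*)$) over $\cO_{\cE^\ur, A}$ — legitimate since $V_A \in \Rep_{G_{K_\infty}}(A)$ and $M_A$ are finite projective of constant rank — yields a $G_{K_\infty}$-, $\varphi$-, and $\cO_{\cE^\ur}$-equivariant isomorphism
\[
V_A \otimes_{\bZ_p} \cO_{\cE^\ur} \lrisom \Hom_{\cO_{\cE, A}}(M_A, \cO_{\cE^\ur, A}),
\]
where $\varphi$ acts on a homomorphism $f$ by $f \mapsto \varphi \circ f \circ (\varphi : \varphi^*(M_A) \risom M_A)^{-1}$, using that the latter is an isomorphism because $M_A$ is étale. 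Taking $\varphi = 1$-invariants and using $(\cO_{\cE^\ur})^{\varphi = 1} = \bZ_p$ gives a canonical $G_{K_\infty}$-equivariant isomorphism $V_A \lrisom \Hom_{\cO_{\cE, A}, \varphi}(M_A, \cO_{\cE^\ur, A})$; by Proposition \ref{prop:main_etale}(2)--(3) the same holds compatibly after any finitely generated base change $A \ra B$.

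The key pointwise step is then: for every finitely generated $A$-algebra $B$ and every $\varphi$-stable $\frS_B$-lattice $\frM_B \subseteq M_B := M_A \otimes_A B$ of $E$-height $\leq h$, the restriction-of-scalars map
\[
\Hom_{\frS_B, \varphi}(\frM_B, \frS^\ur_B) \lra \Hom_{\cO_{\cE, B}, \varphi}(M_B, \cO_{\cE^\ur, B}) \cong V_A \otimes_A B
\]
is a $B$-linear, $G_{K_\infty}$-equivariant isomorphism, functorial in $B$ and in $\frM_B$. This is \cite[Lem.\ 1.4.1]{pssdr} with the Artin-local coefficient ring replaced by a general $p$-power-torsion $B$, and I would follow that argument. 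Injectivity is immediate from $\frM_B[1/u] = M_B$. For surjectivity one must show that a $\varphi$-compatible homomorphism $f : M_B \ra \cO_{\cE^\ur, B}$ carries $\frM_B$ into the integral subring $\frS^\ur_B$ (the base change to $B$ of $\frS^\ur = \cO_{\widehat{\cE^\ur}} \cap W(R)$): the $E$-height bound gives $E(u)^h \frM_B \subseteq \frS_B \cdot \varphi(\frM_B)$, and iterating this under $\varphi$ expresses each element of $f(\frM_B)$ through products $\prod_{i \geq 0} \varphi^i(E(u))$ whose images in $W(R)$ have $v_R$-valuations tending to $0$ but remaining positive; combined with $f(\frM_B) \subseteq \cO_{\cE^\ur, B}$, a convergence estimate then forces $f(\frM_B) \subseteq \frS^\ur_B$. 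I expect this valuation argument, which is the heart of \cite[\S1.4]{pssdr}, to be the only genuine obstacle, the point requiring care being that it survives with the coefficient ring $B$ and the extension ring $\frS^\ur_B$ in place of the Artin-local setting.

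Finally I would globalize. Applying the pointwise isomorphism over $\cL$ to the universal lattice $\underline{\frM}$ — interpreting ``$B$'' sheaf-theoretically, i.e.\ working on an affine open cover $\{\Spec B_j \ra \cL\}$ with each $B_j$ finitely generated over $A$ (Proposition \ref{prop:grass_equiv}) and gluing via the base-change compatibility of Proposition \ref{prop:kisin_proj} — produces a canonical $G_{K_\infty}$-equivariant isomorphism of sheaves on $\cL$,
\[
\Theta_A^*(V_A) = V_A \otimes_A \cO_{\cL} \lrisom \SHom_{\frS_{\cL}, \varphi}(\underline{\frM}, \frS^\ur_{\cL}),
\]
where $\frS^\ur_{\cL} = \Theta_A^*(\frS^\ur_A)$ because $\frS^\ur$ is defined over $\bZ_p$. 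Pushing forward along the projective morphism $\Theta_A$ and applying the projection formula — valid since $V_A$ is finite projective over $A$, and since $\frS^\ur_A$, hence $\frS^\ur_{\tilde A}$, is flat over the base — identifies the left-hand side with $V_A \otimes_A \Theta_{A*}(\cO_{\cL}) = V_A \otimes_A \tilde A = V_{\tilde A}$, and identifies the right-hand side with $\Hom_{\frS_{\tilde A}, \varphi}(\Theta_{A*}(\underline{\frM}), \frS^\ur_{\tilde A})$, using $\Theta_{A*}(\frS^\ur_{\cL}) = \frS^\ur_{\tilde A}$ and the local freeness of $\underline{\frM}$ over $\frS_{\cL}$ to compute $\Theta_{A*}$ of the internal Hom as in the proof of \cite[Lem.\ 1.4.1]{pssdr}. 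The resulting isomorphism is \eqref{eq:universal_V_A}, and its $\tilde A$-linearity and $G_{K_\infty}$-equivariance are inherited from the pointwise construction.
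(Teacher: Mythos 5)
Your overall architecture (étale comparison, then an integral refinement of Fontaine type, then globalization over $\cL^{\leq h}_{V_A}$) is the same as Kisin's and the paper's, and your first step — dualizing \eqref{eq:compare_V_M} to get $V_{\tilde A} \lrisom \Hom_{\cO_{\cE,\tilde A},\varphi}(M_{\tilde A},\cO_{\cE^\ur,\tilde A})$ — is exactly how the paper begins. The gap is in your ``key pointwise step'': you assert that Fontaine's integrality theorem ($\Hom_{\frS,\varphi}(\frN,\frS^\ur)=\Hom_{\frS,\varphi}(\frN,\cO_{\cE^\ur})$ for \emph{finite} $\frS$-modules of bounded $E$-height) survives with coefficients in an arbitrary finitely generated $A$-algebra $B$, to be checked by ``rerunning the valuation argument.'' But that generalization is precisely the new difficulty in the non-Artinian setting. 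In Kisin's situation $A$, hence $\tilde A$, is Artinian, so $\Theta_{A*}(\uM)$ is literally a finite $\frS$-module and \cite[B.1.8.4]{fontaine1990} applies as stated; for infinite $B$ the lattice $\frM_B$ is not finite over $\frS$, and the successive-approximation/valuation argument uses a finite generating set over the complete local ring $\frS$ and the valuation on $W(\Fr R)$, neither of which passes through $\otimes_{\bZ_p}B$. The paper's proof is engineered to avoid ever proving such a statement: it applies Fontaine only to $\frN:=\Theta_{A*}(\uM)\cap M_i$, a finite $\frS_R$-module with $R$ Artinian, chosen (via a finite $R[G_{K_\infty}]$-submodule $V_i\subset V_{\tilde A}$ with $V_i\otimes_R\tilde A\rsurj V_{\tilde A}$) so that $\frN\otimes_R\tilde A\rsurj\Theta_{A*}(\uM)$, then base-changes the targets in \eqref{eq:fontaine_input_R}, uses tensor--Hom adjunction, and descends along that surjection using the injectivity of $\frS^\ur_{\tilde A}\ra\cO_{\cE^\ur,\tilde A}$. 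You would need either to prove the $B$-coefficient Fontaine statement in detail or to run a descent of this kind; as written, the heart of the lemma is assumed rather than proved.

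There is a second, smaller gap in your globalization: you identify $\Theta_{A*}\SHom_{\frS_\cL,\varphi}(\uM,\frS^\ur_\cL)$ with $\Hom_{\frS_{\tilde A},\varphi}(\Theta_{A*}(\uM),\frS^\ur_{\tilde A})$ ``as in the proof of \cite[Lem.\ 1.4.1]{pssdr},'' but Kisin does not take that route (he works directly with the finite module $\Theta_{A*}(\uM)$, as does the paper), and the exchange is not formal: a global section of the sheaf Hom pushes forward to a map $\Theta_{A*}(\uM)\ra\frS^\ur_{\tilde A}$, but the reverse direction is not given by adjunction (these are maps \emph{into} a pulled-back sheaf), and $\Theta_{A*}(\uM)$ need not be projective over $\frS_{\tilde A}$. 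So your comparison map exists in only one direction and its bijectivity needs an argument you do not supply. The lemma is ultimately a statement about the module $\Theta_{A*}(\uM)$ itself — finite over $\frS_{\tilde A}$, of $E$-height $\leq h$, with generic fiber $M_{\tilde A}$ — and the paper proves it at that level, which is where I would steer your write-up.
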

This was proved in \cite[Lem.\ 1.4.1]{pssdr} in the case of Artinian $A$, and the proof of this general case, like that of Proposition \ref{prop:main_etale}, will depend on showing that the proof of the Artinian case can be made compatible with limits. 

We will require the result \cite[B.1.8.4]{fontaine1990} of Fontaine : if $\frN$ is a finite $\frS$-module with a Frobenius semi-linear endomorphism of bounded $E$-height, then the natural $\bZ_p[G_{K_\infty}]$-linear map
\begin{equation}
\label{eq:fontaine_input}
\Hom_{\frS, \varphi}(\frN, \frS^\ur) \lra \Hom_{\frS, \varphi}(\frN, \cO_{\cE^\ur}).
\end{equation}
induced by the inclusion $\frS^\ur \rinj \cO_{\cE^\ur}$ is an isomorphism.  When, in addition, $\frN$ has $R$-linear structure, then taking $R$-linear maps induces a canonical $R[G_{K_\infty}]$-linear isomorphism
\begin{equation}
\label{eq:fontaine_input_R}
\Hom_{\frS_R, \varphi}(\frN, \frS_R^\ur) \lrisom \Hom_{\frS_R, \varphi}(\frN, \cO_{\cE^\ur, R}).
\end{equation}

\begin{proof}
Let $M_A^*$ denote the $\cO_{\cE,A}$-dual of $M_A := M(V_A^*)$, equipped with the induced structure of an object of $\Phi^{\Gal}_M(A)$.  Using the canonical isomorphism 
\[
\Hom_{\cO_{\cE,\tilde A}}(M_{\tilde A}^*, \cO_{\cE^\ur,\tilde A}) \cong M_{\tilde A} \otimes_{\cO_{\cE,\tilde A}} \cO_{\cE^\ur,\tilde A}
\]
and applying $(-)^{\varphi = 1}$ to the canonical isomorphism \eqref{eq:compare_V_M}, we have a canonical isomorphism
\begin{equation}
\label{eq:pssdr_1.4.2}
V_{\tilde A} \lrisom (M_{\tilde A}^* \otimes_{\cO_{\cE,\tilde A}} \cO_{\cE^\ur, \tilde A})^{\varphi = 1}       \lrisom \Hom_{\cO_{\cE,\tilde A}, \varphi}(M_{\tilde A}, \cO_{\cE^\ur,\tilde A}).
\end{equation}
It remains to show that the rightmost factor of \eqref{eq:pssdr_1.4.2} and the rightmost factor of \eqref{eq:universal_V_A} are canonically $G_{K_\infty}$-equivariantly isomorphic.

The arguments given in \cite[Lem.\ 1.4.1]{pssdr} apply verbatim in the present case to show that $\Theta_{A*}(\uM)$ is a finite $\Theta_{A*}\Theta^*_A(\frS_A) = \frS \otimes_A \tilde A = \frS_{\tilde A}$-module, with the additional structure of a $\frS_{\tilde A}$-lattice in $\Theta_{A*}(\uM) \otimes_\frS \cO_\cE$ of $E$-height $\leq h$, where $\Theta_{A*}(\uM) \otimes_\frS \cO_\cE$ is naturally isomorphic to $M_A \otimes_A \tilde A$ in $\Phi^{\Gal}_M(\tilde A)$. It remains to show that $\Hom_{\frS_{\tilde A}, \varphi}(\Theta_{A*}(\underline{\frM}), \frS^\ur_{\tilde A}) \cong
\Hom_{\cO_{\cE^\ur, \tilde A}, \varphi}(\Theta_{A*}(\uM) \otimes_\frS \cO_\cE, \cO_{\cE^\ur, \tilde A})$. 

Choose now some $V_i \subset V_{\tilde A}$, an $R[G_{K_\infty}]$-submodule, finite as an $R$-module (i.e.\ an object of $\Mod_{G_{K_\infty}}(R)$), such that the natural map $V_i \otimes_R \tilde A \ra V_{\tilde A}$ is a surjection. Let $M_i = M(V_i) \subset M(V_{\tilde A}) = M_{\tilde A}$ be the corresponding $\cO_{\cE,R}$-submodule; Proposition \ref{prop:main_etale} gives $M_i$ the structure of an object of $\Phi^{' \Gal}_M(R)$ such that the canonical $\Phi^{' \Gal}_M(A)$-morphism $M_i \otimes_R \tilde A \ra M_{\tilde A}$ is surjective.  Let $\frN$ be the intersection
\[
\frN := \Theta_{A*}(\uM) \cap M_i \subset M_{\tilde A},
\]
which we observe is a $\frS_R$-submodule of $M_{\tilde A}$.  We have the natural surjection $\frN \otimes_R \tilde A \rsurj \Theta_{A*}(\uM)$.

Upon applying $\otimes_R \tilde A$, the isomorphism \eqref{eq:fontaine_input_R} induces an $\tilde A$-linear isomorphism
\[
\Hom_{\frS_R, \varphi}(\frN, \frS_{\tilde A}^\ur) \lrisom \Hom_{\frS_R, \varphi}(\frN, \cO_{\cE^\ur, {\tilde A}}).
\]
Then tensor-Hom adjunction results in an isomorphism 
\[
\Hom_{\frS_{\tilde A}, \varphi}(\frN \otimes_R \tilde A, \frS_{\tilde A}^\ur) \lrisom \Hom_{\frS_{\tilde A}, \varphi}(\frN \otimes_R \tilde A, \cO_{\cE^\ur, \tilde A}).
\]
Finally, because the map $\frS_{\tilde A}^\ur \ra \cO_{\cE,{\tilde A}}$ inducing this isomorphism may be checked to be an injection, an element of the left hand side factors through the quotient $\Theta_{A*}(\uM)$ if and only if its image on the right hand side factors through $\Theta_{A*}(\uM)$. That is, we may replace $\frN \otimes_R \tilde A$ by $\Theta_{A*}(\uM)$ in the equation. The adjunction $\Hom_{\frS_{\tilde A}, \varphi}(\Theta_{A*}(\uM), \cO_{\cE^\ur, \tilde A}) \cong \Hom_{\cO_{\cE^\ur, \tilde A}, \varphi}(\Theta_{A*}(\uM) \otimes_\frS \cO_\cE, \cO_{\cE^\ur, \tilde A})$ completes the proof. 
\end{proof}

\subsection{A Universal Family of Kisin Modules in Characteristic 0}
\label{subsec:universality_uM}

While the previous parts of \S\ref{sec:etale_kisin} have been carried out over a fixed discrete coefficient ring $A$, we now fix a residual pseudorepresentation $\Db$ of $G_K$ and let $A$ be a formally finitely generated $R_\Db$-algebra with a $G_K$-representation $V_A$ with induced pseudorepresentation compatible with the $R_\Db$-algebra structure of $A$. 

The results above can be applied to $(V_A \otimes_{R_\Db} R_\Db/\frmm_\Db^i)\vert_{G_{K_\infty}}$ for each $i \geq 1$ and extend to the limit, where $R$ above may be set to be the image of $R_\Db$ in $A/\frmm_\Db^i A$. For example, the functor $M$ generalizes to this setting naturally from the above, since the map of limits
\begin{equation}
\label{eq:et_comp}
M_A = (\cO_{\cE^{ur}} \hat\otimes_{\bZ_p} V_A^*)^{G_{K_\infty}} \lrisom \varprojlim (\cO_{\cE^{ur}} \otimes_{\bZ_p} V_A^* \otimes_A A/(\mDb A)^i)^{G_{K_\infty}}
\end{equation}
is an isomorphism by Fact \ref{fact:invariant_limit} and the fact that the ideal $(p \otimes 1) + \cO_{\cE^\ur} \otimes \mDb A$ (for which the left side is the completion) is equal to $\cO_{\cE^{ur}} \otimes_{\bZ_p} \mDb A$ (for which the right side is the completion).  This means that $M_A$ is a projective $\cO_{\cE, A}$-module of rank $d$ as expected.

For $B$ an $A$-algebra such that $\frmm_\Db^i \cdot B = 0$ for some $i \geq 1$, set $L^{\leq h}_{V_A}(B) = L^{\leq h}_{V_{A/(\mDb A)^i}}(B)$.
\begin{cor}
\label{cor:L-functor}
The functor $L_{V_A}^{\leq h}$ on $A$-algebras $B$ such that $\frmm_\Db^i \cdot B = 0$ for some $i \geq 1$ is represented by a projective $A$-scheme $\cL^{\leq h}_{V_A}$.
\end{cor}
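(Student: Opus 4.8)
The plan is to reduce the statement to the already-established discrete-coefficient case of Proposition \ref{prop:kisin_proj} by truncating modulo powers of $\mDb$, and then to algebraize the resulting compatible system of projective schemes over $\Spf A$. First I would fix, for each $i \geq 1$, the ring $A_i := A/\mDb^i A$. Since $R_\Db$ is Noetherian, complete, local, with finite residue field $\bF_\Db$ and $p \in \mDb$, the quotient $R_\Db/\mDb^i$ is a finite local ring; hence $A_i$ is a Noetherian algebra of finite type over $R_\Db/\mDb^i$ in which $p$ is nilpotent, and $V_{A_i} := V_A \otimes_A A_i$ is projective of constant rank $d$ over $A_i$. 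Next I would check $V_{A_i}|_{G_{K_\infty}} \in \Rep_{G_{K_\infty}}(A_i)$: the $G_K$-action on $V_A$ factors through $E(G_K)_\Db$ by construction, so the $G_{K_\infty}$-action on $V_{A_i}$ factors through $E(G_K)_\Db/\mDb^i E(G_K)_\Db$, which is a finite ring because $E(G_K)_\Db$ is a finite $R_\Db$-module (Proposition \ref{prop:E(G)}(3)); in particular this action has open kernel, and $M(V_{A_i}^*) \cong M_A \otimes_A A_i$ as in the discussion around \eqref{eq:et_comp}. Thus Proposition \ref{prop:kisin_proj} applies with coefficient ring $A_i$, producing a projective $A_i$-scheme $\cL^{\leq h}_{V_{A_i}}$ with a canonical ample line bundle representing $L^{\leq h}_{V_{A_i}}$; applying its base-change clause to the (surjective, hence finitely generated) ring maps $A_{i+1} \rsurj A_i$ gives canonical isomorphisms $\cL^{\leq h}_{V_{A_{i+1}}} \otimes_{A_{i+1}} A_i \risom \cL^{\leq h}_{V_{A_i}}$ compatible with the ample line bundles. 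The same compatibility shows, incidentally, that the functor $L^{\leq h}_{V_A}$ of the statement is well defined independently of the choice of $i$.

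The second step is algebraization. The system $(\cL^{\leq h}_{V_{A_i}})_{i\geq 1}$ together with its compatible ample line bundles constitutes a projective formal scheme over $\Spf A$, where $A$ carries the $\mDb A$-adic topology. Here $A$ is Noetherian, being formally finitely generated over the Noetherian ring $R_\Db$ (\cite[Prop.\ 10.13.5]{ega1}), and $\mDb A$-adically complete, being a quotient of a restricted power series ring over the complete local ring $R_\Db$. I would therefore invoke Grothendieck's existence theorem for projective morphisms over a complete Noetherian ring --- formal GAGA for projective morphisms, \cite{ega3-1} --- carrying the relatively ample line bundle along exactly as in Lemma \ref{lem:AG_background}(1a) and Remark \ref{rem:local_proj}, but with the complete Noetherian ring $A$ in the role of $\cR$ (Lemma \ref{lem:AG_background} is stated only for bases of finite type over $\cR$, so this must be applied directly over $A$ rather than cited verbatim). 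This produces a projective $A$-scheme $\cL^{\leq h}_{V_A}$, unique up to unique isomorphism, with compatible identifications $\cL^{\leq h}_{V_A} \otimes_A A_i \cong \cL^{\leq h}_{V_{A_i}}$.

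Finally, to see that $\cL^{\leq h}_{V_A}$ represents $L^{\leq h}_{V_A}$ I would take any $A$-algebra $B$ with $\mDb^i B = 0$; then $B$ is an $A_i$-algebra and
\[
\Hom_A(\Spec B, \cL^{\leq h}_{V_A}) = \Hom_{A_i}(\Spec B, \cL^{\leq h}_{V_A}\otimes_A A_i) = \Hom_{A_i}(\Spec B, \cL^{\leq h}_{V_{A_i}}) = L^{\leq h}_{V_{A_i}}(B),
\]
which equals $L^{\leq h}_{V_A}(B)$ by the definition preceding the corollary, naturally in $B$. The step I expect to be the main obstacle is the algebraization: Grothendieck existence yields a proper $A$-scheme directly, and the real content is verifying that the canonically constructed ample line bundles on the $\cL^{\leq h}_{V_{A_i}}$ --- which exist precisely because the construction in Proposition \ref{prop:kisin_proj} proceeds via a canonically linearized affine Grassmannian, cf.\ Remark \ref{rem:basis_choice} --- assemble into a relatively ample line bundle on the formal scheme, so that the algebraization is projective and not merely proper. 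Everything else is a matter of transporting Proposition \ref{prop:kisin_proj} and its base-change compatibility through the $\mDb$-adic tower.
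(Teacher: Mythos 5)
Your proposal is correct and follows essentially the same route as the paper: reduce modulo $\mDb^i$ to the discrete case of Proposition \ref{prop:kisin_proj}, use the canonical ample line bundles and base-change compatibility to obtain a projective formal scheme over $\Spf A$, and then algebraize by Grothendieck existence (formal GAGA for projective morphisms over the $\mDb A$-adically complete Noetherian ring $A$), as in Lemma \ref{lem:AG_background}(1a) and Remark \ref{rem:local_proj}. Your version simply spells out the truncation, open-kernel, and representability checks that the paper leaves implicit.
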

\begin{proof}
By Proposition \ref{prop:kisin_proj} and Remark \ref{rem:local_proj}, this functor is represented by a projective formal scheme with a ample line bundle compatible with its limit structure.  By applying formal GAGA \ref{lem:AG_background}(1a), we conclude that $L^{\leq h}_{V_A}$ is the $\mDb$-adic completion of a projective $A$-scheme.
\end{proof}

We now study the the map $\Theta_A: \cL^{\leq h}_A \ra \Spec A$, showing that it is a closed immersion in equi-characteristic zero. 

\begin{prop}[{Generalizing \cite[Prop.\ 1.6.4]{pssdr}}]
\label{prop:pssdr_1.6.4}
Let $A$ and $V_A$ be as specified above.  Then
\begin{enumerate}
\item The map $\Theta_A: \cL_{V_A}^{\leq h} \ra \Spec A$ is a closed immersion after inverting $p$.
\item If $A^{\leq h}$ is the quotient of $A$ corresponding to the scheme-theoretic image of $\Theta_A$, then for any finite $W(\bF)[1/p]$-algebra $B$, a continuous $A \ra B$ factors through $A^{\leq h}$ if and only if $V_B = V_A \otimes_A B$ is of $E$-height $\leq h$.
\end{enumerate}
\end{prop}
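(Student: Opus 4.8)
The plan is to run Kisin's proof of \cite[Prop.\ 1.6.4]{pssdr}, the key observation being that his argument is \emph{local} on $\cL^{\leq h}_{V_A}$ and only refers to the single $\cO_{\cE,A}$-module $M_A = M(V_A^*)$ and the projective $A$-scheme $\cL^{\leq h}_{V_A}$, neither of which required $A$ to be local: the former exists over our formally finite type base by Proposition \ref{prop:main_etale} and the latter by Corollary \ref{cor:L-functor}. For (1), $\Theta_A$ is projective by Corollary \ref{cor:L-functor}, hence proper, and a proper monomorphism is a closed immersion; so it suffices to show $\Theta_A[1/p]\colon \cL^{\leq h}_{V_A}[1/p] \to \Spec A[1/p]$ is a monomorphism. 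Both schemes being of finite type over the Noetherian ring $A[1/p]$, this amounts to $\Theta_A[1/p]$ being universally injective and unramified, and both conditions may be tested on Artinian local $\bQ_p$-algebras and on geometric points of characteristic $0$; over such test rings the functor $L^{\leq h}_{V_A}$ is still (pro\nobreakdash-)represented by the corresponding base change of $\cL^{\leq h}_{V_A}$, so we are reduced to the following uniqueness statement: \emph{if $B$ is a $\bQ_p$-algebra then $M_B := M_A \otimes_A B$ contains at most one $\varphi$-stable $\frS_B$-lattice of $E$-height $\leq h$.}

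To prove the uniqueness lemma, first note $\varphi\colon \frS_B \to \frS_B$ is finite flat (free of rank $p$, basis $1,u,\dots,u^{p-1}$), so $\varphi^\ast$ is exact; one checks that the family of $\varphi$-stable $\frS_B$-lattices of $E$-height $\leq h$ is closed under intersection, so it is enough to show that if $\frM \subseteq \frM'$ are two such lattices then $\frM = \frM'$. The quotient $Q := \frM'/\frM$ is a finitely generated $\frS_B$-module with $Q[1/u] = 0$, hence supported at the prime $(u) = \ker(\frS_B \to B)$, along which $\frS_B$ localizes to a ring on which length is finite and along which $\varphi^\ast$ multiplies length by $p$ (since $\varphi(u) = u^p$). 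On the other hand the constant term $E(0)$ of the Eisenstein polynomial is $p$ times a unit, hence a \emph{unit} of $\frS_B$ because $p \in B^\times$; therefore the injections $\varphi^\ast\frM \hookrightarrow \frM$ and $\varphi^\ast\frM' \hookrightarrow \frM'$, whose cokernels are killed by $E(u)^h$, become isomorphisms of $\frS_{B,(u)}$-modules after localizing at $(u)$, and hence induce an isomorphism $\varphi^\ast(Q)_{(u)} \cong Q_{(u)}$. Comparing lengths gives $\operatorname{length}(Q_{(u)}) = p\cdot\operatorname{length}(Q_{(u)})$, so $Q_{(u)} = 0$ and $Q = 0$. (This is precisely where characteristic $0$ enters: over a base where $p$ is not invertible, $E(u)$ is not a unit at $(u)$, and indeed uniqueness fails, reflecting the possibility of several $G_{K_\infty}$-stable lattices.)

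For (2), write $Z = \Spec A^{\leq h} \rinj \Spec A$ for the scheme-theoretic image of $\Theta_A$. Since $\Theta_A$ is proper it factors as a scheme-theoretically dominant, hence surjective, proper morphism $\cL^{\leq h}_{V_A} \to Z$ followed by a closed immersion; and since scheme-theoretic image commutes with the flat base change $A \to A[1/p]$ while $\Theta_A[1/p]$ is a closed immersion by (1), we get $Z[1/p] \cong \cL^{\leq h}_{V_A}[1/p]$. Now let $x\colon A \to B$ be continuous with $B$ a finite $W(\bF)[1/p]$-algebra; decomposing $B$ into local factors we may assume $B$ is local with residue field finite over $W(\bF)[1/p]$, and continuity forces $x$ to factor through a continuous $R_\Db$-algebra map $A \to \cO_B$, where $\cO_B$ is the (discrete valuation) ring of integers of $B$. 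As $p \in B^\times$, $x$ factors through $A^{\leq h}$ iff $\Spec B \to \Spec A[1/p]$ lands in $Z[1/p] = \cL^{\leq h}_{V_A}[1/p]$, iff (valuative criterion for the proper morphism $\cL^{\leq h}_{V_A} \to \Spec A$, using that $\cO_B$ is a DVR) $\Spec\cO_B \to \Spec A$ lifts to $\cL^{\leq h}_{V_A}$. Such a lift is a compatible system over $n$ of $\cO_B/p^n$-points of $\cL^{\leq h}_{V_A}$ — each $\cO_B/p^n$ lying in the domain of $L^{\leq h}_{V_A}$ by Corollary \ref{cor:L-functor} — i.e.\ a compatible system of $\varphi$-stable $\frS_{\cO_B/p^n}$-lattices of $E$-height $\leq h$ in $M_{\cO_B/p^n}$, which by passing to the limit as in \eqref{eq:et_comp} is the same as a $\varphi$-stable $\frS_{\cO_B}$-lattice of $E$-height $\leq h$ in $M_{\cO_B} = M(V_{\cO_B}^\ast)$; by part (1) the existence of such a lattice is independent of the choice of $G_K$-stable lattice $V_{\cO_B}^\ast \subseteq V_B^\ast$, and is exactly the statement that $V_B$ is of $E$-height $\leq h$.

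I expect the real content to be the uniqueness lemma, and specifically checking that Kisin's length computation survives the passage from a field to an arbitrary $\bQ_p$-algebra $B$: one must invoke Proposition \ref{prop:main_etale}(4) to know $M_B$ and its lattices are finite projective, use exactness of $\varphi^\ast$ and the good behaviour of $\frS_{B,(u)}$ (finite length, multiplicativity of length under $\varphi^\ast$), and be careful that the cokernel-of-$\varphi$ maps really are $\frS_B$-linear so that localizing at $(u)$ is legitimate. The surrounding formal geometry — properness, scheme-theoretic image, flat base change, valuative criterion, and the $\mDb$-adic limit — is routine given the representability statements already in hand, but the bookkeeping in passing from the $\mDb$-adic/Artinian regime, where $\cL^{\leq h}_{V_A}$ represents $L^{\leq h}_{V_A}$ on the nose, to the characteristic-$0$ regime of the statement must be done with some care.
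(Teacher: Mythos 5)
Your overall route is the paper's own: the proof given there just observes that Kisin's argument for \cite[Prop.\ 1.6.4]{pssdr} only ever interrogates points of $\Theta_A$ valued in Artinian $\bQ_p$-algebras, and that Lemma \ref{lem:jacobson} makes such points decisive for closed loci in $\Spec A[1/p]$; your ``projective $+$ monomorphism'' skeleton and the reduction to uniqueness of $\varphi$-stable $\frS$-lattices of $E$-height $\leq h$ in characteristic zero is exactly that argument, with the uniqueness input (which the paper quotes from \cite[Prop.\ 2.1.12]{crfc}) re-proved by your length computation. That computation is sound for the coefficients actually needed, namely Artinian local $\bQ_p$-algebras, but you should state it that way: for a general $\bQ_p$-algebra $B$ lengths are not finite and $(u)$ need not be a prime of $\frS_B$, so replace ``localize at $(u)$'' by ``pass to $u$-power-torsion modules, on which $E(u)$ acts invertibly because $E(0)\in pW^\times$ while $u$ acts nilpotently''; the snake lemma then gives $\varphi^*(Q)\cong Q$ and the length count finishes.

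Two steps are asserted rather than proved, and one of them fails as written. In (2), your reduction to a discrete valuation ring breaks when the finite $W(\bF)[1/p]$-algebra $B$ is non-reduced: such $B$ has no ``ring of integers,'' the valuative criterion is unavailable, and---since $A^{\leq h}$ need not be reduced---factorization through $A^{\leq h}$ is not detected on the residue field, so you cannot shrink to the field case. The repair is the device the paper sets up for exactly this purpose: by Lemma \ref{lem:jacobson}(5,6) the relevant maps land in some finitely generated $\cO_E$-subalgebra $C\in\Int_B$; one spreads the $B$-point of $\cL^{\leq h}_{V_A}[1/p]$ out to a point over a suitable $C'\in\Int_B$ (using that $\Theta_A$ is projective, hence of finite type), passes to the $\mDb$-adic completion of $C'$ and its Artinian quotients, and runs your limit argument there---this is Kisin's actual proof, which the paper invokes verbatim. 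Relatedly, in (1) the identification of Artinian characteristic-zero points of $\cL^{\leq h}_{V_A}$ with $\varphi$-stable lattices is the same integral-model bridging: $L^{\leq h}_{V_A}$ is defined, and represented by $\cL^{\leq h}_{V_A}$, only on $\mDb$-power-torsion $A$-algebras, so a point over an Artinian $\bQ_p$-algebra must first be descended to some $C\in\Int_B$ and completed before it becomes a lattice in $M_C$. You flag this (``pro-represented,'' and your closing caveat), but it is precisely where Lemma \ref{lem:jacobson} carries the weight in the paper's proof, not a formality; once it is done, your argument and the paper's coincide.
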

Part (1) expresses the uniqueness of $\frS$-lattices of $E$-height $\leq h$ in characteristic zero \cite[Prop.\ 2.1.12]{crfc}. According to part (2), scheme-theoretic image of $\Theta_A$ has the property we expect. 

Proposition \ref{prop:pssdr_1.6.4} follows from the arguments in the case of local $A$, done in \cite[Prop.\ 1.6.4]{pssdr}. These arguments apply immediately to the case of non-local $A$, once Lemma \ref{lem:jacobson} is established. The reason that the arguments apply verbatim is that they need only address the $B$-points of $\Theta_A$, where $B$ is an Artinian $\bQ_p$-algebra. Indeed, Lemma \ref{lem:jacobson} verifies that closed loci in $\Spec A[1/p]$ are characterized entirely by their behavior on $B$-points. 

We establish the following notation: let $B$ be an Artinian local $\bQ_p$-algebra with residue field $E$. Let $\Int_B$ denote the set of finitely generated $\cO_E$-subalgebras of $B^0$, the preimage of $\cO_E$ via $B \rsurj E$. 
\begin{lem}
\label{lem:jacobson}
Let $A$ be a formally finitely generated $\cR$-algebra, where $\cR$ is a local admissible $\bZ_p$-algebra with maximal ideal $\frmm_\cR$. Then 
\begin{enumerate}
\item $A[1/p]$ is Jacobson and Noetherian,
\item all residue fields of $A[1/p]$ at maximal ideals are finite extensions of $\bQ_p$, 
\item a quotient $A'$ of $A[1/p]$ is characterized by those homomorphisms from $A[1/p]$ to finite $\bQ_p$-algebras which factor through $A'$, 
\item a finite $A[1/p]$-module $V$ is projective if and only if $V \otimes_A B$ is projective for every finite $\bQ_p$-algebra $B$ receiving a homomorphism $A \ra B$, 
\item the image of $A$ in a residue field from part (2) is an order in its ring of integers, and 
\item the image of $A$ in a finite local $\bQ_p$-algebra $B$ with residue field $E$ lies in some $C \in \Int_B$.
\end{enumerate}
Statements (1), (2), (3), and (4) are true for $A_\alg$, a finitely generated $\frmm_\cR$-adically separated $\cR$-algebra with completion $A$, in the place of $A$. 
\end{lem}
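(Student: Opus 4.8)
The plan is to reduce everything to the structure theory of formally finitely generated algebras over a complete local Noetherian ring, and to exploit the fact that such an $A$ is a quotient of $\cR\langle x_1,\dots,x_n\rangle$, while $A_\alg$ is a quotient of $\cR[x_1,\dots,x_n]$. First I would handle (1): the ring $\cR\langle x_1,\dots,x_n\rangle[1/p]$ is Noetherian (since $\cR\langle x_1,\dots,x_n\rangle$ is Noetherian by \cite[Ch.\ 0, Lem.\ 19.7.1.2]{ega4-1} or \cite[Prop.\ 10.13.5]{ega1}), hence so is its quotient $A[1/p]$. For the Jacobson property the key input is that a maximal ideal $\frm$ of $A[1/p]$ pulls back to a prime $\frp$ of $A$ that is \emph{not} open, i.e.\ $\frp \not\supseteq \frmm_\cR A$; then $A/\frp$ is a domain finite type over the $p$-adic field $\cR[1/p]^{\wedge}$ only after completion, so one argues instead that $A/\frp$ is a complete local domain whose generic fibre over $\Spec\bZ_p$ is a Jacobson ring because it is a localization of a finitely generated $\bQ_p$-algebra --- concretely, by \cite[Prop.\ 10.13.5]{ega1} or the theory of \emph{loc.\ cit.}, $A[1/p]$ is excellent and Jacobson, being a quotient of a Tate algebra over the $p$-adic field $\operatorname{Frac}(\cR^\wedge)$. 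This same reference gives (2): residue fields at closed points of the affinoid-type algebra $A[1/p]$ are finite over $\bQ_p$.

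Next, (3) is a formal consequence of (1) and (2): since $A[1/p]$ is Jacobson, any radical ideal --- in particular the defining ideal of a reduced closed subscheme --- is the intersection of the maximal ideals containing it, and by (2) each such maximal quotient is a finite $\bQ_p$-algebra; for non-reduced quotients one uses that a closed subscheme of a Noetherian Jacobson scheme is determined by its intersections with the (thickened) closed points, or more simply that the completion of $A[1/p]$ at each maximal ideal detects the ideal, and these completions are finite-$\bQ_p$-algebra-points up to thickening. For (4), projectivity of a finite module over a Noetherian ring is equivalent to flatness, which can be checked at all maximal ideals; by (1)--(2) these are governed by the finite-$\bQ_p$-algebra points $B = (A[1/p])_\frm$ and their Artinian thickenings, so $V\otimes_A B$ projective for all such $B$ forces $V$ flat, hence projective. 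For (5): the image of $A$ in a residue field $E$ (finite over $\bQ_p$) is a finitely generated $\bZ_p$-subalgebra of $\cO_E$ whose fraction field is $E$, hence an order. For (6): writing $B$ as an extension of $E = \cO_E[1/p]$ by a nilpotent ideal, the image of $A$ in $B$ is a finitely generated $\bZ_p$-subalgebra; intersecting with the preimage $B^0$ of $\cO_E$ and noting that the image of $A$ lands in $B^0$ (the generators being integral mod nilpotents), it is contained in a finitely generated $\cO_E$-subalgebra, i.e.\ some $C\in\Int_B$.

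Finally, the statements for $A_\alg$: here $A_\alg$ is finitely generated over $\cR$ (not formally), $\frmm_\cR$-adically separated, with $A$ its completion. Since $A_\alg \rinj A$ and $A_\alg[1/p]$ is of finite type over $\cR[1/p]$, which is a finite $\bZ_p$-algebra localized at $p$ hence Jacobson with finite residue fields over $\bQ_p$ --- wait, more carefully, $\cR[1/p]$ is an Artinian $\bQ_p$-algebra since $\cR$ is a complete local $\bZ_p$-algebra, so $\cR[1/p]$ is a finite-dimensional $\bQ_p$-algebra --- therefore $A_\alg[1/p]$ is a finitely generated algebra over a finite-dimensional $\bQ_p$-algebra, hence Noetherian and Jacobson with all residue fields finite over $\bQ_p$, giving (1) and (2). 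Then (3) and (4) for $A_\alg$ follow exactly as above, now using the Nullstellensatz for finitely generated $\bQ_p$-algebras in place of the affinoid theory, together with the fact that maximal ideals of $A_\alg[1/p]$ and of $A[1/p] = A_\alg[1/p]^{\wedge}$ sit compatibly (every $\bQ_p$-finite point of $A_\alg$ extends to one of $A$ and conversely the $\frmm_\cR$-adic points exhaust the maximal spectrum).

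I expect the main obstacle to be (3) and (4) --- verifying that a closed locus in $\Spec A[1/p]$ is genuinely detected on its finite-$\bQ_p$-algebra points, \emph{including} infinitesimal (Artinian $\bQ_p$-algebra) thickenings, rather than just on the reduced closed points. The subtlety is that Jacobsonness gives the reduced statement for free, but to characterize an arbitrary (possibly non-reduced) quotient $A'$ one must argue that $A[1/p] \to \prod_\frm (A[1/p])^{\wedge}_\frm$ is injective and that each completed local ring is itself pro-(finite $\bQ_p$-algebra); this uses excellence of $A[1/p]$ (so that completions are faithfully flat and detect ideals locally) plus the fact that $A[1/p]$ being Noetherian and Jacobson makes the intersection of all maximal ideals zero, which combined with a Krull-intersection argument at each closed point pins down the ideal. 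Once this is in place the rest is bookkeeping.
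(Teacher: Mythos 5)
Your treatment of the base ring $\cR$ contains two concrete false steps. First, in the main case you justify (1)--(2) by asserting that $A[1/p]$ is ``a quotient of a Tate algebra over the $p$-adic field $\Frac(\cR^\wedge)$.'' But in this lemma $\cR$ is only a local admissible $\bZ_p$-algebra, i.e.\ a complete Noetherian local ring with finite residue field — in the intended application $\cR = R_\Db$, which typically has large Krull dimension. For such $\cR$ (already for $\cR = \bZ_p\lb t\rb$), $\Frac(\cR)$ is not a $p$-adic field and $A[1/p]$ is not even an algebra over it, so the classical affinoid Nullstellensatz cannot be invoked this way; statement (2) is a genuine theorem about formally finite type algebras over such $\cR$, not a formal consequence of Tate's theory. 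The paper instead gets Jacobsonness from the observation that $p$ lies in the Jacobson radical of the adically complete ring $A$ (so \cite[Cor.\ 10.5.8]{ega4-3} applies), and gets (2) from \cite[Lem.\ 7.1.9]{dejong1995}, which identifies maximal ideals of $A[1/p]$ with points of the associated rigid space and shows their residue fields are finite over $\bQ_p$. Second, your argument for the $A_\alg$ case rests on the claim that $\cR[1/p]$ is an Artinian (finite-dimensional) $\bQ_p$-algebra; this is false for the same reason ($\bZ_p\lb t\rb[1/p]$ is not Artinian). The correct route, which is the paper's, is to apply the already-proven (1)--(2) to $\cR$ itself and then use the general Nullstellensatz: a finitely generated algebra over a Jacobson ring is Jacobson, with residue fields at maximal ideals finite over those of the base, hence finite over $\bQ_p$.

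Apart from this, your architecture for (3) and (4) is essentially the paper's: equality of functors on finite $\bQ_p$-algebras forces equality of the completions $\widehat{A[1/p]}_\frmm$-quotients at every maximal ideal (since the Artinian quotients $A[1/p]/\frmm^n$ are finite $\bQ_p$-algebras by (1)--(2)), and one concludes by faithfully flat descent along $A[1/p] \rinj \prod_\frmm \widehat{A[1/p]}_\frmm$; the ``main obstacle'' you flag about non-reduced quotients is exactly what this descent argument disposes of, and projectivity in (4) follows because flatness descends and is checked on the same completions. Your sketches of (5) and (6) are at roughly the paper's level of detail (the paper derives (5) from the fact that $p$ lies in every maximal ideal of $A$, so the image of $A$ in $E$ is not a field yet generates $E$ after inverting $p$, and notes (5) implies (6)); just be aware that asserting the image of $A$ is a finitely generated $\bZ_p$-subalgebra of $\cO_E$ is itself the point needing proof there. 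So the proposal would go through once the two misidentifications of $\cR$ are repaired as above.
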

\begin{proof}
Because $A$ is Noetherian, $A[1/p]$ is also Noetherian. Every maximal ideal of $A$ has a finite characteristic $p$ residue field, so $p$ is in the Jacobson radical of $A$. Then \cite[Cor.\ 10.5.8]{ega4-3} implies that $A[1/p]$ is Jacobson, proving (1) for $A$.

Statement (2) follows directly from \cite[Lem.\ 7.1.9]{dejong1995}, which gives a natural bijection between the maximal ideals $\frmm$ of $A[1/p]$ and the points of the associated rigid analytic space and, furthermore, implies that the respective residue fields are isomorphic. Because all maximal ideals of $A$ have characteristic $p$, the image of $A$ in the $p$-adic field $E = A[1/p]/\frmm$ is not a field, but generates $E$ upon adjoining $1/p$. Statement (5) follows. Finally, it is clear that (5) implies (6).

Consider two quotient rings $A'$ and $A''$ of $A[1/p]$. Assume that the functors of points $\Hom_{\bQ_p}(A', -)$ and $\Hom_{\bQ_p}(A'', -)$ are identical after restriction to finite $\bQ_p$-algebras $B$. Applying (1) and (2), this assumption implies that for any closed point of $\Spec A'$ corresponding to maximal ideals $\frmm \subset A[1/p]$ and $\frmm' \subset A'$, there is a corresponding maximal ideal $\frmm'' \subset A''$ and there is an isomorphism of complete local rings $\hat A'_{\frmm'} \simeq \hat A''_{\frmm''}$ as quotient rings of $\widehat{A[1/p]}_{\frmm}$. Therefore, for any maximal ideal $\frmm$ of $A[1/p]$, we have $A' \otimes_{A[1/p]} \widehat{A[1/p]}_{\frmm} \simeq A'' \otimes_{A[1/p]} \widehat{A[1/p]}_{\frmm}$. Because $A[1/p] \rinj \prod_{\frmm \subset A[1/p]} \widehat{A[1/p]}_\frmm$ is faithfully flat, this implies that $A' = A''$ by descent, as desired, proving (3). Because faithfully flat morphisms are descent morphisms for the flat property, and flatness is equivalent to projectivity for finite modules over Noetherian rings, we also have (4). 

To prove (1) and (2) for $A_\alg$, consider that (1) and (2) apply to $\cR[1/p]$, and that $A_\alg[1/p]$ is finitely generated over $\cR[1/p]$. The Hilbert basis theorem and the nullstellensatz for Jacobson rings provide statements (1) and (2) for $A_\alg[1/p]$. Properties (3) and (4) follow. 
\end{proof}

We now show that there exists a family of $\frS$-lattices of $E$-height $\leq h$ with coefficients in $A^{\leq h}$ which are  universal in characteristic $0$ in the sense of part (4) below. Only the construction of \cite{pssdr} needs to be modified. 
\begin{prop}[{Generalizing \cite[Cor.\ 1.7]{pssdr}}]
\label{prop:dim1}
There exists a finite $\frS_{A^{\leq h}}$-module $\frM_{A^{\leq h}}$ with the following structures and properties. 
\begin{enumerate}
\item $\frM_{A^{\leq h}}$ is equipped with a linear map $\varphi^*(\frM_{A^{\leq h}}) \ra \frM_{A^{\leq h}}$ whose cokernel is killed by $E(u)^h$.
\item $\frM_{A^{\leq h}} \otimes_{\bZ_p} \bQ_p$ is a projective $\frS_{A^{\leq h}}[1/p]$-module.
\item For any finite $W(\bF_\Db)[1/p]$-algebra $B$, any map $f : A^{\leq h} \ra B$ and any $C \in \mathrm{Int}_B$ through which $f$ factors, there is a canonical, $\varphi$-compatible isomorphism of $\frS \otimes_{\bZ_p} B$-modules
\[
\frM_{A^{\leq h}} \otimes_{A^{\leq h}} B \lrisom \frM_C \otimes_C B,
\]
where $\frM_C$ is the unique $\frS$-lattice of $E$-height $\leq h$ in $M_C = M_A \otimes_A C$.
\item There is a canonical isomorphism
\[
V_{A^{\leq h}} \otimes_{\bZ_p} \bQ_p \lrisom \Hom_{\frS_{A^{\leq h}}[1/p], \varphi}(\frM_{A^{\leq h}} \otimes_{\bZ_p} \bQ_p, \frS^\ur_{A^{\leq h}}[1/p]).\]
\end{enumerate}
\end{prop}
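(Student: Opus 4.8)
The plan is to define $\frM_{A^{\leq h}}$ as the direct image $\Theta_{A*}(\uM)$ of the universal $E$-height $\leq h$ lattice, and to extract properties (1)--(4) from Lemma~\ref{lem:S-periods} together with the fact (Proposition~\ref{prop:pssdr_1.6.4}(1)) that $\Theta_A$ is a closed immersion after inverting $p$. First I would set $\tilde A := \Theta_{A*}(\cO_{\cL^{\leq h}_{V_A}})$; since $\cL^{\leq h}_{V_A}$ is projective over the Noetherian ring $A$ (Corollary~\ref{cor:L-functor}), $\tilde A$ is a finite $A$-module, and since $\Theta_A$ factors through the scheme-theoretic image $\Spec A^{\leq h}$, the $A$-module structure on $\tilde A$ factors through an inclusion $A^{\leq h}\rinj \tilde A$; hence $\tilde A$ is finite over $A^{\leq h}$, and by Proposition~\ref{prop:pssdr_1.6.4}(1) this inclusion becomes an isomorphism $A^{\leq h}[1/p]\risom \tilde A[1/p]$. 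I then define $\frM_{A^{\leq h}} := \Theta_{A*}(\uM)$; by Lemma~\ref{lem:S-periods} this is a finite $\frS_{\tilde A}$-module carrying a $\varphi$-semilinear structure making it an $\frS_{\tilde A}$-lattice of $E$-height $\leq h$ inside $\Theta_{A*}(\uM)\otimes_\frS \cO_\cE \cong M_A\otimes_A \tilde A$. Regarding it over $\frS_{A^{\leq h}}$ is legitimate because the Frobenius of $\frS_{\tilde A}$ is $\tilde A$-linear, hence restricts to the $A^{\leq h}$-linear Frobenius of $\frS_{A^{\leq h}}$ (all these rings being Noetherian by Lemma~\ref{lem:frS_A_noetherian}), and it is finite over $\frS_{A^{\leq h}}$ since $\tilde A$ is finite over $A^{\leq h}$.

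Property (1) is then immediate: the $E$-height $\leq h$ structure over $\frS_{\tilde A}$ supplied by Lemma~\ref{lem:S-periods} is exactly a map $\varphi^*(\frM_{A^{\leq h}})\ra \frM_{A^{\leq h}}$ with cokernel killed by $E(u)^h$, and since $\varphi$ acts trivially on the coefficient ring this map and its cokernel are unchanged on passing from $\frS_{\tilde A}$ to $\frS_{A^{\leq h}}$. For property (2) I would use that inverting $p$ commutes with $\Theta_{A*}$ and that, after inverting $p$, $\Theta_A$ factors through the isomorphism $\cL^{\leq h}_{V_A}[1/p]\risom \Spec A^{\leq h}[1/p]$, so that $\frM_{A^{\leq h}}\otimes_{\bZ_p}\bQ_p$ is the pullback of $\uM[1/p]$ along this isomorphism; since by Proposition~\ref{prop:grass_equiv} (and Proposition~\ref{prop:kisin_proj}) the universal lattice $\uM$ is a locally free $\Theta_A^*(\frS_A)$-module of rank $d$, it follows that $\frM_{A^{\leq h}}\otimes_{\bZ_p}\bQ_p$ is finite projective over $\frS_{A^{\leq h}}[1/p]$. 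Property (4) comes by applying $-\otimes_{\bZ_p}\bQ_p$ to the canonical $\tilde A$-linear, $G_{K_\infty}$-equivariant isomorphism $V_A\otimes_A\tilde A\risom \Hom_{\frS_{\tilde A},\varphi}(\Theta_{A*}(\uM),\frS^\ur_{\tilde A})$ of Lemma~\ref{lem:S-periods}, using $\tilde A[1/p]=A^{\leq h}[1/p]$ to identify the left side with $V_{A^{\leq h}}\otimes_{\bZ_p}\bQ_p$ and $\Theta_{A*}(\uM)[1/p]$ with $\frM_{A^{\leq h}}\otimes_{\bZ_p}\bQ_p$.

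The substantive point is property (3). Given $B$, $f\colon A^{\leq h}\ra B$, and $C\in\Int_B$ through which $f$ factors, the composite $A\ra A^{\leq h}\ra B$ factors through $A^{\leq h}$, so by Proposition~\ref{prop:pssdr_1.6.4}(2) the representation $V_B=V_A\otimes_A B$ has $E$-height $\leq h$; hence $M_B=M_A\otimes_A B$ carries a $\frS_B$-lattice of $E$-height $\leq h$, which is unique by the characteristic-zero uniqueness of such lattices (\cite[Prop.\ 2.1.12]{crfc}). Following the argument of \cite[Cor.\ 1.7]{pssdr} --- which only manipulates points valued in $\bZ_p$-flat finite-type $\cO_E$-algebras $C$ and in finite $\bQ_p$-algebras $B$ --- this lattice descends to a unique $\frS_C$-lattice $\frM_C$ of $E$-height $\leq h$ in $M_C=M_A\otimes_A C$, and the resulting morphism $\Spec C\ra\cL^{\leq h}_{V_A}$ lifts $\Spec C\ra\Spec A$ and pulls $\uM$ back to $\frM_C$. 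I would then compare the two morphisms $\Spec B\ra\cL^{\leq h}_{V_A}$ obtained by (i) composing $\Spec B\ra\Spec C\ra\cL^{\leq h}_{V_A}$ and (ii) composing $\Spec B\ra\Spec A^{\leq h}[1/p]\risom\cL^{\leq h}_{V_A}[1/p]$: both lift $\Spec B\ra\Spec A$, hence agree because $\cL^{\leq h}_{V_A}[1/p]\ra\Spec A[1/p]$ is a monomorphism (Proposition~\ref{prop:pssdr_1.6.4}(1)); pulling $\uM$ back along them yields, respectively, $\frM_C\otimes_C B$ and $\frM_{A^{\leq h}}\otimes_{A^{\leq h}} B$, compatibly with $\varphi$, which is the asserted isomorphism.

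The reduction of the assertions about $\Spec A^{\leq h}[1/p]$ to their behaviour on finite $\bQ_p$-algebra points --- which is what lets Kisin's local arguments apply verbatim --- is provided by Lemma~\ref{lem:jacobson}. The main obstacle I anticipate is the integral descent step inside property (3): producing the $\frS_C$-lattice $\frM_C$ over the (non-complete, non-local) ring $C$ and identifying $\frM_C\otimes_C B$ with the unique $E$-height $\leq h$ lattice in $M_B$. This is precisely the content of the integrality arguments of \cite[Prop.\ 1.6.4, Cor.\ 1.7]{pssdr}, and the work to be done is to verify that those arguments nowhere used that the base ring is local, only that it is $\bZ_p$-flat and Noetherian with the point-theoretic properties recorded in Lemma~\ref{lem:jacobson}; the remaining steps are formal consequences of flat base change and of Lemmas~\ref{lem:S-periods} and~\ref{lem:jacobson}.
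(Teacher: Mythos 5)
Your outline follows the same skeleton as the paper's proof (define $\frM_{A^{\leq h}}$ as a pushforward of the universal Kisin module and then rerun Kisin's arguments for (1)--(4)), but it skips the step that is the actual new content here. In \S\ref{subsec:universality_uM} the coefficient ring $A$ is a formally finitely generated $R_\Db$-algebra, not a discrete ($p$-power torsion) ring, and everything established in \S\S\ref{subsec:families_etale}--\ref{subsec:lattice_grass} --- in particular the universal sheaf $\uM$ and Lemma~\ref{lem:S-periods} --- is only available over discrete coefficients, i.e.\ over $A/\frmm_\Db^i A$ for each $i$. Corollary~\ref{cor:L-functor} produces the algebraic projective scheme $\cL^{\leq h}_{V_A}$, but the functor it represents (and hence the universal lattice) lives only on its $\mDb$-adic completion: one has $\hat\uM$ on $\hat\cL^{\leq h}_{V_A}\times_{\Spf A}\Spf\frS_A$, not an algebraic sheaf on $\cL^{\leq h}_{V_A}\times_{\Spec A}\Spec\frS_A$. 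So when you write $\frM_{A^{\leq h}}:=\Theta_{A*}(\uM)$ and invoke Lemma~\ref{lem:S-periods} over $\tilde A$ to get finiteness, the $E$-height $\leq h$ structure, and the comparison isomorphism in (4), you are applying statements outside their proven range. The paper's proof supplies exactly the missing bridge: the morphism $\Theta_{\frS_A}$ is projective over the Noetherian ring $\frS_A$ (Lemma~\ref{lem:frS_A_noetherian}), so formal GAGA de-completes $\hat\uM$ to a finite locally free algebraic module $\uM$, and only then is $\frM_{A^{\leq h}}:=\Theta_{\frS_A*}(\uM)$ defined as a finite $\frS_{A^{\leq h}}$-module.

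The same issue recurs in your treatment of (4). Lemma~\ref{lem:S-periods} gives the isomorphism $V_{\tilde A}\risom\Hom_{\frS_{\tilde A},\varphi}(\Theta_{A*}(\uM),\frS^\ur_{\tilde A})$ only after replacing $\tilde A$ by $\tilde A/\frmm_\Db^i A$ for each $i\geq 1$; to get it over $\tilde A$ itself one must combine these finite-level isomorphisms using the theorem on formal functions (to identify the inverse limit of the finite-level pushforwards with the completion of the algebraic pushforward) and then de-complete again by formal GAGA, before using that the kernel and cokernel of $A^{\leq h}\ra\tilde A$ are $p$-torsion. Your remaining points --- the identification $\tilde A[1/p]\simeq A^{\leq h}[1/p]$ via Proposition~\ref{prop:pssdr_1.6.4}(1), projectivity in (2) from local freeness of the universal lattice, and the reduction of (3) to finite $\bQ_p$-algebra points via Lemma~\ref{lem:jacobson} and the uniqueness of lattices in characteristic zero --- are in line with the paper, but they all presuppose the algebraized $\uM$ and the interpolated isomorphism, so the proof is incomplete without the GAGA/formal-functions step.
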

\begin{proof}
Let $\hat \cL^{\leq h}_{V_A}$ be the $\mDb$-adic completion of $\cL^{\leq h}_{V_A}$.  Then
\[
\hat\Theta_{\frS_A}: \hat \cL^{\leq h}_{V_A} \times_{\Spf A} \Spf \frS_A \ra \Spf \frS_A
\]
is a projective morphism of $\Spf(A)$-formal schemes, and its base is Noetherian by Lemma \ref{lem:frS_A_noetherian}.  By Corollary \ref{cor:L-functor}, there exists a universal Kisin module $\hat\uM$ on the $\mDb$-adic completion $\hat\cL^{\leq h}_{V_A}$ of $\cL^{\leq h}_{V_A}$. Relative to the formal scheme $\hat \cL^{\leq h}_{V_A} \times_{\Spf A} \Spf \frS_A$, it is a locally free coherent sheaf.  Applying formal GAGA for $\Theta_{\frS_A}$, $\hat\uM$ is the completion of a finite locally free module $\uM$ on the projective $\frS_A$-scheme
\[
\Theta_{\frS_A}: \Spec \cL^{\leq h}_{V_A} \times_{\Spec A} \Spec \frS_A \ra \Spec \frS_A.
\]
The scheme theoretic image of $\Theta_{\frS_A}$ is $\frS_{A^{\leq h}}$.  We set
\[
\frM_{A^{\leq h}} := \Theta_{\frS_{A}*}(\uM).
\]

With this work done, the proofs of part (1), (2), (3) and (4) may be repeated from \cite[Cor.\ 1.7]{pssdr}.  For part (4), we remark that just as in \emph{loc.\ cit.},  there is a canonical isomorphism
\[
V_{\tilde A} \lrisom \Hom_{\frS_{\tilde A}, \varphi}(\hat\Theta_{\frS_A *}(\hat\uM), \frS^{ur}_{\tilde A})
\]
produced by combining Lemma \ref{lem:S-periods}, which gives this isomorphism when $\tilde A$ is replaced by $\tilde A/\frmm_\Db^i A$ for each $i \geq 1$, and the theorem on formal functions \cite[Thm.\ 4.1.5]{ega3-1}.  Then the right side is de-completed by formal GAGA, and (4) then follows from the fact that the kernel and cokernel of $A^{\leq h} \ra \tilde A$ are $p$-torsion. 
\end{proof}

\section{Period Maps and $(\phz,N)$-modules in Families}
\label{sec:periods}

So far, we have cut out loci of $G_{K_\infty}$-representations with $E$-height $\leq h$. In this section and in \S\ref{sec:PHT_conditions}, we refine this locus to cut out semi-stable $G_K$-representations with Hodge-Tate weights in $[0,h]$. In order to do this, we will construct a family of $(\varphi,N)$-modules from the family of Kisin modules already produced. The locus will be cut out in characteristic 0, but the construction relies on the family having an integral model. 

In \S\ref{subsec:PHT_background_2} we will set up notation and prove lemmas so that we can use the required period rings in families. In \S\ref{subsec:PHT_families} we will carry out the constructions needed to cut out the semi-stable locus; the result will appear in \S\ref{sec:PHT_conditions}. 

\subsection{Background and Notation}
\label{subsec:PHT_background_2}

We now change notation, writing $A^\circ$ for what we called $A$ above, and writing $A$ for what we called $A[1/p]$ above. Also, assume that $A^\circ$ is $p$-torsion free. We also make analogous notation changes to $R_\Db$ as follows. 

\begin{center}
\begin{tabular}{| l | c | c | c | c | }
\hline
Formerly: & $A$ & $R_\Db/(p\text{-tors})$ & $A[1/p]$ &  $R_\Db[1/p]$   \\
\hline
Henceforth: & $A^\circ$ & $R^\circ$ & $A$ & $R$ \\
\hline
\end{tabular} 
\end{center}

For $S$ a $\bZ_p$-algebra we write $S_A := S_{A^\circ}[1/p]$, where we recall that $S_{A^\circ}$ is the $\mDb A$-adic completion of $S \otimes_{\bZ_p} A^\circ$.   We will use the canonical isomorphism $\frS_{A}/u\frS_{A} \risom W_{A} \cong W[1/p] \otimes_{\bQ_p} A$.

Let $\cO := \varprojlim_n (W\lb u, u^n/p\rb[1/p])$, which is the ring of rigid analytic functions on the open disk of radius $1$ (cf.\ \cite[\S1.1.1]{crfc}), including $\frS[1/p]$ as the dense subring of bounded functions. The Frobenius endomorphism $\varphi$ has a unique continuous extension from $\frS[1/p]$ to each ring $W\lb u, u^n/p\rb[1/p]$, and therefore to $\cO$ as well. 

In order to study families over $A$ of $\varphi$-modules over $\cO$, we need to define the correct notion of the ring of coefficients.  Two candidate definitions end up being the identical: 
\begin{equation}
\label{eq:analytic_A_disk_same}
\cO_A := \varprojlim_n(W\lb u, u^n/p\rb_A) \lrisom \varprojlim_n(W_{A^\circ}\lb u, u^n/p\rb[1/p]).
\end{equation}
While it is clear that these rings are isomorphic when $A^\circ$ is local, we prove the isomorphism here in the general case. 
\begin{lem}
\label{lem:cO_equiv_defns}
The natural inclusions
\[
W\lb u, u^n/p\rb_A \rinj W_{A^\circ}\lb u, u^n/p\rb[1/p], \quad n \geq 1,
\]
induce the isomorphism \eqref{eq:analytic_A_disk_same}.
\end{lem}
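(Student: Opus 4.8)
The statement is that for each $n \geq 1$ the natural inclusion $W\lb u, u^n/p\rb_A \rinj W_{A^\circ}\lb u, u^n/p\rb[1/p]$ is an isomorphism, and passing to the inverse limit over $n$ then gives \eqref{eq:analytic_A_disk_same}. So the whole problem reduces to analyzing a single $n$. I would first unwind the two definitions: on the left, $W\lb u, u^n/p\rb_A$ is, by the standing convention ``$S_A := S_{A^\circ}[1/p]$'', the $p$-inverted $\mDb A^\circ$-adic completion of $W\lb u, u^n/p\rb \otimes_{\bZ_p} A^\circ$; on the right, $W_{A^\circ}\lb u, u^n/p\rb[1/p]$ first completes $W \otimes_{\bZ_p} A^\circ$ along $\mDb A^\circ$ (to get $W_{A^\circ}$), then forms the $W_{A^\circ}$-algebra $W_{A^\circ}\lb u, u^n/p\rb$ (the $(p,u)$-adic, or rather the appropriate-ideal-adic, completion of the polynomial-type construction over $W_{A^\circ}$), and finally inverts $p$. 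The point is that both are obtained from $W[u] \otimes_{\bZ_p} A^\circ$ by completing with respect to two a priori different ideals and then inverting $p$, and one must check the resulting topologies agree after inverting $p$.

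\textbf{Key steps.} First I would reduce to the case $A^\circ = R^\circ$, or even just track the relevant ideals directly: on $W[u] \otimes_{\bZ_p} A^\circ$ the left-hand ring is the completion with respect to powers of $I := (\mDb A^\circ, p \cdot (\text{the } u^n/p \text{ relation}))$ — more precisely one works with the algebra $W[u, v]/(pv - u^n) \otimes_{\bZ_p} A^\circ$ and completes along $\mDb$ of the base, then inverts $p$; the right-hand ring completes first along $\mDb A^\circ$ and then along the ideal generated by $p$ and $v$ (equivalently $u$), then inverts $p$. The crucial observation, which I would isolate as the technical heart, is that after inverting $p$ the $\mDb A^\circ$-adic topology and the $(p, \mDb A^\circ)$-adic topology on these Noetherian rings coincide — this is exactly the kind of argument used in Lemma \ref{lem:frS_A_noetherian} to identify $\cO_\cE \hat\otimes_{\bZ_p}\cR$ with $\cO_{\cE,\cR}$, where the point was that $(p\otimes 1) + \cO_\cE \otimes \frmm_\cR$ and $\frmm_\cR\,\cO_{\cE,\cR}$ generate the same ideal. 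Here the analogue is that $p$ becomes invertible so contributes nothing to the topology after $[1/p]$, while $u^n/p$ (equivalently $v$) is already present on both sides; hence the two nested completions compute the same ring. I would then invoke Noetherianity (Lemma \ref{lem:frS_A_noetherian}, together with \cite[Ch.\ 0, Lem.\ 19.7.1.2]{ega4-1} and \cite[Prop.\ 10.13.5(ii)]{ega1} applied to $W\lb u, u^n/p\rb$, which is itself Noetherian) so that completions are flat and exact, allowing the comparison of ideals to upgrade to an isomorphism of topological rings. Finally, taking $\varprojlim_n$ of the isomorphisms for each $n$ yields \eqref{eq:analytic_A_disk_same}, using that inverse limits commute on both sides with the construction.

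\textbf{Main obstacle.} The genuinely delicate point is the interchange of the two completion operations when $A^\circ$ is not local: forming $W_{A^\circ}$ first and then adjoining the ``$u^n/p$'' variable is not obviously the same as adjoining $u^n/p$ and then completing, because completion does not commute with arbitrary (even finite-type) ring extensions in general. The resolution is that over a Noetherian base the relevant ideals are finitely generated and the completion is with respect to an ideal that, up to radical after inverting $p$, does not involve the new variable $u^n/p$ in an essential way — so one is really completing along an ideal pulled back from the base $A^\circ$. Making this precise — identifying $W\lb u, u^n/p\rb \otimes_{\bZ_p} A^\circ$, completed along $\mDb A^\circ$, with $W\lb u,u^n/p\rb \hat\otimes_{\bZ_p} A^\circ$ via \cite[Ch.\ 0, Lem.\ 19.7.1.2]{ega4-1}, and then recognizing the right-hand side of \eqref{eq:analytic_A_disk_same} as the same completed tensor product after $[1/p]$ — is where the care is needed, but it is the same circle of ideas already deployed for $\frS_\cR$ and $\cO_{\cE,\cR}$ in the proof of Lemma \ref{lem:frS_A_noetherian}, so no new input beyond careful bookkeeping with Noetherian completions should be required.
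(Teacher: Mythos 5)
Your proposal rests on a misreading of the statement. The lemma does \emph{not} assert that each inclusion $W\lb u, u^n/p\rb_A \rinj W_{A^\circ}\lb u, u^n/p\rb[1/p]$ is an isomorphism; it asserts only that these inclusions induce an isomorphism \emph{after passing to the inverse limit over $n$}. The paper's proof is an interleaving argument between the two towers: writing $B_n = W\lb u,u^n/p\rb_A$ and $C_n = W_{A^\circ}\lb u,u^n/p\rb[1/p]$, one shows that the image of the transition map $C_{2n} \ra C_n$ lands inside the image of $B_n$, which (with injectivity of all maps) identifies the limits even though the finite-level inclusions need not be surjective. By reducing everything to ``analyzing a single $n$'' you have replaced the lemma by a strictly stronger levelwise claim that is nowhere asserted and that your argument does not establish.

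The proposed proof of that levelwise claim also misses where the actual difficulty sits. The difference between $B_n$ and $C_n$ is not a discrepancy between two ideals of definition in the base (the situation of Lemma \ref{lem:frS_A_noetherian}, where $\cO_\cE\hat\otimes_{\bZ_p}\cR$ and $\cO_{\cE,\cR}$ are completions along literally the same ideal); it is a condition on the coefficients of power series in $u$: an element of $B_n$, written as $\sum_m f_m\, u^m/p^{\floor{m/n}}$, has coefficients decaying $\mDb A^\circ$-adically, whereas membership in $C_n$ only gives coefficients that are uniformly bounded (integral up to a fixed power of $p$). Your key assertion that ``$p$ becomes invertible so contributes nothing to the topology after $[1/p]$'' is unsound: the completions are performed \emph{before} inverting $p$, and the powers of $p$ appearing in the denominators $u^m/p^{\floor{m/n}}$ interact with the $\mDb$-adic topology in an essential way, so inverting $p$ afterwards does not erase the difference at a fixed $n$. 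The paper resolves this precisely by changing $n$: rewriting $u^m/p^{\floor{m/2n}} = p^{\floor{m/n}-\floor{m/2n}}\, u^m/p^{\floor{m/n}}$ and using $p \in \mDb A^\circ$, the growing exponent $\floor{m/n}-\floor{m/2n}$ converts mere boundedness of the coefficients of an element of $C_{2n}$ into $\mDb A^\circ$-adic decay of its coefficients viewed in $C_n$, hence membership in the image of $B_n$. This shrinking-of-radius step is the entire content of the lemma, and it has no counterpart in your argument; conversely, nothing in your Noetherian-completion bookkeeping supplies the missing decay at a fixed level $n$.
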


\begin{proof}
Write $B_n := W\lb u, u^n/p\rb_A$ and $C_n := W_{A^\circ}\lb u, u^n/p\rb[1/p]$, with the canonical map $B_n\rinj C_n$ that we get from considering an element of $B_n$ as a power series in $u$.  Since the maps making up these limits are injective, it will suffice to show for $f \in C_{2n}$ that its image in $C_n$ under the inclusions making up the limit lies in the image of $B_n$ in $C_n$.  With $f \in C_{2n}$ chosen, write it as
\[
f = \sum_{m \geq 0} f_m \frac{u^m}{p^{\floor{m/2n}}}
\]
where, for some fixed $N \geq 0$, $f_m \in p^{-N}(W \otimes_{\bZ_p} A^\circ) \subseteq W \otimes_{\bZ_p} A$ for all $m \geq 0$.  We rewrite it as
\[
f = \sum_{m \geq 0} f_m p^{\floor{m/n}-\floor{m/2n}} \frac{u^m}{p^{\floor{m/n}}}.\]
Because $p \in \frmm_\Db A^\circ$, the coefficient $f_m p^{\floor{m/n}-\floor{m/2n}}$ of $u^n/p^{\floor{m/n}}$ lies in $(\mDb A^\circ)^{i(n)}$ where $\lim_{n \ra +\infty} i(n) = + \infty$. This means that $f$ lies in the image of $B_n$ in $C_n$, which is what we wanted to prove. 
\end{proof}

 We observe that $\cO_A$ has an $A$-linear Frobenius endomorphism compatible with the natural map $\frS_A \ra \cO_A$. 

There are natural maps from $\frS_A$ and $\cO_A$ to other period rings in families. First we recall the period rings and some properties; see e.g.\ \cite{BCon2009} for a reference. Let $\Ac$ be the $p$-adic completion of the divided power envelope of $W(R)$ (see \S\ref{subsec:PHT_background_1}) with respect to $\ker(\theta)$, and let $\Bpc := \Ac[1/p]$. The action of $\varphi$ on $W(R)$ extends to an action on $\Ac$.  Also recall from \S\ref{subsec:PHT_background_1} that we have fixed a map $\frS \rinj W(R)$ sending $u$ to $\tupi$. The resulting map $\frS[1/p] \rinj \Bpc$ extends uniquely to a continuous inclusion $\cO \rinj \Bpc$, because $\frS[1/p]$ is dense in $\cO$, and the $e$th power of the image $\tupi$ of $u$ in $W(R)$ is in the divided power ideal $(\ker \theta, p)$ for $\Ac$. Define $\BpdR$ to be the $\ker(\theta)$-adic completion of $W(R)[1/p]$, where $\theta$ is extended to a map $\theta: W(R)[1/p] \rsurj \bC_p$, and let $\BdR$ be its fraction field. 

Recall from \S\ref{subsec:PHT_background_1} the definition of $\tupi$, the image of $u$ in $W(R)$, and $\tuep$ of \eqref{eq:tuep_defn}. Write $\ell_u, t \in \BpdR$ for the elements defined by
\[
\ell_u = \log\tupi := \sum_{i=1}^\infty \frac{(-1)^{i-1}}{i} \left(\frac{\tupi-\pi}{\pi}\right)^i, \qquad
t = \log [\underline{\varep}] := \sum_{i=1}^\infty (-1)^{i+1} \frac{([\underline{\varep}]-1)^i}{i};
\]
one can check that these series converge in $\BpdR$. 

We may now define several more period rings: $\Bc := \Bpc[1/t] \subset \BdR$, $\Bpst := \Bpc[\ell_u] \subset \BpdR$, and $\Bst := \Bc[\ell_u]$.  We can and will think of $\Bpst$ as a polynomial ring over $\Bpc$, for $\ell_u$ is transcendental over the fraction field of $\Bc$.  One computes that Frobenius $\phz$ acts as $\varphi(\ell_u) = p\ell_u$ and $\varphi(t) = pt$.

Equip $\Bpst$ with an endomorphism $N$ by formal differentiation $d/d\ell_u$ of the variable $\ell_u$ with coefficients in $\Bpc$, i.e.\ so that $N(\Bpc) = 0$.  Extend $\varphi$ to $\Bpst$ as well, with $\varphi(\ell_u) = p\ell_u$.  We note that $\varphi$ and $N$ define endomorphisms of the polynomial subring $K_0[\ell_u] \subset \Bpst$, and that $p\varphi N = N \varphi$ on $\Bst$. 

There is an exhaustive, decreasing filtration on each of $\Ac, \Bpc$, written 
\[
\Fil^i \Ac, \Fil^i \Bpc,
\]
induced by their inclusion in the filtered ring $\BpdR$, such that $\Fil^0 \Ac = \Ac$ and $\Fil^0 \Bpc = \Bpc$. The filtration on $\BpdR$ is given by
\[
\Fil^i \BpdR := (\ker\theta)^i, \quad i \geq 0.
\]
In fact, $t \in \Fil^1 \BpdR$ and $t \not\in \Fil^2 \BpdR$, so also $t \in \Fil^1 \Ac$, and $t$ is a generator for the maximal ideal of $\BpdR$.  

There is an action of $G_K$ on these rings arising from its action on $\cO_{\bar K}/p$ to a continuous action on $R$, $W(R)$, and the derivative rings above. In particular, it will be useful to know the action of $G_K$ on $t$ is given by $\sigma(t) = \chi(\sigma)\cdot t$ where $\chi$ represents the $p$-adic cyclotomic character. It is also well-known that $\Bpst$ is stable under $G_K$; this also follows from the following calculation (see \cite[Lem.\ 4.6.4]{WEthesis}), which we will need later. 
\begin{lem}
\label{lem:beta}
The map $\beta$ given by 
\[
\beta(\sigma) := \sigma(\ell_u) - \ell_u \text{ for } \sigma \in G_K
\]
is a $1$-cocycle $\beta: G_K \ra \bZ_p(1)$ with respect to the cyclotomic character, belonging to the cohomology class associated to $\pi$ by Kummer theory.  When $\beta(\sigma) \neq 0$, it generates the maximal ideal of $\BpdR$.
\end{lem}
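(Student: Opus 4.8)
The plan is to verify the three assertions of Lemma~\ref{lem:beta} directly from the definitions, using the logarithmic period element $\ell_u = \log\tupi$ and the standard Kummer-theoretic description of $\bZ_p(1)$-cocycles.

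\textbf{Step 1: $\beta$ is a $1$-cocycle valued in $\bZ_p(1)$.} First I would compute $\sigma(\ell_u)$ for $\sigma \in G_K$. Since $\theta(\tupi) = \pi$, the element $\tupi/\pi \in \BpdR$ lies in $1 + \Fil^1\BpdR$, so its logarithm converges, giving $\ell_u = \log\tupi$ (up to the constant $\log\pi$, which is immaterial for the difference $\sigma(\ell_u)-\ell_u$; I would note that $\log$ is being used formally and that the additive property $\log(xy)=\log x+\log y$ holds for $x,y\in 1+\Fil^1$). Now $\sigma(\tupi)$ and $\tupi$ are both $p$-power-compatible systems lifting $\pi$, so their ratio $\sigma(\tupi)/\tupi$ is a unit whose image under $\theta$ is $1$ and which is moreover a system of compatible $p$-power roots of unity: concretely $\sigma(\tupi)/\tupi = [\underline{\varep}]^{c(\sigma)}$ for a unique $c(\sigma) \in \bZ_p$ (this is exactly the Kummer cocycle attached to the chosen system $(\pi_n)$). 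Taking $\log$ gives $\sigma(\ell_u) - \ell_u = c(\sigma)\cdot \log[\underline\varep] = c(\sigma)\, t$, so $\beta(\sigma) = c(\sigma) t \in \bZ_p\cdot t = \bZ_p(1)$, using that $t$ is a $\bZ_p$-basis of $\bZ_p(1) \subset \BpdR$ and that $G_K$ acts on $t$ by $\chi$. The cocycle relation $\beta(\sigma\tau) = \beta(\sigma) + \chi(\sigma)\beta(\tau)$ then follows from applying $\sigma$ to $\beta(\tau)$ and using $\sigma(t) = \chi(\sigma) t$, or equivalently from the cocycle relation for $c$. This simultaneously identifies the cohomology class: $c$ is by construction the image of $\pi$ under the Kummer map $K^\times \to H^1(G_K, \bZ_p(1)) = \varprojlim K^\times/(K^\times)^{p^n}$, since $c(\sigma)$ records how $\sigma$ permutes the chosen $p$-power roots $\pi_n$ of $\pi$.

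\textbf{Step 2: when $\beta(\sigma)\neq 0$ it generates $\Fil^1\BpdR$.} This is immediate from Step~1: $\beta(\sigma) = c(\sigma) t$ with $c(\sigma) \in \bZ_p$, and $t$ generates the maximal ideal $\Fil^1\BpdR$ of the DVR $\BpdR$ (as recalled in the text, $t \in \Fil^1 \setminus \Fil^2$). If $\beta(\sigma) \neq 0$ then $c(\sigma) \neq 0$; I would need $c(\sigma) \in \bZ_p^\times$, not merely nonzero, for $\beta(\sigma)$ to be a genuine generator. This holds because the valuation of $\beta(\sigma)$ in $\BpdR$ is $v_p(c(\sigma)) + 1$; but actually the cleaner route is: $c(\sigma)$ takes values in $\bZ_p$ and, as $\pi$ is a uniformizer of $K$ hence not a $p$-th power even after any finite extension inside $K_\infty$ in the relevant sense, the class of $\pi$ in $K^\times/(K^\times)^p$ is nontrivial, so the cocycle $c$ is surjective onto $\bZ_p$ modulo... — here I would instead simply argue at the level of the single element: write $\beta(\sigma) = c(\sigma)t$; if $c(\sigma) = p^m u$ with $u \in \bZ_p^\times$ and $m \geq 1$, one checks directly from $\sigma(\tupi)/\tupi = [\underline\varep]^{c(\sigma)}$ and the fact that $[\underline\varep]^{p^m} - 1$ has $\Fil$-valuation... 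Actually the statement in \emph{loc.\ cit.}\ \cite[Lem.\ 4.6.4]{WEthesis} is surely meant with the understanding that $\beta(\sigma)$, being $c(\sigma)t$, generates $(\Fil^1)^{1 + v_p(c(\sigma))}$, and "generates the maximal ideal" is the case $c(\sigma) \in \bZ_p^\times$; I would state it precisely as: $\beta(\sigma)$ generates $t^{1+v_p(c(\sigma))}\BpdR$, in particular $\Fil^1\BpdR$ when $v_p(c(\sigma)) = 0$, and cite the thesis for the precise form needed later.

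\textbf{Main obstacle.} The only real subtlety is bookkeeping around the formal logarithm: ensuring $\log\tupi$ makes sense (it does not literally, since $\tupi \notin 1 + \Fil^1$; only $\tupi/\pi$ does, or one works with $\ell_u$ as \emph{defined} by the given series $\sum (-1)^{i-1}(\tupi - \pi)^i/(i\pi^i)$ which converges because $\tupi - \pi \in \Fil^1\BpdR$), and then verifying the additivity identity $\ell_u$ transforms by in Step~1 is compatible with this definition rather than with a naive $\log$. Once one is careful that $\tupi - \pi$ generates $\Fil^1$ up to a unit and that the series-defined $\ell_u$ satisfies $\sigma(\ell_u) - \ell_u = \log\big(\sigma(\tupi)/\tupi\big)$ with the right-hand logarithm genuinely convergent (its argument being in $1 + \Fil^1$), everything else is formal. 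I expect to spend most of the write-up making that convergence/additivity precise and citing \cite[Lem.\ 4.6.4]{WEthesis} for the detailed computation, since the result is used only as an input to the period-ring constructions that follow.
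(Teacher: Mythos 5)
Your Step 1 is correct and is exactly the standard computation the paper has in mind (the paper gives no in-text proof of this lemma, deferring to the author's thesis): $\sigma(\tupi)/\tupi=[\underline{\varep}]^{c(\sigma)}$ with $c$ the Kummer cocycle attached to the chosen system $(\pi_n)$, hence $\beta(\sigma)=c(\sigma)\,t$, the cocycle identity via $\sigma(t)=\chi(\sigma)t$, and the identification of the class of $\pi$. Your care with the series definition of $\ell_u$ (working with $\tupi/\pi\in 1+\Fil^1\BpdR$ rather than a naive $\log\tupi$) is also fine and matches how the paper sets things up.

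The genuine gap is Step 2, where you talk yourself out of the statement as written. You claim the valuation of $\beta(\sigma)$ in $\BpdR$ is $v_p(c(\sigma))+1$, conclude that you would need $c(\sigma)\in\bZ_p^\times$, and propose to weaken the lemma to ``$\beta(\sigma)$ generates $t^{1+v_p(c(\sigma))}\BpdR$.'' This is incorrect: the valuation on $\BpdR$ is the $\ker(\theta)$-adic (i.e.\ $t$-adic) one, its residue field is $\bC_p$, and since $\theta(p)=p\neq 0$ the element $p$ is a \emph{unit} in $\BpdR$. Hence every nonzero element of $\bZ_p$ is a unit in $\BpdR$, so $\beta(\sigma)=c(\sigma)t$ has $t$-adic valuation exactly $1$ whenever $c(\sigma)\neq 0$, and the lemma holds as stated. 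Note that your alternative instinct --- trying to show $c(\sigma)\in\bZ_p^\times$ --- cannot work either, since for suitable $\sigma$ one genuinely has $0\neq c(\sigma)\in p\bZ_p$; the resolution is not a property of $c(\sigma)$ as a $p$-adic integer but the invertibility of $p$ in $\BpdR$. This is not a pedantic point for the paper: the weakened statement you propose would fail to justify \eqref{eq:beta_location}, which asserts $\beta(\sigma)\notin\Fil^2\Bpc$ whenever $\beta(\sigma)\neq 0$ and which is what feeds Lemma \ref{lem:pssdr_2.3.2}(3) and the $G_K$-descent arguments built on \eqref{eq:hG_action}. Once you observe $p\in\BpdR^\times$, your Step 2 collapses to one line and the proof is complete.
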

As this maximal ideal generates the filtration on $\BpdR$, if $\beta(\sigma) \neq 0$ then 
\begin{equation}
\label{eq:beta_location}
\beta(\sigma) \in \Fil^1 \Bpc, \quad \beta(\sigma) \not\in \Fil^2 \Bpc.
\end{equation}

We will use the following families of period rings over $A$. Define $\Bpca := \Acac[1/p]$, where $\Acac$ is, as usual, the $\mDb A^\circ$-adic completion of $\Ac \otimes_{\bZ_p} A^\circ$.  For any $A$-algebra $B$, we write $\Bpcb$ for $\Bpca \otimes_A B$.  Set $\Bpsta := \Bpca[\ell_u]$ and $\Bpstb := \Bpsta \otimes_A B$.  The map $\varphi$ extends to each of these rings $B$-linearly, with $N$ again acting as formal differentiation with respect to $\ell_u$ here.  In particular, $N(\Bpcb) = 0$. Analogous notation is used for the elements of the filtration on these rings: denote by $\Fil^i \Acac$ the $\mDb A^\circ$-adic completion of $\Fil^i \Ac \otimes_{\bZ_p} A^\circ$, and for any $A$-algebra $B$ let $\Fil^i \Bpcb := \Fil^i \Acac \otimes_{A^\circ} B$.  

It will be important to know in the construction of \eqref{eq:period_main} that there is a canonical inclusion $\cO_A \rinj \Bpca$ extending the map $\cO \rinj \Bpc$ discussed above, and also a map $\frS^\ur_A \rinj \Bpca$.  By Lemma \ref{lem:cO_equiv_defns}, it will suffice to show that for large enough $n$,
\[
W\lb u, u^n/p\rb_{A^\circ} \rinj \Acac.
\]
In order to construct the map, it will suffice to draw, for sufficiently large $n$, maps
\[
W\lb u, u^n/p\rb \otimes_{\bZ_p} A^\circ/(\mDb A^\circ)^j \rinj \Ac \otimes_{\bZ_p} A^\circ/(\mDb A^\circ)^j
\]
for each $j \geq 1$.  We will get such maps by showing, for large enough $n$, the existence of maps
\[
W\lb u, u^n/p\rb \rinj \Ac.
\]
Indeed, these maps exist for $n \geq e$ because the $e$th power of $u$ maps to a divided power ideal for $A_\cris$ relative to $W(R)$, as mentioned above. With the construction complete, Lemma \ref{lem:pssdr_2.3.1} implies that this map will remain injective after tensoring with $A^\circ$ and completing with respect to the $\mDb A^\circ$-adic topology.  This same construction gives us a canonical map $\frS^\ur_A \rinj \Bpca$.

We will now record some lemmata to ensure that the large rings $\Bpc$, $\Ac$, and so forth behave well in families. 

\begin{lem}[{\cite[Lem.\ 2.3.1]{pssdr}}]
\label{lem:pssdr_2.3.1}
Let $\cR$ be a Noetherian ring that is $I$-adically complete and separated for some ideal $I \subset \cR$. For any $\cR$-module $M$, denote by $\widehat M$ its $I$-adic completion.  If $M$ is a flat $\cR$-module, then 
\begin{enumerate}
\item For any finite $\cR$-module $N$, the natural map
\[
N \otimes_{\cR} \widehat M \ra \widehat{N\otimes_{\cR} M}
\]
is an isomorphism.
\item $\widehat M$ is flat over $\cR$.  If $M$ is faithfully flat over $\cR$, then so is $\widehat M$.
\item The functor $M \mapsto \widehat M$ preserves short exact sequences of flat $\cR$-modules.
\end{enumerate}
\end{lem}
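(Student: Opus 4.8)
The plan is to prove the three assertions in the order (3), (1), (2); all are standard properties of $I$-adic completion of flat modules over a Noetherian $I$-adically complete ring, and the common engine is to reduce each claim to the level of the quotients modulo $I^n$ — where flatness of the relevant module turns the pertinent sequences into exact sequences — and then to pass to the inverse limit, controlling the derived limit $\varprojlim^1$ by a Mittag-Leffler argument and using the Artin--Rees lemma to compare the $I$-adic filtration with a subspace filtration. The step I expect to be the main obstacle is precisely this filtration comparison (needed for both (1) and (2)); everything else is a formal diagram chase or a Mittag-Leffler observation.

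For part (3) I would argue as follows. Given a short exact sequence $0 \to M' \to M'' \to M''' \to 0$ of flat $\cR$-modules, tensoring with $\cR/I^n$ stays exact because $\Tor_1^\cR(M''',\cR/I^n)=0$, so $0 \to M'/I^nM' \to M''/I^nM'' \to M'''/I^nM''' \to 0$ is exact for every $n$. Taking inverse limits, the system $\{M'/I^nM'\}_n$ of kernels has surjective transition maps, hence is Mittag-Leffler, so its $\varprojlim^1$ vanishes and $0 \to \widehat{M'} \to \widehat{M''} \to \widehat{M'''} \to 0$ is exact. No Noetherian hypothesis is actually used here.

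For part (1), since $\cR$ is Noetherian the finite module $N$ is finitely presented, so I would pick an exact sequence $0 \to P \to \cR^b \to N \to 0$ with $P$ finite. Flatness of $M$ gives an exact sequence $0 \to P\otimes_\cR M \to M^b \to N\otimes_\cR M \to 0$, and the crux is to show that applying $I$-adic completion keeps it exact — which is not automatic. Here flatness of $M$ yields the identity $\Tor_1^\cR(N\otimes_\cR M,\cR/I^n)\cong \Tor_1^\cR(N,\cR/I^n)\otimes_\cR M$ (compute $\Tor$ from a flat resolution of $N$ tensored with $M$), and combining this with Artin--Rees applied to $P\subseteq\cR^b$ shows that the subspace filtration $\{(P\otimes_\cR M)\cap I^nM^b\}_n$ is cofinal with $\{I^n(P\otimes_\cR M)\}_n$; together with the Mittag-Leffler argument of part (3) this produces an exact sequence $0 \to \widehat{P\otimes_\cR M}\to \widehat M^b\to \widehat{N\otimes_\cR M}\to 0$. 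Comparing this with the right-exact sequence $P\otimes_\cR\widehat M\to\widehat M^b\to N\otimes_\cR\widehat M\to 0$ obtained by tensoring the presentation with $\widehat M$ — using that the natural map $N\otimes_\cR\widehat M\to\widehat{N\otimes_\cR M}$ is an isomorphism when $N$ is finite free, and that comparison maps such as $P\otimes_\cR\widehat M\to\widehat{P\otimes_\cR M}$ are always surjective (apply $-\otimes\widehat M$ and completion to a surjection $\cR^a\twoheadrightarrow P$) — a short diagram chase identifies $N\otimes_\cR\widehat M$ with $\widehat{N\otimes_\cR M}$, which is the natural map of the statement.

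For part (2), to see that $\widehat M$ is flat it is enough, $\cR$ being Noetherian, to check that $\mathfrak a\otimes_\cR\widehat M\to\widehat M$ is injective for every ideal $\mathfrak a\subseteq\cR$; by part (1) and flatness of $M$ (which gives $\mathfrak a\otimes_\cR M\xrightarrow{\ \sim\ }\mathfrak aM$) this map is identified with $\widehat{\mathfrak aM}\to\widehat M$, whose injectivity I would deduce from the same filtration comparison: a flat module satisfies $\mathfrak aM\cap I^nM=(\mathfrak a\cap I^n)M$, Artin--Rees makes $\{\mathfrak aM\cap I^nM\}_n$ cofinal with $\{I^n(\mathfrak aM)\}_n$, and hence the kernel $\varprojlim_n\big((\mathfrak aM\cap I^nM)/I^n(\mathfrak aM)\big)$ vanishes. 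For the faithfully flat assertion I would use that in a Noetherian $I$-adically complete ring $1+I\subseteq\cR^{\times}$, so $I$ lies in the Jacobson radical and $I\subseteq\mathfrak m$ for every maximal ideal $\mathfrak m$; then by part (1), $\widehat M/\mathfrak m\widehat M\cong\widehat{M/\mathfrak mM}=M/\mathfrak mM$ (the completion being trivial since $\mathfrak m\supseteq I$), which is nonzero because $M$ is faithfully flat, so $\widehat M$ is faithfully flat.
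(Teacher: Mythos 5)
Your proof is correct; the paper gives no argument of its own here, simply citing Kisin's Lemma 2.3.1 and remarking that its proof extends from the local case to any $I$-adically complete Noetherian ring, and your argument (reduction mod $I^n$ with Mittag--Leffler, the Artin--Rees comparison of the subspace and $I$-adic filtrations for a finite presentation, the ideal criterion for flatness, and $I \subseteq \mathrm{rad}(\cR)$ for faithful flatness) is exactly that standard proof. In particular your write-up substantiates the paper's remark, since nothing in it uses locality of $\cR$.
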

Lemma \ref{lem:pssdr_2.3.1} is given in \cite{pssdr} in the case that $\cR$ is local, but its proof is valid for any adically complete Noetherian ring. 

The following lemma requires some generalization from the local case. 
\begin{lem}[{Generalizing \cite[Lem.\ 2.3.2]{pssdr}}]
\label{lem:pssdr_2.3.2}
Let $\cR$ be a admissible $\bZ_p$-algebra, $I$-adically complete and separated. Also, assume that $\cR$ is $p$-torsion free. 
\begin{enumerate}
\item For $i \geq 0$, the ideal $\Fil^i \AcR$ of $\AcR$ is a faithfully flat $\cR$-module.
\item For $i \geq 0$, $\Fil^i \AcR/\Fil^{i+1} \AcR$ is a faithfully flat $\cR$-module, which is isomorphic to the $I$-adic completion of $(\Fil^i \Ac/\Fil^{i+1} \Ac) \otimes_{\bZ_p} \cR$.
\item For any $\cR[1/p]$-algebra $B$, $i \geq 1$, and $\sigma \in G_K$, $\Bpcb/(\beta(\sigma)B + \Fil^i \Bpcb)$ is a flat $B$-module.  If $\beta(\sigma) \not\in \Fil^i \Bpc$, then $\beta(\sigma) \not\in \Fil^i \Bpcb$.
\item Let $B^\circ$ be a finite continuous $\cR$-algebra with ideal of definition $J$. Then the $J$-adic completion of $\Ac \otimes_{\bZ_p} B^\circ$ is canonically isomorphic to $\AcR \otimes_{\cR} B^\circ$.
\item The map 
\[\AcR \ra \prod_\frq A_{\mathrm{cris}, \cR/\frq}\]
is injective, where $\frq$ runs over ideals of $\cR$ such that $\cR/\frq$ is a finite flat $\bZ_p$-algebra.
\item If $0 \neq f \in \Ac$, then $f$ is not a zero divisor in $\AcR$.
\end{enumerate}
\end{lem}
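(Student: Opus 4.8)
The plan is to deduce all six statements from the corresponding facts for $\Ac$ over $\bZ_p$ by base change along $\bZ_p\to\cR$. The facts I take as input are: $\Ac$ is a $p$-torsion free domain; each $\Fil^i\Ac$ and each graded piece $\gr^i\Ac:=\Fil^i\Ac/\Fil^{i+1}\Ac$ is a faithfully flat $\bZ_p$-module; multiplication by $\beta(\sigma)$ is injective on $\gr^\bullet\Ac$ when $\beta(\sigma)\neq 0$; and $\Ac/(\beta(\sigma)\Ac+\Fil^i\Ac)$ is a flat $\bZ_p$-module (these last being the local case \cite[Lem.\ 2.3.2]{pssdr}). The transfer mechanism is that $\cR$ is $p$-torsion free, hence flat — and even faithfully flat, since $p\in I$ and $\cR\neq 0$ force $\cR/p\cR\neq 0$ — over $\bZ_p$; so $\Ac\otimes_{\bZ_p}\cR$, $\Fil^i\Ac\otimes_{\bZ_p}\cR$ and $\gr^i\Ac\otimes_{\bZ_p}\cR$ are faithfully flat $\cR$-modules, and $\AcR$, $\Fil^i\AcR$ are by definition their $I$-adic completions, to which I apply the completion lemma \ref{lem:pssdr_2.3.1}.

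Granting this, (1) is immediate: $\Fil^i\Ac\otimes_{\bZ_p}\cR$ is faithfully flat over $\cR$, and by Lemma \ref{lem:pssdr_2.3.1}(2) so is its $I$-adic completion $\Fil^i\AcR$. For (2) I will complete the short exact sequence $0\to\Fil^{i+1}\Ac\otimes_{\bZ_p}\cR\to\Fil^i\Ac\otimes_{\bZ_p}\cR\to\gr^i\Ac\otimes_{\bZ_p}\cR\to 0$ of flat $\cR$-modules; by Lemma \ref{lem:pssdr_2.3.1}(3) it stays exact, identifying $\Fil^i\AcR/\Fil^{i+1}\AcR$ with the $I$-adic completion of $\gr^i\Ac\otimes_{\bZ_p}\cR$, faithfully flat over $\cR$ as in (1). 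The second assertion of (3) then follows from (2): if $\beta(\sigma)\in\Fil^j\Bpc\setminus\Fil^{j+1}\Bpc$, its image in $\gr^j\Bpc$ is a nonzero element of a $\bQ_p$-vector space, so it has nonzero image in the faithfully flat $B$-module $\gr^j\Bpcb\cong\gr^j\Bpca\otimes_{\cR[1/p]}B$ of (2); hence $\beta(\sigma)\notin\Fil^{j+1}\Bpcb$, and in particular $\beta(\sigma)\notin\Fil^i\Bpc$ implies $\beta(\sigma)\notin\Fil^i\Bpcb$. For the first assertion of (3), base change and the definitions of the family rings give a canonical identification $\Bpcb/(\beta(\sigma)\Bpcb+\Fil^i\Bpcb)\cong\big(\Ac/(\beta(\sigma)\Ac+\Fil^i\Ac)\big){}_{\cR}[1/p]\otimes_{\cR[1/p]}B$, so by Lemma \ref{lem:pssdr_2.3.1}(2) and faithful flatness of $\cR$ over $\bZ_p$ it suffices to know $\Ac/(\beta(\sigma)\Ac+\Fil^i\Ac)$ is $\bZ_p$-flat, which is $\Ac/\Fil^i\Ac$ (flat by (2)) when $\beta(\sigma)=0$ and is \cite[Lem.\ 2.3.2]{pssdr} otherwise. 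For (4) I apply Lemma \ref{lem:pssdr_2.3.1}(1) with $M=\Ac\otimes_{\bZ_p}\cR$ flat over $\cR$ and $N=B^\circ$ finite over $\cR$, obtaining $B^\circ\otimes_\cR\AcR\cong B^\circ\otimes_\cR\widehat{M}\cong\widehat{B^\circ\otimes_\cR M}=\widehat{\Ac\otimes_{\bZ_p}B^\circ}$, the $I$-adic completion; this coincides with the $J$-adic completion because $IB^\circ$ and $J$ define the same topology on the finite $\cR$-algebra $B^\circ$.

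It remains to handle (5) and (6), which carry the only input genuinely beyond the local case. For (5), I first note that $\AcR$ is $I$-adically separated (it is an $I$-adic completion and $I$ is finitely generated), so a nonzero $x\in\AcR$ has nonzero image in $\Ac\otimes_{\bZ_p}(\cR/I^n)$ for some $n$; since $\cR/I^n$ is killed by a power of $p$ and $\Ac/p^m$ is flat over $\bZ/p^m$, the image of $x$ already lives, and is nonzero, in $(\Ac/p^m)\otimes_{\bZ/p^m}M_0$ for a suitable $m$ and a finitely generated submodule $M_0\subseteq\cR/I^n$. By Lemma \ref{lem:jacobson}, $\cR[1/p]$ is Jacobson with residue fields finite over $\bQ_p$, from which I will deduce that $\cR$ injects into the product of its quotients $\cR/\frq$ with $\cR/\frq$ finite flat over $\bZ_p$ (concretely, that $\bigcap_\frq(\frq+I^n)=I^n$ for each $n$); since $M_0$ is finite this forces, for a suitable such $\frq$, that $x$ has nonzero image in $A_{\cris,\cR/\frq}$, using flatness of $\Ac/p^m$ to detect survival. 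Then (6) follows from (5): $\Ac$ is a domain, so a nonzero $f\in\Ac$ is a non-zero-divisor in $\Ac$; since $\cR/\frq$ is finite free over $\bZ_p$, $A_{\cris,\cR/\frq}\cong\Ac^{\oplus r}$ as an $\Ac$-module with the image of $f$ acting diagonally, hence is a non-zero-divisor there; if $fg=0$ in $\AcR$ with $g\neq 0$, then (5) produces a $\frq$ with $\bar g\neq 0$ and $\bar f\bar g=0$ in $A_{\cris,\cR/\frq}$, a contradiction. I expect (5) to be the main obstacle: the reduction to finite level and the bookkeeping with the infinite product $\prod_\frq A_{\cris,\cR/\frq}$ are exactly where the Noetherianness of $\cR$ and the Jacobson property of $\cR[1/p]$ from Lemma \ref{lem:jacobson} must replace Kisin's ``complete local'' hypothesis, whereas the other parts are formal consequences of the local statements together with Lemma \ref{lem:pssdr_2.3.1} applied along the faithfully flat map $\bZ_p\to\cR$.
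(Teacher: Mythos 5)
Your treatment of parts (1), (2), (3), (4) and (6) is essentially the paper's: the paper disposes of these by saying they follow from Kisin's local arguments, and your transport of those arguments along the flat map $\bZ_p\to\cR$ via Lemma \ref{lem:pssdr_2.3.1} (plus the adic remark for (4), and the deduction of (6) from (5) using that $\Ac$ is a domain and $A_{\cris,\cR/\frq}\cong\Ac^{\oplus r}$) is the intended route. The genuinely new content of the lemma is part (5), and there your proposal has a real gap.

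The gap is the assertion, attributed to Lemma \ref{lem:jacobson} but never proved, that $\bigcap_\frq(\frq+I^n)=I^n$ for each $n$, where $\frq$ runs over ideals with $\cR/\frq$ finite flat over $\bZ_p$. This is not a consequence of the Jacobson property of $\cR[1/p]$: since a power of $p$ lies in $I$, the ideal $I^n$ generates the unit ideal of $\cR[1/p]$, so any argument that passes to the generic fiber (maximal ideals of $\cR[1/p]$, its Artinian quotients, Lemma \ref{lem:jacobson}(3),(5),(6)) loses all information modulo $I^n$. What such arguments do give is the weaker statement $\bigcap_\frq\frq=0$ in $\cR$ itself, but that is useless at the point where you need it, because your very first step was to reduce to a nonzero element of $\Ac\otimes_{\bZ_p}\cR/I^n$; the separation you need is inside the $p$-power-torsion ring $\cR/I^n$, and producing finite flat quotients that detect a given element \emph{modulo $I^n$} is exactly the hard content of the lemma (comparable in depth to Kisin's local case, and in particular requiring the non-reduced finite flat quotients that no Jacobson-style argument supplies). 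Your $M_0$-finiteness device — only finitely many submodules $M_0\cap\bar\frq$ occur, so flatness lets you commute the tensor with a finite intersection — is a correct and nice way to handle the infinite family of $\frq$'s, but it only works once the separation statement is in hand, and that statement is precisely what is missing. The paper avoids needing it altogether: given $0\neq f\in\Ac\otimes\cR/I^n$, it localizes at a maximal ideal $\frmm'$ where $f$ survives (localization commutes with the tensor product, and $(\cR/I^n)_{\frmm'}\to(\cR/I^n)^{\wedge}_{\frmm'}$ is faithfully flat), observes that the image of $f$ in $A_{\cris,\cR^{\wedge}_{\frmm'}}$ is then nonzero, applies Kisin's local Lemma 2.3.2(5) to the complete local flat $\bZ_p$-algebra $\cR^{\wedge}_{\frmm'}$ to get $\frq'$, and takes $\frq=\ker\bigl(\cR\to\cR^{\wedge}_{\frmm'}/\frq'\bigr)$, noting this composite is surjective so the quotient is still finite flat. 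To repair your proof, either follow that reduction to the complete local case, or supply an actual proof of the mod-$I^n$ separation claim; as written, the claim is doing all the work and has no justification.
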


\begin{proof}
Parts (1), (2), (3), (4), and (6) are proved by the same arguments as the corresponding parts of \cite[Lem.\ 2.3.2]{pssdr}, where for part (4) the  the fact that $f: \cR \ra B^\circ$ is finite and continuous implies that $f$ is adic, i.e.\ that $f(I)\cdot B^\circ$ is an ideal of definition for the $J$-adic topology of $B^\circ$. 

To prove part (5), consider that if $0 \neq f \in A_{\cris, \cR}$, we may fix some $n$ such that $0 \neq f \in A_{\cris, \cR/I^n} = \Ac \otimes_{\bZ_p} \cR/I^n$. There is an injective map
\[
\cR/I^n \rinj \prod_\frmm (\cR/I^n)^\wedge_\frmm 
\]
with $\frmm$ varying over the maximal ideals of $\cR/I^n$, which are in natural bijective correspondence with the maximal ideals of $\cR$. Because $\Ac$ is $\bZ_p$-flat, there exists some maximal ideal $\frmm' \subset \cR$ such that the projection of $f$ to $A_{\cris,(\cR/I^n)^\wedge_{\frmm'}}$ is non-zero. Then there exists a positive integer $a$ such that the projection of $f$ to $A_{\cris, \cR/(I^n+\frmm'^a)}$ is non-zero. Notice that $f$ naturally projects from $A_{\cris, \cR}$ to $A_{\cris, (\cR/I^n)^\wedge_{\frmm'}}$, which then projects to $A_{\cris, \cR/(I^n+\frmm'^a)}$. Therefore the images of $f$ in $A_{\cris, (\cR/I^n)^\wedge_{\frmm'}}$ and $A_{\cris, \cR^\wedge_{\frmm'}}$ are non-zero. Because the statement of (5) is known in the case that $\cR$ is local \cite[Lem.\ 2.3.2(5)]{pssdr}, we apply this case to $\cR^\wedge_{\frmm'}$ to produce an ideal $\frq \subset \cR^\wedge_{\frmm'}$ with the desired property. Then the kernel of the surjective composite $\cR \ra \cR^\wedge_{\frmm'} \ra \cR^\wedge_{\frmm'}/\frq$ is an ideal of $\cR$ with the desired property. 
\end{proof}

The following lemma will be useful to construct loci cutting out conditions realized over a family of period rings. 

\begin{lem}[Generalizing {\cite[Lem.\ 2.3.3]{pssdr}}]
\label{lem:pssdr_2.3.3}
Let $M$ be an $A^\circ$-module and $x \in \Acac \otimes_{A^\circ} M$.  The set of $A^\circ$-submodules $N \subset M$ such that $x \in \Acac \otimes_{A^\circ} N$ has a smallest element $N(x)$.
\end{lem}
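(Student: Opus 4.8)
The plan is to mimic the proof of \cite[Lem.\ 2.3.2]{pssdr} in the local case, which constructs $N(x)$ as an intersection of submodules, and to reduce the non-local situation to the local one exactly as was done for Lemma \ref{lem:pssdr_2.3.2}(5). The first step is to check that the collection $\Sigma := \{N \subseteq M : x \in \Acac \otimes_{A^\circ} N\}$ is closed under (possibly infinite) intersections, since then $N(x) := \bigcap_{N \in \Sigma} N$ is the desired smallest element (note $M \in \Sigma$, so $\Sigma \neq \emptyset$). The content is therefore: if $x \in \Acac \otimes_{A^\circ} N_1$ and $x \in \Acac \otimes_{A^\circ} N_2$, then $x \in \Acac \otimes_{A^\circ}(N_1 \cap N_2)$; and more generally the same for an arbitrary family.

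Since $A^\circ$ is Noetherian and $M$ is a priori an arbitrary module, I would first reduce to $M$ finitely generated: the element $x$ lies in $\Acac \otimes_{A^\circ} M_0$ for some finitely generated submodule $M_0 \subseteq M$ (as $\Acac = \varprojlim_j \Ac \otimes_{\bZ_p} A^\circ/(\mDb A^\circ)^j$ and at each finite level only finitely many elements of $M$ are involved, but one must be a little careful passing to the limit --- concretely, write $x = \varprojlim_j x_j$ with $x_j \in \Ac \otimes A^\circ/(\mDb A^\circ)^j \otimes M$, pick finitely generated $M_0^{(j)}$ containing the relevant data, and take $M_0$ to be the submodule generated by the union together with enough elements to witness compatibility --- since $A^\circ$ is Noetherian this is still finitely generated). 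Then replacing $M$ by $M_0$ and any $N$ by $N \cap M_0$, we may assume $M$ is finitely generated, hence so is every $N$, and the relevant tensor products are finite $A^\circ$-modules. Now the key exact-sequence input: for $N_1, N_2 \subseteq M$ there is an exact sequence $0 \to N_1 \cap N_2 \to N_1 \oplus N_2 \to M$, and I want to apply $\Acac \otimes_{A^\circ} -$ and conclude left-exactness. This is where Lemma \ref{lem:pssdr_2.3.2} and Lemma \ref{lem:pssdr_2.3.1} enter: $\Acac$ is the $\mDb A^\circ$-adic completion of $\Ac \otimes_{\bZ_p} A^\circ$, and $\Ac$ is $\bZ_p$-flat, so $\Acac$ is a flat (indeed faithfully flat, by Lemma \ref{lem:pssdr_2.3.1}(2)) $A^\circ$-module --- strictly, $\Ac \otimes_{\bZ_p} A^\circ$ is flat over $A^\circ$, and its $\mDb A^\circ$-adic completion is flat over the Noetherian ring $A^\circ$ by \cite[Ch.\ 0, Thm.\ 19.7.1.1]{ega4-1} or by Lemma \ref{lem:pssdr_2.3.1}(2) applied with $\cR = A^\circ$. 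Flatness of $\Acac$ over $A^\circ$ gives left-exactness of $\Acac \otimes_{A^\circ} -$, so $\Acac \otimes_{A^\circ}(N_1\cap N_2) = (\Acac \otimes N_1) \cap (\Acac \otimes N_2)$ inside $\Acac \otimes_{A^\circ} M$, and hence $x$ lies in the left side. The same argument with an arbitrary intersection $\bigcap_\lambda N_\lambda$ works because, $M$ being Noetherian, any such intersection is already a finite one.

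The main obstacle I anticipate is the interaction of completion with arbitrary (infinite) intersections and with non-finitely-generated $M$: the functor $\Acac \otimes_{A^\circ} -$ is only manifestly well-behaved on finite modules (that is exactly the role of Lemma \ref{lem:pssdr_2.3.1}(1), the statement $N \otimes_\cR \widehat M \cong \widehat{N \otimes_\cR M}$ for $N$ finite), so the reduction to finitely generated $M$ described above is the crux and must be done carefully --- in particular one must verify that the finitely generated $M_0$ can be chosen so that the ``witnessing'' element $x$ genuinely lies in $\Acac \otimes_{A^\circ} M_0$ and not merely in some completion of $M_0 \otimes$ that fails to inject. Once $M$ is finite, everything is formal from flatness of $\Acac/A^\circ$ and Noetherianity. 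I would also remark, as in \emph{loc.\ cit.}, that $N(x)$ is characterized by the universal property that $x \in \Acac \otimes_{A^\circ} N$ iff $N \supseteq N(x)$, which is immediate from the construction as an intersection. This proof does not require $A^\circ$ to be $p$-torsion free nor local, only Noetherian, so the statement as given (with the standing hypotheses of \S\ref{subsec:PHT_background_2}) is comfortably covered.
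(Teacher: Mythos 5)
There is a genuine gap, and it sits exactly where you locate the "crux" in the wrong place. Your reduction to finitely generated $M$ and your observation that flatness of $\Acac$ over $A^\circ$ (which does hold here, by Lemma \ref{lem:pssdr_2.3.1}(2) applied to the flat module $\Ac \otimes_{\bZ_p} A^\circ$, cf.\ Lemma \ref{lem:pssdr_2.3.2}(1)) makes $\Acac \otimes_{A^\circ}(-)$ commute with \emph{finite} intersections are both fine. The failure is the passage to the arbitrary intersection: the assertion that, $M$ being Noetherian, any intersection of a family of submodules is already a finite subintersection is false. Noetherianness (ACC) produces maximal elements of families of submodules, not minimal ones; for instance $\bigcap_n p^n\bZ_p = 0$ in the Noetherian module $\bZ_p$, and no finite subintersection is $0$. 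So you have not shown that $\bigcap_{N \in \Sigma} N$ lies in $\Sigma$, and the construction of $N(x)$ collapses. Moreover this is not a repairable slip within your strategy: the lemma is \emph{not} a formal consequence of flatness. Take $F = \bQ_p$, which is flat over $A^\circ = \bZ_p$, $M = \bZ_p$ and $x = 1 \in F \otimes_{A^\circ} M$; then every nonzero submodule $N = p^n\bZ_p$ satisfies $x \in F \otimes N$, yet there is no smallest such $N$. Hence any correct proof must use finer structure of $\Acac$ than flatness (completeness/Mittag--Leffler-type input, which is exactly what Kisin's local argument exploits), and "everything is formal from flatness and Noetherianity" cannot be the end of the story.

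For comparison, the paper does not reprove anything about $\Ac$: it reduces to the already-known local case. For $M$ killed by $\mDb^n$ one has $\Acac \otimes_{A^\circ} M \cong A_{\mathrm{cris},R_\Db} \otimes_{R_\Db} M$, so Kisin's Lemma 2.3.3 over the local ring $R_\Db$ yields a smallest $R_\Db$-submodule $P \subseteq M$ with $x \in A_{\mathrm{cris},R_\Db} \otimes_{R_\Db} P$; the image $N(x)$ of $P \otimes_{R_\Db} A^\circ \to M$ is then the smallest $A^\circ$-submodule, since any competitor $N$ is in particular an $R_\Db$-submodule containing $P$, hence contains its $A^\circ$-span. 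The general (finitely generated) $M$ is handled by applying this to the quotients $M/\mDb^n M$, checking compatibility of the resulting $N(x_n)$, and passing to the limit using Lemma \ref{lem:pssdr_2.3.1}(1). If you want to keep your intersection-theoretic framing, you would still need an argument of this kind (or a direct Mittag--Leffler/completeness argument specific to $\Acac$) to show the candidate intersection actually belongs to $\Sigma$.
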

\begin{proof}
Assume that $\frmm_\Db^n\cdot M = 0$ for some $n \geq 1$, and choose some $x \in \Acac \otimes_{A^\circ} M$.  Therefore there is a natural isomorphism of $A^\circ$-modules
\[
\Ac_{,R_\Db} \otimes_{R_\Db} M \lrisom \Acac \otimes_{A^\circ} M.
\]
Because the statement was shown to be true when $A^\circ$ is local in \cite[Lem.\ 2.3.3]{pssdr}, we may apply the statement, with $R_\Db$ in place of $A^\circ$, to the left hand side. Here we are using the assumption that $n$ exists as above, since this implies that $\Ac_{,R_\Db} \otimes_{R_\Db} M \cong \Acac \otimes_{A^\circ} M$. As a result, there exists a smallest $R_\Db$-submodule $P$ of $M$ such that $x \in \Ac_{,R_\Db} \otimes_{R_\Db} P$.  We claim that the image $N(x)$ of the natural map
\[
P \otimes_{R_\Db} A^\circ \ra M
\]
is the smallest $A^\circ$-submodule of $M$ with the required property.  Clearly it contains $x$.  If there were a $A^\circ$ submodule $N$ with the property, then $N \supset P$ since $N$ is also a $R_\Db$-module with the property.  But then $N$ must contain $N(x)$, which is the $A^\circ$-span of $P$.  This shows that $N(x)$ is the smallest $A^\circ$-submodule of $M$ with the property.

The proof of \cite[Lem.\ 2.3.3]{pssdr} deduces the Lemma in the case that $A^\circ$ is local from the case that $M$ has finite length. The same proof works in this setting, so we briefly sketch it. Assume that $M$ is a finitely generated $A^\circ$-module, as the general case will follow. Denote by $x_n$ the image of $x$ in $\Acac \otimes_{A^\circ} M/\mDb^n M$, and denote by $N(x_n)$ the submodule of $M/\mDb^n M$ obtained from the argument above. One then observes that the image of $N(x_{n+1})$ in $M/\mDb^n M$ is $N(x_n)$, and that the limit $N(x) := \varprojlim_n N(x_n)$ is the desired submodule of $M$ (using Lem.\ \ref{lem:pssdr_2.3.1}(1)). 
\end{proof}

\subsection{Period Maps in Families}
\label{subsec:PHT_families}

Suppose now that $V_{A^\circ}$ has an $A^\circ$-linear action of $G_K$ such that its restriction to $G_{K_\infty}$ has $E$-height $\leq h$, in the sense that $(A^\circ)^{\leq h} = A^\circ$ (cf.\ Prop.\ \ref{prop:pssdr_1.6.4}(2)). Write $V_A := V_{A^\circ} \otimes_{A^\circ} A$. We will now follow \cite[\S2.4]{pssdr} in constructing a period map \eqref{eq:period_main} comparing the family $V_A$ of $G_K$-representations to a family of $(\phz, N)$-modules. Using the results of \S\ref{sec:etale_kisin}, we can compare $V_A\vert_{G_{K_\infty}}$ to a family of Kisin modules. We will produce a family of $(\phz, N)$-modules from Kisin modules, and add additional structure needed to descend $G_{K_\infty}$-representations to $G_K$-representations. Our goal is to produce $A^\st$, the maximal quotient of $A$ over which $V_A$ is semi-stable with Hodge-Tate weights in $[0,h]$. 
 
Proposition \ref{prop:dim1} has produced $\frM_{A^\circ}$, a finite $\frS_{A^\circ}$-module, such that  $\frM_A := \frM_{A^\circ} \otimes_{\bZ_p} \bQ_p$ is a projective, rank $d$ $\frS_A$-module with a $\varphi$-semilinear, $A$-linear endomorphism $\varphi: \frM_A \ra \frM_A$ such that the induced $\frS_A$-linear map $\varphi^*(\frM_A) \ra \frM_A$ has cokernel killed by $E(u)^h$. One part of the period map comes from Prop.\ \ref{prop:dim1}(4), which provides a canonical, $G_{K_\infty}$-equivariant isomorphism
\begin{equation}
\label{eq:period_iota}
\iota: V_A \lrisom \Hom_{\frS_A, \varphi}(\frM_A, \frS^{\mathrm{ur}}_A).
\end{equation}

The following lemma supplies the other half of the period map, comparing $V_A$ with the candidate family of $(\varphi,N)$-modules $D_A$. Indeed, we write 
\[
\cM_A := \frM_A \otimes_{\frS_A} \cO_A, \quad \text{ and } \quad D_A := \cM_A/u\cM_A,
\]
each of which have a natural induced action of $\varphi$. Denote by $\widehat \frS_{0, A}$ the completion of $K_0[u] \otimes_{\bQ_p} A$ at the ideal $(E(u))$. We also define $\lambda := \prod_{n=0}^\infty \varphi^n(E(u)/E(0)) \in \cO$. 

\begin{lem}[\cite{pssdr}, Lem.\ 2.2]
\label{lem:pssdr_2.2}
There is a unique, $\varphi$-compatible, $W_A$-linear map $\xi: D_A \ra \cM_A$, whose reduction modulo $u$ is the identity on $D_A$.

The induced map $D_A \otimes_{W_A} \cO_A \ra \cM_A$ has cokernel killed by $\lambda^h$, and the image of the map $D_A \otimes_{W_A} \widehat \frS_{0, A} \ra \cM_A \otimes_{\cO_A} \widehat \frS_{0, A}$ is equal to that of
\[
\varphi^*(\cM_A) \otimes_{\cO_A} \widehat \frS_{0, A} \ra \cM_A \otimes_{\cO_A} \widehat \frS_{0, A}.
\]
\end{lem}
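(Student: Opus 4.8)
The plan is to run Kisin's construction from \cite[Lem.\ 2.2]{pssdr}, which was carried out over a local base, over the formally finitely generated $\bQ_p$-algebra $A$; the additional input is that the rings $\cO_A$, $\frS_A^{\ur}$ and $\AcR$ behave well in families, as recorded in Lemmas \ref{lem:cO_equiv_defns}, \ref{lem:pssdr_2.3.1} and \ref{lem:pssdr_2.3.2}, together with the module $\frM_A$ of Proposition \ref{prop:dim1}. I would first dispatch uniqueness: if $\xi$ and $\xi'$ are two $\varphi$-compatible $W_A$-linear lifts of $\mathrm{id}_{D_A}$, their difference $\delta$ carries $D_A$ into $u\cM_A$, and repeated use of $\varphi$-compatibility together with $\varphi(u) = u^p$ forces $\delta(D_A) \subseteq u^{p^n}\cM_A$ for all $n$; since $\cM_A = \frM_A \otimes_{\frS_A} \cO_A$ is finite projective over $\cO_A$ and $\bigcap_n u^n\cO_A = 0$ in the Fr\'echet ring $\cO_A = \varprojlim_n W\lb u,u^n/p\rb_A$ (Lemma \ref{lem:cO_equiv_defns}), we get $\delta = 0$.

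For existence I would use successive approximation. The crucial preliminary point is that $\varphi_{D_A}\colon \varphi^*(D_A) \ra D_A$ is an isomorphism: it is the reduction modulo $u$ of $\varphi^*(\cM_A) \ra \cM_A$, whose cokernel is killed by $E(u)^h$, so it has cokernel killed by $E(0)^h$, and $E(0) \in K_0^\times$ since $E$ is Eisenstein and $p$ is invertible in $A$; hence the reduction is surjective, and bijective by constancy of rank, using that $D_A$ is a projective $W_A$-module of rank $d$. Now choose any $W_A$-linear section $\xi_0$ of $\cM_A \rsurj \cM_A/u\cM_A = D_A$ and set $\xi_{n+1} := \varphi_{\cM_A} \circ \varphi^*(\xi_n) \circ \varphi_{D_A}^{-1}$. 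Each $\xi_n$ still reduces to $\mathrm{id}_{D_A}$ modulo $u$, and $\xi_{n+1} - \xi_n$ has image in $u^{p^n}\cM_A$, so the sequence converges in $\Hom_{W_A}(D_A,\cM_A)$ for the Fr\'echet topology; the limit $\xi$ is $\varphi$-compatible and lifts the identity. Making this convergence, and its compatibility with the inverse limit defining $\cO_A$ and with reduction modulo $(\frmm_\Db A^\circ)^i$, precise over the non-local ring $A$ is where Lemmas \ref{lem:cO_equiv_defns} and \ref{lem:pssdr_2.3.1} are used; I expect this bookkeeping, rather than any new idea, to be the main obstacle.

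Finally I would extract the statements about cokernels and images. Let $\Xi\colon D_A \otimes_{W_A} \cO_A \ra \cM_A$ be the $\cO_A$-linear extension of $\xi$. The $\varphi$-compatibility of $\xi$ says exactly that $\Xi$ factors as $D_A \otimes_{W_A} \cO_A \xrightarrow{\ \theta\ } \varphi^*(\cM_A) \xrightarrow{\ \varphi_{\cM_A}\ } \cM_A$, where $\theta$ is the composite of $\varphi^*(\Xi)$ with the isomorphism $\varphi_{D_A}^{-1}\otimes\mathrm{id}$, so that $\theta$ reduces modulo $u$ to $\varphi_{D_A}^{-1}$ and $\mathrm{coker}(\theta) \cong \varphi^*(\mathrm{coker}\,\Xi)$ by right exactness of $\varphi^*$. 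Bootstrapping this identity against $\mathrm{coker}(\varphi^*(\cM_A) \ra \cM_A)$, which is killed by $E(u)^h$, and using that $\det\Xi \equiv 1 \pmod u$ so that its Frobenius iterates converge to $1$ in $\cO_A$, shows that $\mathrm{coker}(\Xi)$ is killed by $\prod_{m=0}^{n}\varphi^m(E(u))^h$ up to units for every $n$; dividing by the unit $E(0)^{h(n+1)}$ and letting $n \to \infty$ gives that $\lambda^h = \prod_{m\ge 0}\varphi^m(E(u)/E(0))^h$ annihilates $\mathrm{coker}(\Xi)$, which is the first displayed assertion once $p$ is inverted. Over $\widehat{\frS}_{0,A} = (K_0[u]\otimes_{\bQ_p}A)^{\wedge}_{(E(u))}$ one has $\varphi^m(E(u)) = E(u^{p^m}) \in \widehat{\frS}_{0,A}^\times$ for $m \ge 1$, so $(\lambda) = (E(u))$ there; the factorization $\Xi = \varphi_{\cM_A}\circ\theta$ then gives $\mathrm{image}(\Xi \otimes_{\cO_A} \widehat{\frS}_{0,A}) \subseteq \mathrm{image}(\varphi^*(\cM_A) \ra \cM_A) \otimes_{\cO_A} \widehat{\frS}_{0,A}$, and for the reverse inclusion I would show, as in \cite[Lem.\ 2.2]{pssdr}, that $\theta$ becomes an isomorphism after $\otimes_{\cO_A}\widehat{\frS}_{0,A}$, by reducing modulo $E(u)$ (where $\widehat{\frS}_{0,A}/(E(u)) \cong K \otimes_{\bQ_p} A$) and comparing ranks and determinants. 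This last point is the family version of the uniqueness of $\frS$-lattices of $E$-height $\le h$ in characteristic zero (cf.\ \cite[Prop.\ 2.1.12]{crfc}).
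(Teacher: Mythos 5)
Your proposal is correct and is essentially the paper's own route: the paper simply observes that the proofs of \cite[Lem.\ 2.2, Lem.\ 2.2.1]{pssdr} (going back to \cite[Lem.\ 1.2.6]{crfc}) carry over verbatim since nothing in them uses that $A^\circ$ is local, and your successive-approximation construction of $\xi$, the invertibility of $\varphi_{D_A}$ via $E(0)$ being a unit after inverting $p$, the $\lambda^h$-bound on the cokernel, and the unit status of $\varphi^m(E(u))$ for $m\geq 1$ in $\widehat\frS_{0,A}$ are exactly the steps of that argument, with the family-theoretic inputs (Lemmas \ref{lem:cO_equiv_defns}, \ref{lem:pssdr_2.3.1}, \ref{lem:pssdr_2.3.2}) playing the role you assign them. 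So you have reconstructed the cited proof in detail rather than found a different one.
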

\begin{proof}
The proofs of \cite[Lem.\ 2.2 and Lem.\ 2.2.1]{pssdr} go through verbatim.  None of it depends on $A^\circ$ being local.  They are a generalization of \cite[Lem.\ 1.2.6]{crfc}, where $A^\circ = \bZ_p$. Additional detail may be found in \cite[Lem.\ 4.7.1]{WEthesis}.
\end{proof}

We may now produce the period map from $\xi$ and $\iota$ as follows. The map $\iota$ induces a $\frS_A$-linear, $\varphi$-equivariant map
\[
\frM_A \lra \Hom_A(V_A, \frS^\ur_A); \quad m \mapsto (v \mapsto \langle m, \iota(v)\rangle).
\]
Applying $\otimes_{\frS_A} \cO_A$ to it and composing it with $\xi$, we have a $\varphi$-equivariant map
\[
D_A \buildrel\xi\over\lra \cM_A \lra \Hom_A(V_A, \frS^\ur_A) \otimes_{\frS_A} \cO_A \lra \Hom_A(V_A, \Bpca).
\]
Tensoring the composition of these maps by $\otimes_A B$, where $B$ is any $A$-algebra, there is a $\Bpcb$-linear map
\begin{equation}
\label{eq:period_main}
D_B \otimes_{W_B} \Bpcb \ra \Hom_A(V_A, \Bpca) \otimes_A B \cong \Hom_B(V_B, \Bpcb).
\end{equation}

We see that the right hand side has an action of $G_K$, and the left hand side has an action of $G_{K_\infty}$ through the action on $\Bpcb$.  This map is $G_{K_\infty}$-equivariant because $G_{K_\infty}$ acts equivariantly on the inclusions $\frS \rinj \cO \rinj \Bpc$ and because $\iota$ above is $G_{K_\infty}$-equivariant.  In order to extend the action of $G_{K_\infty}$ on the left hand side of \eqref{eq:period_main} to an action of $G_K$, we suppose that there is a $W_B$-linear map
\[
N: D_B \ra D_B
\]
which satisfies the identity $p\varphi N = N \varphi$.  Then the action of $G_K$ on $D_B \otimes_{W_B} \Bpcb$ is
\begin{equation}
\label{eq:hG_action}
\sigma(d \otimes b) = \left(\sum_{i=0}^\infty \frac{N^i(d) \otimes \beta(\sigma)^i}{i!} \right)\sigma(b) = \exp(N \otimes \beta(\sigma)) \cdot d \otimes \sigma(b)
\end{equation}
for $\sigma \in G_K$.  One can check that this action of $G_K$ commutes with $\varphi$.

In order to parameterize semi-stable representations, we must work with $\Bpst$. Recall that we adjoin $\ell_u$ to $\Bpcb$ to get $\Bpstb = \Bpcb \otimes_{K_0} K_0[\ell_u]$ with a $B$-linear action of $N$ and $\phz$. Consider the composite of the isomorphisms
\begin{equation}
\label{eq:pssdr_2.4.4}
D_B \otimes_{K_0} K_0[\ell_u] \risom (D_B \otimes_{K_0} K_0[\ell_u])^{N=0} \otimes_{K_0} K_0[\ell_u] \buildrel{(\ell_u \mapsto 0) \otimes 1}\over\lra D_B \otimes_{K_0} K_0[\ell_u]
\end{equation}
where the first map is the inverse to the natural isomorphism
\[
(D_B \otimes_{K_0} K_0[\ell_u])^{N=0} \otimes_{K_0} K_0[\ell_u] \lrisom D_B \otimes_{K_0} K_0[\ell_u]
\]
induced by polynomial multiplication in $K_0[\ell_u]$.  Tensoring \eqref{eq:pssdr_2.4.4} by $\Bpcb$ over $W_B$ and tensoring \eqref{eq:period_main} by $K_0[\ell_u]$ over $K_0$, we obtain the composite map
\begin{equation}
\label{eq:period_st}
D_B \otimes_{W_B} \Bpstb \buildrel{\eqref{eq:pssdr_2.4.4}}\over\lra D_B \otimes_{W_B} \Bpstb \buildrel{\eqref{eq:period_main}}\over \lra \Hom_B(V_B, B) \otimes_B \Bpstb.
\end{equation}

We claim that \eqref{eq:period_main} is $G_K$-equivariant if and only if $\eqref{eq:period_st}$ is equivariant when $G_K$ is regarded as acting trivially on $D_B$.  A key observation is that the an inverse to the bijection $(D \otimes_{K_0} K[\ell_u])^{N=0} \buildrel{\ell_u \mapsto 0}\over\lra D$ is given by $d \mapsto \exp(-N \otimes \ell_u)\cdot d$.

The following lemma is an important step toward the comparison of semi-stable Galois representations and filtered $(\varphi, N)$-modules in families. 
\begin{lem}[{Generalizing \cite[Lem.\ 2.4.6]{pssdr}}]
\label{lem:period_injective}
For each $A$-algebra $B$, the maps \eqref{eq:period_main} and \eqref{eq:period_st} are injective, and their cokernels are flat $B$-modules.
\end{lem}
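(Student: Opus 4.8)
The plan is to reduce to the case treated by Kisin in \cite[Lem.\ 2.4.6]{pssdr}, where $A^\circ$ is local, by a combination of base-change and flatness arguments, and to treat the two statements (injectivity; flatness of cokernels) in tandem. First I would observe that it suffices to prove the statement for \eqref{eq:period_main}, since \eqref{eq:period_st} is obtained from \eqref{eq:period_main} by tensoring with the free $K_0$-module $K_0[\ell_u]$ and composing with the isomorphism \eqref{eq:pssdr_2.4.4}; tensoring with a free module preserves both injectivity and flatness of cokernel, and composing with an isomorphism changes nothing. Moreover, since \eqref{eq:period_main} for a general $A$-algebra $B$ is obtained from the case $B = A$ by applying $- \otimes_A B$, and since a base change of an injection with flat cokernel is again an injection with flat cokernel, it suffices to treat $B = A$.

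Next I would reduce to the ``finite-length'' situation. Recall $A = A^\circ[1/p]$ with $A^\circ$ a formally finitely generated $R_\Db$-algebra, so $A^\circ$ is $\mDb$-adically separated and Noetherian. The modules in \eqref{eq:period_main} with $B = A$ are built from $D_A$, $V_A$, and the period rings $\Bpca$, which by construction are $\mDb A^\circ$-adic completions (after inverting $p$) of $\Ac \otimes_{\bZ_p}(-)$. Using Lemma \ref{lem:pssdr_2.3.1}(1),(3) — the compatibility of $I$-adic completion with finite tensor products and its exactness on flat modules — together with the theorem on formal functions / the fact that $D_A$ and $V_A$ are finite over the Noetherian ring in question, I would write both sides of \eqref{eq:period_main} as inverse limits over $n$ of the corresponding maps with $A^\circ$ replaced by $A^\circ/(\mDb A^\circ)^n$, i.e. with coefficients of finite length over $R_\Db/\mDb^n$. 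Injectivity then follows from injectivity at each finite level together with left-exactness of $\varprojlim$; for the cokernel, I would check that the relevant inverse system is Mittag-Leffler (the transition maps are surjective, being reductions of finitely generated modules), so that $\varprojlim^1$ vanishes and the cokernel of the limit is the limit of the cokernels, which is flat over $A$ because each term is flat over the corresponding Artinian quotient and flatness is preserved under such limits of finite modules (again Lemma \ref{lem:pssdr_2.3.1}(2)).

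At each finite level the coefficient ring $A^\circ/(\mDb A^\circ)^n$ is Artinian but \emph{not} local. Here I would invoke Lemma \ref{lem:pssdr_2.3.2}(5): the map $\AcR \to \prod_\frq A_{\cris,\cR/\frq}$ over finite flat $\bZ_p$-quotients is injective, and more generally an Artinian ring is a finite product of Artinian local rings, so I can decompose $A^\circ/(\mDb A^\circ)^n$ into its local factors. Over each Artinian local factor, \eqref{eq:period_main} is exactly the map considered in \cite[Lem.\ 2.4.6]{pssdr}, whose proof establishes injectivity with flat cokernel by the key computation that the image of $\varphi^*(\cM_A)$ modulo $E(u)$ matches that of $D_A$ (Lemma \ref{lem:pssdr_2.2}) together with the flatness statements of Lemma \ref{lem:pssdr_2.3.2}(1)--(4) for the filtration pieces of $\Bpca$. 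Injectivity and flatness of the cokernel are both stable under finite products, so they pass to $A^\circ/(\mDb A^\circ)^n$, and then up to the limit as above.

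The main obstacle I anticipate is the interchange of the $\mDb$-adic completion (defining $\Bpca$ and hence both sides of \eqref{eq:period_main}) with the formation of kernel and cokernel. This is precisely where one must know the period rings are \emph{flat} over the coefficient ring — so that completion is exact (Lemma \ref{lem:pssdr_2.3.1}(3)) — and where the non-local generalizations Lemma \ref{lem:pssdr_2.3.2}(1)--(5), together with the $\varprojlim^1$-vanishing from the Mittag-Leffler condition, do the real work. Everything else is either a formal base-change reduction or a verbatim citation of the local computations in \cite[Lem.\ 2.2, 2.4.6]{pssdr}.
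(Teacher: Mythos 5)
Your opening reductions (to \eqref{eq:period_main} only, and to $B=A$, using that an injection with flat cokernel stays injective with flat cokernel after any base change) agree with the paper. The central dévissage, however, does not work. You propose to write the two sides of \eqref{eq:period_main} as inverse limits over $n$ of ``corresponding maps'' with coefficients in $A^\circ/(\mDb A^\circ)^n$ and to apply Kisin's result to the Artinian local factors of those rings. But the map \eqref{eq:period_main} has no finite-level avatars: $A = A^\circ[1/p]$, and every ingredient of the map --- $\frM_A = \frM_{A^\circ}\otimes_{\bZ_p}\bQ_p$, the ring $\cO_A$, the section $\xi$ of Lemma \ref{lem:pssdr_2.2} (a convergent series constructed only after inverting $p$), and $\Bpca = \Acac[1/p]$ --- lives in characteristic zero, while $A^\circ/(\mDb A^\circ)^n$ is killed by a power of $p$ (recall $p \in \mDb$). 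Inverting $p$ does not commute with the limit defining $\Acac$, so neither side of \eqref{eq:period_main} is a limit of modules over these truncations, and \cite[Lem.\ 2.4.6]{pssdr} --- which treats coefficients finite \emph{flat} over $\bZ_p$, i.e.\ finite $\bQ_p$-algebras after inverting $p$, not characteristic-$p^n$ rings --- cannot be invoked there. Relatedly, Lemma \ref{lem:pssdr_2.3.2}(5) is not a device for splitting an Artinian ring into local factors: it injects $\Acac$ into $\prod_\frq A_{\cris, A^\circ/\frq}$ over quotients $A^\circ/\frq$ that are finite flat over $\bZ_p$. Finally, your flatness step is unsupported: the modules involved are not finite over $A$, the rings $A^\circ/(\mDb A^\circ)^n$ are not $A$-algebras, and Lemma \ref{lem:pssdr_2.3.1}(2) only says that $I$-adic completion preserves flatness over a fixed base; ``flatness is preserved under such limits'' is not a valid principle here.

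For comparison, the paper's argument runs as follows. Injectivity for $B=A$ is reduced in one step, via Lemma \ref{lem:pssdr_2.3.2}(5), to the case where $A^\circ$ is finite (flat) over $\bZ_p$, which is exactly \cite[Lem.\ 2.4.6]{pssdr}. Flatness of the cokernel is then checked by the Tor criterion: one must show \eqref{eq:period_main} remains injective after $\otimes_A A/I$ for every finitely generated ideal $I \subset A$; the base-changed map is identified with the map \eqref{eq:period_main} formed intrinsically over $A/I$ (Lemma \ref{lem:pssdr_2.3.2}(4), applied to the finite continuous quotient of $A^\circ$ underlying $A/I$), and the injectivity statement already proved --- which holds for $A/I$ in place of $A$ --- finishes the proof. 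So the only quotients one ever passes to are the characteristic-zero rings $A/I$ and the finite flat $\bZ_p$-quotients of $A^\circ$, never the $\mDb$-adic truncations; if you replace your limit/Mittag-Leffler mechanism by these two reductions, the rest of your outline falls into place.
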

\begin{proof}
First we note that it suffices to prove the assertions only for \eqref{eq:period_main}, and for $B = A$. Indeed, the case for general $B$ arises from the case $B = A$ by applying $\otimes_A B$, and this map will remain injective after $\otimes_A B$ if the cokernel for $B = A$ is a flat $A$-module. 

Lemma \ref{lem:pssdr_2.3.2}(5) immediately reduces the injectivity claim to the case that $A^\circ$ is finite over $\bZ_p$, which was proved in \cite[Lem.\ 2.4.6]{pssdr}.

To show that the cokernel of \eqref{eq:period_main} is flat, it suffices to show that \eqref{eq:period_main} remains injective after applying $\otimes_A A/I$ for any finitely generated ideal $I$ of $A$.  If we had started our proof with $A/I$ in the place of $A$, we would still have the injectivity statement for $A/I$, just as we proved it for $A$ above.  Therefore it only remains to show that 
\[
D_A \otimes_{W_A} \Bpca \otimes_A A/I \buildrel{\eqref{eq:period_main} \otimes_A A/I}\over\lra \Hom_A(V_A, \Bpca) \otimes_A A/I
\]
is injective provided that
\[
D_{A/I} \otimes_{W_{A/I}} B^+_{\cris, A/I} \buildrel{\eqref{eq:period_main}}\over\lra \Hom_{A/I}(V_{A/I}, B^+_{\cris, A/I})
\]
is injective.  This is precisely what Lemma \ref{lem:pssdr_2.3.2}(4) tells us -- indeed, the sources and targets are isomorphic, respectively -- completing the proof.
\end{proof}

Now we can produce $A^\st$, the maximal quotient of $A$ over which $V_A$ is semi-stable with Hodge-Tate weights in $[0,h]$. This means that for any $A$-algebra $B$ which is finite as a $\bQ_p$-algebra, the representation $V_A \otimes_A B$ is semi-stable with Hodge-Tate weights in $[0,h]$ if and only if $A \ra B$ factors through $A^\st$.  

\begin{prop}[{Generalizing \cite[Prop.\ 2.4.7]{pssdr}}]
\label{prop:pssdr_2.4.7}
The functor which to an $A$-algebra $B$ assigns the collection of $W_B$-linear maps $N: D_B \ra D_B$ which satisfy $p\varphi N = N \varphi$ and such that \eqref{eq:period_main} is compatible with the action of $G_K$ is representable by a quotient $A^\st$ of $A$.  
\end{prop}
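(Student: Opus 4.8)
The plan is to set up a functor $F^{\st}$ on $A$-algebras whose $B$-points are pairs consisting of a $W_B$-linear map $N: D_B \to D_B$ satisfying $p\varphi N = N\varphi$, together with the (empty) data that \eqref{eq:period_main} is $G_K$-equivariant for the action \eqref{eq:hG_action} determined by $N$; and to show this functor is representable by a closed subscheme of $\Spec A$, i.e.\ by a quotient $A^{\st}$ of $A$. First I would observe that the space of $W_B$-linear endomorphisms $N$ of the projective rank-$d$ $W_A$-module $D_A$ is itself representable by an affine $A$-scheme $\Spec A_N$ of finite type (it is the total space of a vector bundle, namely $\Spec \Sym^*_A(\SEnd_{W_A}(D_A)^\vee)$ after choosing things appropriately), and the condition $p\varphi N = N\varphi$ is a finite collection of $A$-linear equations on the coordinates, so it cuts out a closed subscheme $\Spec A'_N \hookrightarrow \Spec A_N$. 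Over $\Spec A'_N$ there is then a universal $N$, hence a universal action of $G_K$ on $D_{A'_N} \otimes_{W_{A'_N}} B^+_{\cris, A'_N}$ via \eqref{eq:hG_action}, and it remains to impose that \eqref{eq:period_main} intertwines this action with the natural one on $\Hom(V, B^+_{\cris})$.

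The heart of the argument is to express ``$\eqref{eq:period_main}$ is $G_K$-equivariant'' as a closed condition on $\Spec A'_N$. For each $\sigma \in G_K$ and each $d \in D_{A'_N}$, the difference
\[
\delta_\sigma(d) := (\text{image of }\sigma\cdot d) - \sigma(\text{image of }d) \in \Hom_{A'_N}(V_{A'_N}, \Bpca) \otimes_{A'_N} A'_N
\]
measured via \eqref{eq:period_main} lies in $\Acac \otimes_{A^\circ} M$ for an appropriate $A^\circ$-module $M$ (built from $\Hom_{A^\circ}(V_{A^\circ}, \Ac\text{-stuff})$), and Lemma \ref{lem:pssdr_2.3.3} provides a smallest submodule $N(\delta_\sigma(d)) \subset M$ whose vanishing records equivariance at $(\sigma, d)$. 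The point — exactly as in Kisin's \cite[Prop.\ 2.4.7]{pssdr} — is that because $\iota$ in \eqref{eq:period_iota} is already $G_{K_\infty}$-equivariant and \eqref{eq:period_main} is $G_{K_\infty}$-equivariant, it suffices to check equivariance modulo the ideal $\beta(\sigma)$ together with a bounded power of the filtration $\Fil^{\bullet}\Bpc$ (using Lemma \ref{lem:beta} and \eqref{eq:beta_location}), and $\beta$ is itself a $1$-cocycle so it suffices to check on topological generators of $G_K$; moreover the condition is automatically closed under the group law, so finitely many $\sigma$ and finitely many $d$ (running over generators of $D_{A'_N}$) suffice. This is where Lemma \ref{lem:pssdr_2.3.2}(3) enters, guaranteeing the relevant quotients of $\Bpcb$ are flat so that the vanishing locus of $\delta_\sigma$ behaves well under base change; one then takes $A^{\st}$ to be the quotient of $A'_N$ by the sum of the ideals $N(\delta_\sigma(d))$ over the finite generating set. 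Finally, composing the closed immersion $\Spec A^{\st} \hookrightarrow \Spec A'_N \hookrightarrow \Spec A_N$ with the structural projection $\Spec A_N \to \Spec A$ — which on the universal locus where $N$ is determined is itself a closed immersion, since for fixed $(V_A, \frM_A)$ the map $N$ is unique when it exists, by the injectivity of \eqref{eq:period_main} from Lemma \ref{lem:period_injective} applied over Artinian $\bQ_p$-algebra points and Jacobson-ness (Lemma \ref{lem:jacobson}(3)) — exhibits $A^{\st}$ as a quotient of $A$.

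The main obstacle I anticipate is the bookkeeping in the reduction of ``$G_K$-equivariance'' to a finite closed condition: one must verify that the image of \eqref{eq:period_main}, after the twist by $\exp(N\otimes\beta(\sigma))$, lands in a module to which Lemma \ref{lem:pssdr_2.3.3} applies, and that checking the cocycle relation on generators of $G_K$ really does imply equivariance for all of $G_K$ (this uses that $\sigma \mapsto \delta_\sigma$ behaves like a cocycle with values in the relevant flat module, so that its vanishing on a generating set propagates). A secondary subtlety is the identification, at the very end, of the projection $\Spec A'_N \to \Spec A$ restricted to the equivariance locus with a closed immersion: here one invokes that two maps $N, N'$ with the equivariance property must agree because their difference would violate injectivity of \eqref{eq:period_main} after a faithfully flat base change to products of finite $\bQ_p$-algebras, so uniqueness holds scheme-theoretically. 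Both of these are handled in the local case in \cite[\S2.4]{pssdr}; the only genuinely new input is to route every ``smallest submodule'' and ``flat cokernel'' assertion through the generalized Lemmas \ref{lem:pssdr_2.3.2} and \ref{lem:pssdr_2.3.3} rather than their local predecessors, which is precisely what those lemmas were stated to allow.
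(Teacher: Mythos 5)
Your skeleton matches the paper's: represent the functor of maps $N$ with $p\varphi N = N\varphi$ by a finitely generated $A$-algebra $A^N$, cut out the $G_K$-equivariance of \eqref{eq:period_main} by applying Lemma \ref{lem:pssdr_2.3.3} to the coordinates of the defects $\delta_\sigma(d)$, and then reduce the remaining work to Kisin's local argument with the generalized lemmas substituted. Two corrections, though. First, the reduction to finitely many $\sigma$ (topological generators) and finitely many $d$, and the claim that the closed condition is imposed ``modulo $\beta(\sigma)$ and a bounded power of the filtration,'' are not needed and not how the locus is cut out: one simply takes the ideal of $A^N$ generated by all the submodules produced by Lemma \ref{lem:pssdr_2.3.3} as $\sigma$ and $d$ vary, and the base-change statement (that $A^N \to B$ factors through $A^{\st}$ iff $\eta_B$ is equivariant) comes from the faithful flatness of $Q = \Hom_{A^N}(V_{A^N}, B^+_{\cris,A^N})$ over $A^N$, i.e.\ from Lemma \ref{lem:pssdr_2.3.1}, not from Lemma \ref{lem:pssdr_2.3.2}(3).

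Second, and more seriously, your justification of the final step is insufficient. That $N$ is unique when it exists shows only that $\Hom_A(A^{\st},B) \to \Hom_A(A,B)$ is injective for every $B$, i.e.\ that $\Spec A^{\st} \to \Spec A$ is a monomorphism (equivalently $A \to A^{\st}$ is a ring epimorphism); ring epimorphisms need not be surjective (localizations are the standard example), and since $A^N$ is a symmetric algebra over $A$, hence not finite, you cannot upgrade the monomorphism to a closed immersion via properness. What is actually needed — and what the paper imports from Kisin, with the generalized inputs — is a direct proof that $A \to A^{\st}$ is surjective: choose $\sigma$ with $\beta(\sigma) \neq 0$, expand the equivariance relation \eqref{eq:hG_action} in powers of $\beta(\sigma)$, and use that $\beta(\sigma) \in \Fil^1\Bpc \setminus \Fil^2\Bpc$ together with the $B$-flatness of $\Bpcb/(\beta(\sigma)B + \Fil^i\Bpcb)$ (Lemma \ref{lem:pssdr_2.3.2}(3)) to isolate the term linear in $\beta(\sigma)$; the injectivity of \eqref{eq:period_main} with flat cokernel (Lemma \ref{lem:period_injective}) then forces the matrix entries of $N$ to lie in the image of $A$, which is the surjectivity. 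So the roles you assigned to Lemma \ref{lem:pssdr_2.3.2}(3) and to $\beta(\sigma)$ belong to this surjectivity step, not to defining the closed locus, and ``uniqueness of $N$'' alone does not replace it.
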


\begin{proof}
We may freely assume that $V_A$ is a free rank $d$ $A$-module. In this case, the construction of $A^\st$ in \cite[Prop.\ 2.4.7]{pssdr} of a finitely generated $A$-algebra $A^\st$ representing the functor of the statement generalizes to this setting, since the ingredients, Lemmas \ref{lem:pssdr_2.3.3} and \ref{lem:pssdr_2.3.1} and the map \eqref{eq:period_main}, generalize. We sketch the argument to demonstrate these dependencies.

Firstly, the functor assigning to $B$ the set of $W_B$-linear maps $N: D_B \ra D_B$ satisfying $p\phz N = N \phz$ is representable by a finitely generated $A$-algebra $A^N$.

Write $\eta_B$ for the map of \eqref{eq:period_main}, and for $d \in D_{A^N}$ and $\sigma \in G_K$ set
\[
\delta_\sigma(d) = \eta_{A^N}(\sigma(d)) - \sigma(\eta_{A^N}(d)),
\]
which are elements of $Q := \Hom_{A^N}(V_{A^N},B^+_{\cris,A^N})$. We wish to show that the vanishing of this map for all $\sigma, d$ is cut out by an ideal of $A$. Choose a $B^+_{\cris,A^N}$-basis for $Q$ and let $x_1, \dotsc, x_r$ be the coordinates of $\delta_\sigma(d)$ with respect to the basis. Applying Lemma \ref{lem:pssdr_2.3.3} with $M = A^N$ and $x = x_i$ for $x_i$ varying over a $B^+_{\cris, A^N}$-basis for $Q$, the span of the resulting ideals of $A^N$ is the kernel of the quotient $A^\st$ of $A^N$. Then, because $Q$ is a faithfully flat $A^N$-module by Lemma \ref{lem:pssdr_2.3.1}(1), $A^N \ra B$ will factor through $A^\st$ if and only if $\eta_B$ is compatible with the action of $G_K$.

With this construction complete, the same proof of \cite[Prop.\ 2.4.7]{pssdr} shows that $A \ra A^\st$ is surjective, replacing appeals to \cite[Lem.\ 2.4.6 and Lem.\ 2.3.2(3)]{pssdr} with Lemma \ref{lem:period_injective} and Lemma \ref{lem:pssdr_2.3.2}(3), respectively. 
\end{proof}

\section{Algebraic Families of Potentially Semi-Stable Galois Representations}
\label{sec:PHT_conditions}

We will maintain the notation for $A, A^\circ$, etc.\ established in \S\ref{subsec:PHT_background_2}, but we drop the assumption of \S\ref{subsec:PHT_families} that the family $V_{A^\circ}$ of representations of $G_K$ has $E$-height $\leq h$. We have studied the period map
\[
D_{A} \otimes_{W_{A}} \Bpca \lra \Hom_{A}(V_{A}, \Bpca)
\]
of a family of $G_K$-represetations with bounded $E$-height and demonstrated that it is injective with flat cokernel, and also $G_K$-equivariant over a Zariski-closed locus. In this section, we will show that these properties allow for the construction of Zariski-closed loci of crystalline and semi-stable Galois representations with bounded Hodge-Tate weights. In addition, loci corresponding to a given Hodge type or potentially semi-stable Galois type will be cut out. We will conclude by stating these results for the universal spaces of Galois representations $\CRep^\square_\Db$ and $\CRep_\Db$, producing algebraic versions of these spaces, and drawing conclusions about their geometry in equi-characteristic 0. 

\subsection{Families of Semi-stable Galois Representations with Bounded Hodge-Tate Weight}

The following theorem shows that a semi-stability condition with bounded Hodge-Tate weights cuts out a closed locus and that the corresponding period maps interpolate along this locus. We have followed the techniques of Kisin \cite{pssdr}; see \cite{HH2013} for another approach.

\begin{thm}[{Generalizing \cite[Thm.\ 2.5.5 and Prop.\ 2.7.2]{pssdr}}]
\label{thm:pssdr_2.5.5}
If $h$ is a non-negative integer, then there exists a quotient $A^{\st,h}$ of $A$ with the following properties. 
\begin{enumerate}
\item If $B$ is a finite $\bQ_p$-algebra, and $\zeta: A \ra B$ a map of $\bQ_p$-algebras, then $\zeta$ factors through $A^{\st, h}$ if an only if $V_B = V_A \otimes_A B$ is semi-stable with Hodge-Tate weights in $[0,h]$.
\item There is a projective $W_{A^{\st, h}}$-module $D_{A^{\st, h}}$ of rank $d$ equipped with a Frobenius semi-linear automorphism $\varphi$ and with a $W_{A^{\st, h}}$-linear automorphism $N$ such that for all $\zeta:A \ra B$ factoring through $A^{\st,h}$, there is a canonical isomorphism
\[
D_B = D_{A^{\st,h}} \otimes_{A^{\st,h}} B \lrisom \Hom_{B[G_K]}(V_B, \Bpst \otimes_{\bQ_p} B)
\]
respecting the action of $\varphi$ and $N$.
\item Suppose that $A = A^{\st,h}$.  Then the map
\begin{equation}
\label{eq:pssdr_2.7.3}
D_A \otimes_{W_A} \Bsta \lra \Hom_A(V_A, \Bsta)
\end{equation}
induced from \eqref{eq:period_st} by setting $B = A$ and tensoring by $\otimes_{\Bpsta} \Bsta$ is an isomorphism compatible with $\varphi, N$, and the action of $G_K$. In particular, 
\begin{equation}
\label{eq:pssdr_2.7.4}
D_A \lrisom \Hom_{A[G_K]}(V_A, \Bpsta).
\end{equation}
\end{enumerate}
\end{thm}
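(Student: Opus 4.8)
The strategy is to follow Kisin's argument from \cite[\S2.5, \S2.7]{pssdr}, keeping careful track of exactly which local inputs have been replaced by their ``family'' counterparts proved above. The key point is that all the finiteness and flatness facts needed have already been generalized: Lemma \ref{lem:pssdr_2.3.1}, Lemma \ref{lem:pssdr_2.3.2}, Lemma \ref{lem:pssdr_2.3.3}, Lemma \ref{lem:pssdr_2.2}, Lemma \ref{lem:period_injective}, and Proposition \ref{prop:pssdr_2.4.7}. So the proof should proceed by first constructing $A^{\st,h}$, then verifying the pointwise characterization (1), then producing the $(\varphi,N)$-module structure (2), and finally, under the hypothesis $A = A^{\st,h}$, upgrading the injective-with-flat-cokernel period map \eqref{eq:period_main}/\eqref{eq:period_st} to an isomorphism after inverting $t$ (i.e.\ after $\otimes_{\Bpsta}\Bsta$), which gives (3).

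\textbf{Construction of $A^{\st,h}$ and part (1).} First I would apply the $E$-height constructions of \S\ref{sec:etale_kisin}: Proposition \ref{prop:pssdr_1.6.4} gives a quotient $A^{\leq h}$ (with notation as in \S\ref{subsec:PHT_background_2}, the corresponding $(A^\circ)^{\leq h}$) over which $V$ acquires bounded $E$-height, so that the families $\frM_{A^{\leq h}}$, $\cM_{A^{\leq h}}$, $D_{A^{\leq h}}$ of \S\ref{subsec:PHT_families} are defined. Then Proposition \ref{prop:pssdr_2.4.7} produces a further quotient $(A^{\leq h})^{\st}$ representing the functor of maps $N$ with $p\varphi N = N\varphi$ making the period map $G_K$-equivariant; set $A^{\st,h} := (A^{\leq h})^{\st}$. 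For part (1), let $B$ be a finite $\bQ_p$-algebra and $\zeta: A \to B$. By Lemma \ref{lem:jacobson} it suffices to check the factorization claim after further base change to finite local $\bQ_p$-algebras, and for such $B$ the image of $A$ lies in some $C \in \Int_B$ by Lemma \ref{lem:jacobson}(6); one then reduces to the statement over $C$, i.e.\ over a finite flat $\bZ_p$-algebra, where it is exactly \cite[Thm.\ 2.5.5]{pssdr}: $V_B$ is semi-stable with Hodge-Tate weights in $[0,h]$ iff $V_B|_{G_{K_\infty}}$ has $E$-height $\leq h$ \emph{and} the resulting $(\varphi,N)$-structure on $D_B$ descends the $G_{K_\infty}$-action to a $G_K$-action, i.e.\ iff $\zeta$ factors through both $A^{\leq h}$ and the $\st$-locus. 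Surjectivity of $A \to A^{\st,h}$ is inherited from Proposition \ref{prop:pssdr_2.4.7}.

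\textbf{Parts (2) and (3).} For (2), over $A^{\st,h}$ the map $N$ and the semilinear $\varphi$ on $D_{A^{\st,h}} = \cM_{A^{\st,h}}/u\cM_{A^{\st,h}}$ are already in hand; projectivity and constant rank $d$ of $D_{A^{\st,h}}$ follow from projectivity of $\cM_A$ over $\cO_A$ (built from $\frM_A$ via Proposition \ref{prop:dim1}(2) and Proposition \ref{prop:main_etale}(4)) together with the isomorphism $\frS_A/u\frS_A \cong W_A$; the claim that $\varphi$ is an \emph{automorphism} after inverting $p$ uses that the cokernel of $\varphi^*\cM_A \to \cM_A$ is killed by $\lambda^h$ and $\lambda$ is a unit in $\cO_A[1/p]$ — here one should be slightly careful, since $D_{A^{\st,h}}$ lives over $W_{A^{\st,h}}$ and the relevant statement is the standard fact that a semi-stable representation's $(\varphi,N)$-module has bijective $\varphi$. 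The canonical isomorphism $D_B \cong \Hom_{B[G_K]}(V_B, \Bpst\otimes_{\bQ_p}B)$ for $\zeta$ factoring through $A^{\st,h}$ comes from \eqref{eq:pssdr_2.7.4} applied pointwise (reduce again via Lemma \ref{lem:jacobson} to finite flat $\bZ_p$-coefficients, cite \cite[Prop.\ 2.7.2]{pssdr}). Finally, for (3), assume $A = A^{\st,h}$. By Lemma \ref{lem:period_injective}, the $G_K$-equivariant map \eqref{eq:period_st} is injective with flat cokernel; tensoring with $\Bsta = \Bpsta[1/t]$ over $\Bpsta$, I would show the cokernel dies. The cleanest route: both sides of \eqref{eq:pssdr_2.7.3} are projective of the same rank $d$ over $\Bsta$ — the source because $D_A$ is projective rank $d$ over $W_A$ and $\Bsta$ is free over $W_A$ after the relevant completions, the target likewise — and the cokernel is a flat, hence projective, $A$-module which vanishes after base change to every finite $\bQ_p$-algebra quotient $B$ (there it is an isomorphism by the pointwise semi-stable comparison \cite[Prop.\ 2.7.2]{pssdr}); by Lemma \ref{lem:jacobson}(3),(4) a finite projective $A$-module vanishing at all such $B$ is zero. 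Compatibility with $\varphi$, $N$, and $G_K$ is built into the construction of \eqref{eq:period_st}. The isomorphism \eqref{eq:pssdr_2.7.4} follows by taking $G_K$-invariants of \eqref{eq:pssdr_2.7.3}, using $\Bsta^{G_K} = \Bpsta^{G_K} = A$ (more precisely, the $N=0$, $\varphi$-fixed, $G_K$-invariant part recovers $D_A$, exactly as in \emph{loc.\ cit.}).

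\textbf{Main obstacle.} The routine-but-delicate part is the descent/vanishing argument for the cokernel in (3): one must make sure that ``flat cokernel that vanishes at all finite $\bQ_p$-points is zero'' is legitimately available — this is where Lemma \ref{lem:jacobson}(1),(3),(4) (Jacobson-ness of $A[1/p]$ and the characterization of modules/quotients by finite $\bQ_p$-algebra points) does the essential work, and it is the one place where the non-local nature of $A$ genuinely requires an argument beyond quoting \cite{pssdr} verbatim. Everything else is a faithful transcription of Kisin's local proof with the generalized lemmas substituted in.
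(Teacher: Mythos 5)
Parts (1) and (2) of your plan are essentially the paper's own argument (pointwise reduction via Lemma \ref{lem:jacobson}, uniqueness of Kisin lattices to identify $D_B$ with $\Hom_{B[G_K]}(V_B,\Bpst\otimes_{\bQ_p}B)$, and the characterization of the $\st$-locus from Proposition \ref{prop:pssdr_2.4.7}). The genuine gap is in your treatment of (3). The cokernel of \eqref{eq:pssdr_2.7.3} is a finitely generated $\Bsta$-module, \emph{not} a finite $A$-module, so the step ``flat, hence projective, $A$-module which vanishes after base change to every finite $\bQ_p$-algebra, hence zero'' is not available: flatness implies projectivity only for finitely presented modules, and Lemma \ref{lem:jacobson}(3),(4) concerns quotients of $A[1/p]$ and \emph{finite} $A[1/p]$-modules, so it says nothing about this cokernel. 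Moreover the underlying principle is false for non-finite flat modules: for $A=\bQ_p[x]$ the flat module $M=\bQ_p(x)$ satisfies $M\otimes_A A/\frmm=0$ for every maximal ideal $\frmm$ yet $M\neq 0$. Trying to salvage the argument using that the cokernel is finite over $\Bsta$ would require every maximal ideal of $\Bsta$ to contract to (or contain the extension of) a maximal ideal of $A$ so that Nakayama applies, and no such control over the huge ring $\Bsta$ has been established. So the ``pointwise isomorphism $+$ flat cokernel $+$ Jacobson'' route, which you yourself flag as the main obstacle, does not go through as written.

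The paper circumvents this entirely: by part (2) and Lemma \ref{lem:period_injective}, \eqref{eq:pssdr_2.7.3} is an injective map of projective $\Bsta$-modules of rank $d$, so it is an isomorphism if and only if the induced map on top exterior powers is, which reduces everything to $d=1$. In dimension one, representations coincide with pseudorepresentations, so by \cite[Prop.\ 3.13]{chen2014} the family $V_{A^\circ}$ is obtained by extension of scalars from the (local) pseudodeformation ring, and \eqref{eq:pssdr_2.7.3} arises by $\otimes_{B_{\st,R}}\Bsta$ from the case $A=R$, where it is exactly Kisin's local result \cite[Prop.\ 2.7.2]{pssdr}. One further slip in your sketch: the $G_K$-invariants of $\Bsta$ (and $\Bpsta$) are $W_A$, not $A$ (Lemma \ref{lem:pssdr_2.7.1}); this invariants computation, not ``$\Bsta^{G_K}=A$,'' is what yields \eqref{eq:pssdr_2.7.4} from \eqref{eq:pssdr_2.7.3}.
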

\begin{proof}
For parts (1) and (2), one may apply the arguments of \cite[\S2.5]{pssdr}, as these are matters of points valued in finite $\bQ_p$-algebras and the constructions of the appropriate quotients of $A$ have been made. We will give a sketch of these arguments for the convenience of the reader. 

If $V_B$ is semi-stable with Hodge-Tate weights in $[0,h]$, then it has $E$-height $\leq h$, and $\zeta$ factors through $\Spec A^{\circ, \leq h}[1/p] = \Spec A^{\leq h}$. The theory of \cite[Cor.\ 1.3.15]{crfc} (summarized in \cite[\S2.5]{pssdr}) gives a finite free $\frS[1/p]$-module $\tilde \frM_B$ associated to $\tilde D_B = \Hom_{B[G_K]}(V_B, \Bpstb)$, equipped with a Frobenius semi-linear map of $E$-height $\leq h$ and satisfying certain additional properties. Such lattices are unique (\cite[Prop.\ 2.1.12]{crfc}), so $\tilde \frM_B$ may be identified with the specialization of the universal such lattice, $\frM_B = \frM_{A} \otimes_{A} B$. This induces a map $N$ on $D_B = \frM_B/u\frM_B$. Checking that this map $N$ is compatible with the action of $G_K$ in the sense of Proposition \ref{prop:pssdr_2.4.7}, it follows that $A^{\leq h} \ra B$ must factor through $A^{\st, \leq h}$.

Conversely, if $A^{\leq h} \ra B$ factors through $A^{\st, h}$, then \eqref{eq:period_st} is injective and compatible with Galois actions by Proposition \ref{prop:pssdr_2.4.7}. Comparing dimensions, it is in fact an isomorphism, making $V_B$ semi-stable. Its Hodge-Tate weights are in $[0,h]$ by \cite[Lem.\ 1.2.2]{crfc}.

Part (2) follows from the identification of $D_B$ with $\tilde D_B$ for each map $A^{\st, h} \ra B$ where $B$ is a finite $\bQ_p$-algebra, implying that $D_B$ is projective. The projectivity of $D_{A^{\st, h}}$ as a $W_{A^{\st, h}}$-module is implied by the projectivity of this module after $\otimes_{W_{A^{\st,h}}} W_B$ for each such $B$. Indeed, it suffices to check projectivity over $A^{\st, h}$, which is equivalent to projectivity after base extension to each $B$ by Lemma \ref{lem:jacobson}(4).

We must fully explain the proof of part (3). By part (2) and Lemma \ref{lem:period_injective}, \eqref{eq:pssdr_2.7.3} is an injective map of projective $\Bsta$-modules of rank $d$.  Therefore it will suffice to show that this map induces an isomorphism on top exterior powers, and we may freely restrict ourselves to the case that $d=1$.  
 
 In the one-dimensional case, $V_{A^\circ}$ arises by extension of scalars $\otimes_{R_\Db} A^\circ$ \cite[Prop.\ 3.13]{chen2014}; this is the case because 1-dimensional representations are identical to 1-dimensional pseudorepresentations.  It is then evident that \eqref{eq:pssdr_2.7.3} arises by $\otimes_{B_{\st,R}} B_{\st,A}$ from the same map where $A$ is replaced by $R$, and that it suffices to prove that \eqref{eq:pssdr_2.7.3} is an isomorphism when $A  = R$. This was done in \cite[Prop.\ 2.7.2]{pssdr}. Then \eqref{eq:pssdr_2.7.4} is an isomorphism by Lemma \ref{lem:pssdr_2.7.1}.
\end{proof}

We need the following lemma in order to find the $G_K$-invariants in $\Bsta$.
\begin{lem}[{Generalizing \cite[Lem.\ 2.7.1]{pssdr}}]
\label{lem:pssdr_2.7.1}
For $i \geq 0$ there is an isomorphism 
\[
W_A \cdot t^i \risom \Hom_{A[G_K]}(A(i), \Bpsta)
\]
induced by multiplication by $p^{-r_i}$ for $r_i$ defined below, where $A(i)$ denotes $A$ with $G_K$ acting via the $i$th power of the $p$-adic cyclotomic character $\chi$.  In particular, if $B_{\mathrm{st}, A} := \Bpsta[1/t]$, then $B^{G_K}_{\mathrm{st}, A} = W_A$.
\end{lem}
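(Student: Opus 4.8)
The plan is to reduce the statement to the known one-dimensional local case in \cite[Lem.\ 2.7.1]{pssdr} and then propagate it to our more general coefficient ring $A$ by a flat (indeed faithfully flat) base change argument, using the ``families of period rings'' lemmas of \S\ref{subsec:PHT_background_2}. First I would fix the normalization: the twist $t^i \in \Fil^i \Bpc$ satisfies $\sigma(t) = \chi(\sigma) t$, so $W \cdot t^i$ is a rank-one $G_K$-stable $W$-submodule of $\Bpc$ isomorphic to $W(i)$ as a $G_K$-module after inverting $p$; the exponent $r_i$ is the one chosen in \emph{loc.\ cit.}\ so that $p^{-r_i} t^i$ actually lands in (and generates) $\Hom_{\bZ_p[G_K]}(\bZ_p(i), \Bpc)$ integrally, i.e.\ so that the period $p^{-r_i}t^i$ pairs to $1$ with the canonical generator. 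I would simply cite this integral normalization rather than recompute it.

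Next I would establish the isomorphism over the base ring $R = R_\Db[1/p]$ (equivalently, reduce to $A^\circ$ finite flat over $\bZ_p$ as in the proof of Theorem \ref{thm:pssdr_2.5.5}(3)). As in the proof just given for part (3) of the theorem, the crucial point is that the one-dimensional twist $A(i)$ arises by extension of scalars from the corresponding object over $R$, since a $1$-dimensional representation is the same as a $1$-dimensional pseudorepresentation \cite[Prop.\ 3.13]{chen2014}. So it suffices to produce the isomorphism $W_R \cdot t^i \risom \Hom_{R[G_K]}(R(i), \Bpsta|_{A=R})$, and then base change $\otimes_R A$. The case $A = R$ follows directly from \cite[Lem.\ 2.7.1]{pssdr} applied over the (local, complete Noetherian) ring $R_\Db$ and then inverting $p$; alternatively one reduces further to $A^\circ$ finite over $\bZ_p$ using Lemma \ref{lem:pssdr_2.3.2}(5) (injectivity of $\Bpca \hookrightarrow \prod_\frq B^+_{\cris,\cR/\frq}$) exactly as in Lemma \ref{lem:period_injective}.

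For the base change step, I would argue: the map $W_A \cdot t^i \to \Hom_{A[G_K]}(A(i), \Bpsta)$ is induced from the corresponding map over $\bZ_p$ (or over $R$) by applying $\otimes_{\bZ_p} A^\circ$, completing $\mDb$-adically, and inverting $p$; by Lemma \ref{lem:pssdr_2.3.1}(1) and the fact that $\Ac$ (hence $\Bpc$, $\Bpst$) is $\bZ_p$-flat, the formation of $\Bpsta$ and of the relevant $\Hom$-modules commutes with these operations, so an isomorphism over $\bZ_p$ gives one over $A$. Concretely: $W \cdot t^i$ is free of rank one over $W$, $\Hom_{\bZ_p[G_K]}(\bZ_p(i), \Bpc)$ is free of rank one over $W$ (this is \cite[Lem.\ 2.7.1]{pssdr}), and applying the exact, flat functor $(-) \otimes_{\bZ_p} A^\circ$ followed by $\mDb A^\circ$-adic completion (Lemma \ref{lem:pssdr_2.3.1}(2,3)) and $[1/p]$ preserves the isomorphism, identifying the target with $\Hom_{A[G_K]}(A(i),\Bpsta)$ via the comparison $\Bpsta \otimes_A B \cong \Bpstb$ and the fact that $N$ kills $t$ and $W_A$ so the $\ell_u$-adjunction is irrelevant here (one only needs $\Bpca$, since $\Bpsta = \Bpca \otimes_{K_0} K_0[\ell_u]$ and $t \in \Bpca$). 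The final statement $B^{G_K}_{\st,A} = W_A$ then follows by taking the colimit over $i$: $B_{\st,A} = \Bpsta[1/t] = \varinjlim_i t^{-i}\Bpsta$, the $G_K$-action commutes with this filtered colimit (Fact \ref{fact:invariant_colimit}, the transition maps being injective), and each graded piece contributes exactly $W_A \cdot t^{-i} \cap (\text{invariants}) = \delta_{i,0} W_A$ by the rank-one computation, so the invariants are $W_A$.

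The main obstacle I anticipate is purely bookkeeping rather than conceptual: making sure that ``the map induced by multiplication by $p^{-r_i}$'' over $A$ really is the base change of the corresponding integral map over $\bZ_p$, so that its being an isomorphism is inherited — this requires being careful that $\Bpsta$ is $\bZ_p$-flat (which follows from $\bZ_p$-flatness of $\Ac$ via Lemma \ref{lem:pssdr_2.3.2}), that the Hom-module commutes with the base change and completion (Lemma \ref{lem:pssdr_2.3.1}(1), since $A(i)$ is a finite $A$-module and $\Bpsta$ is flat over the relevant ring), and that no $p$-torsion obstructions intervene after inverting $p$. There is no genuinely hard estimate or new construction here; all the analytic input is already packaged in \S\ref{subsec:PHT_background_2} and in \cite{pssdr}.
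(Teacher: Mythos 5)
Your overall route is the same as the paper's: invoke Kisin's local computation (applied over the complete local ring $R_\Db$) and then carry the statement to the non-local coefficient ring $A$ by base change and completion, finishing with the colimit over powers of $t$ to get $B_{\st,A}^{G_K}=W_A$. But there is a genuine gap at the crucial step, namely the assertion that ``the formation of \dots the relevant $\Hom$-modules commutes with these operations.'' The module $\Hom_{A[G_K]}(A(i),\Bpsta)$ is the $\chi^i$-isotypic part of $\Bpsta$, i.e.\ an intersection of kernels of the maps $\sigma-\chi^i(\sigma)$ over all $\sigma\in G_K$ inside a module that is not finite over $A$. Lemma \ref{lem:pssdr_2.3.1}(1) only says $N\otimes_{\cR}\widehat M\cong\widehat{N\otimes_\cR M}$ for $N$ \emph{finite} and $M$ flat, and Lemma \ref{lem:pssdr_2.3.1}(3) preserves exactness only for sequences of \emph{flat} modules (the quotient of $\Acac$ by the isotypic part is not known to be flat); neither gives that eigenspaces commute with $\mDb$-adic completion. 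The easy direction is the injection of the base change of the isotypic part into the isotypic part of $\Bpsta$; what has to be proved is surjectivity, i.e.\ that no new $\chi^i$-eigenvectors appear after completing $\Ac\otimes_{\bZ_p}A^\circ$ — and this is precisely the content that makes the lemma a generalization rather than a citation of \cite[Lem.\ 2.7.1]{pssdr}.

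The paper closes this gap with a continuity argument that your proposal is missing: applying Kisin's computation to the local ring $R_\Db$ identifies the $\chi^i$-isotypic part of $A_{\cris,R_\Db}$ as $W_{R_\Db}\cdot t^i/p^{r_i}$, hence that of $A_{\cris,R_\Db}\otimes_{R_\Db}A^\circ$ as $W_{A^\circ}\cdot t^i/p^{r_i}$; then, because the $G_K$-action on $\Acac$ is continuous for the $\mDb$-adic topology, any $\chi^i$-eigenvector in the completion is an $\mDb$-adic limit of finite-level eigenvectors, so the isotypic part of $\Acac$ is contained in the closure of $W_{A^\circ}\cdot t^i/p^{r_i}$ — and that submodule is already closed, so equality holds. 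You would need to supply this (or an equivalent surjectivity argument); with it in place, the remaining steps of your proposal — the normalization of $r_i$, the reduction to the rank-one situation, the passage from $\Bpca$ to $\Bpsta$ (since $t$ lives in $\Bpca$ and $N(\Bpca)=0$), and the filtered colimit $\Bsta=\varinjlim_i t^{-i}\Bpsta$ giving $B_{\st,A}^{G_K}=W_A$ — do match the paper's argument.
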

\begin{proof}
The key part of the proof of the local case in \cite[Lem.\ 2.7.1]{pssdr} is that the $\chi^i$-isotypic part of $\Acac$ is given by $W_{A^\circ} \cdot t^i/p^{r_i}$ where $r_i$ is the greatest non-negative integer such that $t^i/p^{r_i}$ is in $\Ac$. Applying this to $A_{\cris, R_\Db}$, which we may do because $R_\Db$ is local, it follows that the $\chi^i$-isotopic part of $A_{\cris, R_\Db} \otimes_{R_\Db} A^\circ$ is $W_{R_\Db} \cdot t^i/p^{r_i} \otimes_{R_\Db} A^\circ \simeq W_{A^\circ} \cdot t^i/p^{r_i}$. Because the action of $G_K$ on $\Acac$ is continuous (where the topology is the $\mDb$-adic topology), the closure of $W_{A^\circ} \cdot t^i/p^{r_i}$ in $\Acac$ is the $\chi^i$-isotypic part. However, this module is already closed.

With this fact in place, the proof of \emph{loc.\ cit.}~supplies the rest of the argument.
\end{proof}

\subsection{$p$-adic Hodge Type} 
Our remaining goal is to find loci corresponding to more refined $p$-adic Hodge theoretic conditions, namely, a certain Hodge type or being potentially semi-stable of a certain Galois type.  In fact, these conditions will cut out connected components (in equi-characteristic 0). First we will address the Hodge type, following \cite[\S2.6]{pssdr} and \cite[\S A.4]{fmc}. For this, we fix an finite extension field $E$ of $\bQ_p$ and suppose that $A$ admits the structure of an $E$-algebra.

\begin{defn}
\label{defn:p-adic_Hodge_type}
Suppose we are given a finite dimensional $E$-vector space $D_E$ with a filtration of $D_{E,K} := D_E \otimes_{\bQ_p} K$ by $E \otimes_{\bQ_p} K$-submodules such that the associated graded is concentrated in degrees $[0,h]$.  Let $\bv:= \{D_E, \Fil^i D_{E,K}, i = 0, \dotsc, h\}$.
If $B$ is a finite $E$-algebra and $V_B \in \Rep^d_{G_K}(B)$ such that $V_B$ is a de Rham representation, then we say that $V_B$ is of \emph{$p$-adic Hodge type $\bv$} if the Hodge filtration on the associated filtered $(\varphi, N)$-module has the same graded degrees as $\bv$.  That is, $V_B$ has all its Hodge-Tate weights in $[0,h]$ and for $i = 0, \dotsc, h$ there is an isomorphism of $B  \otimes_{\bQ_p} K$-modules
\[
\gr^i \Hom_{B[G_K]}(V_B, \BdR \otimes_{\bQ_p} B) \lrisom \gr^i D_{E,K} \otimes_E B.
\]
\end{defn}

\begin{thm}[{\cite[Cor.\ 2.6.2]{pssdr}}]
\label{thm:pssdr_2.6.2}
With $\bv$ as above, there exists a quotient $A^{\st,\bv}$ of $A^{\st,h}$ corresponding to a union of connected components of $\Spec A^{\st, h}$ with the following property: if $B$ is a finite $E$-algebra and $\zeta: A \ra B$ is a map of $E$-algebras, then $\zeta$ factors through $A^{\st,\bv}$ if and only if $V_B = V_{A} \otimes_A B$ is semi-stable of $p$-adic Hodge type $\bv$. 
\end{thm}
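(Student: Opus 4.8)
Since the theorem is attributed to \cite[Cor.\ 2.6.2]{pssdr}, the plan is to take that argument — which is carried out over a local coefficient ring — and adapt it to our non-local ring $A$, using the Jacobson-type properties recorded in Lemma \ref{lem:jacobson}. Set $A' := A^{\st,h}$, which is Noetherian and Jacobson with residue fields at maximal ideals finite over $\bQ_p$ (Lemma \ref{lem:jacobson}, applied to $A$ and passed to the quotient $A'$). Consequently $\Spec A'$ has finitely many connected components, $A' \cong \prod_j A'_j$ with the $\Spec A'_j$ the components, and a union of connected components is exactly $\Spec$ of a product-factor quotient $A' \rsurj A^{\st,\bv}$; moreover such a quotient is determined by which maps to finite $E$-algebras factor through it. So it suffices to produce, compatibly with finite-$E$-algebra base change, a locally constant discrete invariant on $\Spec A'$ recording the $p$-adic Hodge type, and then take $A^{\st,\bv}$ to be the factor on which it equals the value prescribed by $\bv$.

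First I would put in place the universal Hodge filtration over $A'$. By Theorem \ref{thm:pssdr_2.5.5}(2) there is a projective $W_{A'}$-module $D_{A'}$ of rank $d$ with $\varphi$ and $N$, together with the comparison isomorphism $D_B \cong \Hom_{B[G_K]}(V_B, \Bpst\otimes_{\bQ_p} B)$ for every finite $\bQ_p$-algebra $B$ receiving a map from $A'$. Writing $D_{A',K} := D_{A'} \otimes_{W_{A'}} (W_{A'} \otimes_{K_0} K)$, a finite $A' \otimes_{\bQ_p} K$-module, I would equip it with a decreasing filtration $\Fil^\bullet D_{A',K}$ obtained from the map $\xi\colon D_{A'} \to \cM_{A'}$ of Lemma \ref{lem:pssdr_2.2} exactly as in \cite[\S2.5]{pssdr} and \cite[\S1.2]{crfc}: reduce $\xi$ modulo $E(u)$, identify $\cM_{A'}/E(u)\cM_{A'}$ with a module over $\cO_{A'}/E(u)\cO_{A'}$, and pull back the filtration given by the images of $E(u)^i \varphi^*(\cM_{A'})$. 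All of these operations are functorial in the coefficient ring, so the formation of $(D_{A',K},\Fil^\bullet)$ commutes with base change $A' \to B$, and at a finite $\bQ_p$-algebra point it recovers the Hodge filtration of the filtered $(\varphi,N)$-module attached to $V_B$ by the $\BdR$-comparison; in particular $\gr^i D_{A',K} \otimes_{A'} B \cong \gr^i \Hom_{B[G_K]}(V_B, \BdR \otimes_{\bQ_p} B)$ for each $i$.

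The technical core, and the step I expect to be the main obstacle, is to show that each $\gr^i D_{A',K}$ is a finite projective $A' \otimes_{\bQ_p} K$-module — equivalently that $\Fil^\bullet D_{A',K}$ splits over $A' \otimes_{\bQ_p} K$, projectivity of the graded pieces being enough since an extension of projectives splits. When $A'$ is local this is the content of \cite[\S2.6]{pssdr} (cf.\ \cite[\S A.4]{fmc}). For general $A'$ I would deduce it by Jacobson descent from the local case: since $\coker\xi$ is killed by a power of $\lambda$ after inverting $p$ (Lemma \ref{lem:pssdr_2.2}), the graded pieces are presented as cokernels of maps of finite projective $A'\otimes_{\bQ_p} K$-modules whose formation commutes with arbitrary base change, so $\gr^i D_{A',K}$ is flat over $A'$ once it is flat after $\otimes_{A'} B$ for every finite $E$-algebra quotient $B$ of $A'$, which is the local statement applied to the Artinian factors of $B$. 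Being finite and flat over the Noetherian ring $A' \otimes_{\bQ_p} K$, each $\gr^i D_{A',K}$ is then projective; and since $K/\bQ_p$ is separable, $A' \otimes_{\bQ_p} K = \prod_w (A' \otimes_E L_w)$ for the factor fields $L_w$ of the étale $E$-algebra $E \otimes_{\bQ_p} K$, so on each factor the rank of $\gr^i D_{A',K}$ is a locally constant $\bZ_{\geq 0}$-valued function on $\Spec A'$.

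Finally I would cut out $A^{\st,\bv}$. The datum $\bv = \{D_E, \Fil^i D_{E,K}\}$ of Definition \ref{defn:p-adic_Hodge_type} determines, for each $i$ and each factor $L_w$ of $E \otimes_{\bQ_p} K$, the integer $r_{i,w} := \dim_{L_w} \gr^i D_{E,K}$. Let $Z \subseteq \Spec A'$ be the closed-and-open locus on which, for all $i$ and $w$, the locally constant rank function of $\gr^i D_{A',K}$ over the factor $A' \otimes_E L_w$ equals $r_{i,w}$, and let $A' \rsurj A^{\st,\bv}$ be the product-factor quotient with $\Spec A^{\st,\bv} = Z$; this is a union of connected components of $\Spec A^{\st,h}$, as required. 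For a finite $E$-algebra $B$ and an $E$-algebra map $\zeta\colon A \to B$: by Theorem \ref{thm:pssdr_2.5.5}(1), $\zeta$ factors through $A^{\st,h}$ iff $V_B = V_A \otimes_A B$ is semi-stable with Hodge--Tate weights in $[0,h]$; assuming so, base-change compatibility of the universal filtration gives $\gr^i \Hom_{B[G_K]}(V_B, \BdR\otimes_{\bQ_p} B) \cong \gr^i D_{A',K} \otimes_{A'} B$, a finite projective $B \otimes_{\bQ_p} K$-module, so $\zeta$ factors through $A^{\st,\bv}$ iff for every $i$ this module has the same ranks over each $B \otimes_E L_w$ as $\gr^i D_{E,K} \otimes_E B$; since over the étale $E$-algebra $E \otimes_{\bQ_p} K$ a finite projective module is determined up to isomorphism by those ranks, this holds iff $V_B$ is semi-stable of $p$-adic Hodge type $\bv$. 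Apart from the projectivity input of the third paragraph, the argument is the idempotent bookkeeping just described.
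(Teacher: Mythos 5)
Your overall skeleton matches the paper's: produce a finite module over the base whose specialization at every finite $\bQ_p$-algebra point computes the relevant step of the Hodge filtration of $D_B\otimes_{K_0}K$, invoke Lemma \ref{lem:jacobson}(4) to promote pointwise projectivity to projectivity over the base, and use local constancy of rank to see that the type-$\bv$ locus is open and closed; the idempotent bookkeeping at the end is also as in the paper. The gap is in how you produce that module and why its formation is compatible with specialization. You transport to $D_{A',K}$, via $\xi$, ``the filtration given by the images of $E(u)^i\varphi^*(\cM_{A'})$,'' and then assert both that ``the formation of $(D_{A',K},\Fil^\bullet)$ commutes with base change'' and that the graded pieces are ``presented as cokernels of maps of finite projective $A'\otimes_{\bQ_p}K$-modules whose formation commutes with arbitrary base change.'' Neither assertion is justified, and neither is automatic: the relevant filtration is $\Fil^i\varphi^*(\cM)=(1\otimes\varphi)^{-1}\bigl(E(u)^i\cM\bigr)$, a preimage (kernel-type) construction rather than an image or cokernel, and such constructions do not commute with base change in general; establishing the needed specialization statement is exactly the delicate point, and it is what Kisin's erratum \cite[\S A.4]{fmc} corrects in the original proof of \cite[Cor.\ 2.6.2]{pssdr}. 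The paper deliberately avoids your route: it works with the module $\Fil^i \phz^*(\frM_A)/(E(u)\phz^*(\frM_A) \cap \Fil^i\phz^*(\frM_A))$ and quotes the corrected argument only for the statement that its specialization at each finite $\bQ_p$-algebra point realizes the $i$th step of the Hodge filtration of $D_B \otimes_{K_0} K$; that pointwise statement is all Lemma \ref{lem:jacobson}(4) requires, so no universal base-change compatibility has to be proved.

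Two smaller points. First, your flatness bootstrap conflates flatness over $A'$ with flatness over $A'\otimes_{\bQ_p}K$: the application of Lemma \ref{lem:jacobson}(4) you describe yields projectivity over $A'$, whereas tracking ranks over the factors $A'\otimes_E L_w$ requires projectivity over $A'\otimes_{\bQ_p}K$; this is repairable (apply the same lemma with $A^{\circ}\otimes_{\bZ_p}\cO_K$ in place of $A^{\circ}$), but as written the sentence ``being finite and flat over the Noetherian ring $A'\otimes_{\bQ_p}K$'' does not follow from what precedes it. Second, one cannot literally ``reduce $\xi$ modulo $E(u)$'': $E(u)$ acts invertibly on $D_{A'}$ (it acts through $W_{A'}$ as a unit multiple of $p$), so one must first base change to $\widehat\frS_{0,A}$, use Lemma \ref{lem:pssdr_2.2} to identify the images of $D_{A'}$ and $\varphi^*(\cM_{A'})$ there, and only then reduce; this is standard, but it is precisely the maneuver in which the compatibility issue above is hidden. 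If you replace your universal filtered module by the one the paper uses, or prove the finite-point specialization statement for yours via the corrected argument of \cite[\S A.4]{fmc}, the remainder of your write-up — including the factor-by-factor rank bookkeeping over $E\otimes_{\bQ_p}K$, which is more explicit than the paper's sketch — goes through.
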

The proof from \cite[Cor.\ 2.6.2]{pssdr} does not require any generalization to account for $A^\circ$ being non-local, in light of Lemma \ref{lem:jacobson}. However, we will sketch the proof in order to incorporate the erratum to \emph{loc.\ cit}.\ given in \cite[\S A.4]{fmc}.
\begin{proof}
By applying parts (1) and (2) of Thm.\ \ref{thm:pssdr_2.5.5}, the later parts of the proof of \cite[Cor.\ 2.6.2]{pssdr} explains that the finite $A$-module 
\[
\Fil^i \phz^*(\frM_A)/(E(u)\phz^*(\frM_A) \cap \Fil^i\phz^*(\frM_A))
\]
 realizes the $i$th part of the Hodge filtration of $D_B \otimes_{K_0} K$ when specialized to any finite $\bQ_p$-algebra $B$. Therefore, because these pieces of the filtration are projective $B$-modules, the $A$-module is projective by Lemma \ref{lem:jacobson}(4). Because the rank of a finite projective module is locally constant, $\Spec A^\bv$ is a union of connected components of $\Spec A$. One may then set $A^{\st, \bv} := A^{\st, h} \otimes_A A^\bv$. 
\end{proof}

\subsection{Galois Type} Next we will study families of \emph{potentially} semi-stable $G_K$-representations, following \cite[\S2.7.5]{pssdr}. We stipulate that $B$ is an Artinian local $E$-algebra with residue field $E$.  Let $V_B \in \Rep^d_{G_K}(B)$.  Following \cite{fontaine1994}, set
\[
D^*_\pst(V_B) = \varinjlim_{K'} \Hom_{B[G_{K'}]}(V_B, \Bst \otimes_{\bQ_p} B),
\]
where $K'$ runs over finite field extensions of $K$.

Let $\bar K_0 \subset \bar K$ denote the maximal unramified extension of $K_0$, and let $G_{K_0} \subset G_K$ be the inertia group of $G_K$.  Then $D^*_\pst(V_B)$ is a $B \otimes_{\bQ_p} \bar K_0$-module with a Frobenius semi-linear Frobenius automorphism $\varphi$, a nilpotent endomorphism $N$ such that $p \varphi N = N \varphi$, and a $B \otimes_{\bQ_p} \bar K_0$-linear action of $G_{K_0}$ which has open kernel and commutes with $\varphi$ and $N$.

Following \cite[\S2.7]{pssdr} along the line of reasoning of \cite[Lem.\ 1.2.2(4)]{mffgsm}, we see that $D^*_\pst(V_B)$ is finite and free as a $B \otimes_{\bQ_p} \bar K_0$-module. Since the action of $G_{K_0}$ commutes with the action $\varphi$, the traces of elements of $G_{K_0}$ are contained in $B$, and $D^*_\pst$ descends to a representation of $G_{K_0}$ on a finite free $B$-module $\tilde P_B$.  Because characteristic zero representations of finite groups are rigid, this representation must be an extension of scalars from a representation $P_B$ of $G_{K_0}$ over $E$. 

We have associated to a potentially semi-stable $d$-dimensional representation $V_B$ of $G_K$ over $B$ a representation of the inertia group of $K$ over $E$ which reflects the failure of $V_B$ to be semi-stable.  We will call this the ``Galois type'' of $V_B$, as follows.

Fix an algebraic closure $\bar \bQ_p$ of $\bQ_p$.  
\begin{defn}
Let $T: G_{K_0} \ra \GL_d(\bar \bQ_p)$ be a representation with open kernel.  We say that $V_B$ is \emph{potentially semi-stable of Galois type $T$} provided that $P_B$ defined above is isomorphic to $T$ over $\bar\bQ_p$.
\end{defn}
It is equivalent to say that for any $\gamma \in G_{K_0}$, the trace of $T(\gamma)$ is equal to the trace of $\gamma$ on $D^*_\pst(V_B)$.

Let $\bv$ be a $p$-adic Hodge type as in Definition \ref{defn:p-adic_Hodge_type}; fix a representation
\[
T: G_{K_0} \ra \End_E(D_E) \lrisom \GL_d(E).
\]
\begin{thm}[{Generalizing \cite[Thm.\ 2.7.6 and Cor.\ 2.7.7]{pssdr}}]
\label{thm:pssdr_2.7.6}
\ 
\begin{enumerate}
\item There exists a quotient $A^{T,\bv}$ of $A$ such that for any finite $E$-algebra $B$, a map of $E$-algebras $\zeta: A \ra B$ factors through $A^{T, \bv}$ if and only if $V_B = V_{A} \otimes_{A} B$ is potentially semi-stable of Galois type $T$ and Hodge type $\bv$.
\item There exists a quotient $A_{\mathrm{cr}}^{T,\bv}$ of $A$ such that for any finite $E$-algebra $B$, a map of $E$-algebras $\zeta: A \ra B$ factors through $A_{\mathrm{cr}}^{T,\bv}$ if and only if $V_B = V_{A} \otimes_{A} B$ is potentially crystalline of Galois type $T$ and Hodge type $\bv$.
\end{enumerate}
\end{thm}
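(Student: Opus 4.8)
The plan is to follow Kisin's strategy from \cite[\S2.7.5--2.7.6]{pssdr} essentially verbatim, since by this point in the paper the only obstruction to doing so was the non-locality of $A^\circ$, and that has been addressed: Lemma \ref{lem:jacobson} tells us that quotient rings of $A$ are detected by maps to finite $\bQ_p$-algebras, and the constructions of $A^{\st, h}$ (Theorem \ref{thm:pssdr_2.5.5}) and $A^{\st, \bv}$ (Theorem \ref{thm:pssdr_2.6.2}) are already in hand. The first reduction is that being potentially semi-stable of Galois type $T$ and Hodge type $\bv$ is a condition we can test after base change to a fixed finite extension $K'/K$: since $T$ has open kernel, there is a finite Galois $K'/K$ such that $T$ factors through $\Gal(K'/K)$, and then $V_B$ is potentially semi-stable of type $(T, \bv)$ iff $V_B|_{G_{K'}}$ is semi-stable with the appropriate Hodge data \emph{and} the resulting $D^*_\pst(V_B)$-representation of $G_{K_0}$ (equivalently $\Gal(K'/K)$) is isomorphic to $T$.

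The key steps, in order, would be: (i) Replace $K$ by $K'$ throughout the machinery of \S\ref{sec:etale_kisin}--\S\ref{sec:periods}, i.e.\ form $\frS' = W(k')\lb u'\rb$ for a uniformizer $\pi'$ of $K'$, choose compatible roots, and produce the quotient $A^{\st', h'}$ of $A$ over which $V_A|_{G_{K'}}$ is semi-stable with Hodge--Tate weights in $[0,h']$ for suitable $h'$ depending on $\bv$ and $[K':K]$, together with the projective $W'_{A^{\st',h'}}$-module $D_{A^{\st',h'}}$ of Theorem \ref{thm:pssdr_2.5.5}(2). Since $K'$ is Galois over $K$, $G_K$ acts on everything in sight and in particular $\Gal(K'/K)$ acts semilinearly on $D_{A^{\st',h'}}$ compatibly with $\varphi$ and $N$; this gives, after descent as in the excerpt's discussion preceding the Galois-type definition (using rigidity of characteristic-zero representations of the finite group $\Gal(K'/K)$ and that the traces land in $A^{\st',h'}$), a finite projective $A^{\st',h'}$-module $P_{A^{\st',h'}}$ with a $\Gal(K'/K)$-action, interpolating the $P_B$ of the pointwise theory. (ii) Cut out the Hodge-type locus exactly as in Theorem \ref{thm:pssdr_2.6.2}, obtaining a union of connected components $A^{\st', \bv}$ of $\Spec A^{\st', h'}$. (iii) Cut out the Galois-type locus: on $\Spec A^{\st', \bv}$ the module $P := P_{A^{\st',\bv}}$ is finite projective with a $\Gal(K'/K)$-action, and the condition ``$P \otimes_{A^{\st',\bv}} B \cong T \otimes_E B$ as $\Gal(K'/K)$-representations'' is an open-and-closed condition on $\Spec A^{\st',\bv}$ because, for the finite group $\Gamma := \Gal(K'/K)$ over a field of characteristic $0$, the isomorphism class of a representation on a projective module over a connected $\bQ_p$-algebra is constant (two idempotents in $E[\Gamma]$ giving the same isotypic decomposition after one point give it everywhere, by locally-constancy of the ranks of the $\chi$-isotypic pieces $e_\chi \cdot P$ for $\chi \in \mathrm{Irr}(\Gamma)$). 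So $A^{T, \bv}$ is defined as the quotient of $A^{\st', \bv}$ (equivalently, sub-union-of-components) cut out by equality of all these isotypic ranks with those of $T$. (iv) Verify the pointwise characterization: for a finite $E$-algebra $B$ and $\zeta: A \to B$, $\zeta$ factors through $A^{T,\bv}$ iff $V_B$ is potentially semi-stable of type $(T,\bv)$ — one direction is the construction, the other uses that $V_B$ of type $(T,\bv)$ forces $V_B|_{G_{K'}}$ semi-stable of the right Hodge type (so $\zeta$ factors through $A^{\st',\bv}$) and then the isotypic ranks of $P_B$ match those of $T$ by definition of Galois type. (v) For part (2), repeat with ``crystalline'' in place of ``semi-stable'': one uses the crystalline quotient $A^{\st', h', N=0}$ — equivalently imposes $N = 0$ on $D_{A^{\st',h'}}$, which is a closed condition (vanishing of a section of a finite projective module) and in fact, since $V_B|_{G_{K'}}$ crystalline $\iff$ semi-stable with $N=0$, defines $A^{\mathrm{cr}}_{(\cdot)}$ — and then intersects with the Hodge-type and Galois-type loci exactly as before, giving $A^{\mathrm{cr}}_{T,\bv}$.

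The main obstacle I expect is step (i): carefully checking that the entire edifice of \S\ref{sec:etale_kisin}--\S\ref{sec:periods}, built over $K$ with its choices of $\frS$, $E(u)$, $\pi_n$, $\tupi$, etc., transfers without loss to $K'$, and — more delicately — that the $\Gal(K'/K)$-descent producing $P_{A^{\st',\bv}}$ works \emph{in the family}, not just pointwise. The pointwise descent (the paragraph in the excerpt defining $\tilde P_B$, $P_B$) uses rigidity of representations of a finite group over a field; over a general connected $\bQ_p$-algebra one must instead argue via idempotents of the group algebra $E[\Gamma]$ acting on the projective module $D^*_\pst$ in families, and confirm that $D^*_\pst$ descends to an $E$-form because $\Gamma$ is finite and we are in characteristic zero — this is where one genuinely needs that $\Spec A^{\st', h'}$-valued points are controlled by their finite-$\bQ_p$-algebra points (Lemma \ref{lem:jacobson}(3),(4)) to conclude that the abstract descended module has the claimed finite-projectivity and interpolation properties. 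Once that is settled, the Hodge-type and Galois-type loci are both ``locally constant rank of a projective module'' conditions and hence cut out unions of connected components by the same mechanism as Theorem \ref{thm:pssdr_2.6.2}; there is nothing further beyond bookkeeping. I would therefore write the proof as: ``Replacing $K$ by a finite Galois extension $K'$ through which $T$ factors and repeating the constructions of \S\S\ref{sec:etale_kisin}--\ref{sec:PHT_conditions}, $\Gal(K'/K)$ acts on the interpolated filtered $(\varphi,N)$-module; the argument preceding Definition \ref{defn:p-adic_Hodge_type}''s analogue, together with Lemma \ref{lem:jacobson}, produces a finite projective family $P$ of $\Gal(K'/K)$-representations, and the condition that $P$ have the same isotypic multiplicities as $T$ defines the desired quotient by local constancy of ranks; the crystalline case follows by additionally imposing $N=0$,'' filling in the routine verifications by reference to \cite[Thm.\ 2.7.6, Cor.\ 2.7.7]{pssdr} and \cite[\S A.4]{fmc}.
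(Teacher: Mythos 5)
Your proposal is correct and follows the same skeleton as the paper's proof: fix a finite Galois extension $L/K$ with $I_L \subseteq \ker T$, apply the already-established machinery (Theorems \ref{thm:pssdr_2.5.5} and \ref{thm:pssdr_2.6.2}) to $V_A\vert_{G_L}$ to get the quotient over which the restriction is semi-stable of Hodge type $\bv$, cut out the Galois type by a locally constant invariant of the finite inertia action, and obtain the crystalline version by imposing $N=0$ on $D_{A^{\st,h}}$. The one place you diverge is the step you yourself flag as the main obstacle: you propose to descend $D^*_\pst$ in the family to an $E$-form $P_{A^{\st',\bv}}$ and then compare isotypic ranks via idempotents of $E[\Gal(K'/K)]$. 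The paper avoids this descent entirely: by Theorem \ref{thm:pssdr_2.5.5}(3) the inertia group $I_{L/K}$ acts $L_0$-linearly on the family $D_{A^{\pst,\bv}}$ commuting with $\varphi$, so its trace function already takes values in $A^{\pst,\bv}$; since the group is finite and we are in characteristic zero, this trace function is locally constant, and equality with $\Tr T$ cuts out a union of connected components. So the family-level descent to an $E$-form is unnecessary --- if you prefer the isotypic-rank formulation, you can apply the central idempotents of $E[I_{L/K}]$ directly to $D_{A^{\pst,\bv}}$ (no $E$-form needed) and use local constancy of the ranks of the resulting projective pieces, which in characteristic zero is equivalent to the trace condition. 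With that simplification your argument coincides with the paper's, and the remaining steps (the reduction to $L$, the Hodge-type components, and $N=0$ for the crystalline case) are exactly as you describe.
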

These constructions may be repeated verbatim from \cite[Thm.\ 2.7.6 and Cor.\ 2.7.7]{pssdr}. We will give a sketch.
\begin{proof}
Let $L/K$ be a finite Galois extension such that $I_L \subseteq \ker T$. Theorem \ref{thm:pssdr_2.6.2} gives the existence of a quotient $A^{\pst, \bv}$ of $A$ such that $\zeta$ factors through $A^{\pst, \bv}$ if and only if $V_B \vert_{G_L}$ is semi-stable with Hodge type $\bv$. One then applies Theorem \ref{thm:pssdr_2.5.5}(3) and studies the action of the inertia subgroup $I_{L/K}$, which is $L_0$-linear and commutes with $\phz$, and therefore has trace function in $A^{\pst,\bv}$. As this inertia group is finite, its trace function is locally constant on $\Spec A^{\pst,\bv}$. The condition that its trace is $T$ therefore cuts out a union of connected components of $A^{\pst, \bv}$, as desired.

For the second result, first produce $A^{T,\bv}$ as above, and then take the quotient corresponding to the equation $N=0$, where $N$ is the endomorphism of $D_{A^{\st,h}}$ defined in Theorem \ref{thm:pssdr_2.5.5}(3).
\end{proof}

\subsection{Universal Families of Representations, and Algebraization}
\label{subsec:universal_C}

We will summarize what we have proved by producing universal spaces of potentially semi-stable Galois representations with bounded Hodge-Tate weights. These can then be algebraized using Theorem A, under some assumptions. In particular, let $C$ be one of the following conditions on representations of $G_K$ over a finite $\bQ_p$-algebra.
\begin{enumerate}
\item Crystalline with Hodge-Tate weights in the range $[a,b]$.
\item Semi-stable with Hodge-Tate weights in the range $[a,b]$.
\item Any of the above two conditions, with fixed Hodge type $\bv$.
\item Any of the above three conditions after restriction to $G_L$, for some finite field extension $L/K$.
\item Condition (4) with $L/K$ a Galois extension, and in addition, a particular Galois type $T$.
\end{enumerate}

We will use the following notation for the universal spaces, in analogy with \S\ref{subsec:PHT_background_2}.
\begin{center}
\begin{tabular}{| l | c | c | c | c | }
\hline
Formerly: & $\Rep_\Db^{(\square)}$ & $\CRep_\Db^{(\square)}$ & $\Rep_\Db^{(\square)}[1/p]$ & $\CRep_\Db^{\square}[1/p]$ \\
\hline
Henceforth: & $\Rep^{(\square),\circ}_\Db$ & $\CRep^{(\square),\circ}_\Db$  & $\Rep_\Db^{(\square)}$ & $\CRep_\Db^{\square}$  \\ 
\hline
\end{tabular}
\end{center}

Consider the case where $A^\circ$ is the coordinate ring of $\CRep^{\circ, \square}_\Db$, the universal formal moduli scheme of framed representations of $G_K$ with residual pseudorepresentation $\Db$. This admits an action of $\widehat{\GL}_d$, the $\mDb$-adic completion of $\GL_d \otimes_\bZ R_\Db$. The results from the previous sections produce a quotient of $A^C$ of $A = A^\circ[1/p]$ of representations satisfying condition $C$. The ideal $I^C \subset A$ such that $A/I^C = A^C$ is stable under the action of $\widehat{\GL}_d$; this is the case because the $B$-valued points in the $C$-locus of $\Spec A$, where $B$ is a finite $\bQ_p$-algebra, are obviously preserved by the action of $\widehat{\GL}_d$, and these points characterize the $C$-locus by Lemma \ref{lem:jacobson}(3). 

The kernel $I^{\circ, C} \subset A^\circ$ of the natural map $A^\circ \ra A^C$ cuts out a quotient $A^{\circ, C} := A^\circ/I^{\circ, C}$ such that $A^{\circ,C}[1/p] \risom A^C$ and therefore $A^{\circ,C}$ has the same property on $B$-points. Moreover, $I^{\circ, C}$ is $\widehat{\GL}_d$-stable. We summarize our discussion in this 
\begin{thm}
\label{thm:C_univ}
Given any of the conditions $C$ above, there is a closed substack $\CRep_\Db^{\circ,C}$ of $\CRep^\circ_\Db$, formally of finite type over $\Spf R_\Db$, such that for any finite $\bQ_p$-algebra $B$ and representation $V_B$ of $G_K$ with residual pseudorepresentation $\Db$, there exists a model $V_{B'}$ for $V_B$, where $B' \in \Int_B$, such that the corresponding map $\zeta: \Spf B' \ra \CRep^\circ_\Db$ factors through $\CRep_\Db^{\circ,C}$ if and only if $V_B$ has property $C$.
\end{thm}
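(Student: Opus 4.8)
The plan is to obtain Theorem \ref{thm:C_univ} as a synthesis of the in-families constructions of \S\S\ref{sec:etale_kisin}--\ref{sec:PHT_conditions} together with the discussion immediately preceding the statement. I would begin by fixing $A^\circ$ to be the coordinate ring of $\CRep^{\circ,\square}_\Db$; by Theorem \ref{thm:PsR_background} this is a formally finitely generated $R_\Db$-algebra carrying the universal framed representation $V_{A^\circ}$ with residual pseudorepresentation $\Db$, and $\CRep^\circ_\Db = [\Spf A^\circ/\widehat{\GL}_d]$ with $\widehat{\GL}_d$ acting by conjugation. For a condition $C$ referring to a $p$-adic Hodge type $\bv$ or a Galois type $T$ defined over a finite extension $E/\bQ_p$ I would first extend scalars along $R_\Db \to R_\Db\hat\otimes_{\bZ_p}\cO_E$, and a condition imposing Hodge--Tate weights in an arbitrary range $[a,b]$ I would normalise to $[0,h]$, $h = b-a$, by a Tate twist (an isomorphism of moduli stacks, shifting $\Db$). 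It then remains to treat the normalised conditions, to which I apply Theorem \ref{thm:pssdr_2.5.5}, Theorem \ref{thm:pssdr_2.6.2}, or Theorem \ref{thm:pssdr_2.7.6} to the family $V_A := V_{A^\circ}[1/p]$, obtaining the quotient $A \twoheadrightarrow A^C$ characterising $C$ on points valued in finite $\bQ_p$- (resp.\ $E$-) algebras. As explained in the paragraph before the theorem, the kernel $I^C \subseteq A$ and its contraction $I^{\circ,C} := I^C \cap A^\circ$ are stable under $\widehat{\GL}_d$ — this rests on the fact that condition $C$ only depends on the isomorphism class of the representation, so conjugation preserves the $C$-locus on finite-$\bQ_p$-algebra points, which by Lemma \ref{lem:jacobson}(3) determines the ideal — and $A^{\circ,C} := A^\circ/I^{\circ,C}$ satisfies $A^{\circ,C}[1/p] \cong A^C$, hence $I^C = I^{\circ,C}A$.

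The second step is to pass to the quotient stack. Since $\Spf A^{\circ,C}\hookrightarrow\CRep^{\circ,\square}_\Db$ is a $\widehat{\GL}_d$-equivariant closed immersion of formal schemes, it descends to a closed immersion $\CRep^{\circ,C}_\Db := [\Spf A^{\circ,C}/\widehat{\GL}_d] \hookrightarrow \CRep^\circ_\Db$, and this substack is formally of finite type over $\Spf R_\Db$ because $A^{\circ,C}$ is a quotient of the formally finitely generated algebra $A^\circ$. (When the coefficients were enlarged to $\cO_E$ above, one obtains in this way a closed substack of $\CRep^\circ_\Db\hat\otimes_{\bZ_p}\cO_E$.)

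Finally I would verify the moduli description. Let $B$ be a finite $\bQ_p$-algebra — I may assume it local, handling the factors of a general one separately — and let $V_B$ be a representation of $G_K$ over $B$ with residual pseudorepresentation $\Db$. Choosing a $B$-basis of its underlying module (free, as $B$ is local) realises $V_B$ as a framed representation; by Lemma \ref{lem:jacobson}(6) its matrix coefficients lie in some $B' \in \Int_B$, which I enlarge inside $B^0$ so that in addition $B'[1/p] = B$, and which is admissible since it is finite over $\cO_E$. This yields a point $\zeta : \Spf B' \to \CRep^{\circ,\square}_\Db$, i.e.\ an $R_\Db$-algebra map $A^\circ \to B'$, with $V_{A^\circ}\otimes_{A^\circ}B'$ a framed model $V_{B'}$ for $V_B$. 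Now $\zeta$ factors through $\Spf A^{\circ,C}$ exactly when $I^{\circ,C}$ maps to zero in $B'$, equivalently (since $B' \hookrightarrow B'[1/p] = B$) when $I^C = I^{\circ,C}A$ maps to zero in $B$, equivalently when $A \to B$ factors through $A^C$, equivalently — by the defining property of $A^C$ — when $V_B$ satisfies $C$; and since $\zeta$ is a section of the presenting $\widehat{\GL}_d$-torsor over $\Spf B'$ and $\Spf A^{\circ,C}$ is $\widehat{\GL}_d$-stable, the induced map $\Spf B' \to \CRep^\circ_\Db$ factors through $\CRep^{\circ,C}_\Db$ if and only if $\zeta$ factors through $\Spf A^{\circ,C}$. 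Chaining these equivalences gives the theorem. I do not expect a serious obstacle here: the substantive $p$-adic Hodge theory in families and the $\widehat{\GL}_d$-equivariance are already in place, and the only points requiring care are the descent of the equivariant closed immersion to the quotient stack and the bookkeeping around the model $V_{B'}$ over $B' \in \Int_B$ (existence via Lemma \ref{lem:jacobson}(6) and the algebraisation Theorem \ref{thm:PsR_background}, and the arrangement $B'[1/p]=B$).
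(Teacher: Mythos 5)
Your proposal is correct and follows essentially the same route as the paper: take $A^\circ$ to be the coordinate ring of $\CRep^{\circ,\square}_\Db$, apply Theorems \ref{thm:pssdr_2.5.5}, \ref{thm:pssdr_2.6.2}, \ref{thm:pssdr_2.7.6} to $A = A^\circ[1/p]$ to get $A^C$, contract to the $\widehat{\GL}_d$-stable quotient $A^{\circ,C}$ (stability via the finite-$\bQ_p$-algebra-point characterization, Lemma \ref{lem:jacobson}(3)), set $\CRep^{\circ,C}_\Db = [\Spf A^{\circ,C}/\widehat{\GL}_d]$, and verify the moduli description for local finite $\bQ_p$-algebras $B$ by choosing a basis and an integral model over some $B' \in \Int_B$ via Lemma \ref{lem:jacobson}. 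Your additional bookkeeping (base change to $\cO_E$ for Hodge/Galois type, Tate-twist normalisation of $[a,b]$ to $[0,h]$, and the explicit descent of the equivariant closed immersion to the quotient stack) only fills in details the paper leaves implicit.
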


The statement of the theorem is an example of the way that we think of the ``generic fiber'' over $\bQ_p$ of a $\Spf \bZ_p$-formal stack while considering the formal stack only as a limit of algebraic stacks over $\Spec R_\Db/\frmm_\Db^i$. The theorem expresses that the locus of such points has an integral model and is $\GL_d$-stable, as $\CRep_\Db$ is a quotient stack of $\CRep^{\circ, \square}_\Db = \Spf A^\circ$. 

\begin{proof}
Without loss of generality we may assume that $B$ is local. Choose a basis for $V_B$, so that we have a corresponding map $A \ra B$. If $V_B$ satisfies $C$, it factors through $A^C$. By Lemma \ref{lem:jacobson}(4), there exists $B' \in \Int_B$ and a natural map $A^{\circ,C} \ra B'$ such that the resulting representation $V_{B'}$ satisfies $V_{B'} \otimes_{B'} B \cong V_B$, as desired.

Conversely, if there exists $B' \in \Int_B$ and a $G_K$-representation $V_{B'}$ such that the corresponding map $\Spf B' \ra \CRep^\circ_\Db$ factors through $\CRep^{\circ,C}_\Db$, then it is clear that $V_{B'} \otimes_{B'} B$ satisfies condition $C$.

Consequently, $\CRep^{\circ, C}_\Db$ may be taken to be the quotient stack $[\Spf A^{\circ, C}/ \widehat{\GL}_d]$.
\end{proof}

Recall from Theorem \ref{thm:PsR_background} that there exists a algebraic model of $\Rep^\circ_\Db$ of $\CRep^\circ_\Db$ given by representations of the associated universal Cayley-Hamilton algebra. When formal GAGA holds for $\psi: \Rep^\circ_\Db \ra \Spec R_\Db$ (see \S\ref{subsec:FGAMS}), we can algebraize the universal family of potentially semi-stable representations. 
\begin{cor}
\label{cor:C_univ_alg}
Assume that either 
\begin{enumerate}
\item $\Db$ is multiplicity-free, or assume
\item the algebraization hypothesis (FGAMS) of \S\ref{subsec:FGAMS} is true.
\end{enumerate}
Then there is a closed substack $\Rep_\Db^{\circ,C}$ of $\Rep^\circ_\Db$ such that for any finite $\bQ_p$-algebra $B$, and representation $V_B$ of $G_K$ with residual pseudorepresentation $\Db$, the corresponding map $\zeta: \Spec B \ra \Rep^\circ_\Db$ factors through $\Rep_\Db^{\circ,C}$ if and only if $V_B$ has property $C$.
\end{cor}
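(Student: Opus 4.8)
The plan is to deduce Corollary \ref{cor:C_univ_alg} from Theorem \ref{thm:C_univ} by applying formal GAGA for $\psi$, exactly in the spirit of Lemma \ref{lem:AG_background}. First I would recall the setup: by Theorem \ref{thm:C_univ} we have a closed formal substack $\CRep_\Db^{\circ, C} \rinj \CRep_\Db^\circ$, realized as $[\Spf A^{\circ, C}/\widehat{\GL}_d]$ where $A^{\circ, C} = A^\circ/I^{\circ,C}$ and $I^{\circ, C}$ is $\widehat{\GL}_d$-stable. On the other hand, Theorem \ref{thm:PsR_background} gives the finite-type algebraic model $\psi\colon \Rep^\circ_\Db \ra \Spec R_\Db$ whose $\frmm_\Db$-adic completion is $\hat\psi\colon \CRep^\circ_\Db \ra \Spf R_\Db$. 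Under hypothesis (1) or (2), Theorem \ref{thm:gaga} tells us that formal GAGA holds for $\psi$. Since $R_\Db$ is a complete Noetherian local $\bZ_p$-algebra (Proposition \ref{prop:R_Db_noeth}) and $\Rep^\circ_\Db$ is of finite type over $\Spec R_\Db$, Lemma \ref{lem:AG_background}(1c) applies: closed immersions into $\Rep^\circ_\Db$ correspond bijectively to closed immersions into its completion $\CRep^\circ_\Db$. Hence the closed formal substack $\CRep_\Db^{\circ, C} \rinj \CRep^\circ_\Db$ is the completion of a unique closed substack $\Rep_\Db^{\circ, C} \rinj \Rep^\circ_\Db$.

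The next step is to verify that this algebraized $\Rep_\Db^{\circ, C}$ has the asserted moduli-theoretic characterization on $B$-points for finite $\bQ_p$-algebras $B$. The point is that a map $\zeta\colon \Spec B \ra \Rep^\circ_\Db$ with $B$ a finite $\bQ_p$-algebra factors through $\Rep_\Db^{\circ, C}$ if and only if the induced map on formal spectra (or rather, the corresponding map from a suitable integral model, cf.\ Lemma \ref{lem:jacobson}(6) and $\Int_B$) factors through $\CRep^{\circ, C}_\Db$; this is because $\Rep_\Db^{\circ, C}$ and its completion have the same underlying closed subscheme away from $p$, and a $B$-point with $B$ finite over $\bQ_p$ is detected on the generic fiber. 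More precisely, I would argue as in the proof of Theorem \ref{thm:C_univ}: without loss of generality $B$ is local with residue field a finite extension $E/\bQ_p$; choosing a basis for $V_B$ gives a map $A \ra B$ (where $A = A^\circ[1/p]$), and $V_B$ has property $C$ iff this map factors through $A^C = A^{\circ,C}[1/p]$. By Lemma \ref{lem:jacobson}(4,6) this is equivalent to the existence of $B' \in \Int_B$ through which the map factors at the integral level, i.e.\ a map $A^{\circ, C} \ra B'$; and such data is precisely a point of $\CRep^{\circ,C}_\Db$, hence of $\Rep^{\circ,C}_\Db$ since the two agree on points valued in finite $\bQ_p$-algebras by the completion relationship together with Lemma \ref{lem:jacobson}(3). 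The quotient-stack description $\Rep^{\circ,C}_\Db = [\Spec A^{\circ, C}_{\alg}/\GL_d]$ for an appropriate finite-type $R_\Db$-algebra $A^{\circ,C}_{\alg}$ follows because the closed immersion is $\GL_d$-equivariant: the ideal cutting out $\CRep^{\circ,C}_\Db$ inside $A^\circ$ is $\widehat{\GL}_d$-stable, so its de-completion inside the finite-type model is $\GL_d$-stable by the faithfulness of the completion functor.

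The main obstacle, and the only substantive input beyond bookkeeping, is the availability of formal GAGA for $\psi$ — this is exactly what hypotheses (1) and (2) are designed to supply, via Theorem \ref{thm:gaga}: case (1) uses that $\psi$ is a good moduli space (Theorem \ref{thm:PsR_background}(4)) together with \cite{GZB2015}, and case (2) uses that $\psi$ is an adequate moduli space followed by a finite morphism $\nu$ (Theorem \ref{thm:PsR_background}(3)), so (FGAMS) plus the trivial formal GAGA along the finite morphism $\nu$ gives formal GAGA for $\psi$. The one point requiring a little care is that Lemma \ref{lem:AG_background} is stated for a complete Noetherian local base $\cR$ and an algebraic stack of finite type over $\Spec\cR$; here $\cR = R_\Db$ and $X = \Rep^\circ_\Db$, so the hypotheses are met directly, and the equivariance is automatic because the $\GL_d$-action is part of the data of the quotient-stack presentation and the completion functor on coherent sheaves (equivalently on closed subschemes) is fully faithful. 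Beyond this, the proof is a formal consequence of the already-established Theorems \ref{thm:C_univ}, \ref{thm:PsR_background}, \ref{thm:gaga}, and Lemma \ref{lem:AG_background}, so I would keep the write-up short, simply assembling these pieces.

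\begin{proof}
By Theorem \ref{thm:gaga}, either of the hypotheses (1) or (2) ensures that formal GAGA holds for $\psi\colon \Rep^\circ_\Db \ra \Spec R_\Db$. Recall from Proposition \ref{prop:R_Db_noeth} that $R_\Db$ is a complete Noetherian local $\bZ_p$-algebra, and from Theorem \ref{thm:PsR_background} that $\Rep^\circ_\Db$ is an algebraic stack of finite type over $\Spec R_\Db$ whose $\frmm_\Db$-adic completion is $\CRep^\circ_\Db$. Theorem \ref{thm:C_univ} provides a closed formal substack $\CRep^{\circ, C}_\Db \rinj \CRep^\circ_\Db$, realized as the quotient stack $[\Spf A^{\circ, C}/\widehat{\GL}_d]$ with $A^{\circ,C}$ a $\widehat{\GL}_d$-equivariant quotient of $A^\circ$.

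Applying Lemma \ref{lem:AG_background}(1c) with $\cR = R_\Db$ and $X = \Rep^\circ_\Db$, the closed immersion $\CRep^{\circ, C}_\Db \rinj \CRep^\circ_\Db$ is the $\frmm_\Db$-adic completion of a unique closed immersion $\Rep^{\circ, C}_\Db \rinj \Rep^\circ_\Db$. Since the completion functor on coherent sheaves (equivalently, on closed subschemes of a presentation) is fully faithful, the $\GL_d$-equivariant structure on $\CRep^{\circ, C}_\Db$ de-completes, so $\Rep^{\circ, C}_\Db$ is a closed substack of $\Rep^\circ_\Db$ of finite type over $\Spec R_\Db$, of the form $[\Spec A^{\circ, C}_{\alg}/\GL_d]$ where $A^{\circ, C}_{\alg}$ is the corresponding $\GL_d$-equivariant quotient of the finite type $R_\Db$-algebra whose $\frmm_\Db$-adic completion is $A^\circ$.

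It remains to check the characterization on $B$-points for $B$ a finite $\bQ_p$-algebra. We may assume $B$ is local with residue field a finite extension $E/\bQ_p$. Choosing a basis for $V_B$ yields a map $A \ra B$, where $A = A^\circ[1/p]$. By the constructions of \S\ref{sec:PHT_conditions} and \S\ref{subsec:universal_C}, $V_B$ has property $C$ if and only if this map factors through $A^C := A^{\circ, C}[1/p]$. By Lemma \ref{lem:jacobson}(4,6), this is equivalent to the existence of $B' \in \Int_B$ and a factorization $A^{\circ, C} \ra B'$ with $V_{B'} \otimes_{B'} B \cong V_B$; such data is precisely the datum of a $B'$-point of $\CRep^{\circ, C}_\Db$. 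By Lemma \ref{lem:jacobson}(3) and the fact that $\CRep^{\circ, C}_\Db$ is the completion of $\Rep^{\circ, C}_\Db$, a map $\zeta\colon \Spec B \ra \Rep^\circ_\Db$ factors through $\Rep^{\circ, C}_\Db$ if and only if the corresponding integral data factors through $\CRep^{\circ, C}_\Db$, i.e.\ if and only if $V_B$ has property $C$. This completes the proof.
\end{proof}
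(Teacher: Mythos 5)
Your proposal is correct and follows essentially the same route as the paper: invoke Theorem \ref{thm:gaga} (under hypothesis (1) or (2)) to get formal GAGA for $\psi$, then apply Lemma \ref{lem:AG_background}(1c) to algebraize the closed immersion $\CRep^{\circ,C}_\Db \rinj \CRep^\circ_\Db$ from Theorem \ref{thm:C_univ}. The paper's proof is just these two sentences; your additional verification of the $B$-point characterization (via Lemma \ref{lem:jacobson}) and the equivariance remark are details the paper leaves implicit, and they are consistent with its constructions.
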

\begin{proof}
In either case, formal GAGA holds for $\psi: \Rep_\Db \ra R_\Db$ by Theorem \ref{thm:gaga}. We apply Lemma \ref{lem:AG_background}(1c) to find a natural corresponding closed immersion $\Rep^{\circ,C}_\Db \rinj \Rep^\circ_\Db$ to the closed immersion $\CRep^{\circ,C}_\Db \rinj \CRep^\circ_\Db$. 
\end{proof}

\begin{rem}
\label{rem:GAGA}
In the corollary above, we have invoked formal GAGA for $\psi$ produce an $R_\Db$-algebraic universal family of potentially semi-stable representations $\Rep^{\circ,C}_\Db$ after first producing a formal version. However, if one freely invokes formal GAGA from the start, it is possible to carry out the construction of algebraic universal families of potentially semi-stable Galois representations in Theorem \ref{thm:C_univ} directly. That is, using formal GAGA for $\psi$ freely, it is possible to carry out all of the work of \S\ref{subsec:universality_uM} and \S\ref{sec:periods} with the $\mDb$-adically separated finitely generated $R_\Db$-subalgebra $A^\circ_\alg \subset A^\circ$ of Corollary \ref{cor:G_coefs} in place of $A^\circ$. For example, even the Cauchy sequence used to construct the map $\xi$ of Lemma \ref{lem:pssdr_2.2} can be shown to be have algebraic coefficients, i.e.\ defining a map $D_{A_\alg} \otimes_{W_{A_\alg}} \cO_{A_\alg} \ra \cM_{A_\alg}$, where $\cO_{A_\alg} = \cO_{R} \otimes_{R} A_\alg$.

In this sense, once we know formal GAGA, the construction of $\Rep^{\circ,C}_\Db$ is not merely algebro-geometric, but is natural in that all of the semi-linear algebraic data and period maps exist algebraically relative to $R_\Db$. However, we have constructed potentially semi-stable loci in the formal setting first, so that Theorem \ref{thm:C_univ} is not conditional on assumption (FGAMS). 
\end{rem}

Here are some geometric properties of the generic fiber over $\bZ_p$ of these algebraic stacks of representations, deduced from established ring-theoretic properties of equi-characteristic zero deformation rings of Galois representations. 
\begin{prop}
\label{prop:univ_C}
For any of the properties $C$ as above, $\Rep^C_\Db$, $\Rep^{\square, C}_\Db$, and $\CRep^{\square, C}_\Db$
are reduced and locally complete intersection.  They are also formally smooth over $\bQ_p$ on a dense, open substack. 
\begin{enumerate}
\item When $C$ is a potentially crystalline condition, each of these spaces is formally smooth over $\bQ_p$.
\item When $C$ has a fixed $p$-adic Hodge type $\bv$, $\Rep^{\square, C}_\Db$ and $\CRep^{\square, C}_\Db$ are equi-dimensional of dimension $d^2 + \dim_E \ad D_{E,K}/\Fil^0 \ad D_{E,K}$.
\end{enumerate}
\end{prop}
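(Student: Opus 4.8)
The strategy is to reduce all assertions to known ring-theoretic properties of the equi-characteristic zero potentially semi-stable (resp.\ potentially crystalline) deformation rings $R^{\square, C}_{\bar\rho}$ attached to a Galois representation $\bar\rho$, as established in \cite{kisin2008}, \cite{pssdr}, and especially \cite{bellovin2014}. The key point is that a closed point $x$ of $\Rep^{\square, C}_\Db$ (or of $\CRep^{\square, C}_\Db$, or of $\Rep^C_\Db$) corresponds to a $G_K$-representation $V_x$ over a finite extension $E/\bQ_p$, and that the complete local ring of the relevant stack at $x$ is, up to the appropriate framing/adjoint factors, the completed tensor of $\cO_E$ with a potentially semi-stable framed deformation ring of $V_x$ — this is because, by Lemma \ref{lem:jacobson}, the $C$-locus is characterized on finite $\bQ_p$-algebra points, and because $\Rep^\square_\Db[1/p]$ is Jacobson (Lemma \ref{lem:jacobson}(1)) so that local rings at closed points control the geometry. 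Concretely: first I would observe that $\CRep^{\square, C}_\Db = \Spf A^{\circ, C}$ is cut out inside the formal framed representation space exactly by the constructions of \S\S\ref{sec:etale_kisin}--\ref{sec:PHT_conditions}, so that $\widehat{(\CRep^{\square, C}_\Db)}_x \cong R^{\square, C}_{V_x} \,\hat\otimes_{\cO_E} \cO_E$, and second, invoke \cite{bellovin2014} (building on \cite{pssdr}, \cite{kisin2008}) which shows $R^{\square, C}_{V_x}[1/p]$ is reduced, a local complete intersection, generically formally smooth over $E$, and formally smooth everywhere in the potentially crystalline case.

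With this dictionary in place, the three global claims follow by checking them on completed local rings at closed points (using Jacobson-ness to pass from closed points to the whole generic fibre). For $\Rep^{\square, C}_\Db$: reducedness, lci, and generic formal smoothness over $\bQ_p$ are étale-local statements once one knows they hold at each closed point, because formal smoothness of the map to $\Spec\bQ_p$ at $x$ is equivalent to formal smoothness of the corresponding framed deformation ring's generic fibre over $E$ times the residue field extension $E/\bQ_p$ (which is itself separable, hence smooth). For the unframed $\Rep^C_\Db$ and $\CRep^{\square, C}_\Db$: since $\Rep^{\square, C}_\Db \to \Rep^C_\Db$ is a $\GL_d$-torsor (smooth surjective) and $\CRep^{\square, C}_\Db \to \Spf A^{\circ, C}$-business is a $\widehat{\GL}_d$-quotient presentation, reducedness, lci, and generic formal smoothness descend/ascend along a smooth surjection. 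Part (1) is immediate from the same dictionary: in the potentially crystalline case \cite{bellovin2014} gives formal smoothness of $R^{\square, C}_{V_x}[1/p]$ at every closed point, hence everywhere by Jacobson-ness, and this transfers to all three spaces as above.

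For part (2), the equi-dimensionality and the explicit dimension formula: again this is a statement about the framed deformation ring $R^{\square, C}_{V_x}$ with fixed $p$-adic Hodge type $\bv$, where \cite{kisin2008} (see also \cite[\S3]{bellovin2014}) computes $\dim R^{\square, \bv}_{V_x}[1/p] = d^2 + \dim_E \ad D_{E,K}/\Fil^0\ad D_{E,K}$; here the $d^2$ accounts for the framing variables and the remaining term is the dimension of the flag variety parameterizing Hodge filtrations of type $\bv$. Equi-dimensionality of $\Rep^{\square, C}_\Db$ and $\CRep^{\square, C}_\Db$ then follows because every closed point has a neighbourhood of this fixed dimension and the space is of finite type over $\bQ_p$ (so dimension is constant on each connected component, and one must note — as the statement of Theorem B already flags — that this fails for the unframed $\Rep^C_\Db$, precisely because the $\GL_d$-action has non-constant stabilizer dimension when $\Db$ is not multiplicity-free, so one genuinely needs the framed version here). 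The main obstacle I anticipate is not any single hard computation but rather the bookkeeping of the identification $\widehat{(\CRep^{\square, C}_\Db)}_x \cong R^{\square, C}_{V_x}\hat\otimes_{\cO_E}\cO_E$ together with checking that the loci constructed in \S\ref{sec:PHT_conditions} over the coefficient ring $A$ really do specialize, at each closed point, to Kisin's and Bellovin's loci over the complete local coefficient rings — i.e.\ compatibility of the family constructions with passage to completed stalks, which is exactly what the generalizations of Kisin's lemmas (Propositions \ref{prop:pssdr_1.6.4}, \ref{prop:pssdr_2.4.7}, Theorems \ref{thm:pssdr_2.5.5}, \ref{thm:pssdr_2.7.6}) were set up to guarantee, so this should go through but requires care.
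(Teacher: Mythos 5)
Your overall skeleton --- reduce everything to the known ring-theoretic results on equi-characteristic-zero lifting rings at closed points, using Lemma \ref{lem:jacobson} to see that closed points control the generic fibre --- is the same as the paper's, and your closing remark that the real work is the identification of the completed local rings of the family with framed potentially semi-stable lifting rings is accurate: that identification is exactly what the paper's Lemma \ref{lem:pssdr_3.3.1} supplies. But two steps are under-justified as written. First, passing from ``the completed local rings at all closed points are reduced/lci'' to ``the scheme is reduced/lci'' is not an \'etale-local statement at closed points: one needs the completion maps $A^C_\frmm \ra \widehat{A^C_\frmm}$ to be regular and the reduced and lci loci to be open, i.e.\ one needs excellence. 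For $\Rep^{\square,C}_\Db$ this is automatic (finite type over the excellent ring $R_\Db[1/p]$), but the coordinate ring of $\CRep^{\square,C}_\Db$ is only \emph{topologically} of finite type over $\bZ_p$, so excellence is a genuine theorem (the paper invokes \cite{valabrega1976}), and openness of the complete intersection locus is \cite{GM1978}; your proposal mentions neither.

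Second, the dense-open formal smoothness is where the paper genuinely departs from a pointwise quotation: it uses only ``complete intersection and reduced'' from \cite{bellovin2014}, and obtains generic regularity (and part (1), which it attributes to \cite[Thms.\ 3.3.4, 3.3.8]{pssdr} and \cite[Thm.\ A.2]{fmc}, not to Bellovin) by re-running Kisin's argument over the non-local base, identifying the singular locus with the support of a coherent module $H^2(D_A)$; this is precisely what forces the new Lemma \ref{lem:pssdr_3.3.1} (the non-local analogue of \cite[Prop.\ 3.3.1]{pssdr}) and, for transferring the conclusion to the stack $\Rep^C_\Db$, the fpqc-cover statement Lemma \ref{lem:AG_background}(2c). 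Your pointwise shortcut can be repaired without the citation you lean on: reducedness of the excellent completed local rings gives $R_0$, hence a dense open regular locus in each $\Spec \widehat{A^C_\frmm}$, and regularity descends along the flat local completion maps, so Jacobson-ness gives density globally --- but you must actually make that argument (with the excellence input above) rather than attribute a generic-smoothness statement to \cite{bellovin2014}. Finally, a small slip in your treatment of (2): these spaces are of finite type over $R_\Db[1/p]$, not over $\bQ_p$, and dimension is not constant on connected components for free; equi-dimensionality follows, as in the paper, from Kisin's computation of the dimensions of the completed local rings at all closed points, again spread out by the excellence/Jacobson mechanism.
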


In view of \cite[Ch.\ 0, Thm.\ 22.5.8]{ega4-1}, the formal smoothness of these spaces over $\bQ_p$ is equivalent to their being regular. We will work with the latter condition in the proof. 

\begin{proof}
Bellovin \cite{bellovin2014} proves that for any $p$-adic field-valued representation $\rho$ satisfying $C$, the complete local ring ring parameterizing liftings of $\rho$ with property $C$ is complete intersection and reduced. It is also equi-dimensional of the dimension given in the statement of part (2) by \cite[Thm.\ 3.3.4]{pssdr} when $C$ has a fixed Hodge type $\bv$. As these rings are the complete local rings of the closed points of the excellent Jacobson scheme $\Rep^{\square,C}_\Db$, we know that $\Rep^{\square, C}_\Db$ is reduced and locally complete intersection. Indeed, see \cite[Cor.\ 3.3]{GM1978} for the openness of the complete intersection locus of an excellent ring. We also know that $\Rep^{\square, C}_\Db$ is equi-dimensional as in statement (2) when $C$ implies a fixed $p$-adic Hodge type.  Because being complete intersection and reduced is local in the smooth topology, these properties hold for $\Rep^C_\Db$ as well. By the main result of \cite{valabrega1976}, the coordinate ring of $\CRep^{\square, \circ, C}_\Db$ is excellent, and therefore so is the coordinate ring of $\CRep^{\square,C}_\Db$. The arguments above may then be applied in this case as well. 

Kisin in \cite[Thms.\ 3.3.4 and 3.3.8]{pssdr} and \cite[Thm.\ A.2]{fmc} proves the generic regularity statement and part (1) of the proposition above, but with the generic fiber of a framed deformation ring $R^\square_{V_\bF}$ of a residual representation $V_\bF$ in place of $\Rep_\Db$. We will deduce our claim from this case. First, let us prove generic regularity in $\Spec A$ for $A = A^\circ[1/p]$ where $\Spf A^\circ = \CRep^{\square, \circ, C}_\Db$. The arguments for generic regularity in \cite{pssdr} are statements 3.1.6, 3.2.1, and 3.3.1 of \emph{loc.\ cit}. Their validity and their application in the proof of \cite[Thm.\ 3.3.4]{pssdr} generalize verbatim from the case that $A^\circ$ is a complete Noetherian local $\bZ_p$-algebra to the case that $A^\circ$ is topologically finite type over $\bZ_p$, with the exception of Prop.\ 3.3.1 of \emph{loc.\ cit}. We deduce the non-local case of this statement in Lemma \ref{lem:pssdr_3.3.1} below. This gives us the generic regularity of $\CRep^{\square, C}_\Db$. 

In particular, Kisin's arguments identify the singular locus with the support of a finite $A$-module $H^2(D_A)$ produced out of $D_A$ and its structure maps $\phz, N$. This construction may be carried out over $\Rep^C_\Db$ to produce a coherent sheaf $H^2(D_\Db)$. As the support of $H^2(D_\Db)$ is nowhere dense after changing base to the fpqc cover $\CRep^{\square,C}_\Db$ (Lemma \ref{lem:AG_background}(2c)), it is also nowhere dense in $\Rep^C_\Db$, showing that $\Rep^C_\Db$ is regular on a dense, open substack. This can also be seen by noting that complete local rings at closed points in $\CRep^{\square,C}_\Db$ are formally smooth over $\Rep^C_\Db$. 
\end{proof}

We freely employ the notation of \cite[\S3]{pssdr} and \cite[\S2.3]{mffgsm} in the following 
\begin{lem}[{Generalizing \cite[Prop.\ 3.3.1]{pssdr}}]
\label{lem:pssdr_3.3.1}
Let $f: \Spf A^\circ \ra \CRep^\circ_\Db$ be formally smooth and let $C$ be one of the conditions above, so that $A^C$ is a quotient of $A$ parameterizing representations with condition $C$. Then for a closed point $x \in A^C$ corresponding to a maximal ideal $\frmm = \frmm_x \subset A^C$, the morphism $\Spf \hat A^C_\frmm \ra \fMod_{F,\phz,N}$ is formally smooth. 
\end{lem}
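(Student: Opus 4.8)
## Proof proposal for Lemma \ref{lem:pssdr_3.3.1}

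The plan is to reduce to the local case \cite[Prop.\ 3.3.1]{pssdr} by passing to completions at the closed point $x$, exactly as one does throughout \S\ref{sec:PHT_conditions} when adapting Kisin's local arguments. First I would observe that since $x$ is a closed point of the Jacobson scheme $\Spec A^C$, by Lemma \ref{lem:jacobson}(2) its residue field $E'$ is a finite extension of $\bQ_p$, and the complete local ring $\hat A^C_\frmm$ is a complete Noetherian local $E'$-algebra (note that $A^C$ is already a $\bQ_p$-algebra, so there is no $p$ to invert anymore). The point $x$ corresponds to a $p$-adic field-valued representation $V_x$ of $G_K$ satisfying condition $C$, and $\hat A^C_\frmm$ pro-represents a deformation-type problem: namely, the functor of deformations of $V_x$ to Artinian local $\hat A^C_\frmm$-algebras, constrained by $C$, but \emph{without} fixing the residual representation in the usual finite-field sense — rather, with the weaker constraint of fixing the residual pseudorepresentation $\Db$ together with the framing data packaged into $f: \Spf A^\circ \ra \CRep^\circ_\Db$ being formally smooth.

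The key step is to identify $\hat A^C_\frmm$ with a framed deformation ring of the type handled in \cite[\S3]{pssdr}. Because $f$ is formally smooth, the map on complete local rings $\hat A^\circ_{\frmm'} \ra (\text{coord.\ ring of } \CRep^{\circ,\square}_\Db)^\wedge$ at the corresponding points differs from an isomorphism only by adjoining a (finite) power series variables; concretely, after choosing a basis for $V_x$ we get a map from the framed lifting ring $R^\square_{V_x}$ — or rather its analogue with residual \emph{pseudo}representation $\Db$ fixed, which by Theorem \ref{thm:PsR_background} and Proposition \ref{prop:E(G)} is governed by the module-finite $R_\Db$-algebra $E(G)_\Db$ — and formal smoothness of $f$ makes $\hat A^\circ_{\frmm'}$ a power series ring over this framed lifting ring. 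Passing to the $C$-locus and inverting $p$ is compatible with this (the quotient $A^C$ was cut out exactly by the conditions of \S\ref{sec:periods} and \S\ref{sec:PHT_conditions}, whose formation commutes with the base change to a complete local ring — this is the content of Lemmas \ref{lem:pssdr_2.3.1}, \ref{lem:pssdr_2.3.3} and \ref{lem:jacobson}). Hence $\hat A^C_\frmm$ is, up to adjoining finitely many formal variables, the complete local $E'$-algebra $R^{C,\square}_{V_x}$ studied in \cite[\S3]{pssdr}.

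Once this identification is in place, the conclusion is immediate from the local statement: \cite[Prop.\ 3.3.1]{pssdr} asserts precisely that for a framed deformation ring with condition $C$, the induced morphism to $\fMod_{F,\phz,N}$ (the stack of filtered $(\phz,N)$-modules, or the appropriate variant matching condition $C$) is formally smooth, and formal smoothness is unaffected by adjoining power series variables to the source. I would then check that the morphism $\Spf \hat A^C_\frmm \ra \fMod_{F,\phz,N}$ appearing in our setting is the composition of this power-series projection with the morphism of \emph{loc.\ cit.}; this compatibility is built into the construction of $D_A$ and its structure maps $\phz, N$ in Theorem \ref{thm:pssdr_2.5.5}, which were defined by the same recipe as in \cite[\S2]{pssdr}. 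The main obstacle I anticipate is bookkeeping: making precise the sense in which ``residual pseudorepresentation $\Db$ fixed + $f$ formally smooth'' recovers exactly a framed lifting ring over $E'$ in the sense of \cite[\S3]{pssdr}, rather than something genuinely more general — but this is exactly the kind of reduction already performed (for the global-coefficient case) in the proofs of Theorems \ref{thm:pssdr_2.5.5}, \ref{thm:pssdr_2.6.2}, and \ref{thm:pssdr_2.7.6}, so it should go through with the same techniques, using Lemma \ref{lem:jacobson} to control residue fields and completions.
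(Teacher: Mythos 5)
Your overall plan---reduce to Kisin's local \cite[Prop.\ 3.3.1]{pssdr} by identifying $\hat A^C_\frmm$ with (a formally smooth extension of) the completed local ring appearing in his setting---is the same reduction the paper performs, but the step you set aside as ``bookkeeping'' is exactly where the content lies, and as written it does not go through. Kisin's Prop.\ 3.3.1 concerns the completion, at a closed point of the generic fiber, of a deformation ring of a \emph{residual representation $V_\bF$ over a finite field}. Your ``framed lifting ring $R^\square_{V_x}$,'' with $V_x$ defined over the $p$-adic field $E$, is not that object, and its ``analogue with residual pseudorepresentation $\Db$ fixed'' is just the coordinate ring of $\CRep^{\circ,\square}_\Db$ again---so the observation that formal smoothness of $f$ makes $\hat A^\circ$ a power series ring over its completion brings you no closer to the local statement. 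What is missing is the bridge from the characteristic-zero point $x$ back to a finite-field deformation problem: (i) since the image of $A^\circ$ in $E$ lands in $\cO_E$ (Lemma \ref{lem:jacobson}), the point $x$ furnishes a $G_K$-stable lattice $V^\circ_x \subset V_x$ and hence a residual representation $V_\bF = V^\circ_x \otimes_{\cO_E} \bF$; (ii) one must then compare Kisin's deformation groupoid $D_{V_\bF}$ of \cite[\S 3]{pssdr} with $\CRep_\Db$ \emph{after localizing at $V^\circ_x$} in the sense of \cite[\S 2.3]{mffgsm}, i.e.\ show $D_{V_\bF,(V^\circ_x)} \simeq \CRep_{\Db,(V^\circ_x)}$; this is the genuinely new input (fixing the pseudorepresentation versus fixing the residual representation agree only after this localization) and it is not a consequence of formal smoothness of $f$ or of Lemma \ref{lem:jacobson} alone; and (iii) one forms the complete local ring $A'^\circ$ with $\Spf A'^\circ = \Spf A^\circ \times_{\Rep_\Db} D_{V_\bF}$ and checks that $\hat A_{\frmm_x} \ra \hat A'_{\frmm_{x'}}$ is an isomorphism, after which Kisin's Prop.\ 3.3.1 applies to the local ring $A'^\circ$, which is formally smooth over $D_{V_\bF}$.

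Your remaining points are fine but peripheral: the compatibility of $D_A$, $\phz$, $N$ with specialization (Theorem \ref{thm:pssdr_2.5.5}) does ensure that the morphism to $\fMod_{F,\phz,N}$ you consider is the one from \emph{loc.\ cit.}\ under the identification above, and Lemma \ref{lem:jacobson} does control residue fields. But without the lattice, the residual representation $V_\bF$, and the localized-groupoid comparison, the asserted identification of $\hat A^C_\frmm$ with a power-series extension of Kisin's ring is unsupported, and that identification is the whole proof.
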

\begin{proof}
Let $V_{A^\circ}$ be the rank $d$ $G_K$-representation corresponding to $f$ with specialization $V_x$ at $x$ to a representation with coefficients in the $p$-adic residue field $E = A/\frmm$. By Lemma \ref{lem:jacobson}(3), the map $A^\circ \ra E$ factors through its ring of integers $O_E \subset E$, giving a choice of $G_K$-stable lattice $V^\circ_x \subset V_x$. Let $V_\bF$ denote $V^\circ_x \otimes_{O_E} \bF$, where $\bF$ is the residue field of $O_E$. Let $D_{V_\bF}$ denote its deformation groupoid as in \cite[\S3]{pssdr}. Note that $D_{V_\bF} \ra \Rep_\Db$, as a morphism of groupoids on complete Noetherian $W(\bF)$-algebras with residue field $\bF$, is schematic. Then, using the notation of \cite[\S2.3]{mffgsm}, we observe that there is an isomorphism of $\widehat{\frA\frR}_{W(\bF), (O_E)}$-groupoids $D_{V_\bF, (V_x^\circ)} \simeq \CRep_{\Db, (V_x^\circ)}$. Following the arguments of \cite[\S2.3]{mffgsm}, one may check that the complete local $\bZ_p$-algebra $A'^\circ$ given by $\Spf A'^\circ = \Spf A^\circ \times_{\Rep_\Db} D_{V_\bF}$ has a map $x': A' = A'^\circ[1/p] \ra E$ factoring $x: A \ra E$ and that $\hat A_{\frmm_x} \ra \hat A'_{\frmm_{x'}}$ is an isomorphism. This is all we need to reduce the proof to the case that $A^\circ$ is local, which then follows by \cite[Prop.\ 3.3.1]{pssdr}.
\end{proof}

\section{Potentially Semi-stable Pseudodeformation Rings}

\subsection{Potentially Semi-Stable Pseudorepresentations}
\label{subsec:psspsr}
We must clarify what it means to ask if a pseudorepresentation satisfies some property which, a priori, only applies to representations. 
\begin{defn}
\label{defn:pseudo_property}
Let $\cK$ be a full subcategory of the category of perfect fields which is closed under finite extensions, let $\cD$ be a setoid of pseudorepresentations fibered over $\cK$,\footnote{That is, given a morphism of fields $E \ra E' \in \cK$ and an $E$-valued pseudorepresentation $D \in \cD$, the base change $D \otimes_E E'$ is in $\cD$.} and let $\CRep$ be a groupoid of representations fibered over $\cK$.  Let $\cP$ be a full subcategory of $\CRep$ of representations with property $P$ such that if $V \in \cP(K)$, then its semisimplification $V^{ss}$ and any finite base change $V \otimes_K K'$ are each in $\cP$.

Then a pseudorepresentation $D \in \cD$ over $K \in \cK$ \emph{has property $P$} if, given a finite extension $K'/K$ such that there exists a semi-simple representation $V^{ss}_D \in \CRep(K')$ such that $\psi(V^{ss}_D) = D \otimes_K K'$ (which exists by Cor.\ \ref{cor:ss_degree}), $V$ has property $P$.
\end{defn}

For example, one can let $\cK$ be the category of $p$-adic fields, let $\cD$ and $\CRep$ be the continuous pseudorepresentations and representations of $G_{\bQ_p}$ over $p$-adic fields, and let the property $P$ be ``crystalline,'' or any of the conditions of \S\ref{subsec:universal_C}.

While it seems possible to emulate Definition \ref{defn:pseudo_property} over non-fields or non-perfect fields if appropriate conditions on $P$ are imposed, we do not pursue this here. 

We now return to the case of $C$ being a potentially semi-stable condition as in the previous section. Recall that $R = R_\Db[1/p]$ and that $\Rep^C_\Db$ exists unconditionally when $\Db$ is multiplicity-free.
\begin{thm}
\label{thm:psspsr}
If $\Rep^C_\Db$ exists, there exists a canonical quotient $R^C$ of $R$ with the property that for any finite field extension $E$ of $W(\bF)[1/p]$, the map $z: R \ra E$ factors through $R^C$ if and only if the semi-simple representation associated to the pseudorepresentation corresponding to $z$ satisfies the condition. This quotient $R^C$ is reduced.
\end{thm}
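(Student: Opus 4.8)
The plan is to realize $R^C$ as the image of $\Rep^C_\Db$ under $\psi$, i.e.\ as the ring-theoretic quotient of $R = R_\Db[1/p]$ corresponding to the scheme-theoretic image of the morphism $\Rep^C_\Db \to \Spec R = \Spec R_\Db[1/p]$. Concretely, the map $\psi : \Rep^C_\Db \to \Spec R$ factors through a canonical closed subscheme $\Spec R^C \rinj \Spec R$ (take $R^C := R/I$ where $I = \ker(R \to \psi_*\cO_{\Rep^C_\Db})$; this makes sense as $\psi$ is quasi-compact and quasi-separated so the scheme-theoretic image exists). First I would check the characterizing property: for a finite extension $E/W(\bF)[1/p]$ and a point $z : \Spec E \to \Spec R$, the point $z$ factors through $\Spec R^C$ if and only if $z$ lies in the (closed) scheme-theoretic image, which, because $\Spec R^C$ is reduced-or-not but certainly because $R$ is Jacobson with finite residue fields over $\bQ_p$ (Lemma \ref{lem:jacobson}), can be detected on $E$-points: $z \in \Spec R^C(E)$ iff $z$ lifts to an $E'$-point of $\Rep^C_\Db$ for some finite $E'/E$. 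By Corollary \ref{cor:ss_to_psr} / Theorem \ref{thm:PsR_background}(2), the fiber $\psi^{-1}(z)$ over the pseudorepresentation $D_z$ contains a unique closed point, namely $\rho^{ss}_{D_z}$ (defined over some finite $E'/E$ by Corollary \ref{cor:ss_degree}), and every point in that fiber has the same Jordan--Hölder factors as $\rho^{ss}_{D_z}$. Since property $C$ is, by Theorem \ref{thm:C_univ}/Corollary \ref{cor:C_univ_alg}, exactly the condition cut out by $\Rep^C_\Db$ on $B$-points for $B$ finite over $\bQ_p$, and since $C$ is stable under semisimplification and under passing to subquotients with the same JH factors (all these conditions from \S\ref{subsec:universal_C} have this property — crystalline/semi-stable/Hodge-type/Galois-type conditions are detected by the associated filtered $(\phz,N)$-module, which is additive in short exact sequences), the point $z$ lifts to $\Rep^C_\Db$ iff $\rho^{ss}_{D_z}$ satisfies $C$ iff $D_z$ has property $P = C$ in the sense of Definition \ref{defn:pseudo_property}. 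This gives the existence and the stated characterization; canonicity is built in since the scheme-theoretic image is canonical.

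Next I would prove reducedness of $R^C$. The scheme-theoretic image of a morphism from a reduced scheme is reduced: indeed if $X$ is reduced and $f : X \to Y$ is quasi-compact, the scheme-theoretic image is the closure of $f(X)$ with its reduced structure — more precisely $\cO_Y \to f_*\cO_X$ has kernel $I$, and $Y/I \rinj \prod$ (something into reduced rings) ... the cleanest argument: $R^C \rinj \psi_*\cO_{\Rep^C_\Db}$ by definition of scheme-theoretic image (the map $R \to \psi_*\cO_{\Rep^C_\Db}$ factors as $R \twoheadrightarrow R^C \rinj \psi_*\cO_{\Rep^C_\Db}$), so it suffices that $\psi_*\cO_{\Rep^C_\Db}$ be reduced, equivalently (since global sections of a reduced scheme over a reduced affine... actually $\Gamma$ of a reduced scheme is reduced) that $\Rep^C_\Db$ be reduced. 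And $\Rep^C_\Db$ is reduced: this is exactly the first assertion of Proposition \ref{prop:univ_C} (``$\Rep^C_\Db$ \dots are reduced and locally complete intersection''), at least for $C$ potentially semi-stable with all the refinements — the reducedness there is deduced from Bellovin's theorem \cite{bellovin2014} that the relevant local rings are reduced, combined with the fact that $\Rep^{\square,C}_\Db$ is an excellent Jacobson scheme whose closed points have reduced complete local rings, and reducedness descends along the smooth cover $\Rep^{\square,C}_\Db \to \Rep^C_\Db$. So $R^C = \psi(\Rep^C_\Db)^{\mathrm{red-image}}$ is reduced.

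The main obstacle I anticipate is the detection-on-$E$-points step: one must be careful that ``$z$ factors through the scheme-theoretic image $\Spec R^C$'' is genuinely equivalent to ``$z$ lies in the closure of $\psi(\Rep^C_\Db)$ and lifts, after a finite extension, to $\Rep^C_\Db$.'' The forward direction (lifting $\Rightarrow$ in the image) is trivial; the subtle direction is that every $E$-point of $\Spec R^C$ is hit. Here I would use that $\psi : \Rep^C_\Db \to \Spec R^C$ is surjective — this follows from Theorem \ref{thm:alper}(1) (adequate moduli spaces are surjective) applied to the restriction of $\psi$, since $\Rep^C_\Db$ is a closed substack of $\Rep_\Db$, $\psi$ on $\Rep_\Db$ is (up to the adequate homeomorphism $\nu$) an adequate moduli space by Theorem \ref{thm:PsR_background}(3), and the scheme-theoretic image of a closed substack under an adequate moduli space morphism is again the ``good/adequate quotient'' of that substack. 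Thus every point of $\Spec R^C$, in particular every $E$-point, is the image of a point of $\Rep^C_\Db$; after a finite extension that point is an $E'$-valued representation satisfying $C$, and one reads off that its semisimplification (which by Corollary \ref{cor:ss_to_psr} is the unique closed point of the fiber and induces the same pseudorepresentation) satisfies $C$. I would also double-check independence of $R^C$ from the choice of $\Rep^C_\Db$ (when $\Db$ is not multiplicity-free and one invokes (FGAMS)): since $R^C[1/p]$... well here $R^C$ is already over $\bQ_p$, and by Theorem \ref{thm:PsR_background}(1) of Theorem C it does not depend on the choice — this follows because $R^C$ is reduced and is determined by its finite-$\bQ_p$-algebra points, which are intrinsic to the condition $C$ by the characterization just established.
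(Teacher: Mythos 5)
Your proposal is correct and follows essentially the same route as the paper: take $R^C$ to be the scheme-theoretic image of $\Rep^C_\Db$ under $\psi$ (equivalently, in characteristic zero, the GIT quotient ring $\Gamma(\cO_{\Rep^{\square,C}_\Db})^{\GL_d}$, so that the restriction of $\psi$ is a good moduli space and in particular surjective), use the unique closed point of each fiber (Theorem \ref{thm:PsR_background}(2)) together with the closedness of $\Rep^C_\Db \rinj \Rep_\Db$ to see that $z$ factors through $R^C$ if and only if $\rho^{ss}_{D_z}$ lies in $\Rep^C_\Db$, and deduce reducedness of $R^C$ from that of $\Rep^C_\Db$ (Proposition \ref{prop:univ_C}) via $R^C \rinj \Gamma(\cO_{\Rep^C_\Db})$. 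Your appeal to stability of $C$ under semisimplification is harmless but unnecessary, since the specialization argument (the unique closed point of the fiber lies in every nonempty closed substack of it) already gives that step, which is how the paper argues.
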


\begin{proof}
We have from Corollary \ref{cor:C_univ_alg} that there is a closed subscheme $\Rep^C_\Db$ of $\Rep_\Db$. Because $\psi$ is universally closed, the scheme-theoretic image $\Spec R^C$ of $\Rep^C_\Db$ under $\psi$ defines a closed subscheme of $\Spec R$, and $\psi$ restricted to $\Rep^C_\Db$ is a good moduli space over $\Spec R$ because $\Rep^C_\Db$ is realizable as a quotient stack $[\Rep^{\square, C}_\Db/ \GL_d]$ with GIT quotient ring $R^C = \Gamma(\cO_{\Rep^{\square, C}_\Db})^{\GL_d}$. 

Having constructed the quotient $R^C$ of $R$, we show that it has the desired property. Choose a closed point $\zeta: \Spec E \ra R^C$. By Theorem \ref{thm:PsR_background}(2), there exists a unique closed point $z$ in the fiber of $\psi$ in $\Rep_\Db$ over $\zeta$, with residue field some finite extension $E'/E$, corresponding to the unique semi-simple representation inducing $\zeta$. Because $\Rep^C_\Db \rinj \Rep_\Db$ is a closed immersion, we must have $z \in \Rep^C_\Db$. 

When $\Rep_\Db^C$ is reduced, then $R^C$ is also \cite[Thm.\ 4.16(viii)]{alper2013}. Then, the uniqueness of $R^C$ follows from Lemma \ref{lem:jacobson} and the fact that it is reduced. 
\end{proof}

\begin{cor}
If $\Rep^C_\Db$ exists, there exists a quotient $R_\Db^C$ of $R_\Db$ with the property that for any finite field extension $E$ of $W(\bF)[1/p]$, the map $z: R_\Db \ra E$ factors through $R_\Db^C$ if and only if the semi-simple representation associated to the pseudorepresentation corresponding to $z$ satisfies the condition. There is a unique such quotient which is reduced, namely the image of $R_\Db$ in $R^C$. 
\end{cor}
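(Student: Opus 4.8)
The plan is to descend the quotient $R^C$ of $R = R_\Db[1/p]$ constructed in Theorem~\ref{thm:psspsr} back to a quotient of the integral ring $R_\Db$. First I would define $R^C_\Db$ to be the image of $R_\Db$ under the composite $R_\Db \to R = R_\Db[1/p] \twoheadrightarrow R^C$; equivalently, $R^C_\Db := R_\Db/I$ where $I = \ker(R_\Db \to R^C)$. Since $R^C$ is a reduced ring (by Theorem~\ref{thm:psspsr}) and $R_\Db \to R^C$ factors through this image, $R^C_\Db$ is a subring of the reduced ring $R^C$, hence reduced. Moreover $R^C_\Db[1/p] = R^C$, because inverting $p$ in the image of $R_\Db$ recovers all of $R^C = R_\Db[1/p]/(\text{stuff})$: concretely, $R^C$ is generated over the image of $R_\Db$ by $1/p$, and $1/p$ is already a unit in $R^C$, so the image of $R_\Db[1/p] = R$ equals the image of $R_\Db$ localized at $p$, which is $R^C_\Db[1/p]$; but this image is all of $R^C$ by surjectivity of $R \twoheadrightarrow R^C$. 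Thus $R^C_\Db[1/p] \xrightarrow{\sim} R^C$.

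Next I would verify the characterizing property on $E$-points. Let $E$ be a finite extension of $W(\bF)[1/p]$ and $z: R_\Db \to E$ a homomorphism. Since $p$ is a unit in $E$, $z$ extends uniquely to $\tilde z: R = R_\Db[1/p] \to E$, and the pseudorepresentation $D_z$ attached to $z$ is the same as that attached to $\tilde z$. Now $z$ factors through $R^C_\Db = R_\Db/I$ if and only if $I \subseteq \ker z$, which (since $I = \ker(R_\Db \to R^C)$ and $z$ extends to $\tilde z$) holds if and only if $\tilde z$ kills the kernel of $R \to R^C$, i.e.\ if and only if $\tilde z$ factors through $R^C$. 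By Theorem~\ref{thm:psspsr} the latter is equivalent to the semi-simple representation $\rho^{ss}_{D_z}$ satisfying condition $C$. This gives the desired equivalence.

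For uniqueness of the reduced quotient: suppose $R_\Db \twoheadrightarrow S$ is any reduced quotient with the same $E$-point property. Then $S[1/p]$ is a quotient of $R = R_\Db[1/p]$ with the $E$-point property characterizing $R^C$ among quotients of $R$; Lemma~\ref{lem:jacobson}(1)--(3) applies to $R$ (a formally finitely generated $\bZ_p$-algebra after inverting $p$, being a quotient of $R_\Db[1/p]$), so a reduced quotient of $R$ is determined by its finite-$\bQ_p$-algebra-valued points, hence by its $E$-valued points for $E$ ranging over $p$-adic fields. Since $R^C$ is reduced (Theorem~\ref{thm:psspsr}), we get $S[1/p] = R^C$ provided $S[1/p]$ is reduced — which holds because $S$ is reduced and $p$-torsion-free quotients stay reduced after localization, or more simply because $S \hookrightarrow S[1/p]$ when $S$ is reduced and $R_\Db$-flat is not assumed, so one argues instead that $S[1/p]$ is reduced because the nilradical of $S[1/p]$ is the localization of the nilradical of $S$, which is zero. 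Then $S$ is a subring of $S[1/p] = R^C$ containing the image of $R_\Db$, and since $R_\Db \to S$ is surjective, $S$ equals the image of $R_\Db$ in $R^C$, i.e.\ $S = R^C_\Db$.

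The main obstacle I anticipate is the bookkeeping around reducedness and $p$-torsion: one must ensure that "image of $R_\Db$ in $R^C$" is genuinely reduced and that the localization-at-$p$ operation behaves transparently, and that uniqueness is stated correctly (the quotient $R^C_\Db$ need not be $p$-torsion-free, only reduced, and among reduced quotients with the stated property it is the unique one). Everything else is formal once Theorem~\ref{thm:psspsr} and Lemma~\ref{lem:jacobson} are in hand; no new $p$-adic Hodge theory or invariant theory is required. I would keep the argument short, essentially as above, citing \cite[Thm.\ 4.16(viii)]{alper2013} only if needed to re-derive reducedness, though here reducedness of $R^C_\Db$ follows immediately from its being a subring of the reduced ring $R^C$.
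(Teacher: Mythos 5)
Your construction of $R^C_\Db$ as the image of $R_\Db$ in $R^C$, the verification that it is reduced with $R^C_\Db[1/p]\risom R^C$, and the $E$-point argument (extend $z$ uniquely to $\tilde z$ on $R_\Db[1/p]$ and invoke Theorem \ref{thm:psspsr}) are correct and follow the paper's route; the paper's own proof is the single sentence that one may take any quotient of $R_\Db$ realizing $R^C$ after inverting $p$, of which your image is the canonical reduced choice.

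The gap is in your uniqueness paragraph. From reducedness of $S$ you correctly get that $S[1/p]$ is reduced and hence, by the Jacobson argument, that $S[1/p]=R^C$; but you then assert that $S$ is a subring of $S[1/p]$. That step requires $S$ to have no $p$-torsion, and reducedness alone does not give this: a reduced quotient of $R_\Db$ can have nonzero $p$-torsion while having exactly the same characteristic-zero field-valued points as $R^C_\Db$. Concretely, if $R_\Db\simeq \bZ_p\lb x\rb$ and the condition cuts out $x=0$ on the generic fiber, then both $\bZ_p\lb x\rb/(x)$ and $\bZ_p\lb x\rb/(px)$ are reduced (the ideal $(px)=(p)\cap(x)$ is radical), and a map $z\colon R_\Db\to E$ kills $px$ if and only if it kills $x$, so both quotients satisfy the stated $E$-point property; they are distinct, and only the first is the image of $R_\Db$ in $R^C$. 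So uniqueness among merely reduced quotients cannot be obtained by your descent step, and indeed the literal uniqueness clause is delicate; what your argument does establish is that the image of $R_\Db$ in $R^C$ is the unique quotient with the $E$-point property that is reduced and $p$-torsion-free (equivalently, injects into its generic fiber). Note that the paper's one-line proof does not address the uniqueness clause at all, so you are attempting more than it does; if you keep a uniqueness statement, insert the torsion-freeness (i.e.\ $\bZ_p$-flatness) hypothesis at the point where you pass from $S[1/p]=R^C$ back to $S$.
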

\begin{proof}
One may take $R_\Db^C$ to be any quotient of $R_\Db$ such that it realizes $R^C$ after inverting $p$. 
\end{proof}

The generic fiber $\Spec R^C$ is \emph{pseudo-rational} when $C$ is a crystalline condition.
\begin{defn}[{\cite[\S6.1]{schoutens2008}}]
\label{defn:pseudo-rational}
A Noetherian local ring $(R, \frmm)$ is called \emph{pseudo-rational} if it is analytically unramified, normal, Cohen-Macaulay, and for any projective birational map $f: Y \ra \Spec R$ with $Y$ normal, the canonical epimorphism between the top cohomology groups $\delta: H^d_\frmm(R) \ra H^d_Z(Y)$ is injective, where $Z$ is the closed fiber $f^{-1}(\frmm)$ and $d$ the dimension of $R$. A Noetherian ring $A$ is called \emph{pseudo-rational} if $A_\frp$ is pseudo-rational for every prime ideal $\frp$ in $A$.
\end{defn}

The notion of ``pseudo-rational'' is a generalization, to rings over which no resolution of singularities exists, of the notion of rational singularities for finite type algebras over a characteristic zero field. The work \cite{schoutens2008} of Schoutens is a generalization of the Hochster-Roberts theorem to this setting.

\begin{cor}
If the condition $C$ implies potentially crystalline, $R^C$ is pseudo-rational. In particular, it is reduced, normal, and Cohen-Macaulay. 
\end{cor}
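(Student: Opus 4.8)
The plan is to exhibit $\Spec R^C$ as a GIT quotient of a regular scheme by a linearly reductive group in equi-characteristic zero, and then to invoke the generalization of the Hochster--Roberts--Boutot theorem due to Schoutens.

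First I would recall, from the proof of Theorem \ref{thm:psspsr}, that $\Rep^C_\Db \cong [\Rep^{\square, C}_\Db / \GL_d]$ and that $R^C = \Gamma(\cO_{\Rep^{\square, C}_\Db})^{\GL_d}$; that is, $\Spec R^C$ is the GIT quotient $\Rep^{\square, C}_\Db /\!/ \GL_d$. Since $C$ implies a potentially crystalline condition, Proposition \ref{prop:univ_C}(1) says that $\Rep^{\square, C}_\Db$ is formally smooth over $\bQ_p$ (indeed over $W(\bF)[1/p]$), hence regular by \cite[Ch.\ 0, Thm.\ 22.5.8]{ega4-1}. Its coordinate ring $S := \Gamma(\cO_{\Rep^{\square, C}_\Db})$ is moreover Noetherian and excellent and admits a dualizing complex, being of finite type over $R = R_\Db[1/p]$, which is a localization of the complete local Noetherian ring $R_\Db$; the same propagation of properties holds for $R^C$.

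Next I would use that over the $p$-adic coefficient field $W(\bF)[1/p]$ the reductive group $\GL_d$ is linearly reductive, so that the Reynolds operator furnishes an $R^C$-module retraction of the inclusion $R^C \rinj S$. Thus $R^C$ is a pure subring --- in fact a direct module summand --- of the regular ring $S$, and this remains true after localizing at any prime of $R^C$ (a direct-summand decomposition localizes, and a localization of $S$ is again regular). The main theorem of \cite{schoutens2008}, asserting that a pure subring of a regular (excellent) Noetherian ring is pseudo-rational, then applies at every localization, so $R^C$ is pseudo-rational in the sense of Definition \ref{defn:pseudo-rational}. The final assertions are immediate from that definition: a pseudo-rational local ring is normal and Cohen--Macaulay, and normal rings are reduced.

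The main obstacle is bookkeeping about hypotheses rather than a genuine difficulty: one must confirm that $S$ meets exactly the standing assumptions under which the pseudo-rationality machinery of \cite{schoutens2008} operates (excellence and the existence of a dualizing complex, which descend from $R_\Db$ through localization and finite-type extension), and that purity of $R^C \rinj S$ is preserved under the localizations demanded by Definition \ref{defn:pseudo-rational}. Both are routine, but they are the points that warrant care.
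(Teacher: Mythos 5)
Your proof is correct and follows essentially the same route as the paper: identify $R^C$ as the $\GL_d$-invariant subring of the regular coordinate ring of $\Rep^{\square,C}_\Db$ (regular by Proposition \ref{prop:univ_C}(1) in the potentially crystalline case) and apply Schoutens' theorem to the pure inclusion $R^C \rinj S$. The only cosmetic difference is that you obtain purity via the Reynolds operator in characteristic zero, whereas the paper cites cyclic purity of the good moduli space map (cf.\ \cite[Remark 4.13]{alper2013}), which already suffices for \cite[Thm.\ A]{schoutens2008}; likewise your deduction of reducedness from normality replaces the paper's appeal to analytic unramifiedness plus excellence.
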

\begin{proof}
Write $S$ for the coordinate ring of the regular affine scheme $\Rep^{\square,C}_\Db$, so that $R^C =S^{\GL_d} \rinj S$ is an inclusion of the invariant subring by the action of basis change. Therefore the map $R^C \rinj S$ is cyclically pure (also known as ideally closed), cf.\ \cite[Remark 4.13]{alper2013}. The main theorem \cite[Thm.\ A]{schoutens2008} states that a cyclically pure subring of a regular Noetherian equi-characteristic zero ring is pseudo-rational. Therefore $R^C$ is pseudo-rational, and hence also formally unramified, normal, and Cohen-Macaulay \cite[\S4]{schoutens2008}. Formally unramified is equivalent to reduced, since $R^C$ is finitely generated over the excellent ring $R_\Db$. 
\end{proof}

\begin{rem}
Reducedness and normality of $R^C$ are clear from the regularity of $S$ without resorting to Schoutens' result. Cf.\ also \cite[Thm.\ 4.16(viii)]{alper2013}. 
\end{rem}

There is often interest in understanding the connected components of potentially semi-stable deformation rings. It is no more complicated to study the connected components of potentially semi-stable pseudodeformation rings. Using the fact that each of the maps $\Rep^\square_\Db \ra \Rep_\Db \ra \Spec R$ and $\CRep^\square_\Db \ra \Rep_\Db$ is surjective with connected geometric fibers (cf.\ \cite[Thm.\ 4.16(vii)]{alper2013}, Thm.\ \ref{thm:PsR_background}(2)), the analysis of the geometrically connected components of $R^C$ amounts to analysis of the geometrically connected components of the affine scheme $\Rep^\square_\Db$. 
\begin{cor}
There is a natural bijective correspondence between the geometrically connected components of each of $\CRep^{\square,C}_\Db$, $\Rep^{\square,C}_\Db$, $\Rep^C_\Db$, and $\Spec R^C$.
\end{cor}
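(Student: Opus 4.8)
The plan is to reduce the whole statement to a single geometric input: the surjectivity with connected geometric fibers of the relevant morphisms. Concretely, I would first recall the three morphisms in play, all restricted to the condition-$C$ loci: the projection $q: \CRep^{\square,C}_\Db \to \Rep^C_\Db$ (which exhibits $\Rep^C_\Db$ as $[\CRep^{\square,C}_\Db/\widehat{\GL}_d]$, or rather its algebraization $[\Rep^{\square,C}_\Db/\GL_d]$ — but note the corollary lists $\CRep^{\square,C}_\Db$, the formal version, so I should be careful to work with the $\mDb$-adic completions or simply observe that $\pi_0$ is insensitive to $\mDb$-adic completion since $\Rep^{\square,C}_\Db$ is finite type over the Noetherian complete local-ish ring $R^C$ and its completion has the same connected components), the morphism $\Rep^{\square,C}_\Db \to \Rep^C_\Db$, and $\psi: \Rep^C_\Db \to \Spec R^C$, the good moduli space map onto the scheme-theoretic image. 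Each of these is surjective (the first two by descent/the torsor structure, the last because it is a good moduli space by Theorem~\ref{thm:psspsr} and its proof) and has geometrically connected fibers: for $\Rep^{\square,C}_\Db \to \Rep^C_\Db$ this is because the fibers are $\GL_d$-torsors and $\GL_d$ is connected; for $\psi$ this is Theorem~\ref{thm:PsR_background}(2), which says each geometric fiber of $\psi$ is connected with a unique closed point.

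The key step is then the elementary topological lemma: if $f: X \to Y$ is a surjective morphism (say, of locally Noetherian schemes or qcqs algebraic stacks) that is either closed or submersive, with all geometric fibers connected, then $f$ induces a bijection on sets of connected components, $\pi_0(f): \pi_0(X) \risom \pi_0(Y)$. I would cite this in the form: $f$ being surjective gives that the induced map on $\pi_0$ is surjective; connectedness of fibers together with surjectivity plus the fact that the image of a connected set is connected forces the preimage of a connected component to be connected (one checks that $f^{-1}$ of a connected component cannot be disconnected, since a clopen decomposition would, by connectedness of the fibers, descend to a clopen decomposition of the base component, using that $f$ is open or closed — here $\psi$ is universally closed by Theorem~\ref{thm:PsR_background}(1), the torsor projections are fppf hence open). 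Applying this lemma successively along the chain
\[
\CRep^{\square,C}_\Db \longrightarrow \Rep^{\square,C}_\Db \longrightarrow \Rep^C_\Db \longrightarrow \Spec R^C
\]
(where the first arrow is $\mDb$-adic completion, inducing a bijection on $\pi_0$ because $R^C$ is a $\bQ_p$-algebra finite type over $R_\Db$ which is complete local up to inverting $p$, so its spectrum has finitely many connected components each of which survives completion — alternatively pass through $\Rep^{\square,\circ,C}_\Db$ and use that $\pi_0$ of a Noetherian formal scheme agrees with $\pi_0$ of its special fiber and of the algebraization) yields the desired bijective correspondence between all four sets of geometrically connected components.

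The main obstacle I anticipate is purely bookkeeping: being careful about which objects are formal and which are honest schemes/stacks, and checking that the $\pi_0$-bijection survives the passage between $\CRep^{\square,C}_\Db$ and $\Rep^{\square,C}_\Db$. For the honest-scheme part this is just Lemma~\ref{lem:AG_background} together with the fact that $\pi_0$ is a coherent-sheaf-theoretic invariant (it is $\Spec$ of the ring of idempotents of global functions), so formal GAGA for $\psi$ — available here by Theorem~\ref{thm:gaga} under our running hypotheses (or automatically when $\Db$ is multiplicity-free) — transports idempotents back and forth. I would also remark that ``geometrically connected'' rather than merely ``connected'' is handled by the same argument after base change to an algebraic closure of $\bQ_p$, since all the morphisms in the chain and all the hypotheses (surjectivity, connected geometric fibers, universal closedness) are stable under this base change. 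The only genuinely non-formal ingredient is the connectedness of the fibers of $\psi$, and that is already proved in Theorem~\ref{thm:PsR_background}(2); everything else is topology.
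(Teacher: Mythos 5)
Your proof is correct and takes essentially the same route as the paper: the paper's argument is precisely the observation that each map in the chain is surjective with geometrically connected fibers (citing \cite[Thm.\ 4.16(vii)]{alper2013} for the good moduli space and Theorem \ref{thm:PsR_background}(2)), so that connected components are preserved at each stage, with the formal/algebraic comparison handled much as you handle it. The one point to tighten is your appeal to Theorem \ref{thm:PsR_background}(2) for the fibers of $\Rep^C_\Db \ra \Spec R^C$: these are closed substacks of the fibers of $\psi$, and a closed substack of a connected fiber need not be connected, so you should either invoke the unique-closed-point clause of that theorem (every point of such a closed substack specializes, within it, to the unique closed point of the fiber, forcing connectedness) or, as the paper does, use that the restricted map is again a good moduli space and apply Alper's connectedness result directly.
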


\subsection{Global Potentially Semi-stable Pseudodeformation Rings}
\label{subsec:global_psspsdr}
In this section, we will assume that all algebraizations of stacks of potentially semi-stable representations exist.  Let $F/\bQ$ be a number field, let $S$ be a finite set of places of $F$ containing those over $p$, and take $\Db: G_{F,S} \ra \bF$ to be a global Galois pseudorepresentation ramified only at places in $S$. As $G_{F,S}$ satisfies Mazur's $\Phi_p$ finiteness condition, the universal ramified-only-at-$S$ pseudodeformation ring  $R_\Db$ of global Galois representations is Noetherian (Thm.\ \ref{thm:chen_3.14}). Fix decomposition subgroups $G_v \subset G_{F,S}$ for places $v \in S$. In analogy to a common construction in the case of deformations of Galois representations, we want to find a quotient $R^C_\Db$ of $R_\Db$, $C = (C_v)_{v \in S}$, parameterizing \emph{pseudo}deformations which satisfy certain conditions $C_v$ at each $v \in S$, such as a condition $C_v$ coming from $p$-adic Hodge theory when $v \mid p$. 

In the case of deformations of a irreducible Galois representation $\br: G_{F,S} \ra \GL_d(\bF)$, one may accomplish this construction using the natural maps $R^v_\br \ra R_\br$ from a local deformation ring to a global deformation ring (usually discussed as a ``deformation condition'' to avoid unnecessary technical complications when $\br\vert_{G_v}$ is not irreducible), and the quotients $R^v_\br \ra R^{C_v}_\br$ corresponding to the condition $C_v$ on representations of $G_v$ deforming $\br\vert_{G_v}$. Then one sets 
\[
R^C_\br := R_\br \otimes_{\left(\bigotimes_{v \in S} R^v_\br\right)} \left(\otimes_{v \in S} R^{C_v}_\br\right)
\]
in order to obtain a deformation ring parameterizing representations of $G_{F,S}$ deforming $\br$ with conditions $C_v$ upon restriction to $G_v$.

In contrast, one does not want to do the same construction with pseudodeformation rings (as if $\Db$ replaced $\br$ in each place in the line above), even though the corresponding maps $R^v_\Db \ra R_\Db$ and $R^v_\Db \ra R^{C_v}_\Db$ exist. The reason is that if $\br$ is irreducible but $\br\vert_{G_v}$ is not, then a deformation $D: G_{F,S} \ra E$ of $\Db$ (where $E/\bQ_p$ is a finite extension) such that $D\vert_{G_v}$ is reducible may have information about extensions between the Jordan-H\"older factors of $D\vert_{G_v}$, while $D\vert_{G_v}$ lacks this information. If a condition $C$ is sensitive to the extension classes in a representation, then we may get too large of a quotient in this way. 

Instead, the following construction is appropriate: fixing $\Db$ as above, we have the Noetherian moduli stack $\Rep^\circ_\Db$ of representations of $G_{F,S}$ inducing residual pseudorepresentation $\Db$; it is algebraizable of finite type over $\Spec R_\Db$ via $\psi$. There are also analogous local spaces $\Rep^{\circ, v}_\Db \ra R^v_\Db$. If $C = (C_v)_{v \in S}$ is a collection of conditions on representations of $G_v$ for $v \in S$ cutting out closed immersions $\Rep^{\circ, C_v}_\Db \rinj \Rep^{\circ,v}_\Db$, we may set
\begin{equation}
\label{eq:construct_Rep_C}
\Rep^{\circ,C}_\Db := \Rep^\circ_\Db \times_{\left(\times_{v \in S} \Rep^{\circ,v}_\Db\right)} \left(\times_{v \in S} \Rep^{\circ,C_v}_\Db\right),
\end{equation}
the universal moduli stack of representations of $G_{F,S}$ satisfying condition $C$ and inducing residual pseudorepresentation $\Db$. 

Having constructed this space, we may construct the pseudodeformation ring parameterizing pseudodeformations with property $C$.
\begin{defn}
\label{defn:pseudo_C}
Let $R^C_\Db$ be the quotient of $R_\Db$ associated to the scheme-theoretic image of the universally closed composition map
\begin{equation}
\label{eq:incl_psi}
\Rep^{\circ,C}_\Db \rinj \Rep^\circ_\Db \buildrel\psi\over\lra \Spec R_\Db.
\end{equation}
\end{defn}

\begin{thm}
\label{thm:global_main}
Given a residual pseudorepresentation $\Db: G_{F,S} \ra \bF$, let $C = (C_v)_{v \in S}$ be a collection of conditions on finite $\bQ_p$-algebra valued points of $\Rep_\Db$. Assume that each $C_v$ cuts out a closed immersion $\Rep^{C_v}_\Db \ra \Rep^v_\Db$ for each $v \in S$.

Then there exists a quotient $R^C_\Db$ of $R_\Db$ such that for any finite extension $E/\bQ_p$ and closed point $z: \Spec E \ra \Spec R_\Db$, $z$ factors through $\Spec R^C_\Db \rinj \Spec R_\Db$ if and only if $D_z$ satisfies $C$, i.e.\ the corresponding semi-simple representation $V^\mathrm{ss}_z$ of $G_{F,S}$ satisfies $C$. 
\end{thm}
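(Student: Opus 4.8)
The plan is to carry out exactly the construction laid out in Definition \ref{defn:pseudo_C} and then verify that the resulting quotient $R^C_\Db$ has the stated point-characterization. First I would observe that the fiber product $\Rep^{\circ,C}_\Db$ in \eqref{eq:construct_Rep_C} makes sense: each $C_v$ cuts out a closed immersion $\Rep^{C_v}_\Db \rinj \Rep^v_\Db$ over $\Spec R^v_\Db$ (in characteristic zero, by the local results of \S\ref{sec:PHT_conditions}, together with Corollary \ref{cor:C_univ_alg} to get an algebraic model whose closedness we can take over the integral base), and pulling back along the restriction maps $\Rep^\circ_\Db \ra \Rep^{\circ,v}_\Db$ and taking the fiber product over $\times_{v \in S} \Rep^{\circ,v}_\Db$ preserves closed immersions; hence \eqref{eq:incl_psi} is a composition of a closed immersion with $\psi$. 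Since $\psi: \Rep^\circ_\Db \ra \Spec R_\Db$ is universally closed by Theorem \ref{thm:PsR_background}(1), the scheme-theoretic image of \eqref{eq:incl_psi} is genuinely a closed subscheme $\Spec R^C_\Db \rinj \Spec R_\Db$, so the quotient $R^C_\Db$ exists.

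Next I would prove the point-characterization. Let $E/\bQ_p$ be a finite extension and $z: \Spec E \ra \Spec R_\Db$ a closed point, corresponding to a pseudorepresentation $D_z: G_{F,S} \ra E$ with semi-simplification $V^{\mathrm{ss}}_z$ defined after a finite extension $E'/E$ (which exists by Corollary \ref{cor:ss_degree}). For the ``if'' direction: if $D_z$ satisfies $C$, i.e.\ $V^{\mathrm{ss}}_z|_{G_v}$ has property $C_v$ for each $v \in S$, then the point of $\Rep_\Db$ over $E'$ determined by $V^{\mathrm{ss}}_z$ lies in $\Rep^{C_v}_\Db$ upon restriction to $G_v$, hence in the fiber product $\Rep^{\circ,C}_\Db$; pushing forward under $\psi$ shows $z$ (which $\psi$ sends $V^{\mathrm{ss}}_z$ to, by the construction of $\psi$) factors through the scheme-theoretic image, i.e.\ through $\Spec R^C_\Db$. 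For the ``only if'' direction I would use Theorem \ref{thm:PsR_background}(2): the fiber $\psi^{-1}(z)$ has a unique closed point, corresponding to $V^{\mathrm{ss}}_z$; if $z$ factors through $\Spec R^C_\Db$, then since $\psi$ restricted to $\Rep^{\circ,C}_\Db$ is universally closed with scheme-theoretic image $\Spec R^C_\Db$, the fiber of $\Rep^{\circ,C}_\Db$ over $z$ is nonempty and closed in $\psi^{-1}(z)$, hence contains the unique closed point $V^{\mathrm{ss}}_z$; therefore $V^{\mathrm{ss}}_z \in \Rep^{\circ,C}_\Db$, which by the defining property of the local loci $\Rep^{C_v}_\Db$ (Theorem \ref{thm:pssdr_2.7.6}, or the appropriate case of \S\ref{subsec:universal_C}) means $V^{\mathrm{ss}}_z|_{G_v}$ has property $C_v$ for all $v$, i.e.\ $D_z$ satisfies $C$ in the sense of Definition \ref{defn:pseudo_property}.

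The main obstacle I anticipate is the ``only if'' direction, specifically the claim that a point of $\Spec R^C_\Db$ is necessarily hit by a point of $\Rep^{\circ,C}_\Db$ whose associated closed representation is $V^{\mathrm{ss}}_z$ rather than some non-semisimple representation in the same fiber. This requires knowing that $V^{\mathrm{ss}}_z$ itself (and not merely \emph{some} representation with the right semisimplification) lands in $\Rep^{\circ,C}_\Db$; the resolution is that $\Rep^{\circ,C}_\Db \rinj \Rep^\circ_\Db$ is closed and $G_K$-stable (equivalently $\GL_d$-stable), so its fiber over $z$ is a closed $\GL_d$-stable subscheme of $\psi^{-1}(z)$, and by Theorem \ref{thm:PsR_background}(2) any nonempty such subscheme contains the unique closed orbit, which is $V^{\mathrm{ss}}_z$ — then one invokes the property that $C_v$ is closed under passing to semisimplification (built into Definition \ref{defn:pseudo_property}, and satisfied by the potentially semi-stable conditions by construction). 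A secondary point needing care is that the fiber product \eqref{eq:construct_Rep_C} is taken over the \emph{stacks} $\Rep^{\circ,v}_\Db$ rather than the rings $R^v_\Db$, precisely so as not to lose the extension-class information discussed in the paragraph preceding Definition \ref{defn:pseudo_C}; I would remark that this is why the naive tensor-product construction of $R^C_\Db$ over the local pseudodeformation rings is \emph{not} used, and that with the stack-level fiber product the argument above goes through unobstructed.
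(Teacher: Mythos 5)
Your proposal is correct and follows essentially the same route as the paper: construct $R^C_\Db$ as the scheme-theoretic image of the closed substack $\Rep^{\circ,C}_\Db$ under the universally closed map $\psi$, and then use the fact that each fiber of $\psi$ has a unique closed point given by the semi-simple representation (Theorem \ref{thm:PsR_background}(2)) together with the local point-characterization of the $C_v$-loci to get the biconditional on $E$-points. The extra detail you supply in the ``only if'' direction (a nonempty closed substack of the fiber must contain the unique closed point) is exactly what the paper's terse ``therefore'' is implicitly invoking, so there is no substantive difference in method.
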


\begin{proof}
After possibly allowing a finite extension of the coefficient field $E'/E$, there exists a unique representation $V^\mathrm{ss}_z$ of $G_{F,S}$ over $E'$ inducing $D_z$. Recall that $V^\mathrm{ss}_z$ induces the unique closed point $\zeta$ of $\Rep_\Db$ lying over $z \in \Spec R_\Db$. Therefore, because $\psi: \Rep_\Db \ra \Spec R_\Db$ is universally closed and $\Rep^C_\Db \rinj \Rep_\Db$ is a closed immersion, we must have $\zeta \in \Rep^C_\Db$ if and only if $z \in \Spec R^C_\Db$. This means that $V^\mathrm{ss}_z$ satisfies $C$, i.e.\ its restriction to $G_v$ satisfies $C_v$ for all $v \in S$.
\end{proof}

\subsection{Example: Ordinary Pseudodeformation Rings}
\label{subsec:ordinary_psspsr}

The first local deformation conditions commonly dealt with were ``ordinary'' \cite{mazur1989} and ``flat'' \cite{ramakrishna1993}. The constructions of this paper result in a ``flat pseudodeformation ring'' since flat is equivalent to ``crystalline with Hodge-Tate weights in $[0,1]$'' \cite{breuil2000,crfc}. However, because the ordinary condition allows for arbitrary Hodge-Tate weights, it is not included in these constructions. Here, we will construct an ordinary pseudodeformation ring in the 2-dimensional case; this should be compared with the ordinary pseudodeformation ring of \cite[\S3]{CV2003}. There are several notions of ``ordinary;'' we use the following one. 

\begin{defn}
\label{defn:ordinary}
Let $K$ be a $p$-adic local field. We call a 2-dimensional representation of $G_K$ \emph{ordinary} when it is reducible and there exists an unramified $1$-dimensional quotient. 
\end{defn}

For simplicity we will address representations of $G_{\bQ,S}$ where $p \in S$, and cut out a locus of representations satisfying ordinariness with respect to a choice of decomposition group with its inertia group at $p$, $G_{\bQ,S} \supset G_p \supset I_p$. We will find a quotient of $R_\Db$ parameterizing ordinary pseudorepresentations. Obviously, the ordinary condition is sensitive to extension classes, so that a $2$-dimensional pseudorepresentation $D$ of $G_{\bQ,S}$ such that $D\vert_{G_p} \simeq \det \circ (\psi \oplus \chi)$ where $\psi$ is unramified is not necessarily ordinary. 

We let $\Db$ arise from the sum of two characters $\bar\psi, \bar\chi$ valued in $\bF^\times$, writing $\Db = \det(\bar\psi \oplus \bar\chi)$ and stipulating that $\bar\psi\vert_{I_p} = 1$ so that the set of ordinary pseudodeformations of $\Db$ is not empty. We also assume that $\bar\psi\vert_{G_p} \neq \bar\chi\vert_{G_p}$. 

\begin{rem}
One may naturally ask if there is a reasonable generalization of the ordinary condition to $n$-dimensional representations. For simplicity, assume that $\Db$ splits into a sum of characters of $G_{\bQ,S}$. In this setting, ordinary will mean totally reducible when restricted to $G_p$ (no condition on the inertia action). By observing the following construction, one can see that this will be possible if the residual characters are pairwise \emph{distinct} after restriction to $G_p$, generalizing the ``residually $p$-distinguished'' condition $\bar\psi\vert_{G_p} \neq \bar\chi\vert_{G_p}$. 
\end{rem}

\begin{lem}
\label{lem:ord_space}
With the assumptions above, there exists a closed substack $\Rep_{G_p, \Db}^{\ord}$ of $\Rep_{G_p,\Db}$ parameterizing representations of $G_p$ with induced residual pseudorepresentation $\Db$ that are ordinary. 
\end{lem}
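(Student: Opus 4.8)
The plan is to realize the ordinary locus as a locus where a rank-one $\phz$-stable (more precisely, $G_p$-stable) sub-object exists with the right properties, and then check that this condition is representable by a closed substack. Concretely, over $\Rep_{G_p,\Db}$ we have the universal rank $d=2$ representation $V$ of $G_p$, and the residual pseudorepresentation $\Db = \det(\bar\psi \oplus \bar\chi)$ is multiplicity-free with the additional hypothesis $\bar\psi\vert_{G_p}\neq\bar\chi\vert_{G_p}$. First I would use the generalized matrix algebra structure: by Theorem \ref{thm:CH_MF_GMA} and Proposition \ref{prop:E(G)}(5), $E(G_p)_\Db$ is a GMA over $R_{G_p,\Db}$, say with data of idempotents $\cE = (e_1, e_2)$ cut out so that $e_1$ picks off the ``$\bar\psi$-part'' and $e_2$ the ``$\bar\chi$-part.'' Then $\Rep_{G_p,\Db} \cong [\Rep^\square_\Ad(E(G_p)_\Db,\cE)/Z(\beta)]$ by Proposition \ref{prop:adapted_GMA}, and on the adapted representation scheme the universal $V$ decomposes as $V = V_1 \oplus V_2$ with $V_i$ a line bundle carrying the action of $e_iRe_i \cong R_{G_p,\Db}$ through $\bar\psi$-resp.-$\bar\chi$-type reduction. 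The off-diagonal bimodules $\cA_{1,2}, \cA_{2,1}$ encode the extension classes.

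The ordinary condition — reducible with an unramified one-dimensional quotient — then translates into two closed conditions. Reducibility of $V$ with $V_2$ as a quotient: this is exactly the vanishing of the action map on the sub-bimodule $\cA_{2,1}$, i.e.\ the condition that $E^2 r E^1 \mapsto 0$ in $\End(V)$ for all $r$, which is a closed condition cutting out the ideal generated by the $\cA_{2,1}$-coordinates in $T/J$ from \eqref{eq:adapted_rep_alg}; equivalently it says the extension is ``upper triangular'' with $\bar\chi$-type on the quotient. (Whether one wants $V_2$ or $V_1$ as the quotient is forced by which of $\bar\psi,\bar\chi$ is unramified on inertia; here $\bar\psi\vert_{I_p}=1$, so the unramified quotient must be of $\bar\psi$-type, hence it is $\cA_{1,2}$ that must vanish — I would fix conventions so that the quotient character is the one whose residual restriction to $I_p$ is trivial.) The unramified condition on the quotient line: the quotient is a deformation of an unramified character, and requiring it to remain unramified is the closed condition that $I_p$ acts trivially on the quotient line bundle, i.e.\ $\rho(g) - 1$ kills the quotient for all $g \in I_p$; by continuity and compactness this is cut out by finitely many such equations, hence closed. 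Both conditions are $Z(\beta)$-stable (indeed $\GL_d$-stable, as they are intrinsic to $V$), so they descend to a closed substack $\Rep^{\ord}_{G_p,\Db} \rinj \Rep_{G_p,\Db}$.

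The remaining point is to verify that this closed substack has the claimed moduli interpretation on all test objects — that a point of $\Rep_{G_p,\Db}(B)$ lands in $\Rep^{\ord}_{G_p,\Db}$ iff $V_B$ is ordinary in the sense of Definition \ref{defn:ordinary}. For $B$ a field (or more generally a reduced ring, and then by the closed-immersion/scheme-theoretic considerations as in Lemma \ref{lem:jacobson} for the generic-fibre statements) this is immediate from the construction. For general $B$ one checks that the GMA decomposition base-changes well (the $\cA_{i,j}$ are $A$-split, Proposition \ref{prop:adapted_GMA} and the surrounding discussion) so that the sub/quotient line bundle persists and the inertia-triviality equations pull back correctly. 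I would phrase the final statement so that ``parameterizing'' means: $\zeta\colon \frX \to \Rep_{G_p,\Db}$ factors through $\Rep^{\ord}_{G_p,\Db}$ iff the corresponding $V$ admits a $G_p$-stable sub-line-bundle with unramified quotient, which over a field recovers Definition \ref{defn:ordinary} exactly.

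The main obstacle I anticipate is \emph{not} the reducibility condition (that is standard GMA bookkeeping) but rather pinning down the unramified-quotient condition cleanly over non-reduced and non-local base rings: one must be sure that ``$I_p$ acts trivially on the quotient line bundle'' is genuinely a closed (finite-type) condition and that it interacts correctly with the reducibility locus — in particular that on the reducible locus the quotient character is a well-defined deformation of $\bar\psi$, so that the residual hypothesis $\bar\psi\vert_{I_p}=1$ makes the condition non-vacuous, while the separation hypothesis $\bar\psi\vert_{G_p}\neq\bar\chi\vert_{G_p}$ guarantees the idempotents $e_1,e_2$ (hence the decomposition $V=V_1\oplus V_2$) exist integrally over $R_{G_p,\Db}$ via Theorem \ref{thm:CH_MF_GMA}. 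Once those compatibilities are checked, the closedness and the moduli description follow formally.
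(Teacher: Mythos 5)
Your route is essentially the paper's: pass to the universal Cayley--Hamilton algebra, use its GMA structure (Theorem \ref{thm:CH_MF_GMA}, Proposition \ref{prop:E(G)}(5)), work on the scheme of adapted representations and descend along $Z(\beta)$ via Proposition \ref{prop:adapted_GMA}. However, there is one concrete gap. You assert that the unramified quotient is \emph{forced} to be of $\bar\psi$-type because $\bar\psi\vert_{I_p}=1$, and accordingly you cut out only one vanishing condition (one off-diagonal block zero, plus $I_p$ acting trivially on the corresponding quotient line). The standing hypotheses are only $\bar\psi\vert_{I_p}=1$ and $\bar\psi\vert_{G_p}\neq\bar\chi\vert_{G_p}$; they do not exclude $\bar\chi\vert_{I_p}=1$. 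If $\bar\chi\vert_{I_p}=1$ (e.g.\ both residual characters unramified but distinct on Frobenius), an ordinary representation may instead be an extension whose unramified quotient lifts $\bar\chi$, with the $\bar\psi$-type character (possibly ramified) as the sub. Such points do not lie in your single-branch locus, so the closed substack you construct can be strictly smaller than the ordinary locus, and the ``parameterizing'' claim of the lemma fails as stated.

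The paper's proof avoids this by taking the union of the two branches: in adapted coordinates it cuts out the intersection of ideals $(B,\rho_{22}(I_p)-1)\cap(C,\rho_{11}(I_p)-1)$, the second factor being non-vacuous exactly when $\bar\chi\vert_{I_p}=1$. The repair to your argument is the same and costs nothing: replace your single closed condition by the scheme-theoretic union of the two closed loci (each is $Z(\beta)$-stable, hence so is their union), after which your descent argument and the verification of the moduli interpretation on test objects go through as you wrote them.
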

\begin{proof}
We will use Theorem \ref{thm:R_E_rep_equiv} and consider representations of $E(G_p)_\Db$. As $\Db$ is multiplicity free, Theorem \ref{thm:CH_MF_GMA} gives us a generalized matrix algebra structure on $E(G_p)_\Db$: the data of two idempotents, $e_1$ associated to the factor $\bar\chi$ of $\br^\mathrm{ss}_\Db = \bar\psi \oplus \bar\chi$ and $e_2$ associated to $\bar\psi$. We write the generalized matrix algebra in the form
\[
E(G_p)_\Db = \begin{pmatrix} R_\Db & B \\ C & R_\Db \end{pmatrix}
\]
and consider the affine scheme $\Rep^\square_\Ad(E(G_p)_\Db, \cE)$ of adapted representations (Definition \ref{defn:adapted}), whose coordinate ring is generated by $B$ and $C$ over $R_\Db$. Those representations which are ordinary and upper triangular are cut out by the ideal generated by $(B,\rho_{22}(I_p)-1)$, where $\rho_{22}(I_p)-1$ represents non-trivial image of inertia in the lower right coordinate. Likewise, if $\bar\chi\vert_{I_p} = 1$, it is also possible to be ordinary and lower triangular, and this is cut out by the ideal $(C,\rho_{11}(I_p)-1)$. The ordinary locus is therefore cut out by the ideal $(B,\rho_{22}(I_p)-1) \cap (C, \rho_{11}(I_p)-1)$. Using Proposition \ref{prop:adapted_GMA}, the image of this closed subscheme of $\Rep^\square_\Ad(E(G_p)_\Db, \cE)$ in $\Rep_{G_p, \Db}$ is the locus of ordinary representations of $G_p$ with residual pseudorepresentation $\Db$.
\end{proof}

We may now define the moduli stack $\Rep^{\ord}_\Db$ of $G_{\bQ,S}$-representations that are ordinary at $p$ by setting
\[
\Rep^{\ord}_\Db := \Rep_\Db \times_{\Rep_{G_p, \Db}} \Rep_{G_p, \Db}^{\ord},
\]
just as in \eqref{eq:construct_Rep_C}, and then construct the \emph{global ordinary pseudodeformation ring} $R^{\ord}_\Db$  by Definition \ref{defn:pseudo_C}. Because $\Db$ is multiplicity-free, $\psi$ is a good moduli space (Thm.\ \ref{thm:PsR_background}(4)). The restriction of $\psi$ to $\Rep^{\ord}_\Db \ra \Spec R^{\ord}_\Db$ is a good moduli space as well (Thm.\ \ref{thm:alper}(7)), and $R^{\ord}_\Db$ is precisely the associated GIT quotient ring.
\begin{cor}
Let $E$ be a $p$-adic field with ring of integers $\cO$. With the data $\Db, \bar\psi, \bar\chi$ as above, choose a pseudorepresentation $D_z: G_{\bQ,S} \ra \cO \subset E$ deforming $\Db$, so that there is a corresponding morphism $z: \Spec E \ra \Spec R_\Db$. Then $z$ factors through $R^{\ord}_\Db$ if and only if $D_z$ is ordinary in the sense that the associated semi-simple representation $V^\mathrm{ss}_z$ is ordinary. 
\end{cor}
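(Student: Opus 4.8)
The plan is to recognize this corollary as the instance of Theorem~\ref{thm:global_main} in which $C=(C_v)_{v\in S}$ has $C_p$ equal to the ordinary condition of Definition~\ref{defn:ordinary} and $C_v$ the trivial (empty) condition for $v\in S\setminus\{p\}$. The only hypothesis of Theorem~\ref{thm:global_main} that must be verified here is that $C_p$ cuts out a closed immersion $\Rep^{\ord}_{G_p,\Db}\rinj\Rep_{G_p,\Db}$, and this is exactly Lemma~\ref{lem:ord_space} (using that $\Db=\det(\bar\psi\oplus\bar\chi)$ with $\bar\psi\vert_{G_p}\neq\bar\chi\vert_{G_p}$ is multiplicity-free, so Theorem~\ref{thm:CH_MF_GMA} applies). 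One also records that the $R^{\ord}_\Db$ defined just before the corollary via \eqref{eq:construct_Rep_C} and Definition~\ref{defn:pseudo_C} coincides with the $R^C_\Db$ of Theorem~\ref{thm:global_main}: both are the quotient of $R_\Db$ corresponding to the scheme-theoretic image of $\Rep^{\ord}_\Db\rinj\Rep_\Db\buildrel\psi\over\lra\Spec R_\Db$.

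With the reduction in place, the argument runs exactly as in the proofs of Theorems~\ref{thm:global_main} and~\ref{thm:psspsr}. First I would use that, as noted just before the corollary, the restriction of $\psi$ to $\Rep^{\ord}_\Db\ra\Spec R^{\ord}_\Db$ is a good moduli space, hence universally closed and surjective (Theorem~\ref{thm:alper}(1)). Next, for the given $D_z\colon G_{\bQ,S}\ra\cO\subset E$ with corresponding closed point $z\colon\Spec E\ra\Spec R_\Db$, after a finite extension $E'/E$ there is a unique semi-simple representation $V^{\mathrm{ss}}_z$ of $G_{\bQ,S}$ over $E'$ with $\psi(V^{\mathrm{ss}}_z)=D_z\otimes_E E'$ (Theorem~\ref{thm:chenevier_A}, cf.\ Corollary~\ref{cor:ss_degree}), and by Theorem~\ref{thm:PsR_background}(2) the point $\zeta$ it defines is the unique closed point of the fibre $\psi^{-1}(z)$.

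The crux is the equivalence $z\in\Spec R^{\ord}_\Db \iff \zeta\in\Rep^{\ord}_\Db$. The direction $(\Leftarrow)$ is immediate since $\psi(\zeta)=z$ and $\psi(\Rep^{\ord}_\Db)\subseteq\Spec R^{\ord}_\Db$. For $(\Rightarrow)$, if $z$ factors through $\Spec R^{\ord}_\Db$, surjectivity of $\psi\vert_{\Rep^{\ord}_\Db}$ gives a point of $\Rep^{\ord}_\Db$ over $z$; then $\Rep^{\ord}_\Db\cap\psi^{-1}(z)$ is a nonempty closed subset of $\psi^{-1}(z)$, and since $\zeta$ is the unique closed point of $\psi^{-1}(z)$ it must lie in this set, so $\zeta\in\Rep^{\ord}_\Db$. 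Finally, by the fibre-product description \eqref{eq:construct_Rep_C}, $\zeta\in\Rep^{\ord}_\Db$ if and only if the image of $\zeta$ in $\Rep_{G_p,\Db}$, which is the point corresponding to $V^{\mathrm{ss}}_z\vert_{G_p}$, lies in $\Rep^{\ord}_{G_p,\Db}$; by Lemma~\ref{lem:ord_space} this happens exactly when $V^{\mathrm{ss}}_z\vert_{G_p}$ is ordinary, i.e.\ when $V^{\mathrm{ss}}_z$ is ordinary. That "$D_z$ is ordinary" means precisely "$V^{\mathrm{ss}}_z$ is ordinary" is the content of Definition~\ref{defn:pseudo_property}.

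I expect the main obstacle to be the $(\Rightarrow)$ direction of the crux, where one must combine the universal closedness (and hence surjectivity onto its scheme-theoretic image) of $\psi\vert_{\Rep^{\ord}_\Db}$ with the uniqueness of the closed point in each fibre of $\psi$ from Theorem~\ref{thm:PsR_background}(2). A secondary point deserving care is that Definition~\ref{defn:ordinary} is imposed on the \emph{possibly non-semisimple} restriction $V^{\mathrm{ss}}_z\vert_{G_p}$ rather than on a semisimplification; but this is exactly what the closed substack of Lemma~\ref{lem:ord_space} detects (it is cut out by the ideal $(B,\rho_{22}(I_p)-1)\cap(C,\rho_{11}(I_p)-1)$ in the coordinate ring of the adapted-representation scheme), so no additional argument is required there.
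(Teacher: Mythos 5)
Your proposal is correct and takes the same route as the paper, whose proof is literally ``combine Lemma \ref{lem:ord_space} and Theorem \ref{thm:global_main}''; your additional unpacking (universal closedness of $\psi$, the unique closed point of the fibre, and the fibre-product description \eqref{eq:construct_Rep_C}) just reproduces the arguments already given in the proofs of Theorems \ref{thm:psspsr} and \ref{thm:global_main}.
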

\begin{proof}
Combine Lemma \ref{lem:ord_space} and Theorem \ref{thm:global_main}.
\end{proof}

See \cite{WWE2015a} for a deformation-theoretic definition of ordinary pseudorepresentation. 

\small

\bibliographystyle{alpha}
\bibliography{CWEbib-1015}

\def\cprime{$'$} \def\Dbar{\leavevmode\lower.6ex\hbox to 0pt{\hskip-.23ex
  \accent"16\hss}D} \def\cfac#1{\ifmmode\setbox7\hbox{$\accent"5E#1$}\else
  \setbox7\hbox{\accent"5E#1}\penalty 10000\relax\fi\raise 1\ht7
  \hbox{\lower1.15ex\hbox to 1\wd7{\hss\accent"13\hss}}\penalty 10000
  \hskip-1\wd7\penalty 10000\box7}
  \def\cftil#1{\ifmmode\setbox7\hbox{$\accent"5E#1$}\else
  \setbox7\hbox{\accent"5E#1}\penalty 10000\relax\fi\raise 1\ht7
  \hbox{\lower1.15ex\hbox to 1\wd7{\hss\accent"7E\hss}}\penalty 10000
  \hskip-1\wd7\penalty 10000\box7} \def\Dbar{\leavevmode\lower.6ex\hbox to
  0pt{\hskip-.23ex \accent"16\hss}D}
  \def\cfac#1{\ifmmode\setbox7\hbox{$\accent"5E#1$}\else
  \setbox7\hbox{\accent"5E#1}\penalty 10000\relax\fi\raise 1\ht7
  \hbox{\lower1.15ex\hbox to 1\wd7{\hss\accent"13\hss}}\penalty 10000
  \hskip-1\wd7\penalty 10000\box7}
  \def\cftil#1{\ifmmode\setbox7\hbox{$\accent"5E#1$}\else
  \setbox7\hbox{\accent"5E#1}\penalty 10000\relax\fi\raise 1\ht7
  \hbox{\lower1.15ex\hbox to 1\wd7{\hss\accent"7E\hss}}\penalty 10000
  \hskip-1\wd7\penalty 10000\box7}
\begin{thebibliography}{WWE15b}

\bibitem[Alp13]{alper2013}
Jarod Alper.
\newblock Good moduli spaces for {A}rtin stacks.
\newblock {\em Ann. Inst. Fourier (Grenoble)}, 63(6):2349--2402, 2013.

\bibitem[Alp14]{alper2014}
Jarod Alper.
\newblock Adequate moduli spaces and geometrically reductive group schemes.
\newblock {\em Algebr. Geom.}, 1(4):489--531, 2014.

\bibitem[BC09a]{BC2009}
Jo{\"e}l Bella{\"{\i}}che and Ga{\"e}tan Chenevier.
\newblock Families of {G}alois representations and {S}elmer groups.
\newblock {\em Ast\'erisque}, (324):xii+314, 2009.

\bibitem[BC09b]{BCon2009}
Olivier Brinon and Brian Conrad.
\newblock {CMI} summer school notes on p-adic {H}odge theory, 2009.
\newblock \url{http://math.stanford.edu/~conrad/papers/notes.pdf}.

\bibitem[Bel16]{bellovin2014}
Rebecca Bellovin.
\newblock Generic smoothness for {$G$}-valued potentially semi-stable
  deformation rings.
\newblock {\em Ann. Inst. Fourier (Grenoble)}, 66(6):2565--2620, 2016.

\bibitem[BL95]{BL1995}
Arnaud Beauville and Yves Laszlo.
\newblock Un lemme de descente.
\newblock {\em C. R. Acad. Sci. Paris S\'er. I Math.}, 320(3):335--340, 1995.

\bibitem[Bre00]{breuil2000}
Christophe Breuil.
\newblock Groupes {$p$}-divisibles, groupes finis et modules filtr\'es.
\newblock {\em Ann. of Math. (2)}, 152(2):489--549, 2000.

\bibitem[Che13]{chen2013}
Ga\"{e}tan Chenevier.
\newblock {\em Repr\'{e}sentations {G}aloisiennes automorphes et
  cons\'{e}quences arithm\'{e}tiques des conjectures de {L}anglands et
  {A}rthur}.
\newblock Habilitation, Paris XI, 2013.
\newblock \url{http://gaetan.chenevier.perso.math.cnrs.fr/hdr/HDR.pdf}.

\bibitem[Che14]{chen2014}
Ga\"{e}tan Chenevier.
\newblock The $p$-adic analytic space of pseudocharacters of a profinite group,
  and pseudorepresentations over arbitrary rings.
\newblock In {\em Automorphic Forms and {G}alois Representations: {V}ol.\ {I}},
  volume 414 of {\em London Mathematical Society Lecture Note Series}, pages
  221--285. Cambridge Univ. Press, Cambridge, 2014.

\bibitem[CV03]{CV2003}
S.~Cho and V.~Vatsal.
\newblock Deformations of induced {G}alois representations.
\newblock {\em J. Reine Angew. Math.}, 556:79--98, 2003.

\bibitem[dJ95]{dejong1995}
A.~J. de~Jong.
\newblock Crystalline {D}ieudonn\'e module theory via formal and rigid
  geometry.
\newblock {\em Inst. Hautes \'Etudes Sci. Publ. Math.}, (82):5--96 (1996),
  1995.

\bibitem[EG15]{EG2015}
Matthew Emerton and Toby Gee.
\newblock ``{S}cheme-theoretic images'' of morphisms of stacks.
\newblock arXiv:1506.06146v2 [math.NT], 2015.

\bibitem[Fon90]{fontaine1990}
Jean-Marc Fontaine.
\newblock Repr\'esentations {$p$}-adiques des corps locaux. {I}.
\newblock In {\em The {G}rothendieck {F}estschrift, {V}ol.\ {II}}, volume~87 of
  {\em Progr. Math.}, pages 249--309. Birkh\"auser Boston, Boston, MA, 1990.

\bibitem[Fon94]{fontaine1994}
Jean-Marc Fontaine.
\newblock Repr\'esentations {$p$}-adiques semi-stables.
\newblock {\em Ast\'erisque}, (223):113--184, 1994.
\newblock With an appendix by Pierre Colmez, P{\'e}riodes $p$-adiques
  (Bures-sur-Yvette, 1988).

\bibitem[GM78]{GM1978}
Silvio Greco and Maria~Grazia Marinari.
\newblock Nagata's criterion and openness of loci for {G}orenstein and complete
  intersection.
\newblock {\em Math. Z.}, 160(3):207--216, 1978.

\bibitem[Gro60]{ega1}
A.~Grothendieck.
\newblock \'{E}l\'ements de g\'eom\'etrie alg\'ebrique. {I}. {L}e langage des
  sch\'emas.
\newblock {\em Inst. Hautes \'Etudes Sci. Publ. Math.}, (4):228, 1960.

\bibitem[Gro61a]{ega2}
A.~Grothendieck.
\newblock \'{E}l\'ements de g\'eom\'etrie alg\'ebrique. {II}. \'{E}tude globale
  \'el\'ementaire de quelques classes de morphismes.
\newblock {\em Inst. Hautes \'Etudes Sci. Publ. Math.}, (8):222, 1961.

\bibitem[Gro61b]{ega3-1}
A.~Grothendieck.
\newblock \'{E}l\'ements de g\'eom\'etrie alg\'ebrique. {III}. \'{E}tude
  cohomologique des faisceaux coh\'erents. {I}.
\newblock {\em Inst. Hautes \'Etudes Sci. Publ. Math.}, (11):167, 1961.

\bibitem[Gro64]{ega4-1}
A.~Grothendieck.
\newblock \'{E}l\'ements de g\'eom\'etrie alg\'ebrique. {IV}. \'{E}tude locale
  des sch\'emas et des morphismes de sch\'emas. {I}.
\newblock {\em Inst. Hautes \'Etudes Sci. Publ. Math.}, (20):259, 1964.

\bibitem[Gro66]{ega4-3}
A.~Grothendieck.
\newblock \'{E}l\'ements de g\'eom\'etrie alg\'ebrique. {IV}. \'{E}tude locale
  des sch\'emas et des morphismes de sch\'emas. {III}.
\newblock {\em Inst. Hautes \'Etudes Sci. Publ. Math.}, (28):255, 1966.

\bibitem[Gro67]{ega4-4}
A.~Grothendieck.
\newblock \'{E}l\'ements de g\'eom\'etrie alg\'ebrique. {IV}. \'{E}tude locale
  des sch\'emas et des morphismes de sch\'emas {IV}.
\newblock {\em Inst. Hautes \'Etudes Sci. Publ. Math.}, (32):361, 1967.

\bibitem[GS06]{GS2006}
Philippe Gille and Tam{\'a}s Szamuely.
\newblock {\em Central simple algebras and {G}alois cohomology}, volume 101 of
  {\em Cambridge Studies in Advanced Mathematics}.
\newblock Cambridge University Press, Cambridge, 2006.

\bibitem[GZB15]{GZB2015}
Anton Geraschenko and David Zureick-Brown.
\newblock Formal {GAGA} for good moduli spaces.
\newblock {\em Algebr. Geom.}, 2(2):214--230, 2015.

\bibitem[HH13]{HH2013}
Urs Hartl and Eugen Hellmann.
\newblock The universal family of semi-stable $p$-adic {G}alois
  representations.
\newblock arXiv:1312.6371v1 [math.NT], 2013.

\bibitem[Kis06]{crfc}
Mark Kisin.
\newblock Crystalline representations and {$F$}-crystals.
\newblock In {\em Algebraic geometry and number theory}, volume 253 of {\em
  Progr. Math.}, pages 459--496. Birkh\"auser Boston, Boston, MA, 2006.

\bibitem[Kis08]{pssdr}
Mark Kisin.
\newblock Potentially semi-stable deformation rings.
\newblock {\em J. Amer. Math. Soc.}, 21(2):513--546, 2008.

\bibitem[Kis09a]{fmc}
Mark Kisin.
\newblock The {F}ontaine-{M}azur conjecture for {${\rm GL}_2$}.
\newblock {\em J. Amer. Math. Soc.}, 22(3):641--690, 2009.

\bibitem[Kis09b]{mffgsm}
Mark Kisin.
\newblock Moduli of finite flat group schemes, and modularity.
\newblock {\em Ann. of Math. (2)}, 170(3):1085--1180, 2009.

\bibitem[Kra82]{kraft1982}
Hanspeter Kraft.
\newblock Geometric methods in representation theory.
\newblock In {\em Representations of algebras ({P}uebla, 1980)}, volume 944 of
  {\em Lecture Notes in Math.}, pages 180--258. Springer, Berlin, 1982.

\bibitem[LB08]{lebruyn2008}
Lieven Le~Bruyn.
\newblock {\em Noncommutative geometry and {C}ayley-smooth orders}, volume 290
  of {\em Pure and Applied Mathematics (Boca Raton)}.
\newblock Chapman \& Hall/CRC, Boca Raton, FL, 2008.

\bibitem[LB12]{lebruyn2012}
Lieven Le~Bruyn.
\newblock Representation stacks, {D}-branes and noncommutative geometry.
\newblock {\em Comm. Algebra}, 40(10):3636--3651, 2012.

\bibitem[Lev16]{levin2014}
Brandon Levin.
\newblock Local models for {W}eil-restricted groups.
\newblock {\em Compos. Math.}, 152(12):2563--2601, 2016.

\bibitem[LM85]{LM1985}
Alexander Lubotzky and Andy~R. Magid.
\newblock Varieties of representations of finitely generated groups.
\newblock {\em Mem. Amer. Math. Soc.}, 58(336):xi+117, 1985.

\bibitem[LMB00]{lmb}
G{\'e}rard Laumon and Laurent Moret-Bailly.
\newblock {\em Champs alg\'ebriques}, volume~39 of {\em Ergebnisse der
  Mathematik und ihrer Grenzgebiete. 3. Folge. A Series of Modern Surveys in
  Mathematics [Results in Mathematics and Related Areas. 3rd Series. A Series
  of Modern Surveys in Mathematics]}.
\newblock Springer-Verlag, Berlin, 2000.

\bibitem[Maz89]{mazur1989}
B.~Mazur.
\newblock Deforming {G}alois representations.
\newblock In {\em Galois groups over {${\bf Q}$} ({B}erkeley, {CA}, 1987)},
  volume~16 of {\em Math. Sci. Res. Inst. Publ.}, pages 385--437. Springer, New
  York, 1989.

\bibitem[MR01]{MR2001}
J.~C. McConnell and J.~C. Robson.
\newblock {\em Noncommutative {N}oetherian rings}, volume~30 of {\em Graduate
  Studies in Mathematics}.
\newblock American Mathematical Society, Providence, RI, revised edition, 2001.
\newblock With the cooperation of L. W. Small.

\bibitem[Nys96]{nyssen1996}
Louise Nyssen.
\newblock Pseudo-repr\'esentations.
\newblock {\em Math. Ann.}, 306(2):257--283, 1996.

\bibitem[Pro73]{procesi1973}
Claudio Procesi.
\newblock {\em Rings with polynomial identities}.
\newblock Marcel Dekker Inc., New York, 1973.
\newblock Pure and Applied Mathematics, 17.

\bibitem[Pro87]{procesi1987}
Claudio Procesi.
\newblock A formal inverse to the {C}ayley-{H}amilton theorem.
\newblock {\em J. Algebra}, 107(1):63--74, 1987.

\bibitem[PZ13]{PZ2013}
G.~Pappas and X.~Zhu.
\newblock Local models of {S}himura varieties and a conjecture of {K}ottwitz.
\newblock {\em Invent. Math.}, 194(1):147--254, 2013.

\bibitem[Ram93]{ramakrishna1993}
Ravi Ramakrishna.
\newblock On a variation of {M}azur's deformation functor.
\newblock {\em Compositio Math.}, 87(3):269--286, 1993.

\bibitem[Ric16]{richarz2015}
Timo Richarz.
\newblock Affine {G}rassmannians and geometric {S}atake equivalences.
\newblock {\em Int. Math. Res. Not. IMRN}, (12):3717--3767, 2016.

\bibitem[Rob63]{roby1}
Norbert Roby.
\newblock Lois polynomes et lois formelles en th\'eorie des modules.
\newblock {\em Ann. Sci. \'Ecole Norm. Sup. (3)}, 80:213--348, 1963.

\bibitem[Rob80]{roby2}
Norbert Roby.
\newblock Lois polyn\^omes multiplicatives universelles.
\newblock {\em C. R. Acad. Sci. Paris S\'er. A-B}, 290(19):A869--A871, 1980.

\bibitem[Rou96]{rouquier1996}
Rapha{\"e}l Rouquier.
\newblock Caract\'erisation des caract\`eres et pseudo-caract\`eres.
\newblock {\em J. Algebra}, 180(2):571--586, 1996.

\bibitem[Sam09]{samoilov2009}
L.~M. Samo{\u\i}lov.
\newblock An analogue of the {L}evitzki theorem for infinitely generated
  associative algebras.
\newblock {\em Mat. Zametki}, 86(1):151--153, 2009.

\bibitem[Sch08]{schoutens2008}
Hans Schoutens.
\newblock Pure subrings of regular rings are pseudo-rational.
\newblock {\em Trans. Amer. Math. Soc.}, 360(2):609--627 (electronic), 2008.

\bibitem[Ser58]{Serre1958}
{\em S\'eminaire {C}. {C}hevalley; 2e ann\'ee: 1958. {A}nneaux de {C}how et
  applications}.
\newblock Secr\'etariat math\'ematique, 11 rue Pierre Curie, Paris, 1958.

\bibitem[Ser79]{serre1979}
Jean-Pierre Serre.
\newblock {\em Local fields}, volume~67 of {\em Graduate Texts in Mathematics}.
\newblock Springer-Verlag, New York-Berlin, 1979.
\newblock Translated from the French by Marvin Jay Greenberg.

\bibitem[sga71]{sga1}
{\em Rev\^etements \'etales et groupe fondamental}.
\newblock Springer-Verlag, Berlin-New York, 1971.
\newblock S{\'e}minaire de G{\'e}om{\'e}trie Alg{\'e}brique du Bois Marie
  1960--1961 (SGA 1), Dirig{\'e} par Alexandre Grothendieck. Augment{\'e} de
  deux expos{\'e}s de M. Raynaud, Lecture Notes in Mathematics, Vol. 224.

\bibitem[SW99]{SW1999}
C.~M. Skinner and A.~J. Wiles.
\newblock Residually reducible representations and modular forms.
\newblock {\em Inst. Hautes \'Etudes Sci. Publ. Math.}, (89):5--126 (2000),
  1999.

\bibitem[Tay91]{taylor1991}
Richard Taylor.
\newblock Galois representations associated to {S}iegel modular forms of low
  weight.
\newblock {\em Duke Math. J.}, 63(2):281--332, 1991.

\bibitem[Vak14]{vakil2014}
Ravi Vakil.
\newblock Math 216: {F}oundations of algebraic geometry.
\newblock Version of December 30, 2014,
  \url{http://math.stanford.edu/~vakil/216blog/}, 2014.

\bibitem[Val76]{valabrega1976}
Paolo Valabrega.
\newblock A few theorems on completion of excellent rings.
\newblock {\em Nagoya Math. J.}, 61:127--133, 1976.

\bibitem[WE13]{WEthesis}
Carl Wang-Erickson.
\newblock {\em Moduli of {G}alois Representations}.
\newblock PhD thesis, Harvard University, April 2013.
\newblock Available at \url{http://dash.harvard.edu/handle/1/11108709}.

\bibitem[Wil88]{wiles1988}
A.~Wiles.
\newblock On ordinary {$\lambda$}-adic representations associated to modular
  forms.
\newblock {\em Invent. Math.}, 94(3):529--573, 1988.

\bibitem[WWE15a]{WWE2015a}
Preston Wake and Carl Wang-Erickson.
\newblock Ordinary pseudorepresentations and modular forms.
\newblock arXiv:1510.01661v4 [math.NT], To appear in \textit{Proc.\ Amer.\
  Math.\ Soc.}, 2015.

\bibitem[WWE15b]{WWE2015}
Preston Wake and Carl Wang-Erickson.
\newblock Pseudo-modularity and {I}wasawa theory.
\newblock arXiv:1505.05128v3 [math.NT], To appear in \textit{Amer.\ J.\ Math.},
  2015.

\end{thebibliography}

\end{document}